\NewDocumentCommand { \normalForm }{ m }{
	{#1}_{\textnormal{NF}}
}
\NewDocumentCommand { \normalFormGroup }{ m m }{
	{#2}_{{#1} \textnormal{-NF}}
}
\newcommand{\ex}{\textnormal{ex}}
\begin{document}

\vspace*{-5em}
\MakeTitle

\tableofcontents

\listoftodos

\section{Introduction}

Symplectic reduction is a fundamental concept in the study of Hamiltonian systems, with significant historical roots in finite-dimensional symplectic geometry. The pioneering work of \textcite{Meyer1973,MarsdenWeinstein1974} established a rigorous framework for reducing symplectic manifolds by the symmetries of a Lie group, a process now central to understanding many physical systems with symmetry. This reduction process simplifies the analysis of Hamiltonian systems by effectively lowering the dimensionality of the phase space while preserving the symplectic structure. 
\Textcite{SjamaarLerman1991} extended the theory of symplectic reduction to include singular cases, where the group action is not necessarily free, leading to a stratified symplectic space. 
A comprehensive picture of regular and singular symplectic reduction in finite dimensions, including symplectic orbit reduction, has been presented in the monograph by \textcite{OrtegaRatiu2003}.
A crucial technical tool in these developments is the normal form by \textcite{Marle1983,GuilleminSternberg1984},
which brings both the momentum map and the symplectic form into a canonical form.

From the outset, there was a strong interest in extending the theory of symplectic reduction to infinite dimensions, motivated by the need to understand the symmetries of field theories and the geometric structure of infinite-dimensional manifolds.
Many partial differential equations can be interpreted as Hamiltonian systems with constraints that take the form of momentum maps. For example, in Yang--Mills theory, the Gauss constraint can be understood as a momentum map condition \parencite{Arms1981,AtiyahBott1983}. Similarly, in general relativity, the constraints arising from the ADM formalism reflect a momentum map structure \parencite{ArnowittDeserMisner1959,ArmsMarsdenEtAl1982,ArmsFischerMarsden1975}. In the realm of Kähler geometry, Donaldson \parencite{Donaldson1999,Donaldson2003} has provided significant examples, where ideas from symplectic reduction play a key role in understanding geometric and topological properties of the spaces involved.

If one wishes to develop a rigorous theory of symplectic reduction in infinite dimensions one faces various serious problems: 
\begin{enumerate}
	\item 
		Even in the simplest setting of weakly symplectic Banach manifolds, by constructing a simple counterexample, \textcite{Marsden1972} showed that the Darboux theorem  fails.
		Hence, the classical approach to symplectic reduction via the Marle--Guillemin--Sternberg normal form (which relies on the Darboux theorem) is not applicable in this setting.
	\item
		In many examples, the symmetry group is a group of diffeomorphisms. The latter is not a Banach Lie group.
		Hence, the framework of symplectic Banach manifolds with actions of Banach Lie groups is not sufficient to cover these examples.
	\item 
		Often a classical momentum map does not exist.
\end{enumerate}

In this paper, we present a theory of symplectic reduction in infinite dimensions, including both the regular and singular case.
In order to overcome the challenges mentioned above, we will proceed as follows.
Instead of using the Darboux theorem to construct a normal form, we are exploiting the observation that it is not essential to bring the symplectic structure into a normal form. Instead, we focus primarily on the momentum map; using the symplectic structure only as a secondary tool. 
Thus, our strategy heavily rests on \parencite{DiezRudolphKuranishi}, where we have proven local normal form theorems for smooth equivariant maps between infinite-dimensional manifolds. The normal forms obtained in the latter paper are new even in finite dimensions. The proofs are inspired by the Lyapunov--Schmidt reduction for dynamical systems and by the Kuranishi method for moduli spaces and they heavily rest on a slice theorem for Fréchet manifolds proven in \parencite{DiezSlice}. The abstract moduli space obtained by factorizing a level set of the equivariant map with respect to the group action carries the structure of a Kuranishi space, that is, such moduli spaces are locally modeled on the quotient by a compact group of the zero set of a smooth map.
In \parencite{DiezRudolphKuranishi}, we have applied the general theory  to the moduli space of anti-self-dual instantons, the Seiberg--Witten moduli space and the moduli space of pseudoholomorphic curves.

In order to include groups of diffeomorphisms as symmetry groups, we will work in the setting of (tame) Fréchet manifolds. 
For basics on infinite-dimensional manifolds, we refer to \parencite{Hamilton1982,Neeb2006,GloecknerNeeb2013}.
Moreover, based on joint work of one of us (T.D.) with T. Ratiu \parencite{DiezRatiuAutomorphisms} on actions of diffeomorphism groups, we work in the general context of group-valued momentum maps thereby unifying several other notions of generalized momentum maps.
The group structure of the target of the momentum map allows to encode discrete topological information; a feature that is especially relevant to the action of geometric automorphism groups.
Such groups are sensitive to the topology of the spaces they live on and often do not admit a classical momentum map.
As a byproduct of our approach, we obtain an unified framework for reduction of a few generalized notions of momentum maps including, for example, circle-valued momentum maps 

The paper is organized as follows.
To set the stage, we start by recalling the foundational aspects of symplectic reduction in finite dimensions and discuss the challenges of the infinite-dimensional setting in more detail.
In the next step, we discuss some algebraic results of linear symplectic geometry needed for the symmetry reduction scheme. 
As a fundamental tool to handle weakly symplectic forms, we introduce and study a class of topologies associated with the symplectic form.
Next, we discuss symplectic manifolds and momentum maps in our infinite-dimensional setting.
Using the results of \parencite{DiezRudolphKuranishi} on normal forms of equivariant maps mentioned above, we establish a novel normal form result for momentum maps. Even in the finite-dimensional case, this normal form is different from the classical Marle--Guillemin--Sternberg normal form, see \cref{rem:momentumMap:MGSnormalForm:notDarboux,rem:momentumMap:MGSnormalForm:comparison} for details. This important technical tool then serves as the basis for our infinite-dimensional version of the Singular Symplectic Reduction Theorem (see \cref{prop:symplecticReduction:mainTheorem}).
As in finite dimensions, it states that the reduced phase space, obtained by factorizing a level set of the
momentum map with respect to the symmetry group, decomposes into smooth manifolds with each of them carrying a natural symplectic structure.
If a suitable approximation property holds, then this decomposition of the reduced phase space is a stratification. 
Moreover, the dynamics of the system reduces to the quotient and restricts to some Hamiltonian dynamics on each symplectic stratum.
As a direct consequence of the singular reduction theorem, we obtain the regular reduction theorem, which states that the reduced phase space is a smooth symplectic manifold if the action of the symmetry group is free, see \cref{prop:symplecticReduction:regular}.
Note that our approach is different from the classical one, where the regular reduction is usually discussed first and then used to derive the symplectic normal form for the singular reduction.
To our knowledge, our results on regular and singular symplectic reduction are new even for (weakly) symplectic Banach manifolds.
Finally, we apply our results to the example of symplectic reduction in the context of the Yang–Mills equation over a Riemannian surface.
This reworks and extends the classical treatment of \textcite{AtiyahBott1983}, and is based on joint work with J.~Huebschmann \parencite{DiezHuebschmann2017}.
Moreover, we study symplectic reduction in the context of the Teichmüller space of a Riemann surface.
The Teichmüller space can be realized as a symplectic orbit reduction; the general theory of symplectic orbit reduction in infinite dimensions will be dealt with in future work.
After restricting to a suitable subgroup, the symplectic point reduction yields a symplectic manifold that symplectically fibers over the Teichmüller space.
Some of the material presented in this work is based on the first author's thesis \parencite{DiezThesis}.

This paper is closely related to our paper \parencite{DiezRudolphReduction}, where we have developed a theory for symplectic cotangent bundle reduction in infinite dimensions in the Fr\'echet context and where we have applied the results obtained there to symplectic reduction of gauge theories with special emphasis on the Higgs sector of the 
Glashow--Weinberg--Salam model. For the latter model we have given a detailed description of the reduced phase space and we have shown that the singular structure is encoded in a finite-dimensional Lie group action. 
The results obtained in \parencite{DiezRudolphReduction} cannot be seen in a straightforward way as an application of  the general theory developed here. This is related to the fact that in the cotangent bundle case one has a secondary stratification structure and, thus, one has to 
construct a normal form which is compatible with that structure, see \cref{sec:cotangentBundleReduction} below.  

\section{Symplectic reduction in finite dimensions}
\label{sec:symplecticReductionFiniteDimensions}

In this section, we recall the foundational aspects of symplectic reduction within the finite-dimensional context. The exploration of principles and techniques in the classical finite-dimensional setting acts as a foundational step, paving the way to comprehend the complexities and extensions encountered in infinite-dimensional symplectic reduction.
Experts in the field may wish to skip this section and proceed directly to the next section, where the challenges arising from the passage to infinite dimensions are discussed.

Consider a finite-dimensional symplectic manifold \( (M, \omega) \) equipped with a symplectic action of a Lie group \( G \).
The action of \( G \) induces a corresponding infinitesimal action \( \xi \mapsto \xi^* \) of the Lie algebra \( \LieA{g} \) on \( M \).
The momentum map associated with this action is denoted by \( J: M \to \LieA{g}^* \) and it satisfies the momentum map equation \( \xi^* \contr \omega = - \dif \dualPair{J}{\xi} \) for all \( \xi \in \LieA{g} \), where the bracket represents the natural pairing between elements of \( \LieA{g}^* \) and \( \LieA{g} \).

The goal of singular symplectic reduction is to construct a reduced space that captures the essential geometric and dynamical features of the original system.
For every \( \mu \in \LieA{g}^* \), consider
\begin{equation}
	\check{M}_\mu \defeq J^{-1}(\mu) / G_\mu,
\end{equation}
where \( G_\mu \) denotes the stabilizer subgroup of \( \mu \) under the coadjoint action of \( G \) on \( \LieA{g}^* \).
In the regular case where \( G \) acts freely and \( \mu \) is a regular value, \textcite{MarsdenWeinstein1974,Meyer1973} showed that \( \check{M}_\mu \) is a smooth manifold carrying again a symplectic structure \( \check{\omega}_\mu \) uniquely determined by the relation \( \pi_\mu^* \check{\omega}_\mu = \restr{\omega}{J^{-1}(\mu)} \), where \( \pi_\mu: J^{-1}(\mu) \to \check{M}_\mu \) denotes the projection map.
The singular case has been studied by \textcite{SjamaarLerman1991,ArmsGotayEtAl1990} and involves a decomposition of \( \check{M}_\mu \) into symplectic manifolds, each associated with a specific orbit type.
Moreover, these symplectic manifolds are glued together in such a way that the Poisson brackets fit together to yield one global Poisson algebra, endowing \( \check{M}_\mu \) with a structure known as a stratified symplectic space.

The proof of singular symplectic reduction involves several key techniques, which we outline below:

\begin{enumerate}
    \item The \emph{Witt-Artin decomposition} provides an \( \omega \)-orthogonal splitting of the tangent space at each point in the symplectic manifold. This splitting can be seen as a linear version of the reduction procedure. The proof of the Witt-Artin decomposition relies mostly on the momentum map equation and on basic properties of symplectic orthogonal complements. 
    
    \item The proof that the \emph{decomposition of \( \check{M}_\mu \) into orbit types yields a stratification} uses a slice theorem.
    Since one needs to take into account the geometric structure of the momentum map level set, it is not enough to use the classical Slice Theorem of Palais for the action of a Lie group on a manifold. Instead, one needs to use the Symplectic Slice Theorem due to \textcite{Marle1983,Marle1985} and \textcite{GuilleminSternberg1984}.
	The proof of this theorem proceeds by using the equivariant Darboux Theorem to bring the symplectic form into a normal form, which then also allows to bring the momentum map into a normal form.
	Moreover, the construction of the symplectic slice uses regular symplectic reduction and thus relies on the Inverse Function Theorem as well as on the ordinary Slice Theorem of Palais.
	
    \item That each stratum of \( \check{M}_\mu \) inherits a \emph{symplectic form} is concluded by mainly using the Witt-Artin decomposition to show that the symplectic form is non-degenerate.
    
    \item For the \emph{gluing of the Poisson structures} on the reduced phase space one defines the algebra of smooth functions on the reduced space essentially as projections of \( G \)-invariant smooth functions on \( M \) (restricted to the level set \( J^{-1}(\mu) \)) and then shows that this algebra is closed under the Poisson bracket.
\end{enumerate}

As we will discuss in the next section, the passage to infinite dimensions comes with several difficulties which require new techniques and thus a different approach.

\section{Problems in infinite dimensions}

In field theory and global analysis, the maps under consideration are usually given by partial differential operators between spaces of sections and thus they give rise to smooth maps between appropriate Sobolev completions.
On the other hand, the symmetry action often involves compositions of maps and thus fails to be differentiable as a map between spaces of sections of a given Sobolev class.
For example, the group of diffeomorphisms of a fixed Sobolev regularity is a Banach manifold as well as a topological group but not a Lie group, because the group operation is not differentiable.
When working with smooth sections these problems disappear and the group of smooth diffeomorphisms is a bona fide Lie group modeled on a Fréchet space.
In order to include these important examples, we assume throughout this work that all infinite-dimensional manifolds are modeled on Fréchet or even more general locally convex spaces.
The approach via Fréchet spaces has also the advantage that certain geometric arguments are simpler, because one does not have to deal with issues originating in the low regularity of the geometric objects under study.\footnotemark{}\footnotetext{
The polyfold framework, as introduced by \textcite{HoferWysockiZehnder2017}, presents an alternative perspective on addressing the challenges related to differentiability.
It concentrates on a sequence of Banach spaces, in contrast to the Fréchet approach, which focuses on the limit space.
For a comprehensive analysis and a detailed comparison between these two approaches, we refer to \parencite{Gerstenberger2016}.}
While the generality and flexibility offered by Fréchet manifolds allow for a broader scope of applications, one must also reckon with the challenges that arise due to the absence of standard tools readily available for Banach manifolds.
In particular, in the Fréchet setting, due to \parencite{Hamilton1982}, the Inverse Function Theorem is available  and, while being a powerful tool, it is more intricate and harder to apply compared to the classical Banach counterpart.

When considering infinite-dimensional symplectic manifolds \( (M, \omega) \), it turns out that the requirement for the musical map \( \omega^\flat: \TBundle M \to \CotBundle M \) to be an isomorphism is often too strong.
This is certainly the case in the Fréchet setting since a proper Fréchet space is never isomorphic to its topological dual; but also in many Banach examples the musical map is not an isomorphism.
It turns out that a more suitable notion of non-degeneracy in the infinite-dimensional setting is that of a \emph{weakly symplectic form} which only requires \( \omega^\flat \) to be injective.
However, in the realm of weakly symplectic structures, significant challenges arise, for example, due to the disparity between the double symplectic orthogonal and the original subspace. This discrepancy poses significant complications when attempting to generalize the Bifurcation Lemma and the Witt-Artin decomposition.
These issues are discussed in more detail in \cref{sec:symplecticFunctionalAnalysis}, where we also provide a solution to these problems by introducing a new topology defined in terms of the symplectic form.

The construction of a symplectic normal form in the infinite-dimensional setting initially appears promising by leveraging Hamilton's Inverse Function Theorem and the infinite-dimensional Slice Theorem developed in \parencite{DiezSlice} to construct a symplectic slice similar to the finite-dimensional setting.
However, a fundamental paper by \textcite{Marsden1972} provided a striking counterexample of the Darboux Theorem for weakly symplectic Banach manifolds.
In addition, for Fréchet manifolds one cannot even hope to use Moser's trick due to the absence of a general solution theory of ODE's in this setting.
Thus, although one can perhaps construct a symplectic slice, it is no longer possible to establish that an open neighborhood of a given orbit is indeed symplectomorphic to this normal form.
This no-go result propels us to adopt a different strategy in establishing an infinite-dimensional version of the symplectic reduction theorem, deviating from the conventional proof employed in finite dimensions. Rather than focusing on achieving a normal form for the symplectic structure, our approach shifts the spotlight directly onto the momentum map, with the symplectic structure playing a secondary role in our derivation. By prioritizing the analysis of the momentum map, we can circumvent the need for a Darboux Theorem and instead employ techniques developed in the study of moduli spaces such as the theory of Kuranishi structures.
In particular, we heavily make use of the equivariant normal form results for arbitrary maps between Fréchet manifolds developed in \parencite{DiezRudolphKuranishi}. 

Finally, in applications, one often encounters situations where the momentum map does not exist.
This is especially the case for systems where the symmetry is encoded in the action of a group of diffeomorphisms.
To overcome this limitation, \parencite{DiezRatiuAutomorphisms} introduced the concept of group-valued momentum maps.
While a traditional momentum map takes values in the dual of the Lie algebra, a group-valued momentum maps has a Lie group as its target space. This shift allows for a broader representation of conserved quantities associated with symmetries, and covers all known examples of symplectic actions of diffeomorphism groups.
Luckily, it turns out that this generalization does not provide serious complications for the reduction theory.

\section{Normal form of equivariant maps}
\label{sec:normalFormEquivariantMap}

In our previous work \parencite{DiezRudolphKuranishi}, we have established local normal form theorems for smooth equivariant maps between infinite-dimensional manifolds.
In this paper, we will refine and adopt these results to the construction of normal forms for momentum maps.
For the convenience of the reader, we will briefly review the main results of \parencite{DiezRudolphKuranishi} and introduce the notation that will be used throughout this work.

In finite dimensions, there is essentially only one way to construct a normal form for a smooth map between manifolds, namely the method provided by the classical Inverse Function Theorem.
In infinite dimensions, however, the situation is more complicated since there are different versions of the Inverse Function Theorem available, each one with its own advantages and disadvantages.
To address this complexity, we adopt a modular approach.
We begin by providing a precise definition of what we mean by a normal form of an (equivariant) map.
Then we introduce various theorems which outline different methods for attaining such a normal form.
Each theorem corresponds to a specific version of the Inverse Function Theorem, accounting for its distinct properties and applicability.
However, it is important to note that all subsequent results rely solely on the abstract notion of a normal form.
This makes our framework flexible and allows to easily accommodate new approaches of bringing a map into normal form.

\begin{defn}
	\label{defn:normalFormMap:normalFormAbstract}
	An \emphDef{abstract normal form} consists of a tuple \( (X, Y, \hat{f}, f_\singularPart) \), where
	\begin{enumerate}
		\item
			\( X \) and \( Y \) are locally convex spaces with topological decompositions\footnotemark{} \( X = \ker \oplus \coimg \) and \( Y = \coker \oplus \img \),
			\footnotetext{In these decompositions \( \ker \), \( \coimg \), \etc denote abstract spaces. Below, we will identify them with the kernel, coimage, \etc of the tangent map \( \tangent_m f \), respectively.}
		\item
			\( \hat{f}: \coimg \to \img \) is a linear topological isomorphism,
		\item
			\( f_\singularPart: X \supseteq U \to \coker \) is a smooth map defined on an open neighborhood \( U \) of \( 0 \) in \( X \) such that \( f_\singularPart(0, x_2) = 0 \) holds for all \( x_2 \in U \intersect \coimg \) and such that the derivative \( \tangent_{(0, 0)} f_\singularPart: X \to \coker \) of \( f_\singularPart \) at \( (0, 0) \) vanishes.
		\end{enumerate}
		Given an abstract normal form \( (X, Y, \hat{f}, f_\singularPart) \), we set
		\begin{equation}
			\normalForm{f} \defeq \hat{f} + f_\singularPart: X \supseteq U \to Y
		\end{equation}
		and call it the associated \emphDef{local normal form map}.
			
		A normal form \( (X, Y, \hat{f}, f_\singularPart) \) is called \emphDef{tame} if \( X, Y \) are tame Fréchet spaces with tame decompositions \( X = \ker \oplus \coimg \), \( Y = \coker \oplus \img \) with \( \hat{f} \) being a tame isomorphism and \( f_\singularPart \) being a tame smooth map (see \parencite{Hamilton1982} for the definition of tameness).
\end{defn}
The \( 0 \)-level set of \( \normalForm{f} \),
\begin{equation}
	\label{eq:normalFormMap:zeroSetNF}
	\normalForm{f}^{-1}(0) = \set*{(x_1, 0) \in U \given f_\singularPart(x_1, 0) = 0},
\end{equation}
is in general not a smooth manifold, and its singular structure is completely determined by \( f_\singularPart \).
For this reason, we refer to \( f_\singularPart \) as the \emphDef{singular part} of \( \normalForm{f} \).

\begin{defn}
	\label{defn:normalFormMap:bringIntoNormalForm}
	We say that a smooth map \( f: M \to N \) between manifolds can \emphDef{be brought into the normal form} \( (X, Y, \hat{f}, f_\singularPart) \) at the point \( m \in M \) if there exist charts \( \kappa: M \supseteq U' \to U \subseteq X \) at \( m \) and \( \rho: N \supseteq V' \to V \subseteq Y \) at \( f(m) \) such that \( f(U') \subseteq V' \), \( f_\singularPart \) is defined on \( U \) and 
	\begin{equation}
		\label{eq:normalFormMap:bringIntoNormalForm:commutative}
		\rho \circ \restr{f}{U'} \circ \kappa^{-1} = \normalForm{f}
	\end{equation}
	holds true.
	In short, we say that \emphDef{\( f \) is locally equivalent to \( \normalForm{f} \)}.
\end{defn}
Assume that the smooth map \( f: M \to N \) can be brought into the normal form \( (X, Y, \hat{f}, f_\singularPart) \) at the point \( m \in M \) using charts \( \kappa: U' \to U \) and \( \rho: V' \to V \).
The isomorphisms \( \tangent_m \kappa: \TBundle_m M \to X \) and \( \tangent_{f(m)} \rho: \TBundle_{f(m)} N \to Y \) identify the abstract spaces \( X \) and \( Y \) with the tangent spaces of \( M \) and \( N \), respectively.
Under these identifications, the spaces \( \ker \) and \( \img \) in the decomposition of \( X \) and \( Y \) coincide with the kernel and the image of \( \tangent_m f \).
Note that if a map \( f \) can be brought into a normal form at \( m \), then it is a submersion (locally a projection) if and only if \( \tangent_m f \) is surjective.
Similarly, \( f \) is an immersion (locally an inclusion) if and only if \( \tangent_m f \) is injective.
Thus the upshot is that one obtains the submersion, regular value, immersion and constant rank theorem \etc practically for free as soon as one knows that the map under study can be brought into a normal form.
In \parencite{DiezRudolphKuranishi}, we have established various theorems that allow us to bring a map into a normal form, see Theorem 3.5 (Banach version), Theorem 3.11 (Nash--Moser version), Theorem 3.12 (elliptic version).

Moreover, in \parencite{DiezRudolphKuranishi}, we have shown that a wide class of equivariant maps can be brought into the following equivariant normal form.
\begin{defn}
	\label{def:normalFormEquivariantMap:abstractNormalFormEquivariantMap}
	An \emphDef{abstract equivariant normal form} is a tuple \( (H, X, Y, \hat{f}, f_\singularPart) \) consisting of a compact Lie group \( H \) and a normal form \( (X, Y, \hat{f}, f_\singularPart) \) which is \( H \)-equivariant in the sense that \( X, Y \) are endowed with smooth linear \( H \)-actions, the decompositions \( X = \ker \oplus \coimg \), \( Y = \coker \oplus \img \) are \( H \)-invariant, and the maps \( \hat{f}: \coimg \to \img \), \( f_\singularPart: X \supseteq U \to \coker \) are \( H \)-equivariant, where \( U \subseteq X \) is an \( H \)-invariant open neighborhood of \( 0 \).

	For an equivariant normal form \( (H, X, Y, \hat{f}, f_\singularPart) \) and a Lie group \( K \) with \( H \subseteq K \), we define the \emphDef{equivariant local normal form map} \( \normalFormGroup{K}{f}: K \times_{H} U \to K \times_{H} Y \) by
	\begin{equation}
		\normalFormGroup{K}{f} \bigl(\equivClass{k, x_1, x_2}\bigr) \defeq \equivClass[\big]{k, \normalForm{f}(x_1, x_2)} = \equivClass[\big]{k, \hat{f}(x_2) + f_\singularPart(x_1, x_2)},
	\end{equation}
	for \( k \in K \), \( x_1 \in U \intersect \ker \) and \( x_2 \in U \intersect \coimg \).
\end{defn}

\begin{defn}
	\label{def:normalFormEquivariantMap:normalFormEquivariantMap}
	Let \( f: M \to N \) be a smooth \( G \)-equivariant map.
	For \( m \in M \) write \( \mu = f(m) \in N \), and assume that the stabilizer \( G_\mu \) is a Lie subgroup of \( G \).
	We say that \( f \) can be \emphDef{brought into the equivariant normal form \( (H, X, Y, \hat{f}, f_\singularPart) \)} at \( m \) if \( H = G_m \) and there exist
	\begin{enumerate}
		\item 
			a slice\footnotemark{} \( S \) at \( m \) for the \( G_\mu \)-action,
			\footnotetext{See \cref{defn:slice:slice} for the definition of a slice.}
		\item 
			a \( G_m \)-equivariant diffeomorphism \( \iota_S: X \supseteq U \to S \subseteq M \), 
		\item 
			a \( G_m \)-equivariant chart \( \rho: N \supseteq V' \to V \subseteq Y \) at \( \mu \) with \( f(S) \subseteq V' \)
	\end{enumerate}
	such that the following diagram commutes:
	\begin{equationcd}[label=eq:normalFormEquivariantMap:bringIntoNormalForm]
		M \to[r, "f"] 
			& N
			\\
		\mathllap{G_\mu \times_{G_m} X \supseteq {}} G_\mu \times_{G_m} U
			\to[r, "\normalFormGroup{G_\mu}{f}"]
			\to[u, "{\chi^\tube} \, \circ \, {(\id_{G_\mu} \times \iota_S)}"]
			& G_\mu \times_{G_m} V \mathrlap{{}\subseteq G_\mu \times_{G_m} Y,}
			\to[u, "\rho^\tube" swap] 
	\end{equationcd}
	where \( \chi^\tube: G_\mu \times_{G_m} S \to M, \chi^\tube(\equivClass{g, s}) = g \cdot s \) is the tube map associated with \( S \) and \( \rho^\tube: G_\mu \times_{G_m} V \to N \) is defined by \( \rho^\tube(\equivClass{g, y}) = g \cdot \rho^{-1}(y) \).
	For short, we say that \( f \) is \emphDef{locally \( G \)-equivalent} to \( \normalFormGroup{G_\mu}{f} \).
\end{defn}
Assume that the \( G \)-equivariant map \( f: M \to N \) can be brought into the equivariant normal form \( (G_m, X, Y, \hat{f}, f_\singularPart) \) by using diffeomorphisms \( \iota_S \) and \( \rho \) as above.
Then \( \tangent_m \iota_S: X \to \TBundle_m S \) identifies \( X \) with \( \TBundle_m S \isomorph \TBundle_m M \slash (\LieA{g}_\mu \ldot m) \), as \( G_m \)-representation spaces.
Similarly, \( \tangent_\mu \rho: \TBundle_\mu N \to Y \) is an isomorphism of \( G_m \)-representations.
Under these identifications, the abstract spaces \( \ker \) and \( \coker \) occurring in the decomposition of \( X \) and \( Y \) are identified with the first and second homology groups of the chain complex
\begin{equationcd}[label=eq:normalFormEquivariantMap:deformationChain]
	0 \to[r] & \LieA{g}_\mu \to[r, "\tangent_e \Upsilon_m"] & \TBundle_m M \to[r, "\tangent_m f"] & \TBundle_\mu N \to[r] & 0 \,,
\end{equationcd}
where \( \Upsilon_m: G_\mu \to M \) is the orbit map of the \( G_\mu \)-action at \( m \in M \).

By combining a slice for the \( G_\mu \)-action with a normal form of the restriction of \( f \) to that slice, we obtain an equivariant local normal form.
\begin{thm}[{General Equivariant normal form, \parencite[Theorem~4.6]{DiezRudolphKuranishi}}]
	\label{prop:normalFormEquivariantMap:abstract}
	Let \( f: M \to N \) be an equivariant map between Fréchet \( G \)-manifolds.
	Choose \( m \in M \) and set \( \mu = f(m) \).
	Assume that the following conditions hold:
	\begin{thmenumerate}
		\item
			The stabilizer subgroup \( G_\mu \) of \( \mu \) is a Lie subgroup of \( G \).
		\item 
			The induced \( G_\mu \)-action on \( M \) is proper and admits a slice \( S \) at \( m \).
		\item
			The induced \( G_m \)-actions \( N \) can be linearized\footnotemark{} at \( \mu \).
			\footnotetext{A \( G \)-action can be linearized at a fixed point \(  m \in M \) if there exist a \( G \)-invariant open neighborhood \( U' \) of \( m \) in \( M \), a chart \( \rho: M \supseteq U' \to U \subseteq X \) and a smooth linear \( G \)-action on \( X \) such that \( \rho \) is \( G \)-equivariant. By Bochner's Linearization Theorem \parencites[Theorem~1]{Bochner1945}, every action of a compact Lie group on a finite-dimensional manifold can be linearized near a fixed point of the action.}
		\item
			{
			% Use "or" in the list of theorems
			\providecommand{\creflastconjunction}{}
			\renewcommand{\creflastconjunction}{~or~}
			The restriction \( f^S \equiv \restr{f}{S}: S \to N \) of \( f \) to \( S \) can be brought into a normal form at \( m \) using \( G_m \)-equivariant charts.
			}
	\end{thmenumerate}
	Then, \( f \) can be brought into an equivariant normal form at \( m \) relative to the \( G \)-action.
\end{thm}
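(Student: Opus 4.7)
The plan is to assemble the equivariant normal form from two already-available pieces: a $G_m$-equivariant normal form for the restriction $\restr{f}{S}$, supplied by hypothesis (iv), and the $G_\mu$-orbit structure around $m$, encoded in the slice from hypothesis (ii). I would first apply (iv) to obtain a $G_m$-equivariant diffeomorphism $\iota_S \colon X \supseteq U \to S$ and a $G_m$-equivariant chart $\rho \colon N \supseteq V' \to V \subseteq Y$ at $\mu$ (whose existence is guaranteed by the linearization hypothesis (iii)) satisfying $\rho \circ f^S \circ \iota_S = \hat{f} + f_\singularPart$ on $U$. Transporting the $G_m$-action along $\iota_S$ and $\rho$ turns $X$ and $Y$ into smooth linear $G_m$-representations, and the decompositions $X = \ker \oplus \coimg$, $Y = \coker \oplus \img$ can be arranged $G_m$-invariant by averaging over the compact group $G_m$. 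This exhibits $(G_m, X, Y, \hat{f}, f_\singularPart)$ as an abstract equivariant normal form in the sense of \cref{def:normalFormEquivariantMap:abstractNormalFormEquivariantMap}.

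Next, I would spread this local data along the $G_\mu$-orbit using the tube construction. By (ii), the tube map $\chi^\tube \colon G_\mu \times_{G_m} S \to M$ is a $G_\mu$-equivariant diffeomorphism onto an open neighborhood of $G_\mu \cdot m$. Define $\rho^\tube \colon G_\mu \times_{G_m} V \to N$ by $\rho^\tube(\equivClass{g, y}) = g \cdot \rho^{-1}(y)$; the $G_m$-equivariance of $\rho$ makes this well-defined, and $G_\mu$-equivariance follows immediately from the formula. Commutativity of the diagram of \cref{def:normalFormEquivariantMap:normalFormEquivariantMap} is then a direct computation: for $\equivClass{g, (x_1, x_2)} \in G_\mu \times_{G_m} U$,
\begin{equation*}
	f\bigl(\chi^\tube(\equivClass{g, \iota_S(x_1, x_2)})\bigr) = g \cdot f^S\bigl(\iota_S(x_1, x_2)\bigr) = g \cdot \rho^{-1}\bigl(\normalForm{f}(x_1, x_2)\bigr) = \rho^\tube\bigl(\normalFormGroup{G_\mu}{f}(\equivClass{g, x_1, x_2})\bigr),
\end{equation*}
where the first equality uses $G$-equivariance of $f$, the inclusion $\iota_S(U) \subseteq S$, and the second uses the normal form produced above.

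The genuinely analytic input has been absorbed into the four hypotheses—the Fréchet slice theorem of \parencite{DiezSlice} underlying (ii), Bochner-type linearization for (iii), and the (non-equivariant) normal form theorems of \parencite{DiezRudolphKuranishi} for (iv). What remains is essentially bookkeeping of equivariances, and the main care point will be the coherent interplay between the $G_m$-equivariance on the slice and the $G_\mu$-equivariance introduced by the tube. Concretely, one must shrink $V'$ so that $f(\iota_S(U)) \subseteq V'$ while keeping $V$ a $G_m$-invariant open neighborhood of $0$ in $Y$, and simultaneously shrink $U$ so that $\iota_S(U) \subseteq S$ remains $G_m$-stable. Both shrinkings are feasible because $G_m$ is compact and the data produced in the first step are continuous and $G_m$-equivariant.
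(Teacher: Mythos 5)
Your proposal follows exactly the strategy the paper indicates for this result (which it quotes from \parencite{DiezRudolphKuranishi}): obtain the normal form of \( f^S = \restr{f}{S} \) from hypothesis (iv) and propagate it along the \( G_\mu \)-orbit via the tube map, and your verification of the commutative diagram \eqref{eq:normalFormEquivariantMap:bringIntoNormalForm} is precisely the required computation. The one step to treat with more care is the passage from \enquote{normal form realized by \( G_m \)-equivariant charts} to an \emph{abstract equivariant} normal form: averaging the complements \( \coimg \) and \( \coker \) \emph{after} the normal form has been fixed destroys the normalization \( f_\singularPart(0, x_2) = 0 \), so one should instead choose \( G_m \)-invariant complements of the (automatically invariant) subspaces \( \ker \tangent_m f^S \) and \( \img \tangent_m f^S \) first and then note that the normalizing diffeomorphisms of the construction in \parencite{DiezRudolphKuranishi} are built \( G_m \)-equivariantly from these data.
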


According to \textcite{Palais1961}, proper actions of finite-dimensional Lie groups on finite-dimensional manifolds always admit slices.
In infinite dimensions, this may no longer be true and additional assumptions have to be made.
We refer to \parencite{DiezSlice,Subramaniam1986} for general slice theorems in infinite dimensions and \parencite{AbbatiCirelliEtAl1989,Ebin1970,CerveraMascaroEtAl1991} for constructions of slices in concrete examples.
All assumptions of \cref{prop:normalFormEquivariantMap:abstract} are automatically satisfied in finite dimensions, and we obtain the following.
\begin{thm}[{Equivariant normal form --- finite dimensions, \parencite[Theorem~4.8]{DiezRudolphKuranishi}}]
	\label{prop:normalFormEquivariantMap:finiteDim}
	Let \( G \) be a finite-dimensional Lie group and let \( f: M \to N \) be a smooth \( G \)-equivariant map between finite-dimensional \( G \)-manifolds.
	If the \( G \)-action on \( M \) is proper, then \( f \) can be brought into an equivariant normal form at every point.
\end{thm}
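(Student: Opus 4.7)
The plan is to derive this as a direct corollary of the general equivariant normal form theorem (\cref{prop:normalFormEquivariantMap:abstract}), by verifying each of its four hypotheses in the finite-dimensional, properly acting setting. The unifying observation is that properness of the \( G \)-action on \( M \) forces every isotropy group \( G_m \) to be compact, which in turn unlocks the classical averaging and linearization techniques available for compact Lie group actions.

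Hypotheses (i)--(iii) will follow from standard results. For (i), the stabilizer \( G_\mu \) is closed in \( G \) as the fiber of a continuous orbit map, hence an embedded Lie subgroup by the closed subgroup theorem. For (ii), the action of the closed subgroup \( G_\mu \subseteq G \) on \( M \) inherits properness from \( G \), and Palais' slice theorem (quoted just before the statement) provides a slice \( S \) at \( m \). For (iii), equivariance of \( f \) yields \( G_m \subseteq G_\mu \), so \( \mu = f(m) \) is a fixed point of \( G_m \); combined with compactness of \( G_m \), Bochner's linearization theorem produces a \( G_m \)-equivariant chart of \( N \) at \( \mu \).

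The substantive step, and the main technical obstacle, is hypothesis (iv): that the restriction \( f^S \equiv \restr{f}{S}: S \to N \) can be brought into an abstract normal form at \( m \) using \( G_m \)-equivariant charts. The approach is to exploit the compactness of \( G_m \) once more, by Haar-averaging to choose a \( G_m \)-invariant linear complement \( \coimg \) of \( \ker(\tangent_m f^S) \) in \( \TBundle_m S \) and a \( G_m \)-invariant complement \( \coker \) of the image of \( \tangent_m f^S \) in \( \TBundle_\mu N \). With these equivariant splittings in hand, one applies the Banach version of the local normal form theorem \parencite[Theorem~3.5]{DiezRudolphKuranishi} to \( f^S \). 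In finite dimensions the underlying inverse function theorem is the classical one; since all auxiliary maps in its construction (projections, implicit solution map, \etc) arise from the equivariant splittings, and since one may, if desired, first apply Bochner to linearize the \( G_m \)-action on \( S \) at \( m \) before invoking the normal form, the resulting charts on \( S \) and \( N \) are automatically \( G_m \)-equivariant. With (i)--(iv) verified, \cref{prop:normalFormEquivariantMap:abstract} applies at the arbitrary point \( m \) to give the claimed equivariant normal form.
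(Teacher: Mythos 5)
Your proposal is correct and follows essentially the same route as the paper, which obtains this statement as a corollary of \cref{prop:normalFormEquivariantMap:abstract} by observing that all of its hypotheses hold automatically for a proper action in finite dimensions (closed subgroup theorem for \( G_\mu \), Palais' slice theorem, Bochner's linearization for the compact stabilizer \( G_m \), and the classical inverse function theorem combined with Haar-averaged invariant complements for the normal form of \( \restr{f}{S} \)). The only minor remark is that the \( G_m \)-equivariant linear chart on \( S \) is already part of the slice data (\cref{defn:slice:slice}~\iref{i:slice:linearSlice}), so the optional extra application of Bochner on \( S \) that you mention is not needed.
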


Using the Nash--Moser Inverse Function Theorem, we get a version of \cref{prop:normalFormEquivariantMap:abstract} in the tame Fréchet category. 
\begin{thm}[{Equivariant normal form --- Tame Fréchet, \parencite[Theorem~4.11]{DiezRudolphKuranishi}}]
	\label{prop:normalFormEquivariantMap:tame}
	Let \( G \) be a tame Fréchet Lie group and let \( f: M \to N \) be an equivariant map between tame Fréchet \( G \)-manifolds.
	Choose \( m \in M \) and set \( \mu = f(m) \).
	Assume that the following conditions hold:
	\begin{thmenumerate}
		\item
			The stabilizer subgroup \( G_\mu \) of \( \mu \) is a tame Fréchet Lie subgroup of \( G \).
		\item 
			The induced \( G_\mu \)-action on \( M \) is proper and admits a tame slice \( S \) at \( m \).
		\item
			The induced \( G_m \)-action on \( N \) can be linearized at \( \mu \).
		\item
			The chain
			\begin{equationcd}[label=eq:normalFormEquivariantMap:tame:chain]
				0 \to[r] 
					& \LieA{g}_\mu
						\to[r]
					& \TBundle_s M
						\to[r, "\tangent_s f"]
					& \TBundle_{f(s)} N
						\to[r]
					& 0
			\end{equationcd}
			of linear maps parametrized by \( s \in S \) is uniformly\footnotemark{} tame regular at \( m \).
			Here, the first map is the Lie algebra action given by \( \xi \mapsto \xi \ldot s \) for \( \xi \in \LieA{g}_\mu \).
			\footnotetext{Roughly speaking, a family of operators is uniformly tame regular if a certain family of extended operators is invertible and the inverses form a tame family again, see \parencite[Definition~2.1 and~2.13]{DiezRudolphKuranishi} for the precise definition. For example, the chain~\eqref{eq:normalFormEquivariantMap:tame:chain} is uniformly tame regular if it is an elliptic complex at the point \( s = m \), see \parencite[Theorem~2.15]{DiezRudolphKuranishi}.}
	\end{thmenumerate}
	Then, \( f \) can be brought into an equivariant normal form at \( m \). 
\end{thm}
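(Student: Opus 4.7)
The plan is to reduce the statement directly to the General Equivariant Normal Form Theorem (\cref{prop:normalFormEquivariantMap:abstract}), whose first three hypotheses coincide verbatim with (i)--(iii) of the present theorem. The entire task therefore consists in verifying hypothesis (iv) of \cref{prop:normalFormEquivariantMap:abstract}, namely, that the restriction \( f^S \defeq \restr{f}{S}: S \to N \) can be brought into a (not necessarily equivariant) normal form at \( m \) using \( G_m \)-equivariant charts.

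To establish this in the tame Fréchet category, I would apply the Nash--Moser version of the abstract normal form theorem, Theorem~3.11 of \parencite{DiezRudolphKuranishi}, to the map \( f^S \). Since the \( G_\mu \)-action on \( M \) is proper, the isotropy \( G_m \) is compact. The tame slice \( S \) is, by its defining property, \( G_m \)-equivariantly tamely diffeomorphic to a neighborhood \( U \) of \( 0 \) in the tame \( G_m \)-representation \( X \defeq \TBundle_m S \); this provides a tame \( G_m \)-equivariant chart \( \iota_S: U \to S \). Hypothesis~(iii) furnishes a tame \( G_m \)-equivariant chart \( \rho: N \supseteq V' \to V \subseteq Y \) at \( \mu \). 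Since \( \TBundle_m S \) is a \( G_m \)-invariant tame complement to \( \LieA{g}_\mu \ldot m \) in \( \TBundle_m M \), the derivative \( \tangent_m f^S \) is the restriction of \( \tangent_m f \) to that complement, and so its kernel and cokernel coincide with the first and second cohomology of the chain~\eqref{eq:normalFormEquivariantMap:tame:chain} at \( s = m \). Hypothesis~(iv) --- the uniform tame regularity of this chain --- then supplies precisely the tame splittings \( X = \ker \oplus \coimg \) and \( Y = \coker \oplus \img \), together with the tame inverse of the induced map \( \coimg \to \img \), that serve as input to Theorem~3.11 of \parencite{DiezRudolphKuranishi}. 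Averaging all of these data over the compact group \( G_m \) renders them \( G_m \)-equivariant without destroying tameness, and the Nash--Moser theorem then delivers the desired \( G_m \)-equivariant normal form of \( f^S \) at \( m \).

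The delicate point, and the main technical obstacle, is the interplay between tameness and the \( G_m \)-action. One has to verify that the tame complements produced from the uniform tame regularity of the chain can be chosen simultaneously \( G_m \)-invariant, and that the tame parametrix fed into the Nash--Moser inverse function theorem survives \( G_m \)-averaging with uniform tame estimates. Here compactness of \( G_m \) is used in an essential way, and one should additionally check that the direct-sum decomposition \( \TBundle_m M = \LieA{g}_\mu \ldot m \oplus \TBundle_m S \) coming from the slice is compatible, in the tame category, with the kernel-coimage decomposition of \( \tangent_m f \). Once (iv) is verified in this manner, \cref{prop:normalFormEquivariantMap:abstract} directly yields the equivariant normal form for \( f \) at \( m \), completing the proof.
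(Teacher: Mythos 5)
Your proposal is correct and follows essentially the same route as the paper: the result is obtained from the General Equivariant Normal Form Theorem (\cref{prop:normalFormEquivariantMap:abstract}) by verifying its last hypothesis for \( \restr{f}{S} \) via the Nash--Moser normal form theorem (Theorem~3.11 of \parencite{DiezRudolphKuranishi}), with the uniform tame regularity of the chain supplying the tame splittings and compactness of \( G_m \) allowing all data to be averaged into \( G_m \)-equivariant form. This is precisely the strategy behind Theorem~4.11 of \parencite{DiezRudolphKuranishi}, which the paper cites in lieu of a proof.
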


% For the following elliptic version of the Equivariant Normal Form Theorem, the reader might want to recall the notion of a geometric Fréchet manifold from \cref{defn:normalFormMap:geometric}.
In particular, this theorem applies to nonlinear differential operators that are equivariant under groups of diffeomorphisms or gauge transformations.
To state the theorem, we need the notion of a geometric Fréchet manifold.
A tame Fréchet manifold \( M \) is said to be \emphDef{geometric} if it is locally modeled on the space of smooth sections of some vector bundle over a compact manifold (or a subspace thereof).
Moreover, roughly speaking, a smooth map \( f: M \to N \) between geometric Fréchet manifolds is called \emphDef{geometric} if its linearization \( \tangent_m f \) is a family of linear differential operators whose coefficients depend tamely on \( m \in M \),
see \parencite[Definition~3.13]{DiezRudolphKuranishi} for the precise definition.

\begin{thm}[Equivariant normal form --- elliptic]
	\label{prop:normalFormEquivariantMap:elliptic}
	Let \( G \) be a tame Fréchet Lie group and let \( f: M \to N \) be an equivariant map between tame Fréchet \( G \)-manifolds.
	Let \( m \in M \) and \( \mu = f(m) \).
	Assume that the following conditions hold:
	\begin{thmenumerate}
		\item
			The stabilizer subgroup \( G_\mu \) of \( \mu \) is a geometric tame Fréchet Lie subgroup of \( G \).
		\item 
			The induced \( G_\mu \)-action on \( M \) is proper and admits a geometric slice \( S \) at \( m \).
		\item
			The induced \( G_m \)-action on \( N \) can be linearized at \( \mu \).
		\item
			The chain
			\begin{equationcd}[label=eq:normalFormEquivariantMap:elliptic:chain]
				0 \to[r] 
					& \LieA{g}_\mu
						\to[r]
					& \TBundle_s M
						\to[r, "\tangent_s f"]
					& \TBundle_{f(s)} N
						\to[r]
					& 0
			\end{equationcd}
			is a chain of geometric linear maps parametrized by \( s \in S \), which is an elliptic complex at \( m \).
	\end{thmenumerate}
	Then, \( f \) can be brought into an equivariant normal form at \( m \).
\end{thm}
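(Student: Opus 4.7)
The plan is to deduce this theorem as a direct specialization of the tame Fréchet version, \cref{prop:normalFormEquivariantMap:tame}. First I would observe that conditions (i), (ii) and (iii) of the two statements are word-for-word identical, so no further work is needed for them. The entire content of the proof therefore collapses to verifying condition (iv) of \cref{prop:normalFormEquivariantMap:tame}, namely that the parametrized chain~\eqref{eq:normalFormEquivariantMap:elliptic:chain} is uniformly tame regular at \( m \).

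For this step I would invoke \parencite[Theorem~2.15]{DiezRudolphKuranishi}, already cited in the footnote to \cref{prop:normalFormEquivariantMap:tame}: a chain of geometric linear maps which forms an elliptic complex at a single point \( m \) is automatically uniformly tame regular at \( m \). The mechanism behind this result is that the principal symbols of a geometric family depend continuously on the base point, so pointwise ellipticity at \( m \) propagates to a neighborhood, and the standard parametrix construction combined with tame elliptic estimates then delivers the tame inverse of the extended operator that is required by the definition of uniform tame regularity. Under our hypotheses the chain consists of geometric maps and is elliptic at \( m \), so the theorem applies verbatim.

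With condition (iv) in place, a direct application of \cref{prop:normalFormEquivariantMap:tame} yields the equivariant normal form at \( m \) and closes the argument. The main obstacle — or more precisely, the only nontrivial ingredient — is the analytic passage from pointwise ellipticity at \( m \) to uniform tame estimates on a whole neighborhood, together with the tameness of the resulting inverses; since this step has already been carried out in \parencite{DiezRudolphKuranishi}, the present theorem is effectively a user-friendly reformulation of the tame version under hypotheses that are much easier to check in concrete differential-geometric applications, such as the Yang--Mills setting treated later in the paper.
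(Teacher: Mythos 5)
Your proposal matches the intended argument: the elliptic version is obtained from \cref{prop:normalFormEquivariantMap:tame} by invoking \parencite[Theorem~2.15]{DiezRudolphKuranishi} to upgrade pointwise ellipticity of the geometric chain at \( m \) to uniform tame regularity, exactly as the footnote to the tame theorem indicates. One small correction: conditions (i) and (ii) are not word-for-word identical to those of the tame version — the elliptic statement demands a \emph{geometric} Lie subgroup and a \emph{geometric} slice rather than merely tame ones — but since geometric objects are in particular tame, the stronger hypotheses imply the ones needed, and your reduction goes through unchanged.
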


\section{Symplectic functional analysis}
\label{sec:symplecticFunctionalAnalysis}

In this section, we study infinite-dimensional symplectic vector spaces.
Specifically, we investigate symplectic structures defined on locally convex topological vector spaces.
In the Banach space setting, the theory of symplectic geometry splits into two branches depending on whether the map \( \omega^\flat: X \to X' \) induced by the symplectic form \( \omega \) on \( X \) is an isomorphism or merely an injection.
The former forms are called strongly symplectic and the latter are referred to as weakly symplectic. 
The well-consolidated building of finite-dimensional symplectic geometry generalizes almost without changes to strongly symplectic structures, but it is confronted with serious problems if weakly symplectic forms are considered.
For example, the Darboux theorem holds for strongly symplectic forms but fails for weak ones, see \parencite{Marsden1972}.

Many examples of symplectic structures on Banach spaces are only weakly symplectic.
Moreover, in the case of a Fréchet space, a \( 2 \)-form cannot be strongly symplectic, because the dual of a Fréchet space is never a Fréchet space (except when it is a Banach space).
Thus, weakly symplectic forms are the norm rather than the exception and we need to find a way to address the problems that originate from the failure of \( \omega^\flat \) to be surjective.
At the root of our approach lies the observation that the symplectic form induces a natural topology on \( X \), which we call a symplectic topology. 
For a strongly symplectic form, this topology is equivalent to the original vector space topology; while for a weakly symplectic form the original topology is finer.
Relative to the symplectic topology, the symplectic form behaves as if it were strongly symplectic, enabling the extension of standard finite-dimensional results to this context.
Functional analysis tools, particularly the theory of dual pairs, then allow to transfer these conclusions to results relative to the original topology.
From a different angle, we translate algebraic questions into a functional analytic setting and then utilize the powerful tool of dual pairs to efficiently solve these problems.  
%By the very nature of our considerations, we heavily rely on the theory of topologies associated with dual pairs.
%This integration of algebraic and functional analytic techniques facilitates the study of symplectic structures on infinite-dimensional spaces.
We are not aware of any previous systematic study of infinite-dimensional linear symplectic geometry, though some elements can be found in \parencite{Booss-BavnbekZhu2014,ChernoffMarsden1974}. 
We refer the reader to \parencite{Koethe1983,Schaefer1971,Jarchow1981,NariciBeckenstein2010,RobertsonRobertson1973} for the relevant material concerning dual pairs.

\begin{defn}
	\label{defn::symplecticFunctionalAnalysis:definitionSymplectic}
	A \emphDef{symplectic vector space} \( (X, \omega) \) is a locally convex space \( X \) endowed with a jointly continuous, antisymmetric bilinear form \( \omega \) which is non-degenerate in the sense that the induced map 
	\begin{equation}
		\omega^\flat: X \to X', \quad x \mapsto \omega(x, \cdot)
	\end{equation}
	is injective, where \( X' \) denotes the topological dual of \( X \).
	If \( \omega^\flat \) is a bijection, then \( \omega \) is called \emphDef{strongly symplectic}.
\end{defn}
%In the literature, symplectic forms \( \omega \) for which \( \omega^\flat \) is only injective are often called weakly symplectic.
%Since, however, a genuine Fréchet space is never isomorphic to its topological dual, there are no strongly symplectic forms beyond the Banach setting and the weak case is the generic one for us.

\begin{example}
	\label{ex::diffOneFormsOnSurface:symplecticStructure}
	Consider the Fréchet space \( X = \DiffFormSpace^1(M) \) of differential one-forms on a closed two-dimensional surface \( M \).
	The integration pairing yields a symplectic form \( \omega \) on \( \DiffFormSpace^1(M) \) by setting
	\begin{equation}
		\omega(\alpha, \beta) = \int_M \alpha \wedge \beta
	\end{equation}
	for \( \alpha, \beta \in \DiffFormSpace^1(M) \).
	Indeed, given a Riemannian metric \( g \) on \( M \) with associated Hodge star operator \( \hodgeStar \), the value
	\begin{equation}
		\omega(\alpha, \hodgeStar \alpha) = \int_M \norm{\alpha}^2_g
	\end{equation}
	 is positive for every non-vanishing \( \alpha \in \DiffFormSpace^1(M) \) and thus \( \omega \) is non-degenerate.  
	Moreover, the symplectic structure \( \omega \) is not strongly non-degenerate, because the image of \( \omega^\flat \) consists of regular functionals and not of all distributional \( 1 \)-forms. 

	This example will reappear throughout the next sections serving as an illustration of the abstract theory.
	The series consists of \cref{ex::diffOneFormsOnSurface:diracDistribution,ex::diffOneFormsOnSurface:generalizationToConnections,ex::diffOneFormsOnSurface:pointedGaugeTransf,ex::diffOneFormsOnSurface:momentumMapNormalForm,ex::diffOneFormsOnSurface:gaugeAction,ex::diffOneFormsOnSurface:symplecticReduction}.
\end{example}

If we ignore for a moment that \( X \) already carries a locally convex topology, we are left with a vector space endowed with a bilinear form \( \omega \). 
This setting is well-studied in functional analysis, where such a pair \( (X, \omega) \) is called a \emphDef{dual pair}, see \eg \parencite{Koethe1983,Schaefer1971,Jarchow1981} for background information.
The bilinear form \( \omega \) singles out certain topologies \( \tau \) on \( X \) for which \( \omega^\flat: X \to (X, \tau)' \) is surjective, \ie, every \( \tau \)-continuous functional on \( X \) is of the form \( \omega(x, \cdot) \) for some \( x \in X \).
In the general theory such topologies are called compatible with the dual pair \( (X, \omega) \).
Here and in the following, we will use the notation \( (X, \tau)' \) for the space of all \( \tau \)-continuous functionals on \( X \) to emphasize the dependence on the topology \( \tau \) of \( X \).
\begin{defn}
	Let \( (X, \omega) \) be a symplectic vector space.
	A locally convex topology \( \tau \) on \( X \) is called \emphDef{compatible} with \( \omega \) or a \emphDef{symplectic topology}\footnotemark{} if \( (X, \tau)' = \img \omega^\flat \).
	\footnotetext{We hope this naming will not lead to confusion with the discipline of \textquote{symplectic topology}, which is concerned with the study of global properties of finite-dimensional symplectic manifolds.}  
\end{defn}
% For a general dual pair, the so-called weak, strong, and Mackey topologies play an important role, see \eg \parencite[Section~IV.3]{Schaefer1971}.
% In the context of a symplectic vector space \( (X, \omega) \), we put the prefix \enquote{symplectic} in front.
% For example, the \emphDef{weak symplectic topology}, denoted by \( \weakTopSymbol_\omega \), is determined by the seminorms
% \begin{equation}
% 	\norm{x}_A \defeq \sup_{y \in A} \abs{\omega(y, x)}
% \end{equation}
% indexed by a finite subset \( A \subseteq X \).
% The \emphDef{symplectic Mackey topology} is defined in a similar way except for the fact that the seminorms are indexed by convex, circled and \( \weakTopSymbol_\omega \)-compact subsets \( A \subseteq X \).
% Both the weak symplectic and the symplectic Mackey topology are compatible with the symplectic form.
% Moreover, we have the following symplectic version of the Mackey--Arens Theorem \parencite[Theorem~IV.3.3]{Schaefer1971}.
% \begin{prop}
% 	A locally convex topology on a symplectic vector space \( (X, \omega) \) is symplectic if and only if it is lies between the weak symplectic and the symplectic Mackey topology. 
% \end{prop}
The Mackey--Arens Theorem \parencite[Theorem~IV.3.3]{Schaefer1971}, applied to the symplectic form,
shows that there is always a symplectic topology on a given symplectic vector space.
Moreover, it yields an upper and a lower bound for the symplectic topologies (namely, the symplectic version of the weak and Mackey topology).
In order to distinguish symplectic topologies from the original one in which \( \omega \) is jointly continuous, we call the latter the \emphDef{original topology}.
Note that, for a weakly symplectic form, any symplectic topology is strictly coarser than the original topology, because the latter has more continuous functionals.
\begin{prop}
	\label{prop::symplecticFunctionalAnalysis:strongSymplecticOnlyInNormed}
	Let \( (X, \omega) \) be a symplectic vector space which is complete as a topological vector space.
	The original topology on \( X \) is symplectic if and only if \( X \) is a Banach space and \( \omega \) is strongly symplectic.
\end{prop}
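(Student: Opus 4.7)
The plan is to split the equivalence into two implications. The reverse direction is essentially immediate from the definitions: if $X$ is a Banach space and $\omega^\flat: X \to X'$ is a bijection onto the norm dual, then $(X, \tau_{\text{orig}})' = X' = \img \omega^\flat$, so the original topology is compatible with $\omega$.

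For the forward direction, assume the original topology $\tau$ is symplectic. First note that strong non-degeneracy of $\omega$ falls out for free: $(X,\tau)' = \img \omega^\flat$ together with injectivity of $\omega^\flat$ makes the latter a bijection between $X$ and its own topological dual, which is the definition of a strongly symplectic form. The real content of the theorem is that $X$ must then be normable. My strategy is to leverage the joint continuity of $\omega$ to produce a single continuous seminorm that dominates $\omega$ in both arguments simultaneously, and then to use the surjectivity of $\omega^\flat$ to show this seminorm dominates \emph{every} continuous functional on $X$.

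Concretely, by joint continuity of $\omega$ at $(0,0)$, pick an absolutely convex $\tau$-neighborhood $W$ of $0$ with $|\omega(x,y)| \leq 1$ for all $x, y \in W$. Let $p$ be the Minkowski functional of $W$, a continuous seminorm. A standard scaling argument then yields
\begin{equation}
	|\omega(x,y)| \leq p(x)\, p(y) \quad \text{for all } x, y \in X.
\end{equation}
Now given any $\phi \in (X,\tau)'$, the surjectivity of $\omega^\flat$ produces $x_\phi \in X$ with $\phi = \omega(x_\phi, \cdot)$, hence $|\phi(y)| \leq p(x_\phi)\, p(y)$. Setting $V = \{y \in X : p(y) \leq 1\}$, which is a $\tau$-neighborhood of $0$ since $\{p < 1\} \subseteq V$ is open, every $\phi \in X'$ is bounded on $V$. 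Thus $V$ is weakly bounded, and by the Mackey boundedness theorem (weakly bounded sets coincide with bounded sets in any Hausdorff locally convex space) $V$ is $\tau$-bounded. So $\tau$ admits a bounded neighborhood of zero, Kolmogorov's normability criterion makes $X$ normable, and completeness promotes this to a Banach space structure.

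The main technical point where care is needed is the inequality $|\omega(x,y)| \leq p(x)\,p(y)$: joint continuity only provides a \emph{pair} of seminorms a priori, and collapsing them into a single seminorm hinges on the symmetry in $x$ and $y$ already built into the neighborhood $W$. Beyond that, the argument is a clean application of the dual-pair toolkit referenced in the introduction of this section, with Mackey boundedness and Kolmogorov's criterion doing the decisive work.
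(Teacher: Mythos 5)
Your proof is correct, but it takes a genuinely different route from the paper's. The paper handles the forward direction by transporting the original topology to \( X' \) via \( \omega^\flat \), observing that the evaluation map \( X' \times X \to \R \) becomes jointly continuous because \( \alpha(x) = \omega(\omega^\sharp(\alpha), x) \), and then citing Maissen's theorem that joint continuity of the evaluation pairing (for any vector topology on \( X' \)) forces \( X \) to be normable. You instead re-derive the relevant normability statement from scratch: the single dominating seminorm \( p \) with \( \abs{\omega(x,y)} \leq p(x)\,p(y) \), combined with surjectivity of \( \omega^\flat \) onto \( (X,\tau)' \), shows that the convex neighborhood \( V = \{p \leq 1\} \) is weakly bounded, whence Mackey's theorem and Kolmogorov's criterion give normability. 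Your scaling argument for the seminorm inequality is sound (including the degenerate case \( p(x) = 0 \), where absorbency of \( W \) forces \( \omega(x,\cdot) = 0 \)), and the identification of strong non-degeneracy as immediate from \( (X,\tau)' = \img \omega^\flat \) matches the paper. The trade-off is clear: the paper's argument is shorter but leans on an external citation, while yours is self-contained at the cost of invoking the standard dual-pair machinery (Mackey boundedness, Kolmogorov) explicitly — which is arguably in better keeping with the toolkit the section advertises. Either proof is acceptable.
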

\begin{proof}
	For a strongly symplectic form, the original topology is clearly symplectic.
	Conversely, suppose that the original topology is symplectic.
	Then, by definition, \( \omega^\flat: X \to X' \) is a bijection and it remains to show that \( X \) is normable.
	For this purpose, endow \( X' \) with the topology \( \tau_\omega \) by declaring \( \omega^\flat \) to be a homeomorphism.
	Accordingly, the inverse \( \omega^\sharp: X' \to X \) of \( \omega^\flat \) is a continuous linear map with respect to the topology \( \tau_\omega \) on \( X' \) and the original topology on \( X \).
	Note that the canonical evaluation map \( X' \times X \to \R \) can be written as
	\begin{equation}
		(\alpha, x) \mapsto \alpha(x) = \omega (\omega^\sharp(\alpha), x).
	\end{equation}
	Thus, it is jointly continuous with respect to \( \tau_\omega \) on \( X' \) and the original topology on \( X \).
	However, the evaluation map is only jointly continuous (for some vector space topology on \( X' \)) if \( X \) is normable according to \parencite{Maissen1963}.
\end{proof}
In other words, the difference between the original topology and any symplectic topology is a measure of how much \( \omega^\flat: X \to X' \) fails to be surjective.

By definition, symplectic topologies are closely tied to the symplectic form and thus they are often able to detect symplectic phenomena, which are hard to describe in the original topology.
As we will see now, problems involving subspaces of symplectic spaces can be conveniently dealt within the framework of symplectic topologies.
A first hint of this interplay can be gathered from the fact that all symplectic topologies have the same closed linear subspaces according to \parencite[Corollary~8.3.6 and Theorem~8.4.1]{Wilansky1978}.
This observation allows us to introduce the following notion.
\begin{defn}
	Let \( (X, \omega) \) be a symplectic vector space.
	A linear subspace \( V \subseteq X \) is called \emphDef{symplectically closed}, or simply \( \omega \)-closed, if it is closed with respect to some (and hence all) symplectic topology on \( X \).
	In a similar vein, \( V \) is said to be \emphDef{symplectically dense} if it is dense relative to any symplectic topology.
\end{defn}
Let \( (X, \omega) \) be a symplectic vector space and let \( V \subseteq X \) be a linear subspace.
The \emphDef{symplectic orthogonal} of \( V \) is defined by 
\begin{equation}
	V^\omega \defeq \set{x \in X \given \omega(x, v) = 0 \text{ for all } v \in V }.
\end{equation}
The following topological description of the double orthogonal generalizes previous results of \textcite[Lemma~1.4]{Booss-BavnbekZhu2014}.
\begin{prop}
	\label{prop:symplecticFunctionalAnalysis:symplectiallyClosedIffDoubleOrthogonal}
	Let \( (X, \omega) \) be a symplectic vector space.
	For every linear subspace \( V \subseteq X \), the symplectic double orthogonal \( V^{\omega \omega} \) coincides with the closure of \( V \) with respect to any symplectic topology.
	In particular, \( V \) is symplectically closed if and only if \( V^{\omega \omega} = V \).
\end{prop}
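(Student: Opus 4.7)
The plan is to recognize that the symplectic form $\omega$ turns $X$ into a dual pair with itself in the sense of functional analysis, and that under this viewpoint the symplectic orthogonal $V^{\omega}$ is nothing but the polar (annihilator) of $V$. Once this translation is in place, the assertion reduces to the classical bipolar theorem for linear subspaces.

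Concretely, I would fix any symplectic topology $\tau$ on $X$. By definition of a symplectic topology, $\omega^\flat: X \to (X,\tau)'$ is a linear bijection onto the topological dual. For a linear subspace $V \subseteq X$, a direct computation gives
\[
	\omega^\flat(V^\omega) = \{\alpha \in (X,\tau)' : \alpha(v) = 0 \text{ for all } v \in V\} = V^\circ,
\]
the annihilator of $V$ in $(X,\tau)'$. Exploiting the antisymmetry of $\omega$, an analogous computation identifies $V^{\omega\omega}$ with the pre-annihilator of $V^\circ$ in $X$, that is, with ${}^\circ(V^\circ) = \{y \in X : \alpha(y) = 0 \text{ for all } \alpha \in V^\circ\}$.

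The conclusion then follows from the bipolar theorem for linear subspaces, a standard consequence of the Hahn--Banach theorem (see \eg \parencite{Schaefer1971}), which asserts that ${}^\circ(V^\circ)$ equals the $\tau$-closure $\overline{V}^{\tau}$. Independence of this closure from the particular choice of symplectic topology was already observed in the discussion preceding the proposition, where it was noted that all symplectic topologies share the same closed linear subspaces. The second statement is then the tautological special case $V = V^{\omega\omega}$ iff $V$ equals its $\tau$-closure.

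I do not anticipate any serious obstacle. The main point requiring care is the bookkeeping of antisymmetry when passing from $V^\omega$ to $V^{\omega\omega}$, since the two arguments of $\omega$ play asymmetric roles in the definition of the orthogonal; this produces a harmless sign that does not affect the sets involved. Everything else is a direct translation of symplectic language into the language of dual pairs, which is precisely the methodological point emphasized in the introduction to this section.
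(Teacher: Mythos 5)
Your proof is correct and follows essentially the same route as the paper: identify the symplectic orthogonal \( V^\omega \) with the polar (annihilator) of \( V \) in the dual pair determined by \( \omega \), and then invoke the bipolar theorem, with the independence from the choice of symplectic topology supplied by the preceding discussion. The extra bookkeeping via \( \omega^\flat \) and the antisymmetry sign is fine and does not change the argument.
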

\begin{proof}
	Note that \( V^\omega \) is the symplectic counterpart of the polar (or annihilator) of \( V \) in the general theory of dual pairs, see \eg \parencite[Section~II.8.2]{Jarchow1981}.
	%Hence, the results about polars apply, in particular, to symplectic orthogonals as well.
	Hence the claim follows directly from the Bipolar Theorem \parencite[Proposition~20.3.2]{Koethe1983}.
\end{proof}
This results in a shift of perspective as we may complement the algebraic approach to symplectic double orthogonals by powerful tools from topology.
Illustrating this shift in philosophy, the following result is almost trivial from a topological point of view but not so straightforward to prove in an algebraic fashion.
\begin{lemma}
	\label{prop:symplecticFunctionalAnalysis:finiteDimSubspaceSymplectiallyClosed}
	Every finite-dimensional subspace of a symplectic vector space is symplectically closed.
\end{lemma}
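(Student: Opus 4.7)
The plan is to reduce the claim to a standard fact from the topology of locally convex spaces via \cref{prop:symplecticFunctionalAnalysis:symplectiallyClosedIffDoubleOrthogonal}. Concretely, it suffices to exhibit \emph{some} symplectic topology \( \tau \) on \( X \) in which every finite-dimensional subspace is closed, since by that proposition this is equivalent to \( V^{\omega\omega} = V \).

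First, I would verify that every symplectic topology \( \tau \) on \( (X,\omega) \) is Hausdorff. By definition of a symplectic topology, the continuous dual \( (X,\tau)' \) equals \( \img \omega^\flat \). Non-degeneracy of \( \omega \) says that for every nonzero \( x \in X \) there exists \( y \in X \) with \( \omega(y,x) \neq 0 \); hence the functional \( \omega^\flat(y) \in (X,\tau)' \) separates \( x \) from \( 0 \). Since a locally convex topology is Hausdorff iff its continuous dual separates points, \( \tau \) is Hausdorff.

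Next, I invoke the classical fact that every finite-dimensional linear subspace of a Hausdorff topological vector space is closed. The standard argument is: a finite-dimensional Hausdorff topological vector space is linearly homeomorphic to \( \R^n \) (Tychonoff's theorem on the uniqueness of the Hausdorff vector topology in finite dimensions), so \( V \), equipped with the subspace topology from \( \tau \), is complete; and a complete subspace of a Hausdorff topological vector space is always closed. Applying this to \( \tau \), the subspace \( V \) is \( \tau \)-closed, so by \cref{prop:symplecticFunctionalAnalysis:symplectiallyClosedIffDoubleOrthogonal} we conclude \( V^{\omega\omega} = V \), \ie \( V \) is symplectically closed.

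There is no real obstacle here; the only subtlety is confirming that the dual pair \( (X,\omega) \) really gives rise to a Hausdorff compatible topology, but this is automatic from non-degeneracy as above. Once that is in place, the result is a direct consequence of the double-orthogonal characterization already established.
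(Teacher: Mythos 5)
Your proof is correct and follows essentially the same route as the paper: both reduce the claim to the standard fact that a finite-dimensional subspace is closed in any (Hausdorff) locally convex topology on \( X \), applied to a symplectic topology and combined with \cref{prop:symplecticFunctionalAnalysis:symplectiallyClosedIffDoubleOrthogonal}. Your explicit check that symplectic topologies are Hausdorff (via non-degeneracy of \( \omega \)) is a welcome detail that the paper leaves implicit in its citation to Köthe.
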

\begin{proof}
	Every finite-dimensional vector space \( V \subseteq X \) is closed for whatever vector space topology we put on \( X \), see \parencite[Proposition~15.5.2]{Koethe1983}.
\end{proof}
Often it is convenient to relate the symplectic orthogonal to a metric orthogonal.
\begin{lemma}
	\label{prop:symplecticFunctionalAnalysis:orthogonalToSymplecticOrthogonal}
	Let \( (X, \omega) \) be a symplectic vector space with compatible complex structure \( j \) and associated inner product \( g = \omega(\cdot, j \cdot) \).
	For every linear subspace \( V \subseteq X \), the symplectic double orthogonal \( V^{\omega \omega} \) coincides with the double orthogonal \( V^{\perp \perp} \) of \( V \) with respect to the inner product \( g \).
\end{lemma}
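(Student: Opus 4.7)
My plan is to reduce both double orthogonals to the same subspace by exploiting the interplay between $\omega$, $j$, and $g$. The essential identity I will establish first is
\begin{equation}
	V^\omega = j(V^\perp).
\end{equation}
To see this, note that $\omega(x,v) = \omega(x, j(-jv)) = g(x, -jv) = -g(x, jv)$, so $x \in V^\omega$ iff $g(x, jv) = 0$ for every $v \in V$, iff $x \in (jV)^\perp$. Since $j$ is a $g$-isometry (the compatibility $\omega(jx, jy) = \omega(x,y)$ together with $j^2 = -\id$ gives $g(jx, jy) = \omega(jx, -y) = \omega(y, jx) = g(x,y)$), one has $(jV)^\perp = j(V^\perp)$, which yields the identity.

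Iterating the identity and using $j^2 = -\id$ together with the fact that any linear subspace satisfies $-W = W$, I compute
\begin{equation}
	V^{\omega\omega} = j\bigl( (V^\omega)^\perp\bigr) = j\bigl((j V^\perp)^\perp\bigr) = j \circ j\,(V^{\perp\perp}) = -V^{\perp\perp} = V^{\perp\perp}.
\end{equation}
This is the claim.

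There is essentially no obstacle here beyond being careful with signs and verifying that $g$ is a genuine inner product (symmetry follows from compatibility, positive-definiteness is part of the hypothesis). One could alternatively argue more abstractly by noting that the topology generated by the seminorms $x \mapsto \lvert g(x, y)\rvert$ is a symplectic topology (since $g$-continuous functionals are exactly of the form $g(\cdot, y) = -\omega(\cdot, jy) = \omega(jy, \cdot)$, and $j$ is a bijection on $X$, hence this coincides with $\img \omega^\flat$), and then invoke \cref{prop:symplecticFunctionalAnalysis:symplectiallyClosedIffDoubleOrthogonal} to identify $V^{\omega\omega}$ with the $g$-closure of $V$, which is precisely $V^{\perp\perp}$. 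Either route is elementary; I would prefer the direct computation above for brevity.
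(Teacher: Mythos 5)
Your proposal is correct and matches the paper, which records exactly these two arguments in reverse order: its main proof is your ``abstract'' route (the identity map is an isomorphism between the weak topologies of the dual pairs \( \omega \) and \( g \), so weak closures and hence double orthogonals coincide), and its stated alternative is precisely your primary computation \( V^\omega = (jV)^\perp = j V^\perp \). The only blemish is a harmless sign slip in your abstract variant: \( g(\cdot, y) = \omega(\cdot, jy) = -\omega(jy, \cdot) \), not \( -\omega(\cdot, jy) \); since \( j \) is a bijection and signs are absorbed by linearity, the identification of the \( g \)-weakly continuous functionals with \( \img \omega^\flat \) is unaffected.
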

\begin{proof}
	The relation \( g = \omega(\cdot, j \cdot) \) implies that \( j \) is the adjoint of the identity map \( \id: X \to X \) with respect to the dual pairs \( \omega \) and \( g \).
	Hence, the identity map is a topological isomorphism between the weak topologies of \( (X, \omega) \) and \( (X, g) \).
	In particular, weak closures (and hence double orthogonals) coincide.

	Alternatively, the claim follows from the fact that \( V^\omega = (j V)^\perp = j V^\perp \).
\end{proof}

In the literature, it is often assumed or even \enquote{proven} that in weakly symplectic Banach spaces every linear subspace \( V \) which is closed with respect to the original topology on \( X \) satisfies \( V^{\omega\omega} = V \), see for example \parencite[Lemma~7.5.9]{Kobayashi1987} or \parencite[Lemma~3.2]{Bambusi1999}.
Extending a counterexample of \textcite[Example~1.6]{Booss-BavnbekZhu2014} we can, however, show that every genuinely weakly symplectic space has at least one closed subspace that is not symplectically closed.
The following result is analogous to the fact that the dual of a non-reflexive Banach space contains subspaces that are both norm-closed and weak*-dense, see \parencite[Fact~4.1.6]{DeVito1978}.
\begin{prop}
	\label{prop:sympleticFunctionalAnalysis:allClosedAreSymplecticallyClosedImpliesStrong}
 	Let \( (X, \omega) \) be a symplectic vector space.
 	If \( \omega \) is not strongly symplectic, then there exists a closed proper linear subspace that is symplectically dense.
 	In particular, every closed linear subspace of \( X \) is symplectically closed if and only if \( X \) is normable and \( \omega \) is strongly symplectic.
\end{prop}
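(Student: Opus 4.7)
The plan is to mirror the classical construction from Banach space theory (the paper's own stated analogue, namely the existence of a norm-closed weak*-dense subspace in the dual of a non-reflexive space) by producing an explicit closed hyperplane whose symplectic orthogonal is trivial. Since \( \omega \) is not strongly symplectic, \( \omega^\flat: X \to X' \) fails to be surjective, so I fix a continuous linear functional \( \alpha \in X' \setminus \img \omega^\flat \) and take \( V \defeq \ker \alpha \). This \( V \) is automatically closed in the original topology as the kernel of a continuous functional, and proper because \( \alpha \neq 0 \); the only nontrivial point will be to verify symplectic density.

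By \cref{prop:symplecticFunctionalAnalysis:symplectiallyClosedIffDoubleOrthogonal} this amounts to showing that \( V^{\omega\omega} = X \), and since non-degeneracy forces \( X^\omega = \set{0} \), it is equivalent to checking \( V^\omega = \set{0} \). So suppose \( y \in V^\omega \); then \( \omega^\flat(y) \in X' \) vanishes on the hyperplane \( \ker \alpha \) and must therefore be a scalar multiple \( c\alpha \). A nonzero \( c \) would put \( \alpha = \omega^\flat(c^{-1} y) \) into the image of \( \omega^\flat \), contradicting the choice of \( \alpha \); hence \( c = 0 \) and injectivity of \( \omega^\flat \) gives \( y = 0 \). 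I do not expect this step to be delicate, as it is a clean application of the bipolar content of \cref{prop:symplecticFunctionalAnalysis:symplectiallyClosedIffDoubleOrthogonal} together with the elementary fact that a functional vanishing on a hyperplane is a scalar multiple of any defining functional.

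The \enquote{in particular} clause then drops out by combining this result with the earlier proposition on normability. The contrapositive of the first assertion shows that if every original-closed subspace is symplectically closed then \( \omega \) must be strongly symplectic; the original topology is then itself a symplectic topology, so \cref{prop::symplecticFunctionalAnalysis:strongSymplecticOnlyInNormed} forces \( X \) to be normable. Conversely, if \( X \) is normable and \( \omega \) is strongly symplectic, then \( \img \omega^\flat \) coincides with the original continuous dual \( X' \), so every symplectic topology shares its continuous dual with the original topology, and the Mackey--Arens theorem guarantees that all such topologies have the same closed convex subsets, in particular the same closed linear subspaces. The closest thing to an obstacle here is the implicit use of completeness of \( X \) when invoking \cref{prop::symplecticFunctionalAnalysis:strongSymplecticOnlyInNormed} to extract normability; this should be flagged but is consistent with the ambient setting of the section.
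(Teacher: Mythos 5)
Your proposal is correct and follows essentially the same route as the paper: both take \( V = \ker \alpha \) for a functional \( \alpha \in X' \setminus \img \omega^\flat \), show \( V^\omega = \set{0} \), and deduce the \enquote{in particular} clause from \cref{prop::symplecticFunctionalAnalysis:strongSymplecticOnlyInNormed} together with the fact that all topologies compatible with a given duality share the same closed subspaces. The only difference is that you verify \( V^\omega = \set{0} \) by the elementary observation that a functional vanishing on a hyperplane is a scalar multiple of the defining functional, whereas the paper routes this through the Bipolar Theorem and the adjoint relation for \( \omega^\flat \); your version is, if anything, cleaner, and your remark about the implicit completeness hypothesis applies equally to the paper's own argument.
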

\begin{proof}
 	Endow \( X' \) with the weak topology determined by the dual pair \( \dualPairR{X'}{X} \), see \eg \parencite[Section~IV.3]{Schaefer1971}.
 	Suppose that \( \omega \) is not strongly non-degenerate.
 	Then, there exists a non-zero continuous linear functional \( \alpha \in X' \setminus \img \omega^\flat \).
 	The subspace \( A \defeq \R \cdot \alpha \subseteq X' \) is finite-dimensional and hence closed, see \parencite[Proposition~15.5.2]{Koethe1983}.
 	Consider the subspace \( V \defeq A^\polar \subseteq X \), where the polar \( A^\polar \) is taken with respect to the dual pair \( \dualPairR{X'}{X} \).
 	By definition, \( V \) coincides with the kernel of \( \alpha \) and thus it is a proper closed subspace of \( X \).
 	The Bipolar Theorem \parencite[Proposition~20.3.2]{Koethe1983} with respect to the dual pair \( \dualPairR{X'}{X} \) and closedness of \( A \) imply \( V^\polar = A^{\bipolar} = A = \R \cdot \alpha \).
 	Since \( \alpha \) is not contained in the image of \( \omega^\flat \), we have \( (\omega^\flat)^{-1}(V^\polar) = \set{0} \).
 	We can view \( \omega^\flat: X \to X' \) as a weakly continuous map between the dual pairs \( \omega(X, X) \) and \( \dualPairR{X'}{X} \) with the identity on \( X \) as the adjoint map.
 	Hence, \parencite[Proposition~IV.2.3]{Schaefer1971} implies 
 	\begin{equation}
 		V^\omega = (\id (V))^\omega = (\omega^\flat)^{-1}(V^\polar) = \set{0}.
 	\end{equation}
 	Thus, \( V^{\omega\omega} = X \).
 	In summary, every genuinely weakly symplectic space has at least one closed proper subspace whose symplectic closure is the whole space.
 	In other words, if every closed subspace is symplectically closed then the symplectic form has to be strongly symplectic.
 	The latter is only possible if \( X \) is normable according to \cref{prop::symplecticFunctionalAnalysis:strongSymplecticOnlyInNormed}.
 	Conversely, if \( (X, \omega) \) is a strongly symplectic space, then the original topology on \( X \) is symplectic and thus every closed subspace is also symplectically closed as a consequence of \parencite[Corollary~8.3.6 and Theorem~8.4.1]{Wilansky1978}. 
\end{proof}
%A characterization of symplectically dense subspaces can be found in \parencite[Corollary~5.96]{AliprantisBorder2006}.
\begin{example}
	\label{ex::diffOneFormsOnSurface:diracDistribution}
	Continuing \cref{ex::diffOneFormsOnSurface:symplecticStructure}.
	Every tangent vector \( X_m \in \TBundle_m M \) yields a continuous Dirac-like functional \( \delta_{X_m}: \DiffFormSpace^1(M) \to \R \) by evaluation of a \( 1 \)-form on \( X_m \).
	Note that \( \delta_{X_m} \) is singular in the sense of distributions and thus does \emph{not} lie in the image of \( \omega^\flat \). 
	Consider the closed subspace
	\begin{equation}
		V \defeq (\vspan \delta_{X_m})^\polar = \set{\alpha \in \DiffFormSpace^1(M) \given \alpha (X_m) = 0}.
	\end{equation}
	Since integration is not sensitive to the behavior at a single point, we find \( V^\omega = \set{0} \) and thus \( V^{\omega\omega} = \DiffFormSpace^1(M) \).
	Hence, by \cref{prop:symplecticFunctionalAnalysis:symplectiallyClosedIffDoubleOrthogonal}, the closure of \( V \) with respect to any symplectic topology is the whole space \( \DiffFormSpace^1(M) \).
	In summary, \( V \) is a closed but symplectically dense subspace.
\end{example}

In their original article on symplectic reduction, \textcite{MarsdenWeinstein1974} considered symplectic forms \( \omega \) on reflexive Banach spaces whose associated musical isomorphism \( \omega^\flat \) had a closed image.
This setting, a priori, lies between weakly and strongly symplectic Banach spaces.
However, under these assumptions, they showed that every closed subspace is also symplectically closed, see \parencite[Lemma on p.~123]{MarsdenWeinstein1974}.
Hence, \cref{prop:sympleticFunctionalAnalysis:allClosedAreSymplecticallyClosedImpliesStrong} implies that such symplectic forms are automatically strongly non-degenerate.
Let us record this observation.
\begin{prop}
	Let \( X \) be a reflexive Banach space endowed with a symplectic form \( \omega \).
	If \( \omega^\flat \) has closed image in \( X' \) (relative to the dual norm topology), then \( \omega \) is strongly symplectic.
\end{prop}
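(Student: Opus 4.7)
The plan is to combine the key lemma from \textcite{MarsdenWeinstein1974} with \cref{prop:sympleticFunctionalAnalysis:allClosedAreSymplecticallyClosedImpliesStrong}, exactly as suggested by the paragraph preceding the statement. Concretely, I would first invoke \parencite[Lemma on p.~123]{MarsdenWeinstein1974}: the reflexivity of $X$ together with the hypothesis that $\omega^\flat$ has norm-closed image implies that every norm-closed subspace $V \subseteq X$ satisfies $V^{\omega\omega} = V$. By \cref{prop:symplecticFunctionalAnalysis:symplectiallyClosedIffDoubleOrthogonal} this is the same as saying that every norm-closed subspace of $X$ is symplectically closed. Since $X$ is a Banach space and hence normable, the second half of \cref{prop:sympleticFunctionalAnalysis:allClosedAreSymplecticallyClosedImpliesStrong} then forces $\omega$ to be strongly symplectic, which is exactly the desired conclusion.

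As an independent and essentially self-contained sanity check, I would also verify the statement directly by means of the Closed Range Theorem, which entirely bypasses the dual-pair machinery. Under the reflexivity identification $X'' \cong X$, a short computation with the symplectic pairing shows that the Banach-space adjoint $(\omega^\flat)^\ast: X'' \to X'$ equals $-\omega^\flat$. Consequently $\ker (\omega^\flat)^\ast = \ker \omega^\flat = \{0\}$ by the injectivity contained in the non-degeneracy of $\omega$. Since $\mathrm{range}(\omega^\flat)$ is assumed closed, the Closed Range Theorem yields
\begin{equation*}
    \mathrm{range}(\omega^\flat) = \bigl(\ker (\omega^\flat)^\ast\bigr)^\perp = X',
\end{equation*}
so $\omega^\flat$ is surjective.

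I do not foresee any genuine obstacle. The only subtlety worth being careful about is to ensure that the appeal to \cref{prop:sympleticFunctionalAnalysis:allClosedAreSymplecticallyClosedImpliesStrong} is not circular: that proposition itself routes through \cref{prop::symplecticFunctionalAnalysis:strongSymplecticOnlyInNormed}, which demands that $X$ be normable. Here this hypothesis is automatic, since $X$ is a Banach space by assumption, and so the chain of implications closes cleanly.
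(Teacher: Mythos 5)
Your first argument is exactly the paper's own proof: the statement is recorded as an immediate corollary of the Marsden--Weinstein lemma (every norm-closed subspace is symplectically closed under these hypotheses) combined with \cref{prop:sympleticFunctionalAnalysis:allClosedAreSymplecticallyClosedImpliesStrong}, and the normability requirement is indeed discharged trivially because \( X \) is Banach, so there is no circularity.

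Your second argument via the Closed Range Theorem is also correct and is a genuinely different, more self-contained route that bypasses the dual-pair machinery entirely. The adjoint computation checks out: for \( y \in X \) identified with \( \hat{y} \in X'' \) one has \( \bigl((\omega^\flat)^* \hat{y}\bigr)(x) = (\omega^\flat x)(y) = \omega(x,y) = -(\omega^\flat y)(x) \), so \( \ker (\omega^\flat)^* = \ker \omega^\flat = \set{0} \), and the Closed Range Theorem gives \( \img \omega^\flat = X' \); the Open Mapping Theorem then upgrades the bijection to a topological isomorphism. Note that reflexivity is used in an essential way here: without it, \( \ker (\omega^\flat)^* \) is only known to meet the canonical image of \( X \) in \( X'' \) trivially, which would not force surjectivity. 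What the paper's route buys is that it exhibits the statement as a special case of the general dichotomy of \cref{prop:sympleticFunctionalAnalysis:allClosedAreSymplecticallyClosedImpliesStrong} (any genuinely weak symplectic structure admits a closed, symplectically dense proper subspace), whereas your Closed Range argument is shorter and independent of the Marsden--Weinstein lemma, which it in fact renders superfluous for this particular conclusion.
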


The restriction \( \omega_V \) of the symplectic structure \( \omega \) to a closed subspace \( V \subseteq X \) is in general degenerate, the kernel of \( \omega_V^\flat: V \to V' \) being \( V \intersect V^\omega \).
\begin{defn}
	\label{defn:sympleticFunctionalAnalysis:symplecticSubspace}
	A closed subspace \( V \) of a symplectic vector space \( (X, \omega) \) is called \emphDef{symplectic} if \( V \intersect V^\omega = \set{0} \).
\end{defn}
Accordingly, the restriction \( \omega_V \) of \( \omega \) to a symplectic subspace \( V \) yields a symplectic form on \( V \).
We emphasize that the notions \enquote{symplectically closed subspace} and \enquote{symplectic subspace} should not be confused.
\begin{example}
	Let \( (X, \omega) \) be a symplectic vector space.
	Assume that \( \omega \) is not strongly symplectic and consider a closed, symplectically dense, proper subspace \( V \subset X \) (which always exists according to \cref{prop:sympleticFunctionalAnalysis:allClosedAreSymplecticallyClosedImpliesStrong}).
	For every \( x \in V^\omega \), the functional \( \omega(x, \cdot) \) on \( X \) is continuous with respect to any symplectic topology on \( X \) but it also vanishes on the symplectically dense subspace \( V \).
	Thus, it has to vanish on the whole space \( X \), which implies \( x = 0 \).
	Hence, \( V^\omega = \set{0} \) and so \( V \intersect V^\omega = \set{0} \).
	In other words, every closed, symplectically dense, proper subspace is a symplectic subspace.
\end{example}
In finite dimensions, every symplectic subspace \( V \subseteq X \) induces a direct sum decomposition \( X = V \oplus V^\omega \).
The previous example shows that this is no longer the case in infinite dimensions (for a proper symplectically dense subspace \( V \) we have \( V^\omega = \set{0} \), and thus the subspace \( V \oplus V^\omega = V \) is a proper subspace of \( X \)).
The following phenomenon, peculiar for the infinite-dimensional setting, is related.
Given a subspace \( V \subseteq X \) and \( v \in V \), the functional \( \omega(v, \cdot) \) on \( X \) is continuous with respect to any symplectic topology \( \tau_\omega \) on \( X \).
Hence, its restriction to a functional on \( V \) is continuous relative to the subspace topology, the latter being denoted by \( \tau_\omega \) as well. 
This furnishes a linear map
\begin{equation}
	\label{eq:sympleticFunctionalAnalysis:symplecticSubspaceGammaMap}
	\Gamma_V: V \mapsto (V, \tau_\omega)', \qquad v \mapsto \omega(v, \cdot).
\end{equation}
Clearly, \( \Gamma_V \) is injective if and only if \( V \) is a symplectic subspace.
In finite dimensions, a count of dimensions shows that \( \Gamma_V \) is bijective if \( V \) is a symplectic subspace.
However, when passing to the infinite-dimensional setting, the above example of a symplectically dense subspace \( V \subseteq X \) shows that \( \Gamma_V \) is in general not surjective (for every non-zero \( y \in X \setminus V \), the \( \tau_\omega \)-continuous functional \( \omega(y, \cdot) \) on \(  V \) is not representable by some \( v \in V \)).

\begin{prop}
	\label{prop:symplecticFunctionalAnalysis:strongSymplecticSubspace}
	Let \( (X, \omega) \) be a symplectic vector space.
	For a closed subspace \( V \subseteq X \), the following are equivalent:
	\begin{enumerate}
		\item
			\label{prop:symplecticFunctionalAnalysis:strongSymplecticSubspace:restriction}
			\( V \) is a symplectic subspace and the restriction of every symplectic topology on \( X \) yields a symplectic topology on \( (V, \omega_V) \).
		\item
			\label{prop:symplecticFunctionalAnalysis:strongSymplecticSubspace:directSum}
			There exists an algebraic direct sum decomposition \( X = V \oplus V^\omega \).
		\item
			\label{prop:symplecticFunctionalAnalysis:strongSymplecticSubspace:surjectiveGamma}
			The linear map \( \Gamma_V \) defined in~\eqref{eq:sympleticFunctionalAnalysis:symplecticSubspaceGammaMap} is surjective.
			\qedhere
	\end{enumerate}
\end{prop}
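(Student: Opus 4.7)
The plan is to prove the three-way equivalence via the cycle \ref{prop:symplecticFunctionalAnalysis:strongSymplecticSubspace:restriction} $\Rightarrow$ \ref{prop:symplecticFunctionalAnalysis:strongSymplecticSubspace:surjectiveGamma} $\Rightarrow$ \ref{prop:symplecticFunctionalAnalysis:strongSymplecticSubspace:directSum} $\Rightarrow$ \ref{prop:symplecticFunctionalAnalysis:strongSymplecticSubspace:restriction}, with each implication reducing to the defining property of a symplectic topology together with an application of the Hahn--Banach extension theorem in locally convex spaces. The implication \ref{prop:symplecticFunctionalAnalysis:strongSymplecticSubspace:restriction} $\Rightarrow$ \ref{prop:symplecticFunctionalAnalysis:strongSymplecticSubspace:surjectiveGamma} is essentially tautological: the map $\Gamma_V$ coincides with the musical map $\omega_V^\flat$ of the restricted form $\omega_V$, and the hypothesis that the restriction of any symplectic topology $\tau_\omega$ on $X$ yields a symplectic topology on $(V, \omega_V)$ means precisely that $\omega_V^\flat$ is a bijection from $V$ onto $(V, \tau_\omega)'$, so in particular surjective.

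For \ref{prop:symplecticFunctionalAnalysis:strongSymplecticSubspace:surjectiveGamma} $\Rightarrow$ \ref{prop:symplecticFunctionalAnalysis:strongSymplecticSubspace:directSum} I proceed in two steps. First, I show that $V$ must itself be a symplectic subspace. Any symplectic topology $\tau_\omega$ is locally convex and Hausdorff, since non-degeneracy of $\omega$ makes the functionals $\omega(x, \cdot)$ separate points; the same is then true of its restriction to $V$. If a nonzero $v_0 \in V \intersect V^\omega$ existed, Hahn--Banach would produce a $\tau_\omega$-continuous functional $\varphi$ on $V$ with $\varphi(v_0) \neq 0$; but surjectivity of $\Gamma_V$ forces $\varphi = \restr{\omega(v, \cdot)}{V}$ for some $v \in V$, and any such functional automatically vanishes on $V \intersect V^\omega$, a contradiction. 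Granted $V \intersect V^\omega = \set{0}$, for any $x \in X$ the functional $\restr{\omega(x, \cdot)}{V}$ is $\tau_\omega$-continuous on $V$, hence equals $\restr{\omega(v, \cdot)}{V}$ for some $v \in V$; then $x - v \in V^\omega$ and the decomposition $x = v + (x - v)$ realizes $X = V \oplus V^\omega$.

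The reverse implication \ref{prop:symplecticFunctionalAnalysis:strongSymplecticSubspace:directSum} $\Rightarrow$ \ref{prop:symplecticFunctionalAnalysis:strongSymplecticSubspace:restriction} uses Hahn--Banach in the opposite direction. Given $\varphi \in (V, \tau_\omega)'$, extend it to $\tilde{\varphi} \in (X, \tau_\omega)'$; since $\tau_\omega$ is symplectic on $X$, one has $\tilde{\varphi} = \omega(x, \cdot)$ for some $x \in X$, and decomposing $x = v + v'$ along $X = V \oplus V^\omega$ makes the $v'$-term drop out when paired against elements of $V$, yielding $\varphi = \Gamma_V(v)$. Combined with the injectivity of $\omega_V^\flat$ on the symplectic subspace $V$, this shows $\omega_V^\flat$ is a bijection onto $(V, \tau_\omega)'$, so the restriction of $\tau_\omega$ to $V$ is indeed symplectic for $(V, \omega_V)$. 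The main obstacle throughout is the first step of \ref{prop:symplecticFunctionalAnalysis:strongSymplecticSubspace:surjectiveGamma} $\Rightarrow$ \ref{prop:symplecticFunctionalAnalysis:strongSymplecticSubspace:directSum}, where the absence of a nontrivial radical $V \intersect V^\omega$ must be extracted from surjectivity of $\Gamma_V$ alone; this extraction hinges on the Hausdorff property of symplectic topologies combined with Hahn--Banach separation, while the remaining arguments are routine applications of the Hahn--Banach extension theorem.
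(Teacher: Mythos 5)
Your proposal is correct and follows essentially the same route as the paper: the same cycle of implications \iref{prop:symplecticFunctionalAnalysis:strongSymplecticSubspace:restriction} \textrightarrow{} \iref{prop:symplecticFunctionalAnalysis:strongSymplecticSubspace:surjectiveGamma} \textrightarrow{} \iref{prop:symplecticFunctionalAnalysis:strongSymplecticSubspace:directSum} \textrightarrow{} \iref{prop:symplecticFunctionalAnalysis:strongSymplecticSubspace:restriction}, with the radical \( V \intersect V^\omega = \set{0} \) extracted from surjectivity of \( \Gamma_V \) via Hahn--Banach separation and the remaining steps via Hahn--Banach extension, exactly as in the paper. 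The only cosmetic difference is that you phrase the separation step contrapositively, and you could note explicitly that in \iref{prop:symplecticFunctionalAnalysis:strongSymplecticSubspace:directSum} \textrightarrow{} \iref{prop:symplecticFunctionalAnalysis:strongSymplecticSubspace:restriction} the symplecticity of \( V \) is itself a consequence of the directness of the sum.
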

\begin{proof}
	The implication \iref{prop:symplecticFunctionalAnalysis:strongSymplecticSubspace:restriction} \textrightarrow{} \iref{prop:symplecticFunctionalAnalysis:strongSymplecticSubspace:surjectiveGamma} is clear.
	Now assume that \( \Gamma_V \) is surjective.
	Then, we have
	\begin{equation}\begin{split}
		V \intersect V^\omega
			&= \set{x \in V \given \Gamma_V(v)(x) = \omega(v, x) = 0 \text{ for all } v \in V}
			\\
			&= \set{x \in V \given \alpha(x) = 0 \text{ for all } \alpha \in (V, \tau_\omega)'}
			\\
			&= \set{0},
	\end{split}\end{equation}
	where the last equality is a consequence of the Hahn--Banach Theorem \parencite[Proposition~20.1.2]{Koethe1983}, which implies that \( \tau_\omega \)-continuous functionals on \( V \) separate points of \( V \); that is, for each non-zero \( x \in V \) there exists \( \alpha \in (V, \tau_\omega)' \) such that \( \alpha(x) = 1 \).
	Moreover, by surjectivity of \( \Gamma_V \), for every \( x \in X \), there exists \( v \in V \) such that the functionals \( \omega(x, \cdot) \) and \( \omega(v, \cdot) \) coincide on \( V \).
	Thus, \( \omega(x-v, \cdot) \) vanishes on \( V \), that is, \( x - v \in V^\omega \).
	Hence, \( X \) is the algebraic direct sum of \( V \) and \( V^\omega \), which establishes the implication \iref{prop:symplecticFunctionalAnalysis:strongSymplecticSubspace:surjectiveGamma} \textrightarrow{} \iref{prop:symplecticFunctionalAnalysis:strongSymplecticSubspace:directSum}.
	For the last implication \iref{prop:symplecticFunctionalAnalysis:strongSymplecticSubspace:directSum} \textrightarrow{} \iref{prop:symplecticFunctionalAnalysis:strongSymplecticSubspace:restriction}, suppose now that \( X = V \oplus V^\omega \) is an algebraic direct sum.
	Then, \( V \) is a symplectic subspace, because the sum is direct.
	Moreover, the Hahn--Banach Theorem \parencite[Proposition~20.1.1]{Koethe1983} implies that the restriction map \( (X, \tau_\omega)' \to (V, \tau_\omega)' \) is surjective.
	Hence, using the definition of the symplectic topology, every \( \tau_\omega \)-continuous functional on \( V \) is obtained as the restriction to \( V \) of \( \omega(x, \cdot) \) for some \( x \in X \).
	Write \( x \) as \( x = v + w \) with \( v \in V \) and \( w \in V^\omega \).
	Then, the restrictions to \( V \) of \( \omega(x, \cdot) \) and \( \omega(v, \cdot) \) coincide.
	In other words, every \( \tau_\omega \)-continuous functional on \( V \) is of the form \( \omega(v, \cdot) \) for some \( v \in V \), which completes the proof.
\end{proof}

Finally, we come to what can be considered the linear toy example of symplectic reduction.
Later on, the nonlinear case of a symplectic action of a Lie group \( G \) on a symplectic manifold \( M \)  will be reduced to this simple setting by considering the action of the stabilizer \( G_m \) on a symplectic slice at \( m \in M \).
Assume that a compact Lie group \( G \) acts continuously and linearly on a symplectic vector space \( (X, \omega) \).
We say that the symplectic form \( \omega \) is preserved by the \( G \)-action if
\begin{equation}
	\omega(g \cdot x, g \cdot y) = \omega(x, y)
\end{equation}
holds for all \( g \in G \) and \( x,y \in X \).
\begin{prop}
	\label{prop:sympleticFunctionalAnalysis:invariantSubspaceSymplectic}
 	Let \( (X, \omega) \) be a symplectic vector space and let a compact Lie group \( G \) act continuously and linearly on \( X \) in such a way that the symplectic form \( \omega \) is preserved.
 	Then, the subspace \( X_G \) of \( G \)-invariant elements is a symplectic subspace of \( X \) and \( X = (X_G) \oplus (X_G)^\omega \) is an algebraic direct sum.
\end{prop}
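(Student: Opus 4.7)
The plan is to produce a continuous $G$-equivariant projection $P\colon X \to X_G$ by Haar averaging, verify that its kernel is contained in $(X_G)^\omega$, and conclude using \cref{prop:symplecticFunctionalAnalysis:strongSymplecticSubspace}. First, observe that $X_G = \bigcap_{g \in G} \ker(g \cdot {} - \id_X)$ is closed because the action is continuous. Second, I would define
\begin{equation}
    P(x) \defeq \int_G g \cdot x \, \dif g,
\end{equation}
interpreted as a Gelfand--Pettis integral against the normalized Haar measure on the compact group $G$; continuity of the orbit map together with compactness of $G$ makes the integrand a continuous curve with compact image, so the integral is well-defined in the quasi-complete (in particular, Fréchet) setting relevant to this paper. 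A standard change of variable in the Haar measure shows that $P$ is a continuous linear projection onto $X_G$.

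Next I would establish that $x - P(x) \in (X_G)^\omega$ for every $x \in X$. For any $y \in X_G$, the joint $G$-invariance of $\omega$ combined with $g \cdot y = y$ yields $\omega(y, g \cdot x) = \omega(g^{-1} \cdot y, x) = \omega(y, x)$ for all $g \in G$. Pulling the continuous linear functional $\omega(y, \cdot)$ inside the Pettis integral then gives $\omega(y, P(x)) = \omega(y, x)$, whence $\omega(y, x - P(x)) = 0$. This already furnishes the algebraic sum $X = X_G + (X_G)^\omega$. To see that the sum is direct, suppose $x \in X_G \intersect (X_G)^\omega$ and pick any $z \in X$. The $G$-invariance of $x$ and $\omega$ gives $\omega(x, g \cdot z) = \omega(x, z)$ for all $g \in G$, and the same integration trick yields $\omega(x, z) = \omega(x, P(z))$; the right-hand side vanishes because $P(z) \in X_G$ and $x \in (X_G)^\omega$. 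Non-degeneracy of $\omega$ then forces $x = 0$, which simultaneously establishes both claims of the proposition.

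The only delicate point will be the legitimacy of the vector-valued integral, which can be handled under mild completeness hypotheses but requires some care in full generality. One can sidestep this issue entirely by instead verifying condition~\iref{prop:symplecticFunctionalAnalysis:strongSymplecticSubspace:surjectiveGamma} of \cref{prop:symplecticFunctionalAnalysis:strongSymplecticSubspace}: given a $\tau_\omega$-continuous functional $\alpha$ on $X_G$, first extend it to $X$ by Hahn--Banach, then use the definition of a symplectic topology to write the extension as $\omega(x_0, \cdot)$ for some $x_0 \in X$, and finally average $x_0$ over $G$ to obtain a $G$-invariant element representing $\alpha$. Compactness of $G$ enters only through this averaging step and is indispensable: without it, neither the Haar projection nor the equivalent argument via \cref{prop:symplecticFunctionalAnalysis:strongSymplecticSubspace} goes through.
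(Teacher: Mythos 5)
Your primary argument via the Haar projection \( P(x) = \int_G g \cdot x \, \dif g \) is correct (granting the integral) but genuinely different from the paper's proof: the paper never averages an element of \( X \). It verifies condition~\iref{prop:symplecticFunctionalAnalysis:strongSymplecticSubspace:surjectiveGamma} of \cref{prop:symplecticFunctionalAnalysis:strongSymplecticSubspace} by extending \( \alpha \) to \( \bar{\alpha} \) via Hahn--Banach, averaging the \emph{functional} \( \bar{\alpha} \) over \( G \) (a scalar integral for each fixed argument), representing the averaged functional as \( \omega(v, \cdot) \), and then deducing \( v \in X_G \) from the \( G \)-invariance of \( \bar{\alpha} \) and \( \omega \) together with injectivity of \( \omega^\flat \). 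Your route buys an explicit continuous projection realizing the splitting and a very clean directness argument from non-degeneracy, at the cost of a Gelfand--Pettis integral needing quasi-completeness. One inaccuracy to flag: your proposed fallback does \emph{not} sidestep that issue, because averaging the representing vector \( x_0 \) over \( G \) is again a vector-valued integral in \( X \) with exactly the same completeness requirement as \( P \). To genuinely avoid averaging vectors you must follow the paper's order of operations --- average the scalar-valued \( \bar{\alpha} \) first and let injectivity of \( \omega^\flat \) force the representing vector to be invariant (and even then one must justify that the averaged functional still lies in \( \img \omega^\flat \), which is the same barycenter question in disguise; this is harmless in the quasi-complete, e.g.\ Fréchet, setting the paper works in). Your observation that \( X_G \) is closed is a needed ingredient that the paper leaves implicit.
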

\begin{proof}
	Let \( \tau_\omega \) be a symplectic topology on \( X \).
	We will denote the induced subspace topology on \( X_G \) by \( \tau_\omega \) as well.
	According to \cref{prop:symplecticFunctionalAnalysis:strongSymplecticSubspace}, we have to show that the linear map
	\begin{equation}
		\Gamma_{X_G}: X_G \mapsto (X_G, \tau_\omega)', \qquad v \mapsto \omega(v, \cdot)
	\end{equation}
	is surjective.
	For this purpose, let \( \alpha \in (X_G, \tau_\omega)' \).
	The functional \( \alpha \) can be extended to a \( \tau_\omega \)-continuous functional \( \bar{\alpha} \) defined on the whole of \( X \) according to the Hahn--Banach Theorem \parencite[Proposition~20.1.1]{Koethe1983}.
	By taking the average over the compact Lie group \( G \), we may assume that the extension \( \bar{\alpha} \) is \( G \)-invariant.
	Since \( \bar{\alpha} \) is \( \tau_\omega \)-continuous, there exists \( v \in X \) such that \( \bar{\alpha} = \omega(v, \cdot) \).
	As \( \bar{\alpha} \) and \( \omega \) are \( G \)-invariant, \( v \) has to be an element of \( X_G \).
	Clearly, \( \Gamma(v) = \alpha \), which shows that \( \Gamma \) is surjective.
\end{proof}

\section{Symplectic manifolds and momentum maps}
In this section, we will introduce the notion of symplectic structure on an infinite-dimensional manifold modelled on a locally convex space and we will study symplectic actions of infinite-dimensional Lie groups.
While some parts will be devoted to certain general properties of symplectic manifolds, our main focus lies on the momentum map geometry.
Although this topic is well-studied in finite dimensions and is the subject of many textbooks, we are not aware of any previous systematic treatments of the infinite-dimensional case, especially such going beyond the Banach realm.
Some aspects of the theory of Hamiltonian dynamics on symplectic Banach manifolds can be found in \parencite{ChernoffMarsden1974,MarsdenRatiuEtAl2002,RatiuMarsden2002}. 

\label{sec:momentumMaps}
\subsection{Group-valued momentum maps}
%Much of the content of this subsection represents joint work with T.~Ratiu and will be published as part of \parencite{DiezRatiu}.
We start by defining the notion of a symplectic structure on an infinite-dimensio\-nal manifold.
\begin{defn}
	Let \( M \) be a smooth manifold.
	A differential \( 2 \)-form \( \omega \) on \( M \) is called a \emphDef{symplectic form} if it closed and, for every \( m \in M \), the induced bilinear form \( \omega_m: \TBundle_m M \times \TBundle_m M \to \R \) is a symplectic structure on \( \TBundle_m M \) in the sense of \cref{defn::symplecticFunctionalAnalysis:definitionSymplectic}.
\end{defn}
By \cref{defn::symplecticFunctionalAnalysis:definitionSymplectic}, for a symplectic form \( \omega \) on \( M \), the associated map
\begin{equation}
	\omega_m^\flat: \TBundle_m M \to (\TBundle_m M)', \qquad v \mapsto \omega_m(v, \cdot)
\end{equation}
is injective for all \( m \in M \).
If \( \omega_m^\flat \) is a topological isomorphism, then we say that \( \omega \) is a strongly symplectic form.
According to \cref{prop::symplecticFunctionalAnalysis:strongSymplecticOnlyInNormed}, this may be the case only when \( M \) is a Banach manifold.

\begin{example}
	\label{ex::diffOneFormsOnSurface:generalizationToConnections}
	We continue with the series of \cref{ex::diffOneFormsOnSurface:symplecticStructure} and include from now on the following geometric structure.
	Let \( P \to M \) be a  \( \UGroup(1) \)-bundle on the closed surface \( M \).
	The space \( \ConnSpace(P) \) of connections on \( P \) is an affine space modeled on the Fréchet space \( \DiffFormSpace^1(M) \).
	The \( 2 \)-form \( \omega \) on \( \ConnSpace(P) \) defined by the pairing
	\begin{equation}
		\label{eq::diffOneFormsOnSurface:symplecticForm}
		\omega_A (\alpha, \beta) = \int_M \alpha \wedge \beta
	\end{equation}
	for \( A \in \ConnSpace(P) \) and \( \alpha, \beta \in \DiffFormSpace^1(M) \isomorph \TBundle_A \ConnSpace(P) \) is a symplectic form.
	Indeed, \( \omega \) is closed, because the right-hand side of~\eqref{eq::diffOneFormsOnSurface:symplecticForm} is independent of \( A \), and non-degeneracy follows from the same arguments as in \cref{ex::diffOneFormsOnSurface:symplecticStructure}.
\end{example}

Let \( (M, \omega) \) be a symplectic manifold and let \( \Upsilon: G \times M \to M \) be an action of a Lie group \( G \) on \( M \) by symplectic diffeomorphisms, \ie \( \Upsilon_g^* \omega = \omega \) for all \( g \in G \).
We will refer to this setting by saying that \( (M, \omega, G) \) is a \emphDef{symplectic \( G \)-manifold}.
In finite dimensions, the conserved quantities corresponding to the \( G \)-symmetry of the system are encoded in the momentum map, which is a map from \( M \) to the dual space of the Lie algebra \( \LieA{g} \) of \( G \).
To make sense of the notion of a momentum map in our infinite-dimensional setting, we first need to explain what we will understand by the dual space of the Lie algebra.
Guided by the theory of dual pairs, we say that a
\emphDef{dual pair} \( \kappa(\LieA{h}, \LieA{g}) \) consists of a locally convex space \( \LieA{h} \) and a weakly non-degenerate jointly continuous bilinear map \( \kappa: \LieA{h} \times \LieA{g} \to \R \).
In this case, we say that \( \LieA{h} \) is the \emphDef{\( \kappa \)-dual space} of \( \LieA{g} \) and write \( \LieA{g}^*_\kappa \equiv \LieA{h} \).
Note that, in general, \( \LieA{g}^*_\kappa \) is not the topological dual \( \LieA{g}' \) of \( \LieA{g} \), but it is realized as a subspace of \( \LieA{g}' \) using the map \( \LieA{g}^*_\kappa \ni \mu \mapsto \kappa(\mu, \cdot) \in \LieA{g}' \).
We emphasize that the dual space is not unique and the choice of a dual space and a pairing is part of the data, and can be adapted to the problem at hand.
\begin{defn}
	\label{def:momentumMap:definitionVectorValued}
	Let \( (M, \omega, G) \) be a symplectic \( G \)-manifold and let \( \kappa(\LieA{g}^*_\kappa, \LieA{g}) \) be a dual pair.
	A map \( J: M \to \LieA{g}^*_\kappa \) is called a \emphDef{momentum map} if 
	\begin{equation}
		\label{eq:momentumMap:definingVectorValued}
		\xi^* \contr \omega + \kappa(\dif J, \xi) = 0
	\end{equation}
	holds for all \( \xi \in \LieA{g} \), where \( \xi^* \) denotes the fundamental vector field induced by the action of the Lie algebra \( \LieA{g} \) on \( M \).
\end{defn}
%For every \( \xi \in \LieA{g} \), the map \( J_\xi: M \to \R \) defined by \( J_\xi(m) = \kappa(J(m), \xi) \) is called the \( \xi \)-component of \( J \).
\begin{example}[finite-dimensional case]
	If \( \LieA{g} \) is finite-dimensional, then the dual space is unique and coincides with the algebraic dual \( \LieA{g}^* \) of \( \LieA{g} \) relative to the canonical pairing \( \kappa: \LieA{g}^* \times \LieA{g} \to \R \).
	Thus, the notion of a momentum map with respect to \( \kappa(\LieA{g}^*, \LieA{g}) \) is the same as the usual definition in the finite-dimensional case.
\end{example}
In the following examples, the spaces involved may be infinite-dimensional.
\begin{example}[Lie algebra-valued momentum maps]
	Let \( \kappa: \LieA{g} \times \LieA{g} \to \R \) be a jointly continuous, non-degenerate, symmetric bilinear form.
	Hence, the \( \kappa \)-dual space of \( \LieA{g} \) is \( \LieA{g} \) itself, and this leads to the notion of a Lie algebra-valued momentum map.
	Although this concept can be found in the literature since the mid-seventies, it was recently formalized by \textcite[Definition~4.3]{NeebSahlmannEtAl2014}:
	a Lie algebra-valued momentum map is a smooth map \( J: M \to \LieA{g} \) such that, for all \( \xi \in \LieA{g} \), the component functions \( J_\xi: M \to \R \) defined by \( J_\xi (m) = \kappa(J(m), \xi) \) for \(m \in  M\) satisfy
	\begin{equation}
		\xi^* \contr \omega + \dif J_\xi = 0.
	\end{equation}
	It is immediately clear from the definition that such a Lie algebra-valued momentum map can be regarded as a \( \LieA{g} \)-valued momentum map with respect to the dual pair \( \kappa(\LieA{g}, \LieA{g}) \).
\end{example}

\begin{example}
	\label{ex:momentumMap:linearCase}
	Let \( (X, \omega) \) be a symplectic vector space endowed with a continuous linear action of the compact finite-dimensional Lie group \( G \).
	Assume that the symplectic form is preserved by the action, which in the linear setting simply means that
	\begin{equation}
		\omega(g \cdot x, g \cdot y) = \omega(x, y)
	\end{equation}
	for all \( x, y \in X \) and \( g \in G \).
	As a consequence, the Lie algebra action of \( \LieA{g} \) is skew-symmetric with respect to \( \omega \).
	%Although \( \LieA{g} \) is finite-dimensional and thus there is no ambiguity of its dual space, we continue to use the dual pair notation.
	A straightforward calculation shows that the quadratic map \( J: X \to \LieA{g}^* \) defined by
	\begin{equation}
		\kappa(J(x), \xi) = \frac{1}{2} \omega(x, \xi \ldot x),
	\end{equation}
	for \( \xi \in \LieA{g} \), is a momentum map for the \( G \)-action relative to the canonical dual pair \( \kappa(\LieA{g}^*, \LieA{g}) \).
	%Note that \( G \) is a finite-dimensional Lie group and so we do not need to pay special attention to functional analytic problems concerning the dual \( \LieA{g}^*_\kappa \) of \( \LieA{g} \) and the existence of adjoints.
\end{example}

\begin{example}
	\label{ex::diffOneFormsOnSurface:gaugeAction}
	Continuing in the setting of \cref{ex::diffOneFormsOnSurface:generalizationToConnections}, let \( P \to M \) be a principal \( \UGroup(1) \)-bundle on the closed surface \( M \).
	The group \( \GauGroup(P) \) of gauge transformations of \( P \) is identified with the space \( \sFunctionSpace(M, \UGroup(1)) \) and thus is a Fréchet Lie group with Lie algebra \( \GauAlgebra(P) = \sFunctionSpace(M) \).
	The natural pairing
	\begin{equation}
		\kappa(\alpha, \phi) = \int_M \phi \, \alpha,
	\end{equation}
	for \( \alpha \in \DiffFormSpace^2(M) \) and \( \phi \in \GauAlgebra(P) \), puts \( \GauAlgebra(P) \) in duality to \( \DiffFormSpace^2(M) \).

	The gauge group \( \GauGroup(P) \) acts on \( \ConnSpace(P) \) via gauge transformations
	\begin{equation}
		\GauGroup(P) \times \ConnSpace(P) \to \ConnSpace(P), \qquad (\lambda, A) \mapsto A - \difLog^R \lambda,
	\end{equation}
	where \( \difLog^R \lambda \in \DiffFormSpace^1(M) \) denotes the right logarithmic derivative of \( \lambda \in \sFunctionSpace(M, \UGroup(1)) \) defined by \( \difLog^R \lambda (v) = \tangent_m \lambda (v) \ldot \lambda^{-1}(m) \) for \( v \in \TBundle_m M \). 
	The infinitesimal action of \( \GauAlgebra(P) \) coincides with minus the exterior differential \( \dif: \sFunctionSpace(M, \R) \to \DiffFormSpace^1(M) \).
	The curvature map
	\begin{equation}
		\label{eq::diffOneFormsOnSurface:gaugeAction:momentumMap}
		\SectionMapAbb{J}: \ConnSpace(P) \to \DiffFormSpace^2(M), \qquad A \mapsto - F_A
	\end{equation}
	is the momentum map for this action with respect to the pairing \( \kappa \) introduced above.
	Indeed, we have \( \tangent_A \SectionMapAbb{J} = - \dif \) as \( F_{A + \alpha} = F_A + \dif \alpha \) for every \( \alpha \in \DiffFormSpace^1(M) \), and the calculation
	\begin{equation}\begin{split}
		\omega(\psi \ldot A, \alpha)
			&= \omega(- \dif \psi, \alpha) 
			\\
			&= - \int_M \dif \psi \wedge \alpha
			= \int_M \psi \wedge \dif \alpha
			\\
			&= \kappa(\dif \alpha, \psi)
			= - \kappa(\tangent_A \SectionMapAbb{J} (\alpha), \psi)
	\end{split}\end{equation}
	for \( \psi \in \sFunctionSpace(M) \) and \( \alpha \in \DiffFormSpace^1(M) \) verifies the momentum map relation~\eqref{eq:momentumMap:definingVectorValued}.
\end{example}

Even in finite dimensions, a momentum map for a symplectic action does not need to exist.
This is the case if the \( 1 \)-form \( \xi^* \contr \omega \) is only closed and not exact for some \( \xi \in \LieA{g} \).
Since \( \xi^* \contr \omega \) is closed, it gives rise to the period homomorphism
\begin{equation}
	\per_\xi: \sHomology^1(M, \Z) \to \R, \qquad \equivClass{\gamma} \mapsto \int_\gamma (\xi^* \contr \omega),
\end{equation}
where \( \gamma \) is a closed curve in \( M \).
One could argue that the topological data encoded in the period homomorphism is conserved by the action and such conservation laws should be encoded in the momentum map as well.
However, the classical momentum map takes values in a continuous vector space and thus there is no space to store (discrete) topological information such as the winding numbers of closed curves.
In order to capture these additional data, in \parencite{DiezRatiuAutomorphisms} the concept of a group-valued momentum map has been introduced.
At the heart of this generalized notion of a momentum map lies the observation that if the periods of \( \xi^* \contr \omega \) are integral, \ie \( \per_\xi(\equivClass{\gamma}) \in \Z \) for all closed curves \( \gamma \) in \( M \), then there exists a smooth map \( J_\xi: M \to \UGroup(1) \) such that the left logarithmic derivative of \( J_\xi \) equals \( \xi^* \contr \omega \).
The \( \UGroup(1) \)-valued component function \( J_\xi \) can be viewed as a generalized primitive of \( \xi^* \contr \omega \).
This extends the real-valued maps components of an ordinary momentum map.
The additional topological information is encoded in the winding number of \( J_\xi \).

In order to formalize these ideas, we say that a \emphDef{dual pair\footnotemark{} of Lie groups} \( \kappa(H, G) \) consists of a pair of Lie groups \( G \) and \( H \) and a jointly continuous, non-degenerate bilinear map \( \kappa: \LieA{h} \times \LieA{g} \to \R \).
\footnotetext{The notion of a dual pair of Lie groups should not be confused with the notion of a Howe pair.}
As for Lie algebras, we often write \( G^*_\kappa \defeq H \).
In the following, we assume for simplicity that \( G^*_\kappa \) is abelian, write its group operation as addition and use \( 0 \in G^*_\kappa \) for the identity element.
The reader is referred to \parencite{DiezRatiuAutomorphisms} for the general non-abelian case.
\begin{defn}
	\label{def:momentumMap:definitionGroupValued}
	Let \( (M, \omega, G) \) be a symplectic \(  G \)-manifold.
	A \emphDef{group-valued momentum map} is a tuple \( (J, \kappa) \), where \( \kappa(G^*_\kappa, G) \) is a dual pair of Lie groups with abelian \( G^*_\kappa \) and \( J: M \to G^*_\kappa \) is a smooth map satisfying
	\begin{equation}
		\label{eq::momentumMap:defEq}
		\xi^* \contr \omega + \kappa(\difLog J, \xi) = 0
	\end{equation}
	for all \( \xi \in \LieA{g} \), where \( \difLog J \in \DiffFormSpace^1(M, \LieA{g}^*_\kappa) \) denotes the (left) logarithmic derivative of \( J \) defined by \( \difLog J (v) = J(m)^{-1} \ldot \tangent_m J (v) \) for \( v \in \TBundle_m M \).
\end{defn}
The notion of a group-valued momentum map is inspired by the theory of Poisson Lie groups \parencite{LuWeinstein1990}.
However, we do not require \( G \) to be a Poisson Lie group.

\begin{example}
	\label{ex:momentumMap:classicalAsGroupValued}
	Let \( \kappa(\LieA{g}^*_\kappa, \LieA{g}) \) be a dual pair of Lie algebras.
	We can consider \( \LieA{g}^*_\kappa \) as a Lie group with group multiplication given by addition, and then \( \kappa(\LieA{g}^*_\kappa, G) \) is a dual pair of Lie groups, that is, \( G^*_\kappa \defeq \LieA{g}^*_\kappa \).
	%We refer to this setting by writing \( G^*_\kappa \defeq \LieA{g}^*_\kappa \).
	Moreover, a group-valued momentum map \( J: M \to \LieA{g}^*_\kappa \) relative to \( \kappa(\LieA{g}^*_\kappa, G) \) in the sense of \cref{def:momentumMap:definitionGroupValued} is the same as an ordinary momentum map relative to \( \kappa(\LieA{g}^*_\kappa, \LieA{g}) \) in the sense of \cref{def:momentumMap:definitionVectorValued}.
\end{example}
Thus, the notion of a group-valued momentum map is a generalization of the classical momentum map taking values in the dual space of the Lie algebra.
However, a group-valued momentum map might exist even if a classical momentum map does not exist.

\begin{example}
	Consider the action of \( \UGroup(1) \) on the torus \( T^2 = \UGroup(1) \times \UGroup(1) \) by multiplication in the first factor.
	This action is symplectic with respect to the natural volume form on \( T^2 \), but it does not have a classical momentum map.
	On the other hand, the projection onto the second factor yields a \( \UGroup(1) \)-valued momentum map \( J: T^2 \to \UGroup(1) \).
\end{example}

\begin{example}
	\label{ex::diffOneFormsOnSurface:pointedGaugeTransf}
    \NewDocumentCommand { \pnt } { }{
	    m_0
    }

	Continuing with the setting of \cref{ex::diffOneFormsOnSurface:gaugeAction}, let \( P \to M \) be a principal \( \UGroup(1) \)-bundle on the compact connected surface \( M \).
	Choose a point \( \pnt \in M \).
	The evaluation \( \ev_{\pnt}: \GauGroup(P) \to \UGroup(1) \) at \( \pnt \) is a morphism of Lie groups.
	The kernel of \( \ev_{\pnt} \) is called the group of pointed gauge transformations and is denoted by \( \GauGroup_{\pnt}(P) \). 
	This group is a normal, locally exponential Lie subgroup of \( \GauGroup(P) \) due to \parencite[Proposition~IV.3.4]{Neeb2006}.
	The Lie algebra \( \GauAlgebra_{\pnt}(P) \) of \( \GauGroup_{\pnt}(P) \) consists of all \( \phi \in \GauAlgebra(P) \) vanishing at \( \pnt \).
	The integration pairing\footnotemark{} \( \kappa \) realizes \( \GauAlgebra_{\pnt}(P)^* = \dif \DiffFormSpace^1(M) \) as a natural dual of \( \GauAlgebra_{\pnt}(P) \).
	\footnotetext{The map \( \phi \mapsto (\phi - \phi(\pnt), \phi(\pnt)) \) gives a topological decomposition \( \GauAlgebra(P) \isomorph \GauAlgebra_{\pnt}(P) \oplus \R \) which is dual to the Hodge isomorphism \( \DiffFormSpace^2(M) = \dif \DiffFormSpace^1(M) \oplus \deRCohomology^2(M) \).}
	However, the momentum map \( \SectionMapAbb{J} \) for the \( \GauGroup(P) \)-action defined in~\eqref{eq::diffOneFormsOnSurface:gaugeAction:momentumMap} does not yield a \( \GauAlgebra_{\pnt}(P)^* \)-valued momentum map for the \( \GauGroup_{\pnt}(P) \)-action, because the curvature \( F_A \) of \( A \in \ConnSpace(P) \) is in general not exact.
	Instead, the curvature is a closed \( 2 \)-form with integral periods, \ie \( F_A \in \clZDiffFormSpace^2(M) \).
	Thus, we get a well-defined map
	\begin{equation}
		\SectionMapAbb{J}_{\pnt}: \ConnSpace(P) \to \clZDiffFormSpace^2(M), \qquad A \mapsto - F_A
	\end{equation}
	and it can easily be verified that \( \SectionMapAbb{J}_{\pnt} \) is a \( \clZDiffFormSpace^2(M) \)-valued momentum map for the action of \( \GauGroup_{\pnt}(P) \).
	Here, we view the abelian Lie group \( \clZDiffFormSpace^2(M) \) with group multiplication given by addition as a dual group \( \GauGroup_{\pnt}(P)^*_\kappa \) of \( \GauGroup_{\pnt}(P) \), because the exact sequence
	\begin{equationcd}
		0
			\to[r]
		& \dif \DiffFormSpace^1(M)
			\to[r]
		& \clZDiffFormSpace^2(M)
			\to[r]
		& \deRCohomology^2(M, \Z)
			\to[r]
		& 0
	\end{equationcd}
	identifies the Lie algebra of \( \clZDiffFormSpace^2(M) \) with \( \dif \DiffFormSpace^1(M) = \GauAlgebra_{\pnt}(P)^* \).
	The \( \clZDiffFormSpace^2(M) \)-valued momentum map \( \SectionMapAbb{J}_{\pnt} \) remembers the topological type of \( P \) in form of the Chern class of \(  P \):
	\begin{equation}
		\chernClass(P) = \int_M F_A = - \int_M \SectionMapAbb{J}_{\pnt}(A) \in \Z
	\end{equation}
	for some \( A \in \ConnSpace(P) \).
	The group-valued momentum map \( \SectionMapAbb{J}_{\pnt} \) simplifies to a classical momentum map if and only if the bundle \( P \) is trivial.
	In summary, for non-trivial bundles \( P \), the group-valued momentum map \( \SectionMapAbb{J}_{\pnt} \) carries the topological structure of \( P \).
\end{example}
The example of the action of the group of pointed gauge transformations shows that valuable topological information is contained in a group-valued momentum map.
This is especially important for actions of diffeomorphism groups, which by their very nature are sensitive to topological properties of the manifold.
We refer the reader to \parencite{DiezRatiuAutomorphisms} for further information.

Despite its general nature, a group-valued momentum map still captures conserved quantities of the dynamical system, \ie, it has the \emph{Noether property} (see \parencite[Definition~4.3.1]{OrtegaRatiu2003}).
In finite dimensions, every Hamiltonian \( h \) on a symplectic manifold \( (M, \omega) \) induces a Hamiltonian flow.
This no longer holds true in an infinite-dimensional context.
For one thing, the map \( \omega^\flat: \TBundle M \to \CotBundle M \) induced by the symplectic form \( \omega \) on \( M \) is, in general, only injective and not surjective.
Hence, a Hamiltonian vector field \( X_h \) associated with a Hamiltonian \( h: M \to \R \) by the relation
\begin{equation}
	X_h \contr \omega + \dif h = 0	
\end{equation}
may not exist\footnote{For example, consider a symplectic vector space \( (X, \omega) \). The Hamiltonian vector field associated with a continuous linear functional \( h: X \to \R \) exists if and only if \( h \in X' \) lies in the image of \( \omega^\flat: X \to X' \).}.
Even if \( X_h \) exists, it may not have a unique flow.
The construction of a flow requires the solution of an ordinary differential equation on \( M \), which a priori is not guaranteed to exist and to be unique in infinite dimensions.
Nonetheless, for concrete examples, one can often show that a unique flow exists.
%For example, in the gauge theory context studied in \cref{sec:yangMillsHiggs} below, existence and uniqueness of the Hamiltonian flow is equivalent to the well-posedness of the Cauchy problem for the Yang--Mills-Higgs theory.
\begin{prop}[Noether's Theorem]
	\label{prop:momentumMap:noetherTheorem}
	Let \( (M, \omega, G) \) be a symplectic \( G \)-manifold and let \( \kappa(G^*_\kappa, G) \) be a dual pair of Lie groups.
	Assume that the \( G \)-action on \( M \) has a \( G^*_\kappa \)-valued momentum map \( J: M \to G^*_\kappa \).
	Let \( h \in \sFunctionSpace(M) \) be a smooth function for which the Hamiltonian vector field \( X_h \) exists and has a unique local flow.
	If \( h \) is \( G \)-invariant, then \( J \) is constant along the integral curves of \( X_h \). 
\end{prop}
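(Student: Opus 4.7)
The plan is to show that \( \tangent_m J \bigl(X_h(m)\bigr) = 0 \) at every point \( m \in M \) where \( X_h \) is defined; since the flow of \( X_h \) is tangent to the kernel of \( \tangent J \), this will imply that \( J \) is constant along integral curves. Because left translation on \( G^*_\kappa \) is a diffeomorphism, the vanishing of \( \tangent_m J(X_h) \) is equivalent to the vanishing of \( \difLog J (X_h) \in \LieA{g}^*_\kappa \), and the latter is what I will verify directly using the dual pair \( \kappa \).

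The core computation is a two-line chase through the two defining relations. On the one hand, the \( G \)-invariance of \( h \) means \( \dif h (\xi^*) = 0 \) for every \( \xi \in \LieA{g} \), so the Hamiltonian equation \( X_h \contr \omega = - \dif h \) gives
\begin{equation}
	\omega(X_h, \xi^*) = - \dif h(\xi^*) = 0.
\end{equation}
On the other hand, the momentum map equation~\eqref{eq::momentumMap:defEq} yields
\begin{equation}
	\kappa\bigl(\difLog J(X_h), \xi\bigr) = - \omega(\xi^*, X_h) = \omega(X_h, \xi^*) = 0
\end{equation}
for every \( \xi \in \LieA{g} \). Since \( \kappa: \LieA{g}^*_\kappa \times \LieA{g} \to \R \) is non-degenerate by the definition of a dual pair of Lie groups, this forces \( \difLog J(X_h) = 0 \), hence \( \tangent J(X_h) = 0 \).

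Given \( \tangent J(X_h) = 0 \), the assumed existence and uniqueness of the local flow \( \Phi_t \) of \( X_h \) allows one to differentiate \( t \mapsto J(\Phi_t(m)) \) and conclude it is constant on its domain of definition. There is essentially no obstacle here: both the invariance argument and the non-degeneracy argument are purely pointwise and do not use any topological subtleties specific to the infinite-dimensional setting, so the proof looks identical to the finite-dimensional classical case. The only point that might require a remark is that non-degeneracy of \( \kappa \) is used in its strong form (that \( \kappa(\mu, \cdot) = 0 \) for all \( \xi \in \LieA{g} \) implies \( \mu = 0 \)), which is precisely the condition built into \cref{def:momentumMap:definitionVectorValued} and its group analogue.
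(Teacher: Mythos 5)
Your proof is correct and follows essentially the same route as the paper's: combine the Hamiltonian equation with the \( G \)-invariance of \( h \) to get \( \omega(X_h, \xi^*) = 0 \), feed this into the momentum map relation to obtain \( \kappa(\difLog J(X_h), \xi) = 0 \) for all \( \xi \), and invoke non-degeneracy of \( \kappa \). The only cosmetic difference is the order in which the two defining relations are chained together, and your closing remark about which half of the non-degeneracy of \( \kappa \) is needed is accurate.
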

\begin{proof}
	Let \( \xi \in \LieA{g} \) and \( m \in M \).
	Using the defining equation for the momentum map, we have
	\begin{equation}\begin{split}
	\kappa\bigl((\difLog J)_m (X_h), \xi\bigr)
		&= - \omega_m \bigl(\xi^*, X_h\bigr) 
		= - (\dif h)_m (\xi^*) \\
		&= - \difFracAt{}{\varepsilon}{0} h(\exp(\varepsilon \xi) \cdot m)
		= 0
	\end{split}
	\end{equation}
	by \(G\)-invariance of \(h\).
	Since \( \xi \in \LieA{g} \) is arbitrary and the pairing \( \kappa \) is non-degenerate, we conclude 
	\( \difLog J (X_h) = 0 \).
	Hence, \( J \) is constant along integral curves of \( X_h \).
\end{proof}

Fix a dual pair \( \kappa(G^*_\kappa, G) \) of Lie groups.
Let \(  G \) act on the symplectic manifold \( (M, \omega) \) and assume that the action has a group-valued momentum map \( J: M \to G^*_\kappa \).
A natural question to ask is in which sense \( J \) is equivariant.
Recall that the coadjoint action is defined with respect to the duality pairing \( \kappa \) by
\begin{equation}
	\kappa\bigl(\CoAdAction_g \mu, \xi\bigr) = \kappa\bigl(\mu, \AdAction_{g^{-1}} \xi\bigr),
\end{equation}
for \( g \in G \), \( \xi \in \LieA{g} \) and \(\mu \in \LieA{g}^*_\kappa \).
In infinite dimensions, this relation only ensures uniqueness of \( \CoAdAction \) but not its existence, because \( \kappa \) is in general only weakly non-degenerate.
In the following, we will assume that \( \CoAdAction \) exists.
\begin{defn}
	Let \( \kappa(G^*_\kappa, G) \) be a dual pair of Lie groups.
	A left action \( \Upsilon: G \times G^*_\kappa \to G^*_\kappa \) of \( G \) on \( G^*_\kappa \) is called a \emphDef{coconjugation action} if it integrates the coadjoint action, that is, 
	\begin{equation}
		\difLog_{\eta} \Upsilon_g (\eta \ldot \mu) = \CoAdAction_{g} \mu
	\end{equation}
	holds for all \( g \in G \), \( \eta \in G^*_\kappa\) and \(\mu \in \LieA{g}^*_\kappa \), where \( \eta \ldot \mu \in \TBundle_\eta G^*_\kappa \) is the derivative at the identity of the (left) translation by \(\eta \in  G^*_\kappa\) on \( G^*_\kappa \) in the direction \( \mu \in  \LieA{g}^*_\kappa \) and \( \difLog_{\eta} \Upsilon_g (\eta \ldot \mu) \in \LieA{g}^*_\kappa \) denotes the (left) logarithmic derivative at \( \eta \) of the map \( \Upsilon_g: G^*_\kappa \to  G^*_\kappa \) in the direction \( \eta \ldot \mu \in \TBundle_\eta G^*_\kappa \).
	If, in addition, \( \Upsilon_g (\zeta + \eta) = \Upsilon_g (\zeta) + \Upsilon_g (\eta) \) holds for all \( \zeta, \eta \in G^*_\kappa \), then we say that the coconjugation action is \emphDef{standard}.

	For a given coconjugation action, we say that the group-valued momentum map \( J \) is \emphDef{equivariant} if it is \( G \)-equivariant as a map \( J: M \to G^*_\kappa \).
\end{defn}
\begin{example}
	The coadjoint representation is a standard coconjugation action of \( G \) on \( G^*_\kappa \defeq \LieA{g}^*_\kappa \), \cf \cref{ex:momentumMap:classicalAsGroupValued}.
	Moreover, every \( 1 \)-cocycle \( c: G \to \LieA{g}^*_\kappa \) defines a (non-standard) coconjugation action by
	\begin{equation}
		(g, \mu) \mapsto \CoAdAction_{g} \mu + c(g).
	\end{equation}
	 Recall that such affine actions play an important role for ordinary non-equivariant momentum maps (see \parencite[Definition~4.5.23]{OrtegaRatiu2003}).
\end{example}
Every standard coconjugation action \( \Upsilon \) of \( G \) on \( G^*_\kappa \) defines a Poisson Lie structure \( \Lambda: G^*_\kappa \times \LieA{g} \to \LieA{g}^*_\kappa \) on \( G^*_\kappa \) by
\begin{equation}
	\label{eq:momentumMap:poissonLie}
	\Lambda(\eta, \xi) = \eta^{-1} \ldot \tangent_e \Upsilon_\eta (\xi),
\end{equation}
where \( \Upsilon_\eta: G \to G^*_\kappa \) is the orbit map through \( \eta \in G^*_\kappa \).
Moreover, non-standard coconjugation actions are related to affine Poisson structures.
We refer to \parencite{DiezRatiuAutomorphisms} for more details on coconjugation actions and their relation to Poisson structures.
A group-valued momentum map is equivariant with respect to a given standard coconjugation action \( \Upsilon \) if and only if it is a Poisson map with respect to the induced Poisson structure \( \Lambda \).
This is essentially a consequence of the infinitesimal equivariance property 
\begin{equation}
	\label{eq:momentumMap:equivarianceInfinitesimal}
	\difLog_m J (\xi \ldot m) = J(m)^{-1} \ldot \bigl(\xi \ldot J(m)\bigr) = \Lambda(J(m), \xi),
\end{equation}
 see \parencite{DiezRatiuAutomorphisms} for details.
In the case when \( G^*_\kappa = \LieA{g}^*_\kappa \) endowed with the coadjoint representation as the standard coconjugation action, the Poisson Lie structure defined in~\eqref{eq:momentumMap:poissonLie} is simply given by
\begin{equation}
	\label{eq:momentumMap:poissonLie:usual}
	\Lambda(\eta, \xi) = \CoadAction_\xi \eta
\end{equation}
and the equivariance condition~\eqref{eq:momentumMap:equivarianceInfinitesimal} takes the usual form
\begin{equation}
	\label{eq:momentumMap:equivarianceInfinitesimal:usual}
	\tangent_m J (\xi \ldot m) = \CoadAction_\xi \bigl(J(m)\bigr).
\end{equation}

\subsection{Bifurcation Lemma}
\label{sec:momentumMap:bifurcationLemma}

Let \( (M, \omega, G) \) be a symplectic \( G \)-manifold with group-valued momentum map \( J: M \to G^*_\kappa \).
Throughout this section we assume that \( J \) is \( G \)-equivariant with respect to a given coconjugation action on \( G^*_\kappa \).
The momentum map relation~\eqref{eq::momentumMap:defEq} can be written in the form
\begin{equation}
 	\kappa\bigl(\difLog_m J (v), \xi \bigr) = \omega_m \bigl(v, \tangent_e \Upsilon_m (\xi) \bigr)
\end{equation}
for all \( m \in M \), \( v \in \TBundle_m M \) and \( \xi \in \LieA{g} \).
Phrased in the language of dual pairs, this identity shows that \( \difLog_m J \) is the adjoint of the infinitesimal action \( \tangent_e \Upsilon_m \) with respect to the dual pairs \( \omega(\TBundle_m M,\TBundle_m M) \) and \( \kappa(\LieA{g}^*_\kappa, \LieA{g}) \), \cf \parencite[Section~IV.2]{Schaefer1971}.
In particular, \( \tangent_e \Upsilon_m \) and \( \difLog_m J \) are continuous with respect to the weak topologies induced by the dual pairs \( \omega \) and \( \kappa \).
We can represent this situation diagrammatically as follows:
\begin{equationcd}[tikz=adjoint diagram]
	\TBundle_m M
		\arrow[d, phantom, "\times_{\mathrlap{\omega_m}}"]
		& \LieA{g}
		\to[l, swap, "\tangent_e \Upsilon_m"]
		\arrow[d, phantom, "\times_{\mathrlap{\kappa}}"] \\
	\TBundle_m M
		\to[r, swap, "\difLog_m J"]
		& \LieA{g}^*_\kappa.
\end{equationcd}
Taken with a grain of salt, the momentum map \( J \) \enquote{integrates} this diagram to
\begin{equationcd}[tikz=adjoint diagram]
	M
		\arrow[d, phantom, "\times_{\mathrlap{\omega}}"]
		& G
		\to[l, swap, "\Upsilon_m"]
		\arrow[d, phantom, "\times_{\mathrlap{\kappa}}"] \\
	M
		\to[r, swap, "J"]
		& G^*_\kappa.
\end{equationcd}
In this sense, we may view a (group-valued) momentum map as the symplectic adjoint of the group action.

The theory of adjoints allows us to generalize the following important result concerning the infinitesimal momentum map geometry to infinite dimensions (see, \eg, \parencite[Proposition~4.5.12]{OrtegaRatiu2003} for the finite-dimensional version).
The polar, or annihilator, with respect to the dual pair \( \kappa \) will be denoted by \( \perp \), \cf~\parencite[Section~II.8.2]{Jarchow1981}.
\begin{lemma}[Bifurcation Lemma]
	\label{prop:bifurcationLemma}
	Let \( (M, \omega, G) \) be a symplectic \( G \)-manifold with equivariant momentum map \( J: M \to G^*_\kappa \).
	Then, for every \( m \in M \), the following holds:
	\begin{thmenumerate}
		\item (weak version)
			\label{prop::bifurcationLemma:weak}
			\begin{subequations}\label{eq:bifurcationLemma:weak}\begin{align}
				\ker \difLog_m J &= (\LieA{g} \ldot m)^\omega, \\
				(\img \difLog_m J)^\perp &= \LieA{g}_m.
			\end{align}\end{subequations}
		\item (strong version)
			\label{prop::bifurcationLemma:strong}
			If, additionally, \( \LieA{g} \ldot m = (\LieA{g} \ldot m)^{\omega\omega} \) holds, then we have
			\begin{subequations}
			\begin{equation}
				\label{eq:bifurcationLemma:strong:ker}
				(\ker \difLog_m J)^\omega = \LieA{g} \ldot m
			\end{equation}
			and
			\begin{equation}
				\ker \difLog_m J \intersect (\ker \difLog_m J)^\omega = \LieA{g}_\mu \ldot m,
			\end{equation}
			where \( \mu = J(m) \in G^*_\kappa \).
			\sideRemark{The momentum map fails to be a submersion exactly at points with non-trivial stabilizer.}
			Moreover, if \( \img \difLog_m J = (\img \difLog_m J)^{\perp\perp} \), then
			\begin{equation}
				\label{eq:bifurcationLemma:strong:img}
				\img \difLog_m J = \LieA{g}_m^\perp.
				\qedhere		
			\end{equation}
			\end{subequations}
	\end{thmenumerate}
\end{lemma}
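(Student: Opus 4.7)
The plan rests on the observation emphasized just above the statement: the momentum map equation~\eqref{eq::momentumMap:defEq} can be rewritten as
\begin{equation*}
	\kappa\bigl(\difLog_m J(v), \xi\bigr) = \omega_m\bigl(v, \xi \ldot m\bigr),
\end{equation*}
which identifies \( \difLog_m J \) with the adjoint of the infinitesimal action \( \tangent_e \Upsilon_m \) with respect to the dual pairs \( \omega_m(\TBundle_m M, \TBundle_m M) \) and \( \kappa(\LieA{g}^*_\kappa, \LieA{g}) \). Combined with the weak non-degeneracy of both \( \omega_m \) and \( \kappa \), this identity translates kernel and image statements on one side directly into symplectic orthogonal and polar statements on the other.

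For part~\iref{prop::bifurcationLemma:weak}, I would note that \( v \in \ker \difLog_m J \) is equivalent, by non-degeneracy of \( \kappa \), to \( \kappa(\difLog_m J(v), \xi) = 0 \) for every \( \xi \in \LieA{g} \); the adjoint identity then rewrites this as \( \omega_m(v, \xi \ldot m) = 0 \) for every \( \xi \), i.e., \( v \in (\LieA{g} \ldot m)^\omega \). Exchanging the roles yields the second identity: \( \xi \in (\img \difLog_m J)^\perp \) is equivalent to \( \omega_m(v, \xi \ldot m) = 0 \) for all \( v \in \TBundle_m M \), which by non-degeneracy of \( \omega_m \) forces \( \xi \ldot m = 0 \), i.e., \( \xi \in \LieA{g}_m \).

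For part~\iref{prop::bifurcationLemma:strong}, taking the \( \omega \)-orthogonal of the first identity in~\eqref{eq:bifurcationLemma:weak} gives \( (\ker \difLog_m J)^\omega = (\LieA{g} \ldot m)^{\omega\omega} \), and the hypothesis \( (\LieA{g} \ldot m)^{\omega\omega} = \LieA{g} \ldot m \) yields~\eqref{eq:bifurcationLemma:strong:ker}. For the intersection formula, these identities reduce the problem to computing \( (\LieA{g} \ldot m) \cap \ker \difLog_m J \); the infinitesimal equivariance~\eqref{eq:momentumMap:equivarianceInfinitesimal} gives \( \difLog_m J(\xi \ldot m) = \Lambda(J(m), \xi) = J(m)^{-1} \ldot (\xi \ldot J(m)) \), which vanishes precisely when \( \xi \in \LieA{g}_\mu \), whence \( (\LieA{g} \ldot m) \cap \ker \difLog_m J = \LieA{g}_\mu \ldot m \). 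Finally, taking the \( \kappa \)-polar of the second identity in~\eqref{eq:bifurcationLemma:weak} and invoking \( (\img \difLog_m J)^{\perp\perp} = \img \difLog_m J \) yields~\eqref{eq:bifurcationLemma:strong:img}.

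The principal difficulty is conceptual rather than computational. The extra hypotheses in part~\iref{prop::bifurcationLemma:strong} cannot be dropped in general: by \cref{prop:sympleticFunctionalAnalysis:allClosedAreSymplecticallyClosedImpliesStrong}, a genuinely weakly symplectic space admits closed subspaces \( V \) with \( V^{\omega\omega} \neq V \), and an analogous pathology can affect the image of \( \difLog_m J \) inside \( \LieA{g}^*_\kappa \); thus the passage from weak to strong version is exactly where the infinite-dimensional obstructions enter. In finite dimensions, both double-polar conditions hold automatically and one recovers the classical Bifurcation Lemma without further hypotheses.
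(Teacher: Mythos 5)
Your proof is correct and takes essentially the same route as the paper's: both identify \( \difLog_m J \) as the symplectic adjoint of the infinitesimal action \( \tangent_e \Upsilon_m \) and then apply the standard kernel--image polar identities for adjoints between dual pairs (you spell these out directly where the paper cites \parencite[Proposition~IV.2.3]{Schaefer1971}), and the intersection formula is handled identically via the infinitesimal equivariance property~\eqref{eq:momentumMap:equivarianceInfinitesimal}.
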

\begin{proof}
	The formulae~\eqref{eq:bifurcationLemma:weak} follow from the general relation \( \ker T = (\img T^*)^\perp \) relating the image of a linear operator \( T \) and the kernel of its adjoint \( T^* \), see \parencite[Proposition~IV.2.3]{Schaefer1971}, applied to \( T = \difLog_m J \) and \( T = \tangent_e \Upsilon_m \), respectively.
	In a similar vein, the general identity \( (\img T)^{\perp\perp} = (\ker T^*)^\perp \) of \parencite[Proposition~IV.2.3]{Schaefer1971} implies~\eqref{eq:bifurcationLemma:strong:ker} and~\eqref{eq:bifurcationLemma:strong:img}.
	Using~\eqref{eq:bifurcationLemma:strong:ker}, we find
	\begin{equation}
		\ker \difLog_m J \intersect (\ker \difLog_m J)^\omega = \ker \difLog_m J \intersect \LieA{g} \ldot m \, .
	\end{equation}
	The equivariance property~\eqref{eq:momentumMap:equivarianceInfinitesimal} of \( J \) implies that \( \xi \ldot m \in \ker \difLog_m J \) if and only if \( \xi \ldot \mu = 0 \), \ie, \( \xi \in \LieA{g}_\mu \).
	Hence, \( \ker \difLog_m J \intersect \LieA{g} \ldot m = \LieA{g}_\mu \ldot m \).
\end{proof}
Recall from \cref{prop:symplecticFunctionalAnalysis:symplectiallyClosedIffDoubleOrthogonal} that the condition \( (\LieA{g} \ldot m)^{\omega\omega} = \LieA{g} \ldot m \) is equivalent to \( \LieA{g} \ldot m \) being symplectically closed.
In particular, by \cref{prop:symplecticFunctionalAnalysis:finiteDimSubspaceSymplectiallyClosed}, every infinitesimal orbit \( \LieA{g} \ldot m \) of the action of a finite-dimensional Lie group \( G \) is symplectically closed.

The Bifurcation Lemma is a helpful tool to study the bifurcations of Hamiltonian flows in the neighborhood of points with non-trivial stabilizer.
With view towards symplectic reduction, its main significance is the existence of a certain symplectic space normal to the orbit.
\begin{prop}
	\label{prop:bifurcationLemma:kernelMomentumMapIsSymplectic}
	Let \( (M, \omega, G) \) be a symplectic \( G \)-manifold with equivariant momentum map \( J: M \to G^*_\kappa \).
	Let \( m \in M \) and \( \mu = J(m) \in G^*_\kappa \).
	Assume that the stabilizer \( G_\mu \) of \( \mu \) is a Lie subgroup of \( G \) and that \( \LieA{g} \ldot m \) is symplectically closed.
	Moreover, assume that a slice \( S \) at \( m \) for the induced \( G_\mu \)-action exists. 
	Then,
	\begin{equation}
		\label{eq:bifurcationLemma:kernelMomentumMapIsSymplectic}
		E \defeq \TBundle_m S \intersect \ker \difLog_m J
	\end{equation}
	is a symplectic subspace of \( (\TBundle_m M, \omega_m) \) in the sense of \cref{defn:sympleticFunctionalAnalysis:symplecticSubspace}.
\end{prop}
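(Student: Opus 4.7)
The plan is to reduce the claim $E \cap E^\omega = \{0\}$ to the strong Bifurcation Lemma (\cref{prop::bifurcationLemma:strong}), via a splitting of $\ker \difLog_m J$ induced by the slice.

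First, I would establish the decomposition
\begin{equation}
    \ker \difLog_m J \,=\, E \,\oplus\, \LieA{g}_\mu \ldot m.
\end{equation}
Two ingredients enter here. By the infinitesimal equivariance relation~\eqref{eq:momentumMap:equivarianceInfinitesimal}, we have $\difLog_m J (\xi \ldot m) = \Lambda(\mu, \xi)$; for $\xi \in \LieA{g}_\mu$ this vanishes, so $\LieA{g}_\mu \ldot m \subseteq \ker \difLog_m J$. On the other hand, the defining property of the slice $S$ for the $G_\mu$-action gives the topological splitting $\TBundle_m M = \TBundle_m S \oplus \LieA{g}_\mu \ldot m$. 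Intersecting this decomposition with the (closed) subspace $\ker \difLog_m J$ and using that $\LieA{g}_\mu \ldot m$ is contained in it yields the desired splitting, since any $w = w_1 + w_2$ with $w_1 \in \TBundle_m S$, $w_2 \in \LieA{g}_\mu \ldot m$ in $\ker \difLog_m J$ forces $w_1 \in \TBundle_m S \cap \ker \difLog_m J = E$.

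Next, let $v \in E \cap E^\omega$. Using the splitting above, showing $v \in (\ker \difLog_m J)^\omega$ reduces to checking $\omega_m(v, \eta \ldot m) = 0$ for all $\eta \in \LieA{g}_\mu$. But the momentum map equation~\eqref{eq::momentumMap:defEq} gives
\begin{equation}
    \omega_m(v, \eta \ldot m) \,=\, -\kappa\bigl(\difLog_m J(v), \eta\bigr) \,=\, 0,
\end{equation}
since $v \in E \subseteq \ker \difLog_m J$. Hence $v \in \ker \difLog_m J \cap (\ker \difLog_m J)^\omega$.

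Now I would invoke the strong Bifurcation Lemma, which applies because $\LieA{g} \ldot m$ is symplectically closed: it yields $\ker \difLog_m J \cap (\ker \difLog_m J)^\omega = \LieA{g}_\mu \ldot m$, so $v \in \LieA{g}_\mu \ldot m$. Combined with $v \in \TBundle_m S$ and the transversality $\TBundle_m S \cap \LieA{g}_\mu \ldot m = \{0\}$ built into the slice property, we conclude $v = 0$, proving that $E$ is symplectic.

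The only delicate point is the topological decomposition $\TBundle_m M = \TBundle_m S \oplus \LieA{g}_\mu \ldot m$ at the level of the slice, together with the complementary inclusion $\LieA{g}_\mu \ldot m \subseteq \ker \difLog_m J$; these are exactly the hypotheses and the standing equivariance assumption, so no genuine obstacle arises. Note that we never need $E \oplus E^\omega = \TBundle_m M$ — which would fail in general in the weakly symplectic setting — only the weaker non-degeneracy $E \cap E^\omega = \{0\}$ required by \cref{defn:sympleticFunctionalAnalysis:symplecticSubspace}.
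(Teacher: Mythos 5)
Your proof is correct and follows essentially the same route as the paper: both arguments reduce to showing \( E \intersect E^\omega \subseteq (\ker \difLog_m J)^\omega \), then invoke the strong Bifurcation Lemma to land in \( \LieA{g}_\mu \ldot m \), and finish with the slice transversality \( \TBundle_m S \intersect \LieA{g}_\mu \ldot m = \set{0} \). The only (harmless) difference is in the intermediate step: you use the splitting \( \ker \difLog_m J = E \oplus \LieA{g}_\mu \ldot m \) together with a direct application of the momentum map relation, whereas the paper gets the same inclusion from the weak Bifurcation Lemma \( \ker \difLog_m J = (\LieA{g} \ldot m)^\omega \) and the polar-calculus identity \( (Y_1 + Y_2)^\omega = Y_1^\omega \intersect Y_2^\omega \).
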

\begin{proof}
	Using~\eqref{eq:bifurcationLemma:weak} and the general identity \( (Y_1 + Y_2)^\omega = Y_1^\omega \intersect Y_2^\omega \), \cf \parencite[Proposition~8.2.1]{Jarchow1981}, we obtain
	\begin{equation}\begin{split}
		E \intersect E^\omega
			&= \TBundle_m S \intersect \ker \difLog_m J \intersect E^\omega \\
			&= \TBundle_m S \intersect (\LieA{g} \ldot m)^\omega \intersect E^\omega \\
			&= \TBundle_m S \intersect (\LieA{g} \ldot m + E)^\omega.
	\end{split}\end{equation}
	Since \( \ker \difLog_m J \subseteq \LieA{g} \ldot m + E \), the anti-monotony of the symplectic orthogonal (for subsets \( A_1 \subseteq A_2 \), we have \( A_1^\omega \supseteq A_2^\omega \), see \parencite[Proposition~8.2.1]{Jarchow1981}), implies \( E \intersect E^\omega \subseteq (\ker \difLog_m J)^\omega \).
	Thus, using the Bifurcation \cref{prop::bifurcationLemma:strong}, we obtain
	\begin{equation}
		E \intersect E^\omega
			\subseteq \TBundle_m S \intersect \ker \difLog_m J \intersect (\ker \difLog_m J)^\omega
			= \TBundle_m S \intersect \LieA{g}_\mu \ldot m
			= \set{0},
	\end{equation}
	because \( S \) is a slice for the \( G_\mu \)-action.
	Hence, \( E \) is a symplectic subspace of \( (\TBundle_m M, \omega_m) \).
\end{proof}
By \( G \)-equivariance of \( J \), we have a chain complex
\begin{equationcd}[label=eq:bifurcationLemma:chainComplex]
	0
		\to[r]
	& \LieA{g}_\mu
		\to[r, "\tangent_e \Upsilon_m"]
	& \TBundle_m M
		\to[r, "\difLog_m J"]
	& \LieA{g}^*_\kappa
		\to[r]
	& 0 .
\end{equationcd}
Since, for every \( G_\mu \)-slice \( S \) at \( m \), \( \TBundle_m S \) is a topological complement of \( \LieA{g}_\mu \ldot m \) in \( \TBundle_m M \), the subspace \( E \) defined in~\eqref{eq:bifurcationLemma:kernelMomentumMapIsSymplectic} is identified with the middle homology group of this complex.
If the assumptions of the Strong Bifurcation Lemma hold, then the other homology groups are given by \( \LieA{g}_m \) and \( \LieA{g}_m^* \) according to~\eqref{eq:bifurcationLemma:strong:img}.
Thus, if~\eqref{eq:bifurcationLemma:chainComplex} is a Fredholm complex, then its Euler characteristic is given by
\begin{equation}
	\dim \LieA{g}_m - \dim E + \dim \LieA{g}_m^* = 2 \dim \LieA{g}_m - \dim E. 
\end{equation}
According to \cref{prop:bifurcationLemma:kernelMomentumMapIsSymplectic}, this number is always even.
As we will see below, the Euler characteristic of~\eqref{eq:bifurcationLemma:chainComplex} coincides (up to a sign) with the virtual dimension of the symplectic quotient. This observation is a first indication that the latter carries a symplectic structure.
\begin{remark}[Witt--Artin Decomposition]
	\label{rem:symplectic:bifurcationLemma:decomposition}
	Let us connect \cref{prop:bifurcationLemma:kernelMomentumMapIsSymplectic} to the classical Witt--Artin decomposition.
	In the setting of \cref{prop:bifurcationLemma:kernelMomentumMapIsSymplectic}, assume additionally that \( G_\mu \) is a split Lie subgroup of \( G \), that is, a Lie subgroup for which there exists a topological complement \( \LieA{q} \) of \( \LieA{g}_\mu \) in \( \LieA{g} \).
	Since \( S \) is a \( G_\mu \)-slice at \( m \), we have a topological decomposition
	\begin{equation}
		\TBundle_m M = \LieA{g}_\mu \ldot m \oplus \TBundle_m S.
	\end{equation}
	Assume that \( E \) has a topological complement \( F' \) in \( \TBundle_m S \).
	Since \( \LieA{q} \ldot m \intersect E = \set{0} \), \( \LieA{q} \ldot m \) is a subspace of \( F' \).
	Moreover, assume that \( \LieA{q} \ldot m \) has a topological complement \( F \) in \( F' \).
	Then we obtain the topological decomposition:
	\begin{equation}
		\label{eq:symplectic:bifurcationLemma:wittArtinDecomposition}
		\TBundle_m M = 
			\mathrlap{\overbrace{\phantom{\LieA{q} \ldot m \oplus \LieA{g}_\mu \ldot m}}^{\LieA{g} \ldot m}}
			\LieA{q} \ldot m \oplus
			\mathrlap{\underbrace{\phantom{\LieA{g}_\mu \ldot m \oplus X}}_{\ker \difLog_m J}}
			\LieA{g}_\mu \ldot m \oplus E \oplus F.
	\end{equation}
	For a classical momentum map, this decomposition is often referred to as the Witt--Artin decomposition (see \eg \parencite[Equation~10.2.11]{RudolphSchmidt2012}).
	We emphasize that, in addition to the conditions of \cref{prop:bifurcationLemma:kernelMomentumMapIsSymplectic}, the derivation of the Witt--Artin decomposition in infinite dimensions relied on several additional assumptions concerning the existence of topological complements. 

	Let us now pass to the corresponding decomposition of \( \LieA{g}^*_\kappa \).
	For this purpose, suppose that \( \LieA{g}_m \) is topologically complemented in \( \LieA{g}_\mu \) by a subspace \( \LieA{m} \), so that we get the topological decomposition
	\begin{equation}
		\LieA{g} = \LieA{q} \oplus \underbrace{\LieA{m} \oplus \LieA{g}_m}_{\LieA{g}_\mu}.
	\end{equation}
	If this decomposition is weakly continuous with respect to the dual pair \( \kappa \), then, by \parencite[Section~20.5]{Koethe1983}, we obtain a dual decomposition of \( \LieA{g}^*_\kappa \):
	\begin{equation}
		\LieA{g}^*_\kappa = \LieA{q}^* \oplus \LieA{m}^* \oplus \LieA{g}_m^*,
	\end{equation}
	where \( \LieA{q}^* \) is the annihilator of \( \LieA{g}_\mu \) in \( \LieA{g} \) and \( \LieA{m}^* \) is the annihilator of \( \LieA{g}_m \) in \( \LieA{g}_\mu^* \).
	Moreover, assume that \( \img \difLog_m J = (\img \difLog_m J)^{\perp\perp} \).
	Then, by \cref{prop::bifurcationLemma:strong}, we have \( \img \difLog_m J = \LieA{g}_m^\perp = \LieA{q}^* \oplus \LieA{m}^* \).
	Hence, in summary, we obtain the decomposition
	\begin{equation}
		\LieA{g}^*_\kappa = \underbrace{\LieA{q}^* \oplus \LieA{m}^*}_{\img \difLog_m J}{} \oplus \LieA{g}_m^*.
	\end{equation}
	Accordingly, \( \difLog_m J \) yields, by restriction, a bijection between \( \LieA{q} \ldot m \oplus F \) and \( \LieA{q}^* \oplus \LieA{m}^* \), \cf~\eqref{eq:symplectic:bifurcationLemma:wittArtinDecomposition}.
	Due to the equivariance property~\eqref{eq:momentumMap:equivarianceInfinitesimal}, the restriction of \( \difLog_m J \) to \( \LieA{q} \ldot m \) yields the map
	\begin{equation}
		I_\mu: \LieA{q} \ldot m \to \LieA{q}^* \oplus \LieA{m}^*, \qquad \xi \ldot m \mapsto \mu^{-1} \ldot (\xi \ldot \mu) = \Lambda (\mu, \xi).
	\end{equation}
	Since \( \LieA{q} \) is a complement of \( \LieA{g}_\mu \), the map \( I_\mu \) is injective.
	In the case when \( G^*_\kappa = \LieA{g}^*_\kappa \) is endowed with the coadjoint action as the coconjugation action, \( I_\mu \) is given by
	\begin{equation}
		I_\mu: \LieA{q} \ldot m \to \LieA{q}^* \oplus \LieA{m}^*, \qquad \xi \ldot m \mapsto \CoadAction_\xi \mu,
	\end{equation}
	see~\eqref{eq:momentumMap:equivarianceInfinitesimal:usual}.
	Then, \( I_\mu \) takes values in \( \LieA{q}^* \), because \( \CoadAction_\xi \mu \in \LieA{g}_\mu^\perp = \LieA{q}^* \) for every \( \xi \in \LieA{g} \).
	If \( \LieA{q} \) is finite-dimensional, then a dimension count shows that \( I_\mu \) yields an isomorphism between \( \LieA{q} \ldot m \) and \( \LieA{q}^* \).
	Moreover, in this case, \( \difLog_m J \) restricts to a bijection between \( F \) and \( \LieA{m}^* \).
	We do not know if this observation generalizes to infinite-dimensional \( \LieA{q} \) and/or to other coconjugation actions.
\end{remark}

\begin{example}
	\label{ex::diffOneFormsOnSurface:gaugeActionMomentumMap}
	For the action of the group of gauge transformations on the space of connections on a principal \( \UGroup(1) \)-bundle \( P \) discussed in \cref{ex::diffOneFormsOnSurface:gaugeAction}, we found that the infinitesimal action \( \dif: \DiffFormSpace^0(M) \to \DiffFormSpace^1(M) \) has \( \difLog_\alpha \SectionMapAbb{J} = \dif: \DiffFormSpace^1(M) \to \DiffFormSpace^2(M) \) as its adjoint.
	The \( L^2 \)-orthogonal Hodge decompositions
	\begin{align}
		\label{eq:diffOneFormsOnSurface:gaugeActionMomentumMap:hodge1}
		\DiffFormSpace^1(M) &= 
			\dif \sFunctionSpace(M) \oplus \dif^* \DiffFormSpace^2(M) \oplus \deRCohomology^1(M, \R), \\
		\DiffFormSpace^2(M) &= 
			\dif \DiffFormSpace^1(M) \oplus \deRCohomology^2(M, \R)
	\end{align}
	show that the closedness conditions \( \LieA{g} \ldot m = (\LieA{g} \ldot m)^{\omega\omega} \) and \( \img \difLog_m J = (\img \difLog_m J)^{\perp\perp} \) hold for the case under consideration, \cf \cref{prop:symplecticFunctionalAnalysis:orthogonalToSymplecticOrthogonal}.
	The identities~\eqref{eq:bifurcationLemma:strong:ker} and~\eqref{eq:bifurcationLemma:strong:img} of the Strong Bifurcation Lemma are equivalent to
	\begin{align}
		(\dif \sFunctionSpace(M) \oplus \deRCohomology^1(M, \R))^\omega &= \dif \sFunctionSpace(M), \\
		\dif \DiffFormSpace^1(M) &= \deRCohomology^0(M, \R)^\perp.
	\end{align}
	These identities can also be verified by direct inspection using Hodge theory.

	Let us now determine the symplectic subspace \( \SectionSpaceAbb{E} \) of \cref{prop:bifurcationLemma:kernelMomentumMapIsSymplectic} for this example.
	The orbit through \( A \in \ConnSpace(P) \) of the \( \GauGroup(P) \)-action is given by
	\begin{equation}
		\GauGroup(P) \cdot A = A + \clZDiffFormSpace^1(M),
	\end{equation}
	where \( \clZDiffFormSpace^1(M) \) denotes the space of closed \( 1 \)-forms with integral periods.
	According to the Hodge decomposition~\eqref{eq:diffOneFormsOnSurface:gaugeActionMomentumMap:hodge1}, a natural choice of a slice \( \SectionSpaceAbb{S} \) at \( A \) is given by \( \SectionSpaceAbb{S} = A + \SectionSpaceAbb{U} \), where \( \SectionSpaceAbb{U} \) is a sufficiently small neighborhood of \( 0 \) in \( \dif^* \DiffFormSpace^2(M) \oplus \deRCohomology^1(M, \R) \).
	For the subspace defined in~\eqref{eq:bifurcationLemma:kernelMomentumMapIsSymplectic}, we hence obtain 
	\begin{equation}
		\SectionSpaceAbb{E} = \TBundle_A \SectionSpaceAbb{S} \intersect \ker \difLog_A \SectionMapAbb{J}
			= \bigl(\dif^* \DiffFormSpace^2(M) \oplus \deRCohomology^1(M, \R)\bigr) \intersect \ker \dif
			\isomorph \deRCohomology^1(M, \R).
	\end{equation}
	It is a simple exercise to verify that \( \SectionSpaceAbb{E} \) is a symplectic subspace, in accordance with \cref{prop:bifurcationLemma:kernelMomentumMapIsSymplectic}.
	Moreover, the restriction of the symplectic structure \( \omega \) as given in~\eqref{eq::diffOneFormsOnSurface:symplecticForm} on \( \TBundle_A \bigl(\ConnSpace(P)\bigr) \) to \( \SectionSpaceAbb{E} \) coincides with the intersection form on \( \deRCohomology^1(M, \R) \):
	\begin{equation}
		\label{eq:diffOneFormsOnSurface:gaugeActionMomentumMap:symplecticStructureOnE}
		\bigl(\equivClass{\alpha}, \equivClass{\beta}\bigr) \mapsto \int_M \alpha \wedge \beta.
		\qedhere
	\end{equation}
\end{example}

\subsection{Normal form of a momentum map}
\label{sec:momentumMap:normalFormMomentumMap}

Starting with the work of \textcite{Arms1981} on the Yang--Mills equation and of \textcite{FischerMarsdenEtAl1980} on the Einstein equation, it became clear that the solution spaces of field theories have singularities at points with internal symmetry.
Shortly thereafter, a similar relation between internal symmetries and singularities of the momentum map was established in \parencite{ArmsMarsdenEtAl1981}.
The occurrence of these singularities of momentum map level sets was later explained by \textcite{Marle1983,Marle1985} and \textcite{GuilleminSternberg1984}, who proved normal form theorems for momentum maps.
These considerations were in a finite-dimensional setting or had a formal nature in the sense that the problems of the infinite-dimensional context were largely ignored.
The aim of this section is to use and refine the construction of the equivariant normal form in \cref{sec:normalFormEquivariantMap} to establish a normal form result for equivariant momentum maps in infinite dimensions.

At this point, the reader might want to recall the definition of an equivariant normal form, \cf  \cref{def:normalFormEquivariantMap:abstractNormalFormEquivariantMap,def:normalFormEquivariantMap:normalFormEquivariantMap}.
In a similar spirit, we define the concept of a normal form of an equivariant momentum map.
\begin{defn}
	\label{def:momentumMap:abstractMGSnormalForm}
	Let \( \LieA{g} \) be a Lie algebra and let \( \kappa(\LieA{g}^*_\kappa, \LieA{g}) \) be a dual pair.
	An \emphDef{abstract Marle--Guillemin--Sternberg (MGS) normal form} relative to these data is a tuple \( (H, X, \LieA{g}^*_\kappa, \hat{J}, J_\singularPart; \bar{\omega}) \), where
	\begin{enumerate}
		\item 
			\( H \) is a Lie group whose Lie algebra \( \LieA{h} \) is a Lie subalgebra of \( \LieA{g} \),
		\item 
			\( (H, X, \LieA{g}^*_\kappa, \hat{J}, J_\singularPart) \) is an abstract equivariant normal form in the sense of \cref{def:normalFormEquivariantMap:abstractNormalFormEquivariantMap} such that \( \kappa \) restricts to a dual pair \( \kappa(\coker, \LieA{h}) \) between the subspace \( \coker \subseteq \LieA{g}^*_\kappa \) (occurring in~\cref{def:normalFormEquivariantMap:abstractNormalFormEquivariantMap}) and \( \LieA{h} \subseteq \LieA{g} \),
		\item 
			\( \bar{\omega} \) is a closed \( H \)-invariant \( 2 \)-form\footnotemark{} on \( U \intersect \ker \) such that, at the point \( 0 \in U \), the form \( \bar{\omega}_{0} \) on \( \ker \) is symplectic, where \( U \) and \( \ker \) are as in \cref{def:normalFormEquivariantMap:abstractNormalFormEquivariantMap},
			\footnotetext{That is, \( \bar{\omega}_x \) is an antisymmetric \( H \)-invariant bilinear form on \( \ker \) for every \( x \in U \intersect \ker \).}
		\item
			the momentum map identity
			\begin{equation}
				\label{eq:momentumMap:MGSnormalForm:momentumMapRel}
				\bar{\omega}_{x} \bigl(\xi \ldot x, w \bigr) + \kappa\bigl(\tangent_{x} J_\singularPart (w), \xi \bigr) = 0
			\end{equation}
			holds for all \( x \in U \intersect \ker \), \( w \in \ker \) and \( \xi \in \LieA{h} \).
	\end{enumerate}
	An abstract MGS normal form is called \emphDef{strong} if \( \bar{\omega}_x = \bar{\omega}_0 \) holds and \( J_\singularPart \) satisfies
	\begin{equation}
		\label{eq:momentumMap:MGSnormalForm:quadraticMomentumMap}
		\kappa\bigl(J_\singularPart(x), \xi\bigr) = \frac{1}{2} \bar{\omega}_{0} (x, \xi \ldot x)
	\end{equation}
	for all \( x \in U \intersect \ker \) and \( \xi \in \LieA{h} \).
\end{defn}
At this point, the elements of an MGS normal form are not connected to a symplectic action on a symplectic manifold; such a connection will be established in \cref{def:momentumMap:MGSnormalForm}.

The important additional property of an abstract MGS normal form in comparison to an abstract equivariant normal form is the fact that the singular part \( J_\singularPart \) takes values in the \( \kappa \)-dual of the Lie algebra of \( H \) and that it is tamed by the \( 2 \)-form \( \bar{\omega} \) using the momentum map identity~\eqref{eq:momentumMap:MGSnormalForm:momentumMapRel}.
For a constant form \( \bar{\omega}_x = \bar{\omega}_0 \), the equation~\eqref{eq:momentumMap:MGSnormalForm:quadraticMomentumMap} is the integrated version of~\eqref{eq:momentumMap:MGSnormalForm:momentumMapRel}.
Note that we do not require \( \bar{\omega} \) to be symplectic away from the origin.
Nonetheless, this is automatically the case if \( \ker \) is finite-dimensional, because then the space of invertible operators is open in the space of all linear maps.
Moreover, in this case, one can use the Darboux Theorem to pass from an MGS normal form to a strong one in the following sense.
\begin{prop}
	\label{prop:momentumMap:upgradeMGSnormalFormToStrong}
	Let \( (H, X, \LieA{g}^*_\kappa, \hat{J}, J_\singularPart; \bar{\omega}) \) be an abstract MGS normal form relative to a dual pair \( \kappa(\LieA{g}^*_\kappa, \LieA{g}) \).
	If the subspace \( \ker \) occurring in \cref{def:normalFormEquivariantMap:abstractNormalFormEquivariantMap} is finite-dimensional, then there exists an \( H \)-equivariant local diffeomorphism \( \psi \) of \( X \) such that \( (H, X, \LieA{g}^*_\kappa, \hat{J}, J_\singularPart \circ \psi; \psi^* \bar{\omega}) \) is a strong MGS normal form.
\end{prop}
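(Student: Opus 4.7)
The plan is to flatten $\bar{\omega}$ to its value at the origin by an equivariant Darboux argument on the finite-dimensional subspace $\ker$, extend the resulting diffeomorphism trivially to all of $X$, and then verify that the transformed singular part automatically satisfies the quadratic identity~\eqref{eq:momentumMap:MGSnormalForm:quadraticMomentumMap}.

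First, I apply the equivariant Darboux theorem on $\ker$. Since $H$ is compact and acts linearly on the finite-dimensional space $\ker$, and $\bar{\omega}$ is a closed $H$-invariant $2$-form defined near $0$ that is non-degenerate at $0$, Moser's trick combined with averaging over $H$ produces an $H$-equivariant local diffeomorphism $\psi_1$ of $\ker$ with $\psi_1(0) = 0$, $\tangent_0 \psi_1 = \id_\ker$, and $\psi_1^* \bar{\omega} = \bar{\omega}_0$. I then extend this to an $H$-equivariant local diffeomorphism $\psi$ of $X = \ker \oplus \coimg$ by $\psi(x_1, x_2) \defeq (\psi_1(x_1), x_2)$; this is well defined because the decomposition is $H$-invariant, and clearly $\psi^* \bar{\omega} = \psi_1^* \bar{\omega} = \bar{\omega}_0$ (constant in $x$).

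Next, I check that the new tuple $(H, X, \LieA{g}^*_\kappa, \hat{J}, J_\singularPart \circ \psi; \psi^* \bar{\omega})$ is still an abstract MGS normal form. Writing $J'_\singularPart \defeq J_\singularPart \circ \psi$, the conditions $J'_\singularPart(0, x_2) = 0$ and $\tangent_{(0,0)} J'_\singularPart = 0$ follow immediately from $\psi(0, x_2) = (0, x_2)$ and $\tangent_0 \psi = \id$, while $H$-equivariance of $J'_\singularPart$ is inherited from $\psi$ and $J_\singularPart$. The momentum map identity~\eqref{eq:momentumMap:MGSnormalForm:momentumMapRel} for $(J'_\singularPart, \psi^*\bar{\omega})$ at a point $x$ follows from the original identity at $\psi(x)$ by pulling back both sides along $\tangent_x \psi$ and using $\tangent_x \psi (\xi \ldot x) = \xi \ldot \psi(x)$, which holds because $\psi$ is $H$-equivariant.

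It remains to verify the quadratic identity~\eqref{eq:momentumMap:MGSnormalForm:quadraticMomentumMap}. Fix $\xi \in \LieA{h}$ and define, on a neighborhood of $0$ in $\ker$, the smooth real-valued functions
\begin{equation}
	f_\xi(x) \defeq \kappa\bigl(J'_\singularPart(x), \xi\bigr), \qquad g_\xi(x) \defeq \tfrac{1}{2}\, \bar{\omega}_0(x, \xi \ldot x).
\end{equation}
The momentum map identity for $(J'_\singularPart, \bar{\omega}_0)$ gives $\tangent_x f_\xi (w) = - \bar{\omega}_0(\xi \ldot x, w) = \bar{\omega}_0(w, \xi \ldot x)$ for all $w \in \ker$. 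Differentiating $g_\xi$ directly and using the $H$-invariance of $\bar{\omega}_0$, which translates into skew-symmetry of the infinitesimal $\LieA{h}$-action, gives
\begin{equation}
	\tangent_x g_\xi(w) = \tfrac{1}{2} \bar{\omega}_0(w, \xi \ldot x) + \tfrac{1}{2} \bar{\omega}_0(x, \xi \ldot w) = \bar{\omega}_0(w, \xi \ldot x).
\end{equation}
Hence $f_\xi$ and $g_\xi$ have the same derivative and both vanish at $0$ (using $J_\singularPart(0, 0) = 0$), so they coincide on a connected neighborhood of $0$, proving~\eqref{eq:momentumMap:MGSnormalForm:quadraticMomentumMap}.

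The only nontrivial ingredient is the equivariant Darboux theorem, which however is entirely standard in the finite-dimensional compact-group setting; the remainder of the argument is bookkeeping involving naturality of pullbacks and the symmetry properties of invariant bilinear forms.
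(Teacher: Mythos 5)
Your proposal is correct and follows essentially the same route as the paper's proof: an equivariant Darboux theorem on the finite-dimensional subspace \( \ker \), extension by the identity on \( \coimg \), and then the quadratic identity~\eqref{eq:momentumMap:MGSnormalForm:quadraticMomentumMap}. The only (cosmetic) difference is that where the paper appeals to the uniqueness of momentum maps for a constant symplectic form in finite dimensions, you unpack that uniqueness argument explicitly by comparing the derivatives of \( f_\xi \) and \( g_\xi \); this is the same reasoning made concrete.
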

\begin{proof}
	Since \( \ker \) is finite-dimensional and \( \bar{\omega}_{0} \) is symplectic, we may shrink \( U \) in such a way that \( \bar{\omega}_{x} \) is a symplectic form on \( \ker \) for all \( x \in U \intersect \ker \).
	The classical Darboux Theorem for finite-dimensional symplectic manifolds yields a local diffeomorphism \( \psi \) of \( U \intersect \ker \) satisfying \( \psi (0) = 0 \) and \( \tangent_0 \psi = \id_{\ker} \) such that
	\begin{equation}
		\psi^* \bar{\omega} = \bar{\omega}_{0}.
	\end{equation}
	By extending \( \psi \) to the whole of \( X \) using the identity map on \( \coimg \), we may regard \( \psi \) as a local diffeomorphism of \( X \).
	As \( H \) is a compact Lie group, we may furthermore choose \( \psi \) to be \( H \)-equivariant.
	The momentum map relation behaves naturally with respect to equivariant symplectomorphisms and thus \( J_\singularPart \circ \psi \) is a momentum map for the linear \( H \)-action on \( \ker \) with respect to the constant symplectic form \( \bar{\omega}_{0} \).
	Since, in finite dimensions, the momentum map is unique (up to a constant which is fixed by the condition \( J_\singularPart \circ \psi(0) = J_\singularPart(0) = 0 \)), we have
	\begin{equation}
		\kappa\bigl(J_\singularPart \circ \psi(x), \xi \bigr) = \frac{1}{2} \bar{\omega}_{0} \bigl(x, \xi \ldot x\bigr)
	\end{equation}
	for every \( x \in \ker \) and \( \xi \in \LieA{h} \).
	In summary, \( J_\singularPart \circ \psi \) is in the strong MGS normal form according to~\eqref{eq:momentumMap:MGSnormalForm:quadraticMomentumMap}.
\end{proof}
For an abstract MGS normal form \( (H, X, \LieA{g}^*_\kappa, \hat{J}, J_\singularPart; \bar{\omega}) \) and a Lie group \( K \) with \( H \subseteq K \), we define the \emphDef{local MGS normal form} map \( \normalForm{J}: K \times_{H} U \to K \times_{H} \LieA{g}^*_\kappa \) by
\begin{equation}
	\label{eq:momentumMap:MGSnormalForm}
	\normalForm{J}\bigl(\equivClass{k, x_1, x_2}\bigr) \defeq \equivClass[\big]{k, \hat{J}(x_2) + J_\singularPart(x_1, x_2)}
\end{equation}
for \( k \in K \), \( x_1 \in U \intersect \ker \) and \( x_2 \in U \intersect \coimg \), where \( \ker, \coimg \) and \( U \) are as in \cref{def:normalFormEquivariantMap:abstractNormalFormEquivariantMap}.

\begin{defn}
	\label{def:momentumMap:MGSnormalForm}
	Let \( G \) be a Lie group with a dual pair \( \kappa(G^*_\kappa, G) \), and let \( (M, \omega, G) \) be a symplectic \( G \)-manifold with equivariant momentum map \( J: M \to G^*_\kappa \).
	Let \( m \in M \).
	Assume that the stabilizer \( G_\mu \) of \( \mu = J(m) \in G^*_\kappa \) is a Lie subgroup of \( G \).
	We say that \( J \) can be \emphDef{brought into the MGS normal form \( (H, X, \LieA{g}^*_\kappa, \hat{J}, J_\singularPart; \bar{\omega}) \)} at \( m \) if \( H = G_m \) and if there exists a linear slice \( S \) at \( m \) for the \( G_\mu \)-action, a \( G_m \)-equivariant diffeomorphism \( \iota_S: X \supseteq U \to S \subseteq M \) and a \( G_m \)-equivariant chart \( \rho: G^*_\kappa \supseteq V' \to V \subseteq \LieA{g}^*_\kappa \) at \( \mu \), which bring \( J \) into the local MGS normal form map \( \normalForm{J} \) according to \cref{def:normalFormEquivariantMap:normalFormEquivariantMap} and for which the restriction of \( \iota_S^* \omega \) to \( \ker \) coincides with \( \bar{\omega} \).
\end{defn}
If \( J \) can be brought into the MGS normal form \( (G_m, X, \LieA{g}^*_\kappa, \hat{J}, J_\singularPart; \bar{\omega}) \) at \( m \in M \), then according to \cref{def:normalFormEquivariantMap:normalFormEquivariantMap} there exists a commutative diagram of the type
\begin{equationcd}[label=eq:momentumMap:bringIntoNormalForm]
	M
		\to[r, "J"] 
	& G^*_\kappa 
	\\
	\mathllap{G_\mu \times_{G_m} X \supseteq {}} G_\mu \times_{G_m} U
		\to[r, "\normalForm{J}"]
		\to[u, "{\chi^\tube} \, \circ \, {(\id_{G_\mu} \times \iota_S)}"]
	& G_\mu \times_{G_m} V \mathrlap{{}\subseteq G_\mu \times_{G_m} \LieA{g}^*_\kappa,}
		\to[u, "\rho^\tube" swap]
\end{equationcd}
where \( \normalForm{J} \) was defined in~\eqref{eq:momentumMap:MGSnormalForm} with \( K = G_\mu \).

\begin{remark}
	Assume that the equivariant momentum map \( J: M \to G^*_\kappa \) can be brought into an MGS normal form \( (H, X, \LieA{g}^*_\kappa, \hat{J}, J_\singularPart; \bar{\omega}) \) at \( m \in M \) using the maps \( \chi^T: G_\mu \times_{G_m} S \to M \), \( \iota_S: X \supseteq U \to S \) and \( \rho: G^*_\kappa \supseteq V' \to V \subseteq \LieA{g}^*_\kappa \).
	Then, under the natural isomorphisms \( \tangent_0 \iota_S: X \to \TBundle_m S \) and \( \tangent_\mu \rho: \TBundle_\mu G^*_\kappa \to \LieA{g}^*_\kappa \), the Witt--Artin decomposition (\cf \cref{rem:symplectic:bifurcationLemma:decomposition}) yields the following identification of the spaces occurring in the MGS normal form:
	\begin{equation}
		\TBundle_m M = 
			\LieA{g}_\mu \ldot m
			\oplus
			\overbrace{
				\underbrace{E_{}}_{\ker}
				\oplus
				\underbrace{
					F
					\oplus
					\LieA{q} \ldot m
					}_{\coimg}
				}^{X}
		\end{equation}
	and
	\begin{equation}
		\LieA{g}^*_\kappa = \underbrace{\LieA{q}^* \oplus \LieA{m}^*_{}}_{\img} \oplus \underbrace{\LieA{g}_m^*}_{\coker}.
	\end{equation}
	Moreover, under these identifications, suppressing \( \iota_S \), the diagram~\eqref{eq:momentumMap:bringIntoNormalForm} takes the form
	\begin{equationcd}[label=eq:momentumMap:MGSnormalForm:bringIntoNormalForm]
		M
			\to[r, "J"]
			& G^*_\kappa
		\\
		G_\mu \times_{G_m} (E \oplus F \oplus \LieA{q} \ldot m)
			\to[u, "\chi^\tube"]
			\to[r, "\normalForm{J}"]
			& G_\mu \times_{G_m} \LieA{g}^*_\kappa \, ,
			\to[u, "\rho^\tube", swap]
	\end{equationcd}
	where \( \normalForm{J} \) is given by \( \normalForm{J}(\equivClass{g, e, f, \xi \ldot m}) = \equivClass{g, \hat{J}(f, \xi \ldot m) + J_\singularPart(e, f, \xi)} \).
	Furthermore, the restriction of the symplectic form \( \omega_m \) on \( \TBundle_m M \) to \( E \) is identified with the symplectic form \( \bar{\omega}_0 \) on \( \ker \).
\end{remark}

\begin{remark}[Classical Marle--Guillemin--Sternberg normal form]
	\label{rem:momentumMap:MGSnormalForm:comparison}
	The classical Marle--Guillemin--Sternberg normal form in finite dimensions (see, \eg, \parencite[Theorem~7.5.5]{OrtegaRatiu2003}) has a slightly different form and can be summarized in the following commutative diagram:
	\begin{equationcd}
		M
			\to[r, "J"]
			& \LieA{g}^*_\kappa
		\\
		G \times_{G_m} (E \times \LieA{m}^*)
			\to[u]
			\to[r, "J_{\textnormal{MGS}}"]
			& G \times_{G_m} \LieA{g}^*_\kappa \, ,
			\to[u, "\Lambda", swap] 
	\end{equationcd}
	where \( J_{\textnormal{MGS}} \) and \( \Lambda \) are defined by 
	\begin{equation}
		J_{\textnormal{MGS}}(\equivClass{g, e, \eta}) = \equivClass{g,  \eta + J_E (e)}
	\end{equation}
	and \( \Lambda(\equivClass{g, \nu}) = \CoAdAction_{g^{-1}} (\mu + \nu) \), respectively.
	Here, \( J_E: E \to \LieA{g}_m^* \) is the quadratic momentum map for the symplectic linear \( G_m \)-action on \( E \), see \cref{ex:momentumMap:linearCase}.

	The most important difference is that the classical MGS normal form splits off the \( G \)-action whereas the focus of our normal form lies on the \( G_\mu \)-action. 
	With view towards symmetry reduction, our formulation has advantages over the traditional approach, because in the context of symplectic reduction the group \( G_\mu \) and not \( G \) is the major player.
	In the classical MGS normal form \( \LieA{m}^* \) is identified with \( F \) via the map \( \LieA{m}^* \ni \nu \mapsto f \in F \) implicitly defined by
	\begin{equation}
		\kappa(\nu, \xi) = \omega_m (\xi \ldot m, f)
	\end{equation}
	for \( \xi \in \LieA{m} \).
	As already noted in \cref{rem:symplectic:bifurcationLemma:decomposition}, we do not know whether this isomorphism generalizes to infinite dimensions.
	Furthermore, in the classical MGS normal form the nonlinear term \( J_E \) only depends on points in \( \ker \) while our normal form is weaker in this regard, allowing the singular part \( J_\singularPart \) to additionally depend on points in \( \LieA{q} \) and \( F \).
	Finally, the classical Marle--Guillemin--Sternberg normal form brings both the symplectic structure and the momentum map simultaneously into a normal form, while we are mostly concerned with the momentum map.
	In fact, we do not have any control over the symplectic form in the \( \coimg \)-direction.
\end{remark}

The following result is of fundamental importance for the construction of an MGS normal form.
\begin{lemma}
	\label{prop:momentumMap:upgradeEquivNormalFormToMGS}
	Let \( (M, \omega, G) \) be a symplectic \( G \)-manifold with equivariant momentum map \( J: M \to G^*_\kappa \).
	Let \( m \in M \) be such that \( \LieA{g} \ldot m \) is symplectically closed.
	Assume that \( J \) can be brought into the \( G_\mu \)-equivariant normal form \( (G_m, X, \LieA{g}^*_\kappa, \hat{J}, J_\singularPart) \) at \( m \) using the maps \( \chi^\tube: G_\mu \times_{G_m} S \to M \) and \( \rho: G^*_\kappa \supseteq V' \to \LieA{g}^*_\kappa \) in the sense of \cref{def:normalFormEquivariantMap:normalFormEquivariantMap}.
	If, for all \( \nu \in V' \subseteq G^*_\kappa \), the left derivative
	\begin{equation}
		\tangentLeft_\nu \rho: \LieA{g}^*_\kappa \to \LieA{g}^*_\kappa, \qquad \eta \mapsto \tangent_\nu \rho (\nu \ldot \eta)
	\end{equation}
	of \( \rho \) at \( \nu \) restricts to the identity on \( \LieA{g}_m^* \), then \( J \) can be brought into the MGS normal form \( (G_m, X, \LieA{g}^*_\kappa, \hat{J}, J_\singularPart; \bar{\omega}) \), where \( \bar{\omega} = \restr{(\iota_S^* \omega)}{\ker} \).
\end{lemma}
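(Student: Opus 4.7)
The strategy is to verify the four pieces of data of an MGS normal form in Definition~\ref{def:momentumMap:abstractMGSnormalForm} using the equivariant normal form factorization \(\rho\circ J\circ\iota_S=\normalForm{J}\) together with the momentum-map equation~\eqref{eq::momentumMap:defEq} for \(J\) on \(M\). Since \(\bar\omega\) is prescribed as \(\restr{(\iota_S^*\omega)}{\ker}\), the work splits into checking structural properties of \(\bar\omega\), identifying \(\coker\) with \(\LieA{g}_m^*\), and pulling back the momentum-map equation of \(J\) to obtain~\eqref{eq:momentumMap:MGSnormalForm:momentumMapRel}.

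First I would dispatch the formal properties of \(\bar\omega\). Closedness and \(G_m\)-invariance follow from closedness and \(G\)-invariance of \(\omega\) together with \(G_m\)-equivariance of \(\iota_S\) and \(G_m\)-invariance of the subspace \(\ker\subseteq X\). For non-degeneracy of \(\bar\omega_0\), the canonical identifications in Definition~\ref{def:normalFormEquivariantMap:normalFormEquivariantMap} send \(\ker\) to the middle homology of the chain~\eqref{eq:bifurcationLemma:chainComplex}; because \(\LieA{g}\ldot m\) is symplectically closed, Proposition~\ref{prop:bifurcationLemma:kernelMomentumMapIsSymplectic} identifies this with the symplectic subspace \(E=\TBundle_m S\cap\ker\difLog_m J\), so \(\bar\omega_0\) agrees with \(\omega_m|_E\) and is thus symplectic. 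The same identifications, together with the Strong Bifurcation Lemma~\ref{prop::bifurcationLemma:strong}, realize \(\coker\) inside \(\LieA{g}^*_\kappa\) as a complement of \(\img\difLog_m J=\LieA{g}_m^\perp\) and identify it with \(\LieA{g}_m^*\), so that \(\kappa\) restricts to the required dual pair between \(\coker\) and \(\LieA{h}=\LieA{g}_m\).

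Next I would derive the momentum-map identity~\eqref{eq:momentumMap:MGSnormalForm:momentumMapRel}. Fix \(x\in U\cap\ker\), \(w\in\ker\), \(\xi\in\LieA{g}_m\), and write \(s\defeq\iota_S(x)\). From \(G_m\)-equivariance of \(\iota_S\) we get \(\tangent_x\iota_S(\xi\ldot x)=\xi\ldot s\), so the momentum-map equation for \(J\) at \(s\) gives
\begin{equation}
	\bar\omega_x(\xi\ldot x,w)=\omega_s\bigl(\xi\ldot s,\tangent_x\iota_S w\bigr)=-\kappa\bigl(\difLog_s J(\tangent_x\iota_S w),\,\xi\bigr).
\end{equation}
On the other hand, differentiating \(\rho\circ J\circ\iota_S=\normalForm{J}\) at \(x\) in the direction \(w\), using \(\hat J(0)=0\) so that \(\normalForm{J}\) reduces to \(J_\singularPart(\cdot,0)\) along \(\ker\), and writing \(\tangent_s J(v)=J(s)\ldot\difLog_s J(v)\), I obtain
\begin{equation}
	\tangent_x J_\singularPart(w)=\tangentLeft_{J(s)}\rho\bigl(\difLog_s J(\tangent_x\iota_S w)\bigr).
\end{equation}
Pairing with \(\xi\) and invoking the hypothesis on \(\tangentLeft_\nu\rho\) should then reduce the right-hand side to \(\kappa(\difLog_s J(\tangent_x\iota_S w),\xi)\), which combines with the previous display to yield~\eqref{eq:momentumMap:MGSnormalForm:momentumMapRel}.

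The main obstacle lies precisely in that last reduction. What the argument truly requires is that \(L\defeq\tangentLeft_{J(s)}\rho\) differs from the identity by a map into the annihilator \(\LieA{g}_m^\perp=\img\difLog_m J\), equivalently that the projection \(\pi_{\coker}\circ L\) equals \(\pi_{\coker}\) for the decomposition \(\LieA{g}^*_\kappa=\LieA{g}_m^*\oplus\LieA{g}_m^\perp\) from Remark~\ref{rem:symplectic:bifurcationLemma:decomposition}. The stated hypothesis only asserts that \(L\) restricts to the identity on \(\LieA{g}_m^*\); to bridge the gap, one exploits that \(L(\eta)=\tangent_x J_\singularPart(w)\) necessarily lies in \(\coker\simeq\LieA{g}_m^*\) by construction of the equivariant normal form. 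Decomposing \(\eta=\eta_1+\eta_2\) with \(\eta_1\in\LieA{g}_m^*\) and \(\eta_2\in\LieA{g}_m^\perp\), the identity \(L\eta_1=\eta_1\) together with \(L\eta\in\LieA{g}_m^*\) pins down \(L\eta_2\in\LieA{g}_m^*\); since \(\kappa(\eta_2,\xi)=0\) for \(\xi\in\LieA{g}_m\), the remaining point is that this forced compatibility of \(L\) with the Witt-Artin splitting is exactly what kills the unwanted term, closing the argument.
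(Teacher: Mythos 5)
Your overall strategy is the same as the paper's: pull the momentum map relation for \( J \) back through \( \iota_S \) and \( \rho \), use \( G_m \)-equivariance of \( \iota_S \), and invoke \cref{prop:bifurcationLemma:kernelMomentumMapIsSymplectic} (via the hypothesis that \( \LieA{g} \ldot m \) is symplectically closed) for non-degeneracy of \( \bar{\omega}_0 \). The problem is the last step, which you correctly identify as the crux but do not actually close. Writing \( L = \tangentLeft_{J(s)}\rho \) and \( \eta = \difLog_s J(\tangent_x\iota_S w) \), you need \( \kappa(L\eta,\xi) = \kappa(\eta,\xi) \) for \( \xi \in \LieA{g}_m \), while the hypothesis only controls \( L \) on \( \LieA{g}_m^* \) and \( \eta \) is not a priori in \( \LieA{g}_m^* \). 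Your proposed bridge — decompose \( \eta = \eta_1 + \eta_2 \), observe \( L\eta_2 \in \LieA{g}_m^* \), and claim that \( \kappa(\eta_2,\xi)=0 \) "kills the unwanted term" — does not work as stated: what must vanish is \( \kappa(L\eta_2,\xi) \), not \( \kappa(\eta_2,\xi) \), and since \( L\eta_2 \) lies in \( \LieA{g}_m^* \) and \( \kappa \) restricts to a \emph{non-degenerate} pairing of \( \LieA{g}_m^* \) with \( \LieA{g}_m \), the term \( \kappa(L\eta_2,\xi) \) can only vanish for all \( \xi \in \LieA{g}_m \) if \( L\eta_2 = 0 \) — which is precisely what remains to be shown and is nowhere established.

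The paper closes this by running the computation through the \emph{inverse} chart instead. From \( \difLog_{\rho(\nu)}(\rho^{-1}) \circ \tangentLeft_\nu\rho = \id_{\LieA{g}^*_\kappa} \) and the hypothesis on \( \tangentLeft_\nu\rho \), one gets that \( M \defeq \difLog_{\rho(\nu)}(\rho^{-1}) \) also restricts to the identity on \( \LieA{g}_m^* \). Now the chain rule applied to \( J \circ \iota_S = \rho^{-1}\circ J_\singularPart \) on \( U \intersect \ker \) gives \( \eta = M\bigl(\tangent_x J_\singularPart(w)\bigr) \), and here \( M \) is evaluated on \( \tangent_x J_\singularPart(w) \), which \emph{does} lie in \( \coker = \LieA{g}_m^* \) by construction of the equivariant normal form. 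Hence \( \kappa(\tangent_x J_\singularPart(w),\xi) = \kappa(M(\tangent_x J_\singularPart(w)),\xi) = \kappa(\eta,\xi) \), which is the identity you need. (Equivalently, in your notation: since \( L\eta \in \LieA{g}_m^* \) and \( M = L^{-1} \) is the identity there, \( \eta = M(L\eta) = L\eta \), so \( \eta \) in fact lies in \( \LieA{g}_m^* \) and the issue evaporates — but this one-line inversion is the missing idea.) A secondary quibble: your identification of \( \coker \) with \( \LieA{g}_m^* \) appeals to the part of the Strong Bifurcation \cref{prop:bifurcationLemma} that requires \( \img\difLog_m J \) to be weakly closed, which is not among the hypotheses of this lemma; that identification is better regarded as part of the data implicit in the hypothesis on \( \tangentLeft_\nu\rho \) (it is supplied in \cref{prop:momentumMap:MGSnormalForm}, where weak closedness is assumed).
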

\begin{proof}
	First, note that the commutative diagram~\eqref{eq:momentumMap:bringIntoNormalForm} yields by restriction the following commutative diagram:
	\begin{equationcd}[label=momentumMap:upgradeEquivNormalFormToMGS:commutativeOnS]
		M
			\to[r, "J"] 
		& G^*_\kappa 
		\\
		U \intersect \ker
			\to[r, "J_\singularPart"]
			\to[u, "\iota_S"]
		& \LieA{g}_m^* \,.
			\to[u, "\rho^{-1}" swap]
	\end{equationcd}
	Moreover, a straightforward calculation shows that \( \difLog_{\rho(\nu)} (\rho^{-1}) \circ \tangentLeft_\nu \rho = \id_{\LieA{g}^*_\kappa} \). 
	Thus, \( \difLog_{\rho(\nu)} (\rho^{-1}): \LieA{g}^*_\kappa \to \LieA{g}^*_\kappa \) restricts to the identity on \( \LieA{g}_m^* \) for all \( \nu \in V' \).
	Hence, for every \( x \in U \intersect \ker \), \( w \in \ker \) and \( \xi \in \LieA{g}_m \), we obtain
	\begin{equation}\begin{split}
		\kappa\bigl(\tangent_x J_\singularPart (w), \xi \bigr)
			&= \kappa\bigl( \difLog_{\rho \, \circ J \circ \iota_S (x)} (\rho^{-1}) \circ \tangent_x J_\singularPart (w), \xi \bigr)
			\\
			&= \kappa\bigl( \difLog_{\iota_S (x)} J \circ \tangent_x \iota_S (w), \xi \bigr)
			\\
			&= \omega_{\iota_S (x)} \bigl(\tangent_x \iota_S (w), \xi \ldot \iota_S (x) \bigr)
			\\
			&= \omega_{\iota_S (x)} \bigl(\tangent_x \iota_S (w), \tangent_x \iota_S (\xi \ldot x) \bigr)
			\\
			&= (\iota_S^* \omega)_{x} \bigl(w, \xi \ldot x \bigr),
	\end{split}\end{equation}
	where we used the momentum map relation and \( G_m \)-equivariance of \( \iota_S \).
	Thus, if we let \( \bar{\omega} = \restr{(\iota_S^* \omega)}{\ker} \), then~\eqref{eq:momentumMap:MGSnormalForm:momentumMapRel} holds.
	Moreover, \( \bar{\omega}_0 \) coincides with the restriction of \( \omega_m \) to the subspace \( E = \TBundle_m S \intersect \ker \difLog_m J \) (under the isomorphism \( \tangent_0 \iota_S: X \to \TBundle_m S \)).
	Hence, \( \bar{\omega}_0 \) is a symplectic form according to \cref{prop:bifurcationLemma:kernelMomentumMapIsSymplectic}.
	In summary, \( J \) is brought into the MGS normal form \( (G_m, X, \LieA{g}^*_\kappa, \hat{J}, J_\singularPart; \bar{\omega}) \) using the maps \( \chi^\tube \) and \( \rho \).
\end{proof}
The assumption in \cref{prop:momentumMap:upgradeEquivNormalFormToMGS} concerning the chart \( \rho \) is rather easy to satisfy.
For example, in the case when \( G^*_\kappa = \LieA{g}^*_\kappa \), we can simply take \( \rho \) to be the translation by \( \mu \).
More generally, if \( G^*_\kappa \) has an exponential map \( \exp_{G^*_\kappa} \) which is a local diffeomorphism, then \( \rho \defeq \exp_{G^*_\kappa}^{-1} \) satisfies \( \tangentLeft_\nu \rho = \id_{\LieA{g}^*_\kappa} \), because \( G^*_\kappa \) is abelian.
This covers the cases we are interested in and thus, for simplicity, we will assume in the sequel that \( \tangentLeft_\nu \rho = \id_{\LieA{g}^*_\kappa} \).
\begin{remark}
	The proof of \cref{prop:momentumMap:upgradeEquivNormalFormToMGS} shows that the momentum map relation~\eqref{eq:momentumMap:MGSnormalForm:momentumMapRel} holds even for all \( x \in U \) and not just \( x \in U \intersect \ker \) (with \( \bar{\omega} \) being a \( 2 \)-form on \( U \)).
	If \( \ker \) is finite-dimensional, this observation can be combined with a slightly modified version of \cref{prop:momentumMap:upgradeMGSnormalFormToStrong} to show that \( J_\singularPart \) satisfies
	\begin{equation}
		\kappa\bigl(J_\singularPart(x_1, x_2), \xi\bigr) = \frac{1}{2} \bar{\omega}_{(0, x_2)} (x_1, \xi \ldot x_1)
	\end{equation}
	for all \( (x_1, x_2) \in U \) and \( \xi \in \LieA{g}_m \).
	Here, we used the property \( J_\singularPart(0, x_2) = 0 \) coming from the equivariant normal form to fix the constant of integration. 
	We see that \( J_\singularPart(x_1, x_2) \) is quadratic in \( x_1 \) for all \( x_2 \).
	In comparison to the classical MGS normal form, the only difference is that the quadratic form \( x_1 \mapsto \bar{\omega}_{(0, x_2)} (\xi \ldot x_1, x_1) \) on \( \ker \) still depends on \( x_2 \).
\end{remark}

With the help of \cref{prop:momentumMap:upgradeEquivNormalFormToMGS}, we can upgrade an equivariant normal form of \( J \) in the sense of \cref{def:normalFormEquivariantMap:normalFormEquivariantMap} to an MGS normal form.
\begin{thm}[Marle--Guillemin--Sternberg Normal Form]
	\label{prop:momentumMap:MGSnormalForm}
	Let \( (M, \omega, G) \) be a symplectic \( G \)-manifold with equivariant momentum map \( J: M \to G^*_\kappa \).
	Let \( m \in M \) and \( \mu = J(m) \in G^*_\kappa \).
	Assume that there exists a chart \( \rho: G^*_\kappa \supseteq V' \to \LieA{g}^*_\kappa \) at \( \mu \) which linearizes the \( G_m \)-action on \( G^*_\kappa \) and which satisfies \( \tangentLeft_\nu \rho = \id_{\LieA{g}^*_\kappa} \) for every \( \nu \in V' \).
	Moreover, assume that \( \LieA{g} \ldot m \) is symplectically closed and that the image of \( \difLog_m J \) is weakly closed.
	If \( J \) satisfies the assumptions of \cref{prop:normalFormEquivariantMap:abstract} (or of one of the other equivariant normal form theorems of \cref{sec:normalFormEquivariantMap}), then \( J \) can be brought into an MGS normal form.
\end{thm}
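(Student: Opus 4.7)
The plan is to combine one of the equivariant normal form theorems from \cref{sec:normalFormEquivariantMap} with the upgrade lemma \cref{prop:momentumMap:upgradeEquivNormalFormToMGS}, while using the Strong Bifurcation Lemma to match the abstract cokernel with the concrete space \( \LieA{g}_m^* \).

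First, I would apply whichever of \cref{prop:normalFormEquivariantMap:abstract,prop:normalFormEquivariantMap:tame,prop:normalFormEquivariantMap:elliptic} is compatible with the hypotheses to the \( G \)-equivariant map \( J \) at \( m \). This yields a \( G_\mu \)-slice \( S \) at \( m \), a \( G_m \)-equivariant diffeomorphism \( \iota_S: X \supseteq U \to S \) and a \( G_m \)-equivariant chart \( \rho': G^*_\kappa \supseteq V' \to Y \) at \( \mu \) bringing \( J \) into the equivariant normal form \( (G_m, X, Y, \hat{J}, J_\singularPart) \). The chart \( \rho' \) is provided by the normal form construction only as a linearization of the \( G_m \)-action; however, by the hypothesis of the theorem, I may post-compose with the linearizing chart \( \rho \) satisfying \( \tangentLeft_\nu \rho = \id_{\LieA{g}^*_\kappa} \), so that the target space is identified with \( Y = \LieA{g}^*_\kappa \) carrying the linearized \( G_m \)-action.

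Second, I need to identify the abstract subspace \( \coker \) in the decomposition \( Y = \coker \oplus \img \) with \( \LieA{g}_m^* \), so that \( \kappa \) restricts to a dual pair \( \kappa(\coker, \LieA{g}_m) \) as required by \cref{def:momentumMap:abstractMGSnormalForm}. The decomposition provided by the equivariant normal form gives \( \img = \img(\difLog_m J) \). Since \( \img(\difLog_m J) \) is assumed to be weakly closed (\ie \( \img(\difLog_m J)^{\perp\perp} = \img(\difLog_m J) \)) and \( \LieA{g} \ldot m \) is symplectically closed, the Strong Bifurcation \cref{prop::bifurcationLemma:strong} yields \( \img(\difLog_m J) = \LieA{g}_m^\perp \). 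Hence \( \coker \) is a \( G_m \)-equivariant topological complement of \( \LieA{g}_m^\perp \) in \( \LieA{g}^*_\kappa \), and the natural identification \( \LieA{g}^*_\kappa / \LieA{g}_m^\perp \isomorph \LieA{g}_m^* \) realizes \( \coker \) as the \( \kappa \)-dual of \( \LieA{g}_m \). Since the normal form construction produces \( \coker \) up to a choice of \( G_m \)-invariant complement, I would fix this choice to be one inside \( \LieA{g}^*_\kappa \) that pairs non-degenerately with \( \LieA{g}_m \) via \( \kappa \); compactness of \( G_m \) and averaging ensure this can be made \( G_m \)-equivariant.

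Third, with the equivariant normal form in hand and the chart \( \rho \) satisfying \( \tangentLeft_\nu \rho = \id_{\LieA{g}^*_\kappa} \), I apply \cref{prop:momentumMap:upgradeEquivNormalFormToMGS} directly. It produces the momentum map relation~\eqref{eq:momentumMap:MGSnormalForm:momentumMapRel} for \( \bar{\omega} = \restr{(\iota_S^* \omega)}{\ker} \). Closedness of \( \bar{\omega} \) is inherited from \( \omega \), \( G_m \)-invariance from the \( G_m \)-equivariance of \( \iota_S \), and non-degeneracy of \( \bar{\omega}_0 \) on \( \ker \) from \cref{prop:bifurcationLemma:kernelMomentumMapIsSymplectic} (which uses exactly the symplectic closedness of \( \LieA{g} \ldot m \)). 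Together with the identification of \( \coker \) with \( \LieA{g}_m^* \) from the previous step, all items of \cref{def:momentumMap:abstractMGSnormalForm} are verified, and the commutative diagram~\eqref{eq:momentumMap:bringIntoNormalForm} holds by construction via \( \iota_S \) and \( \rho \); the last compatibility condition in \cref{def:momentumMap:MGSnormalForm} holds by the very definition of \( \bar{\omega} \).

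The main obstacle is the second step: the normal form theorems yield the decomposition \( Y = \coker \oplus \img \) in abstract form, but for an MGS normal form one needs \( \coker \) realized concretely as \( \LieA{g}_m^* \) inside \( \LieA{g}^*_\kappa \) and paired with \( \LieA{g}_m \) by (the restriction of) \( \kappa \). This is exactly the place where the weak closedness assumption on \( \img(\difLog_m J) \) is consumed, via the strong version of the Bifurcation Lemma. Everything else is either already packaged in the equivariant normal form theorem or is supplied by \cref{prop:momentumMap:upgradeEquivNormalFormToMGS,prop:bifurcationLemma:kernelMomentumMapIsSymplectic}.
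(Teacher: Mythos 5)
Your overall strategy coincides with the paper's: obtain an equivariant normal form from \cref{prop:normalFormEquivariantMap:abstract} (or its variants), use the Strong Bifurcation \cref{prop::bifurcationLemma:strong} together with the weak closedness of \( \img \difLog_m J \) to identify the abstract cokernel with \( \LieA{g}_m^* \), and then invoke \cref{prop:momentumMap:upgradeEquivNormalFormToMGS}. Your second step, in particular, matches the paper's use of \( \img \difLog_m J = \LieA{g}_m^\perp \) to realize \( \coker \) as the \( \kappa \)-dual of \( \LieA{g}_m \).

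However, there is a genuine gap in your third step. The chart on \( G^*_\kappa \) that actually appears in the commutative diagram~\eqref{eq:normalFormEquivariantMap:bringIntoNormalForm} produced by the equivariant normal form theorem is \emph{not} the given linearizing chart \( \rho \); it is \( \rho \) composed with the deformation \( \phi \) of the target (the map~\eqref{eq:normalFormMap:banach:deformTarget} from the construction in \parencite{DiezRudolphKuranishi}), which absorbs the nonlinear part of \( J \) in the \( \img \)-direction. The hypothesis of \cref{prop:momentumMap:upgradeEquivNormalFormToMGS} must be verified for this \emph{deformed} chart: one needs that its left derivative still restricts to the identity on \( \LieA{g}_m^* \). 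You cannot simply ``apply \cref{prop:momentumMap:upgradeEquivNormalFormToMGS} directly'' with \( \rho \), because the normal form identity \( \rho \circ \restr{J}{S} \circ \iota_S = \normalForm{J} \) fails for \( \rho \) itself; and your suggestion in the first step to ``post-compose with the linearizing chart \( \rho \)'' conflates the input chart of the construction with its output and does not repair this. The missing verification is supplied in the paper by inspecting the explicit formula for \( \phi \): its derivative \( \tangent_{(y_1,y_2)}\phi \) restricts to the identity on \( \coker T \) for all points in its domain, and since \( \coker T \) is identified with \( \LieA{g}_m^* \) by the Strong Bifurcation Lemma, the deformed chart \( \rho \circ \phi^{-1} \) does satisfy the hypothesis of \cref{prop:momentumMap:upgradeEquivNormalFormToMGS}. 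Without this computation the argument does not close.
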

\begin{proof}
	\Cref{prop:normalFormEquivariantMap:abstract} (or its other variations in \cref{sec:normalFormEquivariantMap}) shows that \( J \) can be brought into an equivariant normal form in a neighborhood of \( m \) by deforming the chart \( \rho \) on \( G^*_\kappa \) using the local diffeomorphism \( \phi \) defined in~\parencite[Equation~3.7]{DiezRudolphKuranishi}.
	For completeness, we sketch the construction of this diffeomorphism.
	By choosing a chart on the \( G_\mu \)-slice \( S \), we can locally identify \( S \) with a neighborhood \( U \) of \( 0 \) in a locally convex space \( X \) and think of \( J \) as a smooth map \( J: U \to \LieA{g}^*_\kappa \).
	Then, \( \difLog_m J \) induces a linear map \( T: X \to \LieA{g}^*_\kappa \).	
	Since \( T \) is regular by assumption, there exist topological decompositions \( X = \ker T \oplus \coimg T \) and \( Y = \coker T \oplus \img T \).
	Moreover, the restriction \( \hat{T}: \coimg T \to \img T \) of \( T \) is a topological isomorphism.
	Define the smooth map \( \psi: X \supseteq U \to X \) by 
	\begin{equation}
		\label{eq:normalFormMap:banach:deformDomain}
	 	\psi(x_1, x_2) = \bigl(x_1, \hat{T}^{-1} \circ \pr_{\img T} \circ J(x_1, x_2)\bigr),
	\end{equation}
	with \( x_1 \in \ker T \) and \( x_2 \in \coimg T \).
	The assumptions ensure that \( \psi: U \to \psi(U) \) is a diffeomorphism after possibly shrinking \( U \).
	Let \( V = \rho(V') \subseteq \LieA{g}^*_\kappa \) with \( V' \) as in the assumptions of the theorem.
	Define the smooth map \( \phi: Y \supseteq V \to Y \) by
	\begin{equation}
		\label{eq:normalFormMap:banach:deformTarget}
		\phi(y_1, y_2) = \Bigl(y_1 + \pr_{\coker T} \circ J \circ \psi^{-1}(0, \hat{T}^{-1} y_2), y_2\Bigr)
	\end{equation}
	with \( y_1 \in \coker T \) and \( y_2 \in \img T \).
	A simple inspection shows that the restriction of \( \tangent_{(y_1, y_2)} \phi: Y \to Y \) to \( \coker T \) is the identity map for all \( (y_1, y_2) \in V \).
	Since \( \img \difLog_m J = \img T \) is a weakly closed subspace of \( \LieA{g}^*_\kappa \), the Strong Bifurcation \cref{prop::bifurcationLemma:strong} shows that in our setting \( \coker T \) is identified with \( \LieA{g}_m^* \).
	Hence, the chart \( \rho' = \rho \circ \phi^{-1} \) satisfies the assumptions of \cref{prop:momentumMap:upgradeEquivNormalFormToMGS}, which implies that \( J \) can be brought into an MGS normal form.
\end{proof}

We note that, in finite dimensions and for classical momentum maps, all assumptions of \cref{prop:momentumMap:MGSnormalForm} are met automatically except for properness of the action.
\begin{remark}
	\label{rem:momentumMap:MGSnormalForm:notDarboux}
	The proof of \cref{prop:momentumMap:MGSnormalForm} is constructive in the sense that the diffeomorphisms bringing \( J \) into the MGS normal form have explicit and relatively simple expressions.
	In contrast, the usual proof of the classical MGS normal form theorem relies on the relative Darboux Theorem, which makes it difficult\footnote{Even in simple cases, it is usually impossible to integrate in closed-form the differential flow equation underlying Moser's method.} to determine the deforming diffeomorphism (both analytically as well as numerically).
	Thus, one would expect that our proof of the MGS normal form theorem can be used to design new numerical discretization algorithms which preserve the momentum map geometry. 
	This will be explored in further work.
\end{remark}

Recall from \cref{prop:bifurcationLemma:kernelMomentumMapIsSymplectic} that \( \ker \isomorph E \) can be identified with the middle homology group of the complex
\begin{equationcd}
	0 \to[r] & \LieA{g}_\mu \to[r, "\tangent_e \Upsilon_m"] & \TBundle_m M \to[r, "\difLog_m J"] & \LieA{g}^*_\kappa \to[r] & 0,
\end{equationcd}
where \( \Upsilon_m: G \to M \) denotes the orbit map as before.
If this complex is Fredholm, then \( \ker \isomorph E \) is finite-dimensional and every MGS normal form can be upgraded to a strong MGS normal form according to \cref{prop:momentumMap:upgradeMGSnormalFormToStrong}.
This is, in particular, the case for elliptic actions so that \cref{prop:momentumMap:MGSnormalForm,prop:normalFormEquivariantMap:elliptic} yield the following MGS normal form theorem.
For the definition of geometric tame Fréchet manifolds, see the remark before \cref{prop:normalFormEquivariantMap:elliptic}.
\begin{thm}[MGS Normal Form --- elliptic version]
	\label{prop:momentumMap:MGSnormalForm:elliptic}
	Let \( G \) be a tame Fréchet Lie group, let \( G^*_\kappa \) be a geometric dual Lie group and let \( (M, \omega, G) \) be a symplectic geometric tame Fréchet \( G \)-manifold with equivariant momentum map \( J: M \to G^*_\kappa \).
	Let \( m \in M \) and \( \mu = J(m) \).
	Assume that the following conditions hold:
	\begin{thmenumerate}
		\item
			The stabilizer subgroup \( G_\mu \) of \( \mu \) is a geometric tame Fréchet Lie subgroup of \( G \).
		\item 
			The induced \( G_\mu \)-action on \( M \) is proper and admits a slice \( S \) at \( m \).
		\item
			The induced \( G_m \)-action on \( G^*_\kappa \) can be linearized at \( \mu \) using a chart \( \rho: G^*_\kappa \supseteq V' \to \LieA{g}^*_\kappa \) satisfying \( \tangentLeft_\nu \rho = \id_{\LieA{g}^*_\kappa} \) for every \( \nu \in V' \).
		\item
			The chain
			\begin{equationcd}[label=eq:momentumMap:MGSnormalForm:elliptic:chain]
				0 \to[r] 
					& \LieA{g}_\mu
						\to[r, "\tangent_e \Upsilon_s"]
					& \TBundle_s M
						\to[r, "\difLog_s J"]
					& \LieA{g}^*_\kappa
						\to[r]
					& 0
			\end{equationcd}
			of linear maps parametrized by \( s \in S \) is a chain of differential operators, which constitute an elliptic complex at \( m \).
		\item
			The subspace \( \LieA{g} \ldot m \subseteq \TBundle_m M \) is symplectically closed and the image of \( \difLog_m J \) is weakly closed in \( \LieA{g}^*_\kappa \). 
	\end{thmenumerate}
	Then, \( J \) can be brought into a strong MGS normal form at \( m \). 
\end{thm}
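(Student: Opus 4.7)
The plan is to assemble the theorem from three building blocks already established in the paper: the elliptic equivariant normal form theorem (\cref{prop:normalFormEquivariantMap:elliptic}), the upgrade lemma from equivariant normal form to MGS normal form (\cref{prop:momentumMap:upgradeEquivNormalFormToMGS}), and the finite-dimensional Darboux-type upgrade to a strong MGS normal form (\cref{prop:momentumMap:upgradeMGSnormalFormToStrong}). Essentially, this is the elliptic counterpart of \cref{prop:momentumMap:MGSnormalForm}, so the same three-step scheme applies; the elliptic hypotheses only serve to ensure the input of each step is available.

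First, I would verify that the hypotheses of \cref{prop:normalFormEquivariantMap:elliptic} are satisfied: the stabilizer \( G_\mu \) is a geometric tame Fréchet Lie subgroup, the \( G_\mu \)-action admits a slice, the \( G_m \)-action on \( G^*_\kappa \) is linearized by \( \rho \), and the chain~\eqref{eq:momentumMap:MGSnormalForm:elliptic:chain} is an elliptic complex at \( m \). Applying this theorem yields \( G_m \)-equivariant charts \( \iota_S : U \to S \) and \( \rho : V' \to V \subseteq \LieA{g}^*_\kappa \) that bring \( J \) into an equivariant normal form \( (G_m, X, \LieA{g}^*_\kappa, \hat{J}, J_\singularPart) \).

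Second, I would apply \cref{prop:momentumMap:upgradeEquivNormalFormToMGS} to upgrade this equivariant normal form to an MGS normal form. The two additional hypotheses of that lemma are built into the present theorem: \( \LieA{g} \ldot m \) is symplectically closed, and \( \tangentLeft_\nu \rho = \id_{\LieA{g}^*_\kappa} \) for every \( \nu \in V' \). Moreover, weak closedness of \( \img \difLog_m J \) together with the Strong Bifurcation \cref{prop::bifurcationLemma:strong} identifies the \( \coker \)-summand with \( \LieA{g}_m^* \), which is exactly what is needed so that the momentum map identity~\eqref{eq:momentumMap:MGSnormalForm:momentumMapRel} holds on the full space with values in \( \kappa \)-dual of \( \LieA{g}_m \). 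Setting \( \bar{\omega} \defeq \restr{(\iota_S^* \omega)}{\ker} \), the nondegeneracy of \( \bar{\omega}_0 \) is guaranteed by \cref{prop:bifurcationLemma:kernelMomentumMapIsSymplectic}, since \( E = \ker \) is a symplectic subspace.

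Third, the ellipticity of the chain~\eqref{eq:momentumMap:MGSnormalForm:elliptic:chain} at \( m \) implies that its middle homology, which is naturally identified with \( \ker \isomorph E \), is finite-dimensional. Therefore, \cref{prop:momentumMap:upgradeMGSnormalFormToStrong} applies and produces a \( G_m \)-equivariant local diffeomorphism \( \psi \) of \( X \) such that \( (G_m, X, \LieA{g}^*_\kappa, \hat{J}, J_\singularPart \circ \psi; \psi^* \bar{\omega}) \) is a strong MGS normal form, with singular part given by the quadratic momentum map~\eqref{eq:momentumMap:MGSnormalForm:quadraticMomentumMap}. Replacing \( \iota_S \) by \( \iota_S \circ \psi \) completes the proof.

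The only delicate point is the bookkeeping between the two intertwined normal form procedures: one must make sure that composing \( \iota_S \) with the Darboux diffeomorphism \( \psi \) on the \( \ker \)-factor (extended by the identity on \( \coimg \)) preserves the commutativity of diagram~\eqref{eq:momentumMap:bringIntoNormalForm} and keeps the defining momentum map identity intact. This is, however, routine once one observes that \( \psi \) leaves \( \coimg \) fixed and is \( G_m \)-equivariant, so that \( \hat{J} \) is unaffected and only the singular part \( J_\singularPart \) gets composed with \( \psi \). No new analytical difficulties arise beyond those already handled in the three cited results.
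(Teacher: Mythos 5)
Your proof is correct and follows essentially the same route as the paper, which obtains this theorem by combining \cref{prop:normalFormEquivariantMap:elliptic} with \cref{prop:momentumMap:MGSnormalForm} (itself proved via \cref{prop:momentumMap:upgradeEquivNormalFormToMGS}) and then upgrading to a strong normal form via \cref{prop:momentumMap:upgradeMGSnormalFormToStrong}, using that ellipticity makes the middle homology, hence \( \ker \), finite-dimensional. The only point to be slightly more careful about --- which the paper handles inside the proof of \cref{prop:momentumMap:MGSnormalForm} --- is that the chart actually producing the equivariant normal form is a deformation \( \rho \circ \phi^{-1} \) of the given \( \rho \), so one must check that the deformation \( \phi \) restricts to the identity on \( \coker \isomorph \LieA{g}_m^* \) before invoking \cref{prop:momentumMap:upgradeEquivNormalFormToMGS}.
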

\begin{example}
	\label{ex::diffOneFormsOnSurface:momentumMapNormalForm}
	Let us use \cref{prop:momentumMap:MGSnormalForm:elliptic} to show that the momentum map for the action of the group of gauge transformations on the space of connections on a principal \( \UGroup(1) \)-bundle \( P \to M \) over a surface \( M \) can be brought into an MGS normal form.
	By \cref{ex::diffOneFormsOnSurface:gaugeAction}, the map \( \SectionMapAbb{J}: \ConnSpace(P) \to \DiffFormSpace^2(M) \) assigning the curvature \( F_A \) to a connection \( A \in \ConnSpace(P) \) is the momentum map for the \( \GauGroup(P) \)-action on \( \ConnSpace(P) \).
	Let \( \mu \in \DiffFormSpace^2(M) \).
	Since \( \GauGroup(P) \) acts trivially on \( \DiffFormSpace^2(M) \), the stabilizer group of \( \mu \) coincides with the whole group \( \GauGroup(P) \), which clearly is a geometric tame Fréchet Lie group.
	As discussed in \cref{ex::diffOneFormsOnSurface:gaugeActionMomentumMap}, the \( \GauGroup(P) \)-action is proper and admits a slice \( \SectionSpaceAbb{S} \) at every \( A \in \ConnSpace(P) \) of the form \( \SectionSpaceAbb{S} = A + \SectionSpaceAbb{U} \), where \( \SectionSpaceAbb{U} \) is a sufficiently small neighborhood of \( 0 \) in \( \dif^* \DiffFormSpace^2(M) \oplus \deRCohomology^1(M, \R) \).
	In this case, the chain~\eqref{eq:momentumMap:MGSnormalForm:elliptic:chain} is independent of \( B \in \SectionSpaceAbb{S} \) and coincides (up to a sign) with the elliptic de Rham complex
	\begin{equationcd}
		0
			\to[r]
		& \DiffFormSpace^0(M)
			\to[r, "\dif"]
		& \DiffFormSpace^1(M)
			\to[r, "\dif"]
		& \DiffFormSpace^2(M)
			\to[r]
		& 0.
	\end{equationcd}
	The closedness assumptions of the subspaces \( \LieA{g} \ldot m \) and \( \img \difLog_m J \) were checked in \cref{ex::diffOneFormsOnSurface:gaugeActionMomentumMap} for the model under consideration.
	In summary, \cref{prop:momentumMap:MGSnormalForm:elliptic} implies that \( J \) can be brought into a strong MGS normal form.
	In the present setting, the normal form is particularly simple.
	Indeed, for every connection \( A \), the curvature map restricted to the slice \( \SectionSpaceAbb{S} \) at \( A \) is already in the MGS normal form 
	\begin{equation}
		\SectionMapAbb{J}: \SectionSpaceAbb{S} = A + \SectionSpaceAbb{U} \mapsto \DiffFormSpace^2(M), \qquad A + \alpha_1 + \alpha_2 \mapsto F_A + \dif \alpha_1,
	\end{equation}
	where \( \alpha_1 \in \dif^* \DiffFormSpace^2(M) \) and \( \alpha_2 \in \deRCohomology^1(M, \R) \).
	We read off that the linear part of \( \SectionMapAbb{J} \) is given by the isomorphism \( \dif: \dif^* \DiffFormSpace^2(M) \to \dif \DiffFormSpace^1(M) \) and that the singular part \( \SectionMapAbb{J}_\singularPart \) vanishes.
	The latter should not come as a surprise as the \( \GauGroup(P) \)-action on \( \ConnSpace(P) \) has only one orbit type, namely \( \UGroup(1) \).
\end{example}

The linear action of a compact group on a symplectic vector space provides another situation where we can show the existence of an MGS normal form.
To see this, let \( (X, \omega) \) be a symplectic Fréchet space endowed with a continuous linear symplectic action of the compact finite-dimensional Lie group \( G \).
Recall from \cref{ex:momentumMap:linearCase}, that in this case, the momentum map \( J: X \to \LieA{g}^*_\kappa \) for the \( G \)-action is given by
\begin{equation}
	\label{eq:momentumMap:linearActionQuadraticMomentumMap}
	\kappa(J(x), \xi) = \frac{1}{2} \omega(x, \xi \ldot x),
\end{equation}
for \( \xi \in \LieA{g} \).
Moreover, \( J \) is equivariant with respect to the coadjoint action of \( G \) on \( \LieA{g}^*_\kappa \).
\begin{thm}
	\label{prop:momentumMap:MGSnormalForm:linear}
	Let \( (X, \omega) \) be a symplectic Fréchet space endowed with a continuous linear symplectic action of the compact finite-dimensional Lie group \( G \).
	Then, the equivariant momentum map \( J: X \to \LieA{g}^*_\kappa \) defined in~\eqref{eq:momentumMap:linearActionQuadraticMomentumMap} can be brought into a strong MGS normal form at every point of \( X \).
\end{thm}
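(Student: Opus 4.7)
The plan is to apply \cref{prop:momentumMap:MGSnormalForm} and then upgrade the resulting normal form to a strong one by exploiting the constant symplectic form on the linear space $X$ and the quadratic structure of $J$. Fix $m \in X$, set $\mu = J(m)$ and $T = \tangent_m J$. Since $G$ is compact and finite-dimensional, the stabilizers $G_m \subseteq G_\mu$ are compact Lie groups and $\LieA{g}^*_\kappa$ is finite-dimensional. Averaging with the Haar measure of $G_m$ yields $G_m$-equivariant topological splittings $\LieA{g} = \LieA{g}_m \oplus \LieA{m} \oplus \LieA{q}$ (with $\LieA{g}_\mu = \LieA{g}_m \oplus \LieA{m}$) and $X = (\LieA{m} \ldot m) \oplus V$, together with a refinement $V = E \oplus F \oplus (\LieA{q} \ldot m)$ in which $E = V \cap \ker T$ and $T$ restricts to an isomorphism $F \oplus (\LieA{q} \ldot m) \to \LieA{m}^* \oplus \LieA{q}^* = \img T$. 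The orbit $\LieA{g} \ldot m$ is finite-dimensional and hence symplectically closed by \cref{prop:symplecticFunctionalAnalysis:finiteDimSubspaceSymplectiallyClosed}, while $\img T$ sits inside the finite-dimensional $\LieA{g}^*_\kappa$ and is therefore (weakly) closed.

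Take $S = m + V$ as the slice at $m$ (valid since $G_\mu$ is compact acting linearly and $V$ is $G_m$-invariant) and $\rho(\nu) = \nu - \mu$ as the affine chart on the target (which is $G_m$-equivariant because $\mu$ is $G_m$-fixed and satisfies $\tangentLeft_\nu \rho = \id$ trivially). The regularity hypothesis of any of the equivariant normal form theorems in \cref{sec:normalFormEquivariantMap} is automatic because $T$ has finite-dimensional image. Hence \cref{prop:momentumMap:MGSnormalForm} delivers an MGS normal form $(G_m, V, \LieA{g}^*_\kappa, \hat{J}, J_\singularPart; \bar{\omega})$ at $m$.

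Two features of the linear setting let me upgrade to the strong form. First, since $\omega$ is constant on $X$ and the slice embedding is affine, $\bar{\omega} = \omega|_E$ has no point-dependence, so $\bar{\omega}_x = \bar{\omega}_0$. Second, bilinearity of $\omega$ together with skew-symmetry of the infinitesimal action give the explicit expansion
\begin{equation*}
	J(m + v) = \mu + T(v) + J(v),
\end{equation*}
so the singular part of $J$ around $m$ is the polynomial $J$ itself. The target strong normal form map is $\normalForm{J}(e, f, q) = \hat{T}(f, q) + J^E(e)$, with $J^E: E \to \LieA{g}_m^*$ the quadratic momentum map for the linear $G_m$-action on the symplectic subspace $(E, \omega|_E)$ from \cref{ex:momentumMap:linearCase}. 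I realize this form by constructing a $G_m$-equivariant local change of coordinates on the slice that absorbs the $\img T$-valued remainder of $J(v)$; since $\img T$ is finite-dimensional, this amounts to a finite-dimensional implicit-function problem whose solution can be chosen to fix the subspace $E$ pointwise, giving $J_\singularPart(e, 0, 0) = J^E(e)$ as demanded by~\eqref{eq:momentumMap:MGSnormalForm:quadraticMomentumMap}.

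The main obstacle is precisely this last step. \Cref{prop:momentumMap:upgradeMGSnormalFormToStrong}, which would otherwise provide the strong form via Darboux, is unavailable because $\ker = E$ is generally infinite-dimensional. The replacement exploits the polynomial structure of $J$ together with the finite-dimensionality of $\img T$ to reduce the construction to an explicit finite-dimensional computation, but verifying that the resulting normalizing diffeomorphism can be chosen simultaneously $G_m$-equivariant and pointwise fixing $E$ requires careful bookkeeping and is the main technical work of the proof.
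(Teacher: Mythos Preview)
Your verification of the hypotheses of \cref{prop:momentumMap:MGSnormalForm} (compactness of $G_\mu$, existence of a linear slice, symplectic closedness of the finite-dimensional orbit via \cref{prop:symplecticFunctionalAnalysis:finiteDimSubspaceSymplectiallyClosed}, weak closedness of $\img T$ inside the finite-dimensional $\LieA{g}^*_\kappa$, and the choice $\rho(\nu)=\nu-\mu$) matches the paper's proof essentially line for line.

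The gap is in your passage to the \emph{strong} form. You write that ``the slice embedding is affine, so $\bar{\omega}=\omega|_E$ has no point-dependence''. But $\bar{\omega}$ is, by \cref{def:momentumMap:MGSnormalForm}, the restriction of $\iota_S^*\omega$ to $\ker$, where $\iota_S$ is the slice diffeomorphism \emph{produced by the normal-form construction}. That construction (see the proof of \cref{prop:momentumMap:MGSnormalForm} and the map $\psi$ in~\eqref{eq:normalFormMap:banach:deformDomain}) composes the affine chart $v\mapsto m+v$ with the nonlinear $\psi^{-1}$; on $\ker$ this sends $e\mapsto m+e+\sigma(e)$ with $\sigma(e)\in\coimg$, so $\iota_S$ is \emph{not} affine and $\bar{\omega}_e$ genuinely depends on $e$. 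Your subsequent attempt to bypass this by building a new coordinate change that absorbs the $\img T$-component of $J(v)$ while fixing $E$ pointwise is exactly a re-derivation of $\psi$, so it reproduces the same non-affine chart rather than curing the problem; you correctly flag this step as unfinished.

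The paper handles the strong upgrade differently: it does not claim $\bar{\omega}$ is automatically constant, but invokes ``an argument similar to the proof of \cref{prop:momentumMap:upgradeMGSnormalFormToStrong}''. The point is that in that proposition the Darboux theorem supplies a local diffeomorphism making $\bar{\omega}$ constant, after which uniqueness of the momentum map forces the quadratic formula~\eqref{eq:momentumMap:MGSnormalForm:quadraticMomentumMap}. Here Darboux is unavailable (since $\ker$ may be infinite-dimensional), but it is also unnecessary: because $\omega$ on $X$ is already constant, the required ``flattening'' diffeomorphism is supplied directly by the explicit map $\psi$ of the construction rather than by an abstract Darboux argument, and the rest of the proof of \cref{prop:momentumMap:upgradeMGSnormalFormToStrong} (uniqueness of the momentum map up to a constant, fixed by $J_\singularPart(0)=0$) goes through verbatim. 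Your proposal identifies the right ingredients (constancy of $\omega$, quadratic $J$, finite-dimensional $\coimg$) but misattributes the constancy of $\bar{\omega}$ to the wrong map and then leaves the actual work undone.
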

\begin{proof}
	Let \( x \in X \) and let \( \mu = J(x) \).
	The stabilizer subgroup \( G_\mu \) is a compact Lie subgroup of \( G \), because \( G \) is compact and finite-dimensional.
	Therefore, the \( G_\mu \)-action on \( X \) is proper and, by \parencite[Theorem~3.15]{DiezSlice}, it admits a linear slice at \( x \). 
	Since \( \LieA{g} \) is finite-dimensional, the image of \( \tangent_x J \) is (weakly) closed and the orbit \( \LieA{g} \ldot x \) is symplectically closed due to \cref{prop:symplecticFunctionalAnalysis:finiteDimSubspaceSymplectiallyClosed}.
	Define \( \rho: \LieA{g}^*_\kappa \to \LieA{g}^*_\kappa \) by \( \rho(\nu) = \nu - \mu \).
	Clearly, \( \rho \) is \( \CoAdAction_{G_\mu} \)-invariant and satisfies \( \tangent_\nu \rho = \id_{\LieA{g}^*_\kappa} \) for every \( \nu \in \LieA{g}^*_\kappa \).
	Now, \cref{prop:momentumMap:MGSnormalForm} and \parencite[Theorem~3.7]{DiezRudolphKuranishi} imply that \( J \) can be brought into an MGS normal form at \( x \).
	Since the symplectic form \( \omega \) on \( X \) is constant, an argument similar to the one in the proof of \cref{prop:momentumMap:upgradeMGSnormalFormToStrong} shows that \( J_\singularPart \) satisfies~\eqref{eq:momentumMap:MGSnormalForm:quadraticMomentumMap}.
\end{proof}

\section{Symplectic reduction}
\label{sec:symplecticReduction}

In this section, we are concerned with the geometric structure of the symplectic quotient
\begin{equation}
	\check{M}_\mu \defeq J^{-1}(\mu) \slash G_\mu.
\end{equation}
In finite dimensions, the geometry of \( \check{M}_\mu \) attracted a lot of attention starting with the work of \textcite{MarsdenWeinstein1974,Meyer1973} on the regular case and of \textcite{SjamaarLerman1991,ArmsGotayEtAl1990} on the singular case.
By extending these classical structure theorems to our infinite-dimensional context, we will show that the decomposition of \( \check{M}_\mu \) into orbit types is a stratification and that each stratum carries a natural symplectic structure.
These results are obtained under the standing assumption that \( J \) can be brought into an MGS normal form.
Moreover, we will see that the frontier condition of the stratification relies on the strong MGS normal form combined with a certain approximation property.
At the end of the section, we will comment on the regular case.

\subsection{Singular symplectic reduction}

We continue to work in the setting of the previous section.
Let \( (M, \omega, G) \) be a symplectic \( G \)-manifold with equivariant momentum map \( J: M \to G^*_\kappa \), where \( \kappa(G^*_\kappa, G) \) is a dual pair and \( G^*_\kappa \) carries a given coconjugation action.
Recall from \cref{sec:slices} that \( M \) decomposes into the orbit type subsets \( M_{(H)} \) and that \( M_{(H)} \) are submanifolds of \( M \) under the assumption that the group action admits a slice at every point.
The following is the symplectic version of this fact.
\begin{prop}
	\label{prop:symplecticReduction:orbitTypeManifold}
	Let \( (M, \omega, G) \) be a symplectic \( G \)-manifold with equivariant momentum map \( J: M \to G^*_\kappa \).
	Let \( \mu \in G^*_\kappa \), and assume that \( G_\mu \) is a Lie subgroup of \( G \) acting properly on \( M \).
	If \( J \) can be brought into an MGS normal form at every point of \( M_\mu \defeq J^{-1}(\mu) \), then, for every orbit type \( (H) \) of the \( G_\mu \)-action on \( M_\mu \), the set
	\begin{equation}
		M_{(H), \mu} \defeq M_{(H)} \intersect J^{-1}(\mu)
	\end{equation}
	is a smooth submanifold of \( M \).
	Moreover, there exists a unique smooth manifold structure on the quotient
	\begin{equation}
		\check{M}_{(H), \mu} \defeq M_{(H), \mu} \slash G_\mu
	\end{equation}
	such that the natural projection \( \pi_{(H), \mu}: M_{(H), \mu} \to \check{M}_{(H), \mu} \) is a smooth surjective submersion.
	Furthermore, for every \( m \in M_{(H), \mu} \), the projection \( \tangent_m \pi_{(H), \mu} \) restricts to an isomorphism
	\begin{equation}
		\tangent_m \pi_{(H), \mu}: E_{G_m} \to \TBundle_{\equivClass{m}} \check{M}_{(H), \mu},
	\end{equation}
	where \( E_{G_m} \) denotes the set of fixed points of the normal space \( E = \TBundle_m S \intersect \ker \difLog_m J \subseteq \TBundle_m M \) under the \( G_m \)-action and \( S \) is the \( G_\mu \)-slice in the MGS normal form at \( m \). 
\end{prop}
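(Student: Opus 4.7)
The plan is to use the MGS normal form at each point of $M_{(H), \mu}$ to model both $M_{(H), \mu}$ and its quotient locally, much as one does with a slice in the non-symplectic setting. Fix $m \in M_{(H), \mu}$ with $H = G_m$ and choose an MGS normal form $(H, X, \LieA{g}^*_\kappa, \hat{J}, J_\singularPart; \bar{\omega})$ together with a slice $S$ at $m$ for the $G_\mu$-action, the $H$-equivariant diffeomorphism $\iota_S\colon X \supseteq U \to S$, and a linearizing chart $\rho$ on $G^*_\kappa$, all making diagram~\eqref{eq:momentumMap:bringIntoNormalForm} commute. Since the $G_\mu$-action on $M$ is proper and $S$ is a slice, the tube map $\chi^\tube\colon G_\mu \times_H S \to M$ is an equivariant diffeomorphism onto an open $G_\mu$-invariant neighborhood of $G_\mu \ldot m$, so it suffices to verify the claims in the model $G_\mu \times_H U$.

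First I would describe the level set in the model. By commutativity of~\eqref{eq:momentumMap:bringIntoNormalForm} and the direct sum $Y = \img \oplus \coker$, the equation $\normalForm{J}(x_1, x_2) = 0$ splits into $\hat{J}(x_2) = 0$ and $J_\singularPart(x_1, x_2) = 0$; since $\hat{J}$ is a topological isomorphism, this collapses to $x_2 = 0$ together with $J_\singularPart(x_1, 0) = 0$. Since $S$ is a slice, $G_s \subseteq H$ for every $s \in S$, so a point of $S$ has orbit type exactly $(H)$ if and only if it is $H$-fixed. Denoting by $X_H$ the $H$-fixed subspace of $X$, the set $S^H \intersect J^{-1}(\mu)$ therefore corresponds under $\iota_S$ to
\begin{equation*}
	\set[\big]{x \in U \intersect \ker \intersect X_H \given J_\singularPart(x, 0) = 0}.
\end{equation*}

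The crucial step is to show that this last condition is automatic, that is, $J_\singularPart$ vanishes identically on $U \intersect \ker \intersect X_H$. For this I invoke the momentum map identity~\eqref{eq:momentumMap:MGSnormalForm:momentumMapRel}: at any $x \in U \intersect \ker \intersect X_H$ the infinitesimal action $\xi \ldot x$ vanishes for every $\xi \in \LieA{h}$, so the identity forces $\kappa\bigl(\tangent_x J_\singularPart(w), \xi\bigr) = 0$ for all $w \in \ker$ and $\xi \in \LieA{h}$. Non-degeneracy of the restricted dual pair $\kappa(\coker, \LieA{h})$, which is part of the data of an MGS normal form, then gives $\restr{\tangent_x J_\singularPart}{\ker} = 0$ at every such $x$. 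Combined with $J_\singularPart(0) = 0$ from \cref{defn:normalFormMap:normalFormAbstract}, this shows that the smooth map $x \mapsto J_\singularPart(x, 0)$ has zero differential throughout the connected $H$-invariant neighborhood $U \intersect \ker \intersect X_H$ of $0$ and therefore vanishes on it after possibly shrinking $U$.

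Consequently, near $m$ the set $M_{(H), \mu}$ is identified via the tube with the smooth submanifold $G_\mu \times_H (U \intersect \ker \intersect X_H)$ of $G_\mu \times_H U$. Since $H$ acts trivially on $\ker \intersect X_H$, this associated bundle reduces to the product $(G_\mu/H) \times (U \intersect \ker \intersect X_H)$ with $G_\mu$ acting by left translation on the first factor; its quotient is canonically diffeomorphic to the open set $U \intersect \ker \intersect X_H$ of the vector space $\ker \intersect X_H \isomorph E_{G_m}$, the last identification following from the description of $\ker$ recalled after \cref{def:normalFormEquivariantMap:normalFormEquivariantMap} combined with \cref{prop:bifurcationLemma:kernelMomentumMapIsSymplectic}. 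This furnishes a chart on $\check{M}_{(H), \mu}$ around $\equivClass{m}$ in which $\pi_{(H), \mu}$ is a surjective submersion, together with the decomposition $\tangent_m M_{(H), \mu} = \LieA{g}_\mu \ldot m \oplus E_{G_m}$ and kernel $\LieA{g}_\mu \ldot m$ for $\tangent_m \pi_{(H), \mu}$; the asserted tangent space isomorphism follows by restriction. Compatibility and uniqueness of these local charts globally are then the standard statement that the orbit space of a proper action of constant orbit type carries a unique smooth structure making the projection a submersion. The main obstacle is the vanishing argument for $J_\singularPart$ on the $H$-fixed part of $\ker$; everything else reduces to associated bundle bookkeeping on the MGS tube.
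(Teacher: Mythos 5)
Your proposal is correct and follows essentially the same route as the paper's proof: identify the level set in the tube via the MGS normal form, use properness/the slice property to replace the orbit-type condition by the $G_m$-fixed-point condition, kill $J_\singularPart$ on the fixed-point part of $\ker$ via the momentum map identity~\eqref{eq:momentumMap:MGSnormalForm:momentumMapRel} together with non-degeneracy of $\kappa(\coker,\LieA{h})$ and an integration argument, and then read off the product structure of the associated bundle. The only cosmetic difference is that the paper integrates $\tangent_{tx_1}J_\singularPart$ explicitly over a convex neighborhood rather than appealing to constancy on a connected set with vanishing differential.
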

\begin{proof}
	Let \( m \in M_{(H), \mu} \).
	By assumption, \( J \) can be brought into an MGS normal form \( (G_m, X, \LieA{g}^*_\kappa, \hat{J}, J_\singularPart; \bar{\omega}) \) using maps \( \chi^\tube: G_\mu \times_{G_m} S \to M \) and \( \rho: G^*_\kappa \supseteq V' \to \LieA{g}^*_\kappa \).
	Thus, we have
	\begin{equation}
		J \circ \chi^\tube \circ (\id_{G_\mu} \times \iota_S) \bigl(\equivClass{g, x_1, x_2}\bigr)
			= \rho^\tube \bigl(\equivClass{g, \hat{J}(x_2) + J_\singularPart(x_1, x_2)}\bigr)
	\end{equation}
	for \( g \in G_\mu \) and \( (x_1, x_2) \in U \subseteq X \), where \( \iota_S: X \supseteq U \to S \) is the slice diffeomorphism.
	Since \( (\rho^\tube)^{-1}(\mu) = G_\mu \times_{G_m} \set{0} \) and since \( \hat{J} \) is an isomorphism, we find that \( \chi^\tube \) identifies the level set \( M_\mu = J^{-1}(\mu) \) with the set
	\begin{equation}
		\label{eq:symplecticReduction:orbitTypeManifold:locallyMmu}
		G_\mu \times_{G_m} \set*{x_1 \in U \intersect \ker \given J_\singularPart(x_1) = 0}. 	
	\end{equation}
	Moreover, \( \chi^\tube \) is \( G_\mu \)-invariant and so \( M_{(H), \mu} \) is locally identified with
	\begin{equation}
		\label{eq:symplecticReduction:orbitTypeManifold:locallyMHmu}
		G_\mu \times_{G_m} \bigl( U_{(H)} \intersect \ker{} \intersect J_\singularPart^{-1}(0) \bigr),
	\end{equation}
	where for \( U_{(H)} \) the stabilizer is taken with respect to the linear \( G_m \)-action but conjugation is understood with respect to \( G_\mu \).
	Since the \( G_\mu \)-action is proper, we have \( S_{(H)} = S_{G_m} \), see \parencite[Proposition~2.13]{DiezSlice}.
	Therefore, \( U_{(H)} = U_{G_m} \).
	Hence, for every \( x_1 \in U_{(H)} \intersect \ker \), \( v \in \ker \) and \( \xi \in \LieA{g}_m \), we find
	\begin{equation}
		\kappa\bigl( \tangent_{x_1} J_\singularPart (v), \xi \bigr)
			= - \bar{\omega}_{x_1} (\xi \ldot x_1, v)
			= 0
	\end{equation}
	and so \( \tangent_{x_1} J_\singularPart (v) = 0 \).
	By possibly shrinking \( U \), we may assume that \( U_{G_m} \intersect \ker \) is convex.
	Now the Fundamental Theorem of Calculus \parencite[Proposition I.2.3.2]{Neeb2006} implies
	\begin{equation}
		J_\singularPart (x_1) 
			= J_\singularPart (0) + \int_0^1 \tangent_{(t x_1)} J_\singularPart (x_1) \, \dif t
			= 0,
	\end{equation}
	for every \( x_1 \in U_{G_m} \intersect \ker \).
	Thus, in summary, \( M_{(H), \mu}  \) is locally identified with
	\begin{equation}
		G_\mu \times_{G_m} \bigl( U_{(H)} \intersect \ker{} \intersect J_\singularPart^{-1}(0) \bigr)
			= G_\mu \times \bigl( U_{G_m} \intersect \ker \bigr),
	\end{equation}
	from which we conclude that \( M_{(H), \mu} \) is a smooth submanifold of \( M \).
	Moreover, \( \check{M}_{(H), \mu} \) is locally identified with \( U_{G_m} \intersect \ker \) and so it carries a smooth manifold structure modeled on \( \ker_{G_m} \).
	In these coordinates, the projection \( \pi_{(H), \mu}: M_{(H), \mu} \to \check{M}_{(H), \mu} \) corresponds to the projection onto the second factor and thus it is a smooth submersion.
	Finally, note that the \( G_m \)-equivariant isomorphism \( \tangent_0 \iota_S: X \to \TBundle_m S \) identifies \( \ker_{G_m} \) with \( E_{G_m} \).
\end{proof}

\begin{remark}
	\label{prop:symplecticReduction:normalFormLevelSetQuadraticSingularities}
	In contrast to the regular case, the level set \( M_\mu \) is in general not a smooth manifold.
	Indeed, as we have seen in~\eqref{eq:symplecticReduction:orbitTypeManifold:locallyMmu}, \( M_\mu \) is locally modeled on spaces of the type \( \ker \intersect J_\singularPart^{-1}(0) \).
	If the MGS normal form is strong, then \( \restr{(J_\singularPart)}{\ker} \) is a quadratic form and thus \( M_\mu \) has conic  singularities\footnote{The fact that the level set of the momentum map has conic singularities is well known in finite dimensions (see, \eg, \parencite[Proposition~8.1.2]{OrtegaRatiu2003}) and was first observed by \textcite[Theorem~5]{ArmsMarsdenEtAl1981}.}.
\end{remark}

\begin{prop}
	\label{prop:symplecticReduction:orbitTypeManifoldSymplectic}
	In the setting of \cref{prop:symplecticReduction:orbitTypeManifold}, assume additionally that \( \LieA{g} \ldot m \) is symplectically closed for every \( m \in M_{(H), \mu} \).
	Then, there exists a symplectic form \( \check{\omega}_{(H), \mu} \) on \( \check{M}_{(H), \mu} \) uniquely determined by
	\begin{equation}
		\label{eq:symplecticReduction:reducedSymplecticForm}
		\pi_{(H), \mu}^* \, \check{\omega}^{}_{(H), \mu} = \iota_{(H), \mu}^* \omega,
	\end{equation}
	where \( \iota_{(H), \mu}: M_{(H), \mu} \to M \) is the natural injection.
\end{prop}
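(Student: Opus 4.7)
The plan is to produce $\check{\omega}_{(H),\mu}$ as the unique descent of $\iota_{(H),\mu}^* \omega$ along the submersion $\pi_{(H),\mu}$, verify closedness, and then establish non-degeneracy by combining the bifurcation lemma with the invariant-subspace result of \cref{prop:sympleticFunctionalAnalysis:invariantSubspaceSymplectic}.

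First, I would show that $\alpha \defeq \iota_{(H),\mu}^* \omega$ descends to a $2$-form $\check{\omega}_{(H),\mu}$ on $\check{M}_{(H),\mu}$. For this, two properties must be checked. The form $\alpha$ is $G_\mu$-invariant since $\omega$ is $G$-invariant and $\iota_{(H),\mu}$ is $G_\mu$-equivariant. Moreover, $\alpha$ is horizontal with respect to $\pi_{(H),\mu}$: at a point $m \in M_{(H),\mu}$, the vertical vectors are of the form $\xi \ldot m$ for $\xi \in \LieA{g}_\mu$, while tangent vectors $v \in \TBundle_m M_{(H),\mu}$ lie in $\TBundle_m J^{-1}(\mu) \subseteq \ker \difLog_m J$. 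By the weak Bifurcation Lemma~\eqref{eq:bifurcationLemma:weak}, we have $\ker \difLog_m J = (\LieA{g} \ldot m)^\omega \subseteq (\LieA{g}_\mu \ldot m)^\omega$, so $\omega_m(\xi \ldot m, v) = 0$. Since $\pi_{(H),\mu}$ is a surjective submersion with connected fibers (on each $G_\mu$-orbit component), standard descent yields a unique $2$-form $\check{\omega}_{(H),\mu}$ on $\check{M}_{(H),\mu}$ satisfying~\eqref{eq:symplecticReduction:reducedSymplecticForm}. Closedness is automatic: $\pi_{(H),\mu}^* \dif \check{\omega}_{(H),\mu} = \dif \iota_{(H),\mu}^* \omega = \iota_{(H),\mu}^* \dif \omega = 0$, and since $\pi_{(H),\mu}$ is a submersion, its pullback on forms is injective, so $\dif \check{\omega}_{(H),\mu} = 0$.

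The remaining and main step is non-degeneracy. By \cref{prop:symplecticReduction:orbitTypeManifold}, the tangent map $\tangent_m \pi_{(H),\mu}$ restricts to an isomorphism $E_{G_m} \to \TBundle_{\equivClass{m}} \check{M}_{(H),\mu}$, where $E = \TBundle_m S \intersect \ker \difLog_m J$ is the symplectic subspace from \cref{prop:bifurcationLemma:kernelMomentumMapIsSymplectic}. Under this identification, the value of $\check{\omega}_{(H),\mu}$ at $\equivClass{m}$ corresponds to the restriction of $\omega_m$ to $E_{G_m}$. Now $E$ is a symplectic subspace of $\TBundle_m M$ by the assumption that $\LieA{g} \ldot m$ is symplectically closed combined with \cref{prop:bifurcationLemma:kernelMomentumMapIsSymplectic}. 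Since the compact Lie group $G_m$ acts linearly on $E$ preserving $\omega_m|_E$ (being the linearized isotropy action of a symplectic $G$-action), \cref{prop:sympleticFunctionalAnalysis:invariantSubspaceSymplectic} applies and tells us that $E_{G_m}$ is itself a symplectic subspace, so $\omega_m|_{E_{G_m}}$ is non-degenerate in the sense of \cref{defn::symplecticFunctionalAnalysis:definitionSymplectic}. This proves that $\check{\omega}_{(H),\mu}$ is a symplectic form.

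The subtle point, and the step I expect to be most delicate, is the descent argument: in infinite dimensions one must be careful that $\TBundle_{\equivClass{m}} \check{M}_{(H),\mu}$ really decomposes as the quotient of $\TBundle_m M_{(H),\mu}$ by $\LieA{g}_\mu \ldot m$, so that horizontality plus $G_\mu$-invariance suffice. This is precisely guaranteed by the local model~\eqref{eq:symplecticReduction:orbitTypeManifold:locallyMHmu} exhibited in the proof of \cref{prop:symplecticReduction:orbitTypeManifold}, where $M_{(H),\mu}$ is locally a product $G_\mu \times (U_{G_m} \intersect \ker)$ and the quotient projection is the second-factor projection; on this local model the descent is elementary. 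Everything else, including uniqueness, follows from surjectivity of $\tangent_m \pi_{(H),\mu}$.
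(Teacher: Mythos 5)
Your proposal is correct and follows essentially the same route as the paper: descent of \( \iota_{(H),\mu}^*\omega \) via \( G_\mu \)-invariance and horizontality, followed by non-degeneracy through the identification \( \TBundle_{\equivClass{m}}\check{M}_{(H),\mu} \isomorph E_{G_m} \), \cref{prop:bifurcationLemma:kernelMomentumMapIsSymplectic} and \cref{prop:sympleticFunctionalAnalysis:invariantSubspaceSymplectic}. The only cosmetic difference is that you derive horizontality from the weak Bifurcation Lemma, whereas the paper computes \( \omega_m(\xi\ldot m, v) = -\kappa(\difLog_m J(v),\xi) = 0 \) directly from the momentum map relation; these are equivalent.
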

\begin{proof}
	To prove that~\eqref{eq:symplecticReduction:reducedSymplecticForm} uniquely defines a \( 2 \)-form \( \check{\omega}_{(H), \mu} \) on \( \check{M}_{(H), \mu} \) it suffices to show that \( \iota_{(H), \mu}^* \omega \) is \( G_\mu \)-invariant and horizontal with respect to the smooth submersion \( \pi_{(H), \mu} \).
	Invariance under \( G_\mu \) is clear, because \( \iota_{(H), \mu} \) is \( G_\mu \)-equivariant and \( \omega \) is \( G \)-invariant.
	Furthermore, for every \( \xi \in \LieA{g}_\mu \), \( m \in M_{(H), \mu} \) and \( v \in \TBundle_m M_{(H), \mu} \), we find
	\begin{equation}
		\bigl(\iota_{(H), \mu}^* \omega\bigr)_m (\xi \ldot m, v)
			= \omega_m (\xi \ldot m, v)
			= - \kappa (\difLog_m J(v), \xi)
			= 0,
	\end{equation}
	because \( J \) is constant on \( M_{(H), \mu} \).
	In summary, \( \iota_{(H), \mu}^* \omega \) is \( G_\mu \)-invariant and horizontal, and thus descends to a smooth \( 2 \)-form \( \check{\omega}_{(H), \mu} \) on \( \check{M}_{(H), \mu} \) which, by definition, satisfies~\eqref{eq:symplecticReduction:reducedSymplecticForm}.
	Moreover, the identity~\eqref{eq:symplecticReduction:reducedSymplecticForm} shows that \( \check{\omega}_{(H), \mu} \) is closed.
	It remains to prove that \( \check{\omega}_{(H), \mu} \) is non-degenerate.
	For this purpose, recall from \cref{prop:symplecticReduction:orbitTypeManifold} that the projection \( \tangent_m \pi_{(H), \mu} \) restricts to an isomorphism of \( \TBundle_{\equivClass{m}} \check{M}_{(H), \mu} \) with \( E_{G_m} \), where \( E = \TBundle_m S \intersect \ker \difLog_m J \) and \( S \) is the \( G_\mu \)-slice in MGS normal form at \( m \).
	Equation~\eqref{eq:symplecticReduction:reducedSymplecticForm} shows that, under this isomorphism, \( (\check{\omega}_{(H), \mu})_{\equivClass{m}} \) coincides with the restriction of \( \omega_m \) to \( E_{G_m} \).
	By \cref{prop:bifurcationLemma:kernelMomentumMapIsSymplectic}, \( E \) is a symplectic subspace of \( (\TBundle_m M, \omega) \).
	Since \( G_m \) is compact, \cref{prop:sympleticFunctionalAnalysis:invariantSubspaceSymplectic} implies that \( E_{G_m} \) is symplectic as well.
	Thus, we conclude that \( \check{\omega}_{(H), \mu} \) is non-degenerate.
\end{proof}

At this point, we know that \( \check{M}_\mu \) decomposes into orbit type manifolds, every one of which carries a symplectic structure.
We will now see that the pieces fit together in a particularly nice way.
The reader might want to recall from \cref{sec:slices} the notion of a stratified space.
\begin{prop}
	\label{prop:symplecticReduction:orbitTypeDecompositionStratification}
	Let \( G \) be a Lie group with dual pair \( \kappa(G^*_\kappa, G) \), and let \( (M, \omega, G) \) be a symplectic \( G \)-manifold with equivariant momentum map \( J: M \to G^*_\kappa \).
	Let \( \mu \in G^*_\kappa \), and assume that \( G_\mu \) is a Lie subgroup of \( G \) acting properly on \( M \).
	Assume that \( J \) can be brought into a strong MGS normal form \( (H, X, \LieA{g}, \hat{J}, J_\singularPart; \bar{\omega}) \) at every point \( m \in M_\mu \) such that the intersection
	\begin{equation}
		U_{(K)} \intersect \ker{} \intersect J_\singularPart^{-1}(0)
	\end{equation}
	is non-empty for every orbit type \( (K) \) of the \( G_\mu \)-action on \( M_\mu \) satisfying \( (K) \leq (G_m) \).
	Then, the decomposition of \( M_\mu \) and \( \check{M}_\mu \) into orbit type subsets \( M_{(H), \mu} \) and \( \check{M}_{(H), \mu} \), respectively, is a stratification.
\end{prop}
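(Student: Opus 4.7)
The strategy is to reduce the stratification claim to a local, linear problem using the strong MGS normal form, and then invoke standard properties of the orbit-type decomposition of a linear action of a compact Lie group. By \cref{prop:symplecticReduction:orbitTypeManifold,prop:symplecticReduction:orbitTypeManifoldSymplectic} we already know that each piece $M_{(H),\mu}$ is a smooth submanifold and $\check{M}_{(H),\mu}$ a smooth symplectic manifold, so the remaining tasks are local finiteness of the decomposition and the frontier condition.

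Fix $m \in M_{(H),\mu}$ with $H = G_m$ and let $(G_m, X, \LieA{g}^*_\kappa, \hat{J}, J_\singularPart; \bar{\omega})$ be a strong MGS normal form at $m$. As in the proof of \cref{prop:symplecticReduction:orbitTypeManifold}, the tube map identifies a $G_\mu$-invariant open neighborhood of $G_\mu \cdot m$ inside $M_\mu$ with $G_\mu \times_{G_m} Z$, where $Z \defeq U \intersect \ker \intersect J_\singularPart^{-1}(0)$. Under this identification, the $G_\mu$-stabilizer of $\equivClass{g, x}$ is conjugate in $G_\mu$ to the $G_m$-stabilizer of $x \in Z$, so the local orbit-type decomposition of $M_\mu$ corresponds to the orbit-type decomposition of the linear $G_m$-action restricted to $Z$. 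Since $G_m$ is compact and acts linearly on $\ker$, this latter decomposition is locally finite, which yields local finiteness of the orbit-type decomposition of $M_\mu$ and, after passing to $G_\mu$-orbits, of $\check{M}_\mu$.

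For the frontier condition, I would use crucially the strongness of the normal form. By~\eqref{eq:momentumMap:MGSnormalForm:quadraticMomentumMap}, the restriction of $J_\singularPart$ to $\ker$ is a homogeneous quadratic map, so $J_\singularPart^{-1}(0) \intersect \ker$ is a cone; combined with linearity of the $G_m$-action, $Z$ is a $G_m$-invariant cone, \ie $tx \in Z$ whenever $x \in Z$ and $t \in [0,1]$ (possibly after shrinking $U$). Let $(K) \leq (H)$ be an orbit type of the $G_\mu$-action on $M_\mu$, distinct from $(H)$. By hypothesis, there exists $x_* \in U_{(K)} \intersect \ker \intersect J_\singularPart^{-1}(0) = Z_{(K)}$; by linearity, $t x_*$ has the same $G_m$-stabilizer as $x_*$ for every $t > 0$, so the ray $\set{t x_* \given t \in (0, 1]}$ lies in $Z_{(K)}$ and accumulates at $0$. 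Hence every neighborhood of $m$ meets $M_{(K), \mu}$, so $m \in \overline{M_{(K),\mu}}$. Since $m \in M_{(H),\mu}$ was arbitrary, we conclude $M_{(H),\mu} \subseteq \overline{M_{(K),\mu}}$; applying the continuous, open, $G_\mu$-equivariant projection $\pi_\mu: J^{-1}(\mu) \to \check{M}_\mu$ yields the analogous inclusion on the quotient.

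\textbf{Main obstacle.} The delicate point is the correspondence between orbit types for the $G_\mu$-action on $M_\mu$ and orbit types for the linear $G_m$-action on $Z$, and in particular that the partial order $\leq$ on conjugacy classes is preserved under this local identification. This requires properness of the $G_\mu$-action together with the identity $S_{(H)} = S_{G_m}$ already used in the proof of \cref{prop:symplecticReduction:orbitTypeManifold}, but it is then routine bookkeeping. Once that matching is in place, the entire geometric content of the frontier condition is produced by the cone structure of $Z$, which is precisely where the \emph{strong} MGS normal form (rather than merely an MGS normal form) enters in an essential way.
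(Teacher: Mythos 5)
Your proof is correct and follows essentially the same route as the paper: the heart of the argument in both cases is the ray $\alpha \mapsto \alpha x_*$, which stays in $U_{(K)} \intersect \ker \intersect J_\singularPart^{-1}(0)$ by linearity of the $G_m$-action and the quadratic homogeneity~\eqref{eq:momentumMap:MGSnormalForm:quadraticMomentumMap} of the strong normal form, and accumulates at $0$. The only difference is that the paper delegates the final bookkeeping (passing from this closure property to the frontier condition for $M_\mu$ and $\check{M}_\mu$) to \parencite[Proposition~5.7]{DiezRudolphKuranishi}, whereas you sketch that step directly.
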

A strong MGS normal form \( (H, X, \LieA{g}, \hat{J}, J_\singularPart; \bar{\omega}) \) satisfying the assumption of \cref{prop:symplecticReduction:orbitTypeDecompositionStratification} is said to \emphDef{have the approximation property}.
\begin{proof}
	Let \( m \in M_\mu \) and let \( (K) \) be an orbit type of the \( G_\mu \)-action on \( M_\mu \) with \( (K) \leq (H) \), where \( H = G_m \).
	By assumption, there exists a strong MGS normal form \( (H, X, \LieA{g}, \hat{J}, J_\singularPart; \bar{\omega}) \) at \( m \) such that \( Y \equiv U_{(K)} \intersect \ker{} \intersect J_\singularPart^{-1}(0) \) is non-empty.
	A point \( x \in U \intersect \ker \) lies in \( Y \) if and only if \( (H_x) = (K) \) and \( J_\singularPart(x) = 0 \).
	Since the \( H \)-action on \( X \) is linear, we have \( H_x = H_{\alpha x} \) for all \( \alpha \in \R_{>0} \).
	Moreover, by~\eqref{eq:momentumMap:MGSnormalForm:quadraticMomentumMap}, we obtain
	\begin{equation}
	 	\kappa\bigl(J_\singularPart(\alpha x), \xi\bigr) 
	 		= \frac{1}{2} \bar{\omega}_0 \bigl(\xi \ldot (\alpha x), \alpha x\bigr)
	 		= \alpha^2 \, \kappa\bigl(J_\singularPart(x), \xi\bigr).
	\end{equation}
	Thus, for every \( x \in Y \) and \( \alpha \in \R_{>0} \), the point \( \alpha x \) lies in \( Y \) as well.
	By letting \( \alpha \to 0 \), we conclude that the point \( m \) lies in the closure of \( Y \) in \( X \).
	Thus, the claim follows from \parencite[Proposition~5.7]{DiezRudolphKuranishi}.
\end{proof}

In summary, we obtain the following result concerning the structure of the symplectic quotient \( \check{M}_\mu \).
\begin{thm}[Singular Reduction Theorem]
	\label{prop:symplecticReduction:mainTheorem}
	Let \( (M, \omega, G) \) be a symplectic \( G \)-manifold with equivariant momentum map \( J: M \to G^*_\kappa \).
	Let \( \mu \in G^*_\kappa \), and assume that \( G_\mu \) is a Lie subgroup of \( G \) acting properly on \( M \).
	Moreover, assume that \( \LieA{g} \ldot m \) is symplectically closed for every \( m \in M_\mu = J^{-1}(\mu) \).
	If  \( J \) can be brought into an MGS normal form at every point of \( M_\mu \), then the following holds:
	\begin{thmenumerate}
		\item
			For every orbit type \( (H) \) of the \( G_\mu \)-action on \( M_\mu \), the orbit type subset \( M_{(H), \mu} \) is a smooth submanifold of \( M \).
			Moreover, there exists a unique smooth manifold structure on the quotient
			\begin{equation}
				\check{M}_{(H), \mu} \defeq M_{(H), \mu} \slash G_\mu
			\end{equation}
			such that the natural projection \( \pi_{(H), \mu}: M_{(H), \mu} \to \check{M}_{(H), \mu} \) is a smooth submersion.
		\item
			If, additionally, the MGS normal forms can be chosen to be strong and to have the approximation property, then the decompositions of \( M_\mu \) and \( \check{M}_\mu \) into orbit type subsets \( M_{(H), \mu} \) and \( \check{M}_{(H), \mu} \), respectively, are stratifications.
		\item
			For every orbit type \( (H) \) of the \( G_\mu \)-action on \( M_\mu \), there exists a symplectic form \( \check{\omega}_{(H), \mu} \) on \( \check{M}_{(H), \mu} \) uniquely determined by
			\begin{equation}
				\pi_{(H), \mu}^* \, \check{\omega}^{}_{(H), \mu} = \iota_{(H), \mu}^* \omega,
			\end{equation}
			where \( \iota_{(H), \mu}: M_{(H), \mu} \to M \) is the natural injection.
			\qedhere
	\end{thmenumerate}
\end{thm}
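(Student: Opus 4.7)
The plan is to observe that the Singular Reduction Theorem is essentially a consolidation of the three preceding propositions, so the task reduces to verifying that the hypotheses of the theorem imply the hypotheses of each component result and then invoking them.

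First, for part (i), I would invoke \cref{prop:symplecticReduction:orbitTypeManifold} directly. The hypotheses match line-by-line: \( G_\mu \) is a Lie subgroup acting properly, and \( J \) can be brought into an MGS normal form at every point of \( M_\mu \). That proposition already delivers both the submanifold structure on \( M_{(H),\mu} \) and the unique smooth manifold structure on the quotient \( \check{M}_{(H),\mu} \) making \( \pi_{(H),\mu} \) a smooth submersion; it also provides the identification \( \tangent_{\equivClass{m}} \check{M}_{(H),\mu} \isomorph E_{G_m} \) which will be reused in part (iii).

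Next, for part (iii), I would appeal to \cref{prop:symplecticReduction:orbitTypeManifoldSymplectic}. The additional hypothesis required there, namely that \( \LieA{g} \ldot m \) is symplectically closed for every \( m \in M_{(H),\mu} \), is explicitly included in the theorem's hypotheses (applied to all of \( M_\mu \), hence a fortiori to each orbit type stratum). The proposition then constructs \( \check{\omega}_{(H),\mu} \) by descent from \( \iota_{(H),\mu}^* \omega \), using the momentum map equation to verify horizontality and \cref{prop:bifurcationLemma:kernelMomentumMapIsSymplectic} together with \cref{prop:sympleticFunctionalAnalysis:invariantSubspaceSymplectic} to secure non-degeneracy on \( E_{G_m} \).

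For part (ii), the stratification claim, I would invoke \cref{prop:symplecticReduction:orbitTypeDecompositionStratification}. Its hypotheses, the existence of a strong MGS normal form at every \( m \in M_\mu \) together with the approximation property on the non-emptiness of \( U_{(K)} \intersect \ker \intersect J_\singularPart^{-1}(0) \) for all subordinate orbit types, are precisely the extra assumptions imposed in this second clause of the theorem. Since the decomposition into orbit types is locally of a quotient nature and the strata \( M_{(H),\mu} \) and \( \check{M}_{(H),\mu} \) are already known from part (i) to be manifolds, the only remaining structural condition for a stratification is the frontier condition, which the approximation property, combined with the quadratic scaling \( J_\singularPart(\alpha x) = \alpha^2 J_\singularPart(x) \) afforded by strongness, secures by sending \( \alpha \to 0 \).

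Since each part merely cites an already-proven result whose hypotheses are contained verbatim in those of the theorem, there is no genuine obstacle remaining. The real technical weight of the result rests in the preceding propositions, which in turn rest on the MGS normal form \cref{prop:momentumMap:MGSnormalForm} and the Bifurcation \cref{prop:bifurcationLemma}; the present theorem is the packaging step that presents these three outputs as a unified reduction statement in the form expected by comparison with the finite-dimensional Sjamaar--Lerman theorem.
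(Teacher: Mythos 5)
Your proposal is correct and is exactly how the paper treats this theorem: the statement is introduced with \enquote{In summary, we obtain the following result}, and parts (i), (ii) and (iii) are read off from \cref{prop:symplecticReduction:orbitTypeManifold}, \cref{prop:symplecticReduction:orbitTypeDecompositionStratification} and \cref{prop:symplecticReduction:orbitTypeManifoldSymplectic} respectively, whose hypotheses are contained in those of the theorem precisely as you verify.
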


\begin{example}
	\label{ex::diffOneFormsOnSurface:symplecticReduction}
	Continuing in the setting of \cref{ex::diffOneFormsOnSurface:gaugeAction}, let \( P \to M \) be a principal \( \UGroup(1) \)-bundle on the closed surface \( M \).
	As we have seen, the momentum map \( \SectionMapAbb{J}: \ConnSpace(P) \to \DiffFormSpace^2(M) \) for the \( \GauGroup(P) \)-action on \( \ConnSpace(P) \) is given by (minus) the curvature.
	Thus, the symplectic quotient at \( 0 \),
	\begin{equation}
		\check{\ConnSpace}_0(P) \equiv \SectionMapAbb{J}^{-1}(0) \slash \GauGroup(P),
	\end{equation}
	coincides with the moduli space of flat connections.
	Moreover, by the discussion in \cref{ex::diffOneFormsOnSurface:momentumMapNormalForm}, \( \SectionMapAbb{J} \) can be brought into a strong MGS normal form at every \( A \in \ConnSpace(P) \).
	Since the \( \GauGroup(P) \)-action is has only the single orbit type \( \UGroup(1) \), \cref{prop:symplecticReduction:mainTheorem} implies that \( \check{\ConnSpace}_0(P) \) is a symplectic manifold.

	In the present setting, we can use the method of invariants to get a direct description of \( \check{\ConnSpace}_0(P) \).
	Let \( (\gamma_i) \) be a family of closed piecewise smooth paths in \( M \) generating \( \pi_1(M) \). 
	The holonomy \( \Hol_A(\gamma_i) \) of \( \gamma_i \) with respect to a connection \( A \) furnishes a map
	\begin{equation}
		\SectionMapAbb{K}: \ConnSpace(P) \to \sCohomology^1(M, \UGroup(1)), \quad A \mapsto \bigl(\gamma_i \mapsto \Hol_A(\gamma_i)\bigr),
	\end{equation}
	where we used the Universal Coefficient Theorem and the Hurewicz Theorem to identify \( \sCohomology^1(M, \UGroup(1)) \) with \( \Hom(\pi_1(M), \UGroup(1)) \).
	Since \( \SectionMapAbb{K} \) is \( \GauGroup(P) \)-invariant, it descends to a map \( \check{\SectionMapAbb{K}} \) from \( \check{\ConnSpace}_0(P) \) to \( \sCohomology^1(M, \UGroup(1)) \).
	Moreover, by standard arguments, \( \check{\SectionMapAbb{K}} \) is a diffeomorphism.
	Under the identification given by \( \check{\SectionMapAbb{K}} \), the reduced symplectic form on \( \check{\ConnSpace}_0(P) \) coincides with the intersection form on \( \deRCohomology^1(M, \R) \) by~\eqref{eq:diffOneFormsOnSurface:gaugeActionMomentumMap:symplecticStructureOnE}.
	The corresponding story for a principal \( G \)-bundle \( P \) with non-abelian structure group \( G \) will be discussed below in \cref{sec:yangMillsSurface}.
	It turns out that, in this case, the moduli space \( \check{\ConnSpace}_0(P) \) of flat connections has singularities and is a stratified symplectic space, see \cref{prop:yangMillsSurface:moduliSpaceCentralYMSymplecticStrata}.
\end{example}
It is quite remarkable that, in the linear setting, the usual finite-dimensional result about symplectic strata directly generalizes to the infinite-dimensional realm without any further assumptions.
\begin{coro}
	\label{prop:singularReduction:linear}
	Let \( (X, \omega) \) be a symplectic Fréchet space endowed with a continuous linear symplectic action of the compact Lie group \( G \).
	Then, the \( G \)-action has a unique (up to a constant) equivariant momentum map \( J: X \to \LieA{g}^*_\kappa \) given by
	\begin{equation}
		\label{eq:singularReduction:linear:momentumMap}
		\kappa(J(x), \xi) = \frac{1}{2} \omega(x, \xi \ldot x)
	\end{equation}	
	for \( x \in X \) and \( \xi \in \LieA{g} \).
	Moreover, for every orbit type \( (H) \) of the \( G \)-action on \( X \), the subset \( X_{(H), 0} \defeq X_{(H)} \intersect J^{-1}(0) \) is a smooth submanifold of \( X \) and there exists a unique smooth manifold structure on \( \check{X}_{(H), 0} \defeq X_{(H), 0} \slash G \) such that the natural projection \( \pi_{(H)}: X_{(H), 0} \to \check{X}_{(H), 0} \) is a smooth submersion.
	Furthermore, \( \check{X}_{(H), 0} \) carries a symplectic form \( \check{\omega}_{(H)} \) uniquely determined by
	\begin{equation}
		\label{eq:singularReduction:linear:reducedSymplectic}
		\pi_{(H)}^* \check{\omega}_{(H)} = \restr{\omega}{X_{(H), 0}}.
		\qedhere
	\end{equation}
\end{coro}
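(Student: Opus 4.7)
The plan is to recognize the corollary as a direct specialization of the Singular Reduction Theorem (\cref{prop:symplecticReduction:mainTheorem}) to the setting of a linear action on a symplectic vector space, where the MGS normal form furnished by \cref{prop:momentumMap:MGSnormalForm:linear} is available at every point.

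First, I would establish that the quadratic formula~\eqref{eq:singularReduction:linear:momentumMap} defines a $G$-equivariant momentum map. That it satisfies the momentum map relation is already shown in \cref{ex:momentumMap:linearCase}. For equivariance with respect to the coadjoint action, I would use $G$-invariance of $\omega$ together with the relation $\xi \ldot (g \cdot x) = g \cdot (\AdAction_{g^{-1}} \xi \ldot x)$ to derive
\begin{equation*}
	\kappa\bigl(J(g \cdot x), \xi\bigr) = \frac{1}{2} \omega\bigl(g \cdot x, \xi \ldot (g \cdot x)\bigr) = \frac{1}{2} \omega\bigl(x, \AdAction_{g^{-1}} \xi \ldot x\bigr) = \kappa\bigl(\CoAdAction_g J(x), \xi\bigr).
\end{equation*}
Uniqueness up to a constant then follows from the standard argument: if $J'$ is another momentum map for the same action, then $\kappa(\dif(J - J'), \xi) = 0$ for all $\xi \in \LieA{g}$, and weak non-degeneracy of $\kappa$ combined with connectedness of the Fréchet space $X$ force $J - J'$ to be constant.

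Next I would verify the hypotheses of \cref{prop:symplecticReduction:mainTheorem} at the value $\mu = 0$. Since the coadjoint representation is linear, it fixes $0 \in \LieA{g}^*_\kappa$, and therefore $G_0 = G$; compactness of $G$ makes the induced action on $X$ automatically proper. For every $m \in X$, the infinitesimal orbit $\LieA{g} \ldot m$ is finite-dimensional and hence symplectically closed by \cref{prop:symplecticFunctionalAnalysis:finiteDimSubspaceSymplectiallyClosed}. Finally, \cref{prop:momentumMap:MGSnormalForm:linear} supplies a strong MGS normal form for $J$ at every point of $X$, and in particular at every point of $J^{-1}(0)$.

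With all hypotheses in place, parts (i) and (iii) of \cref{prop:symplecticReduction:mainTheorem} directly yield the smooth submanifold structure on $X_{(H), 0}$, the unique smooth manifold structure on the quotient $\check{X}_{(H), 0}$ making $\pi_{(H)}$ a smooth submersion, and the unique symplectic form $\check{\omega}_{(H)}$ characterized by~\eqref{eq:singularReduction:linear:reducedSymplectic}. I do not foresee any serious obstacle: all the heavy analytic work is already absorbed into \cref{prop:momentumMap:MGSnormalForm:linear,prop:symplecticReduction:mainTheorem}, and the only computation required at this stage is the verification of coadjoint equivariance of $J$.
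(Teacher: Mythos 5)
Your proposal is correct and follows essentially the same route as the paper: verify the momentum map property (via \cref{ex:momentumMap:linearCase}), obtain properness from compactness of \( G \) and symplectic closedness of \( \LieA{g} \ldot x \) from \cref{prop:symplecticFunctionalAnalysis:finiteDimSubspaceSymplectiallyClosed}, invoke \cref{prop:momentumMap:MGSnormalForm:linear} for the MGS normal form, and conclude with the Singular Reduction \cref{prop:symplecticReduction:mainTheorem}. The only difference is that you spell out the equivariance and uniqueness computations that the paper leaves to the reader.
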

\begin{proof}
	Since \( G \) is finite-dimensional,~\eqref{eq:singularReduction:linear:momentumMap} defines a smooth map \( J: X \to \LieA{g}^*_\kappa \).
	That \( J \) is a momentum map, indeed, follows from a routine calculation, which we leave to the reader.
	As \( G \) is compact, the \( G \)-action on \( X \) is proper.
	Moreover, for every \( x \), the orbit \( \LieA{g} \ldot x \) is a finite-dimensional subspace of \( X \) and thus it is symplectically closed due to \cref{prop:symplecticFunctionalAnalysis:finiteDimSubspaceSymplectiallyClosed}.
	According to \cref{prop:momentumMap:MGSnormalForm:linear}, \( J \) can be brought into an MGS normal form.
	Hence, the claims follow from the Singular Reduction \cref{prop:symplecticReduction:mainTheorem}.
\end{proof}

Let us now pass from the kinematic picture presented so far to dynamics.
\begin{prop}
	\label{prop:singularReduction:dynamics}
	Under the assumptions of \cref{prop:symplecticReduction:mainTheorem}, let \( h \) be a \( G \)-invariant Hamiltonian on the symplectic \( G \)-manifold \( (M, \omega, G) \) with equivariant momentum map \( J: M \to G^*_\kappa \).
	Assume\footnote{Recall that the Hamiltonian vector field associated with the Hamiltonian \( h \) may not exist in infinite dimensions and that vector fields on Fréchet manifolds need not have flows.
	The latter is more or less equivalent to (in time) local solutions of the corresponding partial differential equation.} that the associated Hamiltonian vector field \( X_h \) exists and that it has a unique flow \( \flow^h_t \).
	Let \( (H) \) be an orbit type and let \( \mu \in G^*_\kappa \).
	Then,
	\begin{thmenumerate}
		\item
			the flow \( \flow^h_t \) is \( G \)-equivariant and leaves \( M_{(H), \mu} \) invariant and, hence, it projects to a flow \( \check{\flow}^h_t \) on \( \check{M}_{(H), \mu} \),
		\item
			the projected flow \( \check{\flow}^h_t \) is Hamiltonian with respect to the smooth function \( \check{h}_{(H)} \) on \( \check{M}_{(H), \mu} \) uniquely defined by
			\begin{equation}
				\pi_{(H), \mu}^* \check{h}^{}_{(H)} = \restr{h}{M_{(H), \mu}} \, .
				\qedhere
			\end{equation}
	\end{thmenumerate}
\end{prop}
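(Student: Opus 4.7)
The plan is to establish (i) by combining $G$-equivariance of the Hamiltonian vector field with Noether's theorem, and then to verify (ii) by a direct computation using the defining relation $\pi_{(H),\mu}^* \check{\omega}_{(H),\mu} = \iota_{(H),\mu}^* \omega$ of \cref{prop:symplecticReduction:orbitTypeManifoldSymplectic}.

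For (i), since both $h$ and $\omega$ are $G$-invariant, the defining equation $X_h \contr \omega = -\dif h$ is $G$-equivariant, and the injectivity of $\omega^\flat$ forces $X_h$ itself to be $G$-equivariant. Uniqueness of the flow then gives $\flow^h_t \circ \Upsilon_g = \Upsilon_g \circ \flow^h_t$ for every $g \in G$. In particular, $\flow^h_t$ maps each orbit type subset $M_{(H)}$ to itself, because $G$-equivariance carries the stabilizer of $m$ to that of $\flow^h_t(m)$. Noether's Theorem (\cref{prop:momentumMap:noetherTheorem}) furthermore guarantees that $J$ is constant along integral curves of $X_h$, so $\flow^h_t$ preserves $M_\mu = J^{-1}(\mu)$; combining the two, $\flow^h_t$ leaves $M_{(H),\mu}$ invariant. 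Because $\flow^h_t$ commutes with the $G_\mu$-action, it descends to a flow $\check{\flow}^h_t$ on $\check{M}_{(H),\mu} = M_{(H),\mu}/G_\mu$, smoothness being automatic from the fact that $\pi_{(H),\mu}$ is a smooth submersion by \cref{prop:symplecticReduction:orbitTypeManifold}.

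For (ii), invariance of $M_{(H),\mu}$ under $\flow^h_t$ means that $X_h|_{M_{(H),\mu}}$ is tangent to $M_{(H),\mu}$ and, by $G_\mu$-equivariance, the pushforward
\begin{equation}
	\check{X}_{\equivClass{m}} \defeq \tangent_m \pi_{(H),\mu}\bigl(X_h(m)\bigr), \qquad m \in M_{(H),\mu},
\end{equation}
is a well-defined vector field on $\check{M}_{(H),\mu}$ whose flow is $\check{\flow}^h_t$. To identify $\check{X}$ with $X_{\check{h}_{(H)}}$, let $v \in \TBundle_m M_{(H),\mu}$ and set $\check{v} \defeq \tangent_m \pi_{(H),\mu}(v)$. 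Then, using \cref{prop:symplecticReduction:orbitTypeManifoldSymplectic}, the momentum map equation $X_h \contr \omega = -\dif h$, and the defining identity $\pi_{(H),\mu}^* \check{h}_{(H)} = \restr{h}{M_{(H),\mu}}$,
\begin{equation}
	\check{\omega}_{(H),\mu}(\check{X}, \check{v})
	= \bigl(\pi_{(H),\mu}^* \check{\omega}_{(H),\mu}\bigr)_m (X_h, v)
	= \omega_m(X_h, v)
	= -(\dif h)_m(v)
	= -(\dif \check{h}_{(H)})_{\equivClass{m}}(\check{v}).
\end{equation}
Surjectivity of $\tangent_m \pi_{(H),\mu}$ onto $\TBundle_{\equivClass{m}} \check{M}_{(H),\mu}$ and non-degeneracy of $\check{\omega}_{(H),\mu}$ then yield $\check{X} = X_{\check{h}_{(H)}}$, so that $\check{\flow}^h_t$ is the Hamiltonian flow of $\check{h}_{(H)}$. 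The only point where the infinite-dimensional Fréchet setting could be a concern is the passage from the equivariant vector field to a genuine flow on the quotient, but this is already subsumed in the hypothesis that a unique flow $\flow^h_t$ for $X_h$ exists, so no additional analytic input is required.
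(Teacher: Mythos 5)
Your proposal is correct and follows essentially the same route as the paper: invariance of $X_h$ plus uniqueness of the flow gives equivariance, Noether's Theorem gives preservation of the level set, and the Hamiltonian property of the projected flow is verified by the standard pullback computation with $\pi_{(H),\mu}^* \check{\omega}_{(H),\mu} = \iota_{(H),\mu}^* \omega$. The paper leaves the final verification as a "routine calculation," which you have simply written out explicitly.
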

\begin{proof}
	Since \( h \) is \( G \)-invariant, the associated Hamiltonian vector field \( X_h \) is invariant, too.
	The calculation
	\begin{equation}
		\difFracAt{}{t}{t} \flow^h_t (g \cdot m) = (X_h)_{g \cdot m} = g \ldot (X_h)_m = \difFracAt{}{t}{t} g \cdot \flow^h_t (m)
	\end{equation}
	shows that the flow \( \flow^h_t \) is \( G \)-equivariant (since, by assumption, it exists and is unique).
	Moreover, \cref{prop:momentumMap:noetherTheorem} implies that the flow \( \flow^h_t \) leaves \( M_{\mu} \) invariant.
	Hence, \( \flow^h_t \) preserves \( M_{(H), \mu} \) and so it projects onto a flow \( \check{\flow}^h_t \) on \( \check{M}_{(H), \mu} \).
	Denote the induced vector field on \( \check{M}_{(H), \mu} \) by \( \check{X}_{(H)} \).
	Since \( h \) is \( G \)-invariant and since \( \pi_{(H)} \) is a surjective submersion, \( h \) descends to a smooth function \( \check{h}_{(H)} \) on \( \check{M}_{(H), \mu} \).
	That \( \check{X}_{(H)} \) is  Hamiltonian with respect to \( \check{h}_{(H)} \),  indeed, is verified by a routine calculation.
\end{proof}

\subsection{Regular symplectic reduction}

A special case is regular symplectic reduction, where the \( G_\mu \)-action is free.
As a consequence of the Singular Reduction \cref{prop:symplecticReduction:mainTheorem} and of \cref{prop:singularReduction:dynamics}, we obtain the following result.

\begin{thm}[Regular Reduction Theorem]
	\label{prop:symplecticReduction:regular}
	Let \( (M, \omega, G) \) be a symplectic \( G \)-manifold with equivariant momentum map \( J: M \to G^*_\kappa \).
	Let \( \mu \in G^*_\kappa \), and assume that \( G_\mu \) is a Lie subgroup of \( G \) acting properly and freely on \( M \).
	Moreover, assume that \( \LieA{g} \ldot m \) is symplectically closed for every \( m \in M_\mu = J^{-1}(\mu) \).
	If  \( J \) can be brought into an MGS normal form at every point of \( M_\mu \), then the following holds:
	\begin{thmenumerate}
		\item
			The level set \( M_\mu \) is a smooth submanifold of \( M \).
			Moreover, there exists a unique smooth manifold structure on the quotient
			\begin{equation}
				\check{M}_\mu \defeq M_\mu \slash G_\mu
			\end{equation}
			such that the natural projection \( \pi_\mu: M_\mu \to \check{M}_\mu \) is a smooth submersion.
		\item
			There exists a symplectic form \( \check{\omega}_\mu \) on \( \check{M}_\mu \) uniquely determined by
			\begin{equation}
				\pi_\mu^* \, \check{\omega}^{}_\mu = \iota_\mu^* \omega,
			\end{equation}
			where \( \iota_\mu: M_\mu \to M \) is the natural injection.
		\item
			Let \( h \) be a \( G \)-invariant Hamiltonian function whose associated Hamiltonian vector field \( X_h \) exists and has a unique flow \( \flow^h_t \).
			Then the flow \( \flow^h_t \) is \( G \)-equivariant and leaves \( M_\mu \) invariant.
			Hence, it projects to a flow \( \check{\flow}^h_t \) on \( \check{M}_\mu \), which is Hamiltonian with respect to the smooth function \( \check{h}_\mu \) on \( \check{M}_\mu \) uniquely defined by
			\begin{equation}
				\pi_\mu^* \check{h}^{}_\mu = \restr{h}{M_\mu} \, .
				\qedhere
			\end{equation}
	\end{thmenumerate}
\end{thm}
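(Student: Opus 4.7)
The plan is to derive the theorem as a direct corollary of the Singular Reduction Theorem (\cref{prop:symplecticReduction:mainTheorem}) together with \cref{prop:singularReduction:dynamics}. Since the \( G_\mu \)-action on \( M \) is by assumption free, the only orbit type present in \( M_\mu = J^{-1}(\mu) \) is the trivial type \( (H) = (\set{e}) \). Hence \( M_{(\set{e}), \mu} = M_\mu \) and \( \check{M}_{(\set{e}), \mu} = \check{M}_\mu \), and the orbit-type decomposition collapses to a single smooth stratum. This is the key observation that allows us to bypass any discussion of gluing strata or of the frontier/approximation condition.

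First, I would invoke the first part of \cref{prop:symplecticReduction:mainTheorem} applied to the orbit type \( (\set{e}) \). All hypotheses are in force: \( G_\mu \) is a Lie subgroup acting properly (by assumption), and \( J \) admits an MGS normal form at each point of \( M_\mu \). This directly yields that \( M_\mu \) is a smooth submanifold of \( M \), and that \( \check{M}_\mu \) inherits a unique smooth manifold structure with respect to which \( \pi_\mu: M_\mu \to \check{M}_\mu \) is a smooth submersion. In the free case, \( \pi_\mu \) is moreover a principal \( G_\mu \)-bundle.

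Second, I would apply \cref{prop:symplecticReduction:orbitTypeManifoldSymplectic} to produce the closed non-degenerate \( 2 \)-form \( \check{\omega}_\mu \) on \( \check{M}_\mu \) characterized by \( \pi_\mu^* \check{\omega}_\mu = \iota_\mu^* \omega \). The required hypothesis that \( \LieA{g} \ldot m \) be symplectically closed for every \( m \in M_\mu \) is present. Non-degeneracy simplifies in the present free setting, because at any \( m \in M_\mu \) the stabilizer \( G_m \) is trivial, so the normal space \( E = \TBundle_m S \intersect \ker \difLog_m J \) of \cref{prop:bifurcationLemma:kernelMomentumMapIsSymplectic} satisfies \( E_{G_m} = E \), and its symplecticity is already established by the Bifurcation Lemma.

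Finally, the dynamical statement is obtained by specializing \cref{prop:singularReduction:dynamics} to the single orbit type \( (\set{e}) \): \( G \)-equivariance of the flow is immediate from uniqueness, preservation of \( M_\mu \) follows from Noether's \cref{prop:momentumMap:noetherTheorem}, and the induced flow on \( \check{M}_\mu \) is Hamiltonian with respect to the descended function \( \check{h}_\mu \). Since the present theorem is a literal specialization of results already established, no genuine obstacle is expected; the only simplification worth noting is that the free action eliminates any need for the strong MGS form or the approximation property, since stratification issues do not arise.
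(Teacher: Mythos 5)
Your proposal is correct and is essentially identical to the paper's own argument: the paper likewise obtains the Regular Reduction Theorem as a direct consequence of the Singular Reduction Theorem (\cref{prop:symplecticReduction:mainTheorem}) and \cref{prop:singularReduction:dynamics}, using that a free \( G_\mu \)-action leaves only the trivial orbit type so that \( M_{(\set{e}),\mu} = M_\mu \) and no stratification or approximation-property issues arise.
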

We emphasize that the Regular Reduction Theorem is a direct consequence of the Singular Reduction Theorem.
This is in contrast to the finite-dimensional case, where the Regular Reduction Theorem is usually proved first and then used to derive the Singular Reduction Theorem, \cf \cref{sec:symplecticReductionFiniteDimensions}.

\begin{remark}[Orbit reduction]
	It is interesting to ask whether our methods apply to orbit reduction as well (in both the regular and singular setting).
	This question will be addressed in future work.
	In \cref{sec:applications:teichmuellerSpace}, we will deal with a special example of orbit reduction, namely the Teichmüller space.
\end{remark}

%%%%%%%%%%%%%%%%%%%%%%%%%%%%%%%%%%%%%%%%%%%%%%%%%%%%%%%%%%
%%%%%%%%%%%%%%%%%%%%%%%%%%%%%%%%%%%%%%%%%%%%%%%%%%%%%%%%%%
%%%%%%%%%%%%%%%%%%%%%%%%%%%%%%%%%%%%%%%%%%%%%%%%%%%%%%%%

\section{Cotangent bundle reduction}
\label{sec:cotangentBundleReduction}

%%%%%%%%%%%%%%%%%%%%%%%%%%%%%%%%%%%%%%%%%%%%%%%%%%%%%%%%%%%
%%%%%%%%%%%%%%%%%%%%%%%%%%%%%%%%%%%%%%%%%%%%%%%%%%%%%%%%%%%

In many applications in physics, the phase space  is the cotangent bundle \( \CotBundle Q \) of the configuration space $Q$ of the 
system under consideration. 
This case has been studied in detail in \parencite{DiezRudolphReduction}. For completeness, we add a short summary of singular symplectic reduction  for that case here.  Moreover, we outline that the Yang--Mills-Higgs gauge field model fits into that reduction scheme. 
For a detailed analysis of this model we refer to \parencite{DiezRudolphReduction}.

If $Q$ is an infinite-dimensional  (non-Banach) Fréchet manifold, then there is no canonical notion of cotangent bundle, 
because the  topological dual bundle \( \TBundle' Q \) is not a smooth fibre bundle.  
As a substitute, we say that a smooth Fréchet bundle \( \CotBundleProj: \CotBundle Q \to Q \) is a cotangent bundle of $Q$ if there exists a fiberwise non-degenerate pairing with the tangent bundle  \( \TBundle Q \), see \parencite[Appendix~A.1]{DiezRudolphReduction} for the 
details. That is, to speak of a cotangent bundle over $Q$ requires the choice of a bundle \( \CotBundle Q \) and of a pairing \( \CotBundle Q \times_Q \TBundle Q \to \R \). Then, as in finite dimensions, the canonical $1$-form $\theta$  on \( \CotBundle Q \) 
is given by 
\begin{equation}
	\theta_p (X) = \dualPair*{p}{\tangent_p \CotBundleProj (X)}_p, \qquad X \in \TBundle_p (\CotBundle Q) 
\end{equation}
and \( \omega = \dif \theta \) is a symplectic form. 

Now, let  $Q$ be endowed with a smooth, proper action of a Fréchet Lie group $G$. Then, by linearization we obtain 
a $G$-action on $\TBundle Q$ and by requiring that the pairing 
\( \CotBundle Q \times_Q \TBundle Q \to \R \)  be left invariant,  the action on \( \TBundle Q \) induces a \( G \)-action on 
\( \CotBundle Q \). It is easy to see that this action preserves both $\theta $ and 
$\omega$. In order to exclude pathological cases, we make the following regularity assumptions:
\begin{enumerate}
\item[(1)] There exists a \( G \)-equivariant diffeomorphism between \( \TBundle Q \) and \( \CotBundle Q \). 
\item[(2)]
 In all constructions which rely on dualizing  the corresponding adjoint maps exist (and are surjective when the original map is injective). 
\end{enumerate}
Our construction of the normal form rests on the existence of a slice for the $G$-action\footnote{We refer to \parencite{DiezSlice} for 
the study of the Slice Theorem in the Fréchet context.} that is compatible with 
the cotangent bundle structure. Thus, we assume: 
\begin{enumerate}
\item[(3)] 
The $G$-action admits a slice $S$ and the associated  tube map \( \chi: G \times_{G_q} S \to Q \) fulfils the following:
the lift \( \cotangent \chi: \CotBundle Q \to \CotBundle \bigl(G \times_{G_q} S\bigr) \) of $\chi$ exists and is a diffeomorphism when restricted to some appropriate neighborhood of \( q \).
\end{enumerate}

Next, in order to define the infinite-dimensional counterpart \( J: \CotBundle Q \to \LieA{g}^* \) of the momentum map, we need to specify a dual space to the Lie algebra \( \LieA{g} \) of \( G \). Thus, in line with the strategy of \cref{sec:momentumMaps}, we have to 
 choose a Fréchet space \( \LieA{g}^* \) and a separately continuous non-degenerate bilinear form \( \kappa: \LieA{g}^* \times \LieA{g} \to \R \).
With respect to these data, the momentum map \( J: \CotBundle Q \to \LieA{g}^* \), if it exists, satisfies
\begin{equation}
	\label{eq:cotangentBundle:momentumMap}
 	\kappa(J(p), \xi) = \dualPair{p}{\xi \ldot q}, \quad p \in \CotBundle_q Q, \xi \in \LieA{g}.
\end{equation}
In general, in infinite dimensions, the momentum map may not exist. 
But Assumption (2) ensures that the functional on \( \LieA{g} \) defined by the right-hand-side of \cref{eq:cotangentBundle:momentumMap} is represented by an element of \( \LieA{g}^* \), and thus the lifted action of $G$ to $\CotBundle Q$ has a momentum map.

In the above described setting, there is a straightforward strategy for the construction of the normal form, which allows for 
omitting the more complicated procedure described in the present paper for the general case. It goes as follows:
\begin{enumerate}
	\item
		By using a slice \( S \) at \( q \in Q \), reduce the problem to \( \CotBundle (G \times_{G_q} S) \).
	\item
		Establish an equivariant diffeomorphism \( \CotBundle (G \times_{G_q} S) \isomorph G \times_{G_q} (\LieA{m}^* \times \CotBundle S) \), where \( \LieA{m} \) is a complement of \( \LieA{g}_q \) in \( \LieA{g} \).
	\item
		Calculate the momentum map under these identifications.
\end{enumerate}
Here,  \( \LieA{g}_q \)  is the  Lie algebra of the stabilizer $G_q$ of the $G$-action  at $q \in Q$. 

To accomplish the first step, let \( p \in \CotBundle Q \) be a point in the fiber over \( q \in Q \), let  \( S \) be a slice of the 
$G$-action at $q$ and let \( \chi: G \times_{G_q} S \to Q \) be the associated tube mapping. The latter is a $G$-equivariant 
diffeomorphism onto an open neighborhood of \( q \) in \( Q \). Then, by Assumption $(3)$ and \parencite[Proposition~2.3]{DiezRudolphReduction}, 
$\cotangent \chi$ is a symplectomorphism and 
\begin{equation}
\label{transp-J-1}
 J \circ  \cotangent \chi^{-1}: \CotBundle (G \times_{G_q} S) \to \LieA{g}^* 
\end{equation}
is a momentum map for the lifted \( G \)-action on \( \CotBundle (G \times_{G_q} S) \).

Next, let us explain how to accomplish the second step. As the $G$-action is proper, the stabilizer $G_q$ is compact and thus, 
by \parencite[Lemma~2.4]{DiezRudolphReduction}, there exists an \( \AdAction_{G_q} \)-invariant complement 
\( \LieA{m} \) of \(  \LieA{g}_q  \) in \( \LieA{g} \), together with a complementary decomposition 
 \( \LieA{g}^* = \LieA{m}^* \oplus   \LieA{g}_q^* \). The choice of \( \LieA{m} \) induces a homogeneous connection in 
$G \to G/G_q$ and the latter yields a $G$-equivariant diffeomorphism 
\begin{equation}
	\label{eq:cotangentBundle:diffeoAssociatedBundle}
	\TBundle (G \times_{G_q} S) \isomorph G \times_{G_q} (\LieA{m} \times \TBundle S).
	\qedhere
\end{equation}
By dualizing this isomorphism we obtain a \( G \)-equivariant diffeomorphism
\begin{equation}
	\phi: G \times_{G_q} (\LieA{m}^* \times \CotBundle S) \to \CotBundle (G \times_{G_q} S) \, ,
\end{equation}
where \( \CotBundle S \) is, by definition, the image of \( \TBundle S \) under the restriction of the  \( G \)-equivariant diffeomorphism 
\( \TBundle Q \to \CotBundle Q \).

Now, the third step is accomplished by pulling back $J$ to  $G \times_{G_q} (\LieA{m}^* \times \CotBundle S) $ via the 
map
\begin{equation}
\label{locIso-Phi}
 \Phi = \cotangent \chi^{-1} \circ \phi  : G \times_{G_q} (\LieA{m}^* \times \CotBundle S) \to \CotBundle Q\, ,
\end{equation}
which is a diffeomorphism onto an open neighborhood of \( p \in \CotBundle_q Q \) in \( \CotBundle Q \).
Under \( \Phi \), the momentum map \( J: \CotBundle Q \to \LieA{g}^* \) takes the following normal form:
	\begin{equation}\label{eq:cotangentBundle:normalFormMomentumMap}
		(J \circ \Phi) \bigl(\equivClass{a, (\nu, \alpha_s)}\bigr) = \CoAdAction_a \bigl(\nu + J_{G_q}(\alpha_s)\bigr),
	\end{equation}
	where \( J_{G_q}: \CotBundle S \to \LieA{g}_q^* \) is the momentum map for the lifted \( G_q \)-action on \( \CotBundle S \), 
see \parencite[Theorem~2.10]{DiezRudolphReduction} for the details. 

We stress that, in sharp contrast to the standard approach in finite dimensions, the semi-global diffeomorphism \( \Phi \) does not yield a \emph{symplectic slice} for the \( G \)-action.
In fact, it does not even yield a slice for the lifted \( G \)-action on \( \CotBundle Q \) because in \( G \times_{G_q} (\LieA{m}^* \times \CotBundle S) \) the quotient is taken by \( G_q \) and not by \( G_p \). Hence, we do not construct here a Marle--Guillemin--Sternberg normal form in the sense of \cref{def:momentumMap:abstractMGSnormalForm}.
But, it turns out that the simple normal form~\eqref{eq:cotangentBundle:normalFormMomentumMap} of the momentum map is sufficient for most questions concerning singular cotangent bundle reduction. We also note that the pull-back of the symplectic form $\omega$ by $\Phi$ may be easily 
calculated, see \parencite[Proposition~2.11]{DiezRudolphReduction}.

The reader may wonder, whether the above normal form can be derived from the general construction provided in the previous sections. 
Unfortunately, for the time being, this is an open question. 

Given the above results, one can prove  the following symplectic reduction theorem, which is similar to the general result \cref{prop:symplecticReduction:mainTheorem} based on the MGS normal form.
\begin{thm}
	\label{prop:cotangentBundle:singularSympRed}
	Let \( Q \) be a Fréchet \( G \)-manifold.
	Assume that the \( G \)-action is proper, that it admits at every point a slice compatible with the cotangent bundle structures and that the decomposition of \( Q \) into orbit types satisfies the frontier condition.
	Moreover, assume that \( \CotBundle Q \) is a Fréchet manifold, which is \( G \)-equivariantly diffeomorphic to \( \TBundle Q \), and that the lifted action on \( \CotBundle Q \), endowed with its canonical symplectic form \( \omega \), has a momentum map \( J \).
	Then, the following hold.
	\begin{thmenumerate}
		\item
			The set of orbit types of \( P \defeq J^{-1}(0) \) with respect to the lifted \( G \)-action coincides with the set of orbit types for the \( G \)-action on \( Q \). 
	\item
			The reduced phase space \( \check{P} \defeq J^{-1}(0) \slash G \) is stratified into orbit type manifolds \( \check{P}_{(K)} \defeq (J^{-1}(0))_{(K)} \slash G \).
	\item 
			Assume, additionally, that every orbit is symplectically closed, that is, the symplectic double orthogonal \( (\LieA{g} \ldot p)^{\omega \omega} \) coincides with \( \LieA{g} \ldot p \) for all \( p \in P \).
			Then, for every orbit type \( (K) \), the manifold \( \check{P}_{(K)} \) carries a symplectic form \( \check{\omega}_{(K)} \) uniquely determined by
			\begin{equation}
				\pi_{(K)}^* \check{\omega}_{(K)} = \restr{\omega}{P_{(K)}},
			\end{equation}
			where \( \pi_{(K)}: P_{(K)} \to \check{P}_{(K)} \) is the natural projection.
			\qedhere
	\end{thmenumerate}
\end{thm}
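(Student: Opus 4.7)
The plan is to exploit the explicit normal form \eqref{eq:cotangentBundle:normalFormMomentumMap} together with the local diffeomorphism \( \Phi \): near any \( p \in P \defeq J^{-1}(0) \) with base point \( q = \CotBundleProj(p) \), an open neighborhood of \( p \) in \( \CotBundle Q \) is \( G \)-equivariantly identified with a neighborhood of \( \equivClass{e, (0, \alpha_s)} \) in \( G \times_{G_q}(\LieA{m}^* \times \CotBundle S) \), on which \( J \) becomes \( \equivClass{a, (\nu, \alpha_s)} \mapsto \CoAdAction_a\bigl(\nu + J_{G_q}(\alpha_s)\bigr) \). Since \( \LieA{m}^* \) and \( \LieA{g}_q^* \) are complementary in \( \LieA{g}^* \), the vanishing of this expression forces \( \nu = 0 \) and \( \alpha_s \in J_{G_q}^{-1}(0) \). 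Thus the global problem reduces locally to a linear model: the compact group \( G_q \) acting linearly on \( \CotBundle S \) with quadratic momentum map \( J_{G_q}: \CotBundle S \to \LieA{g}_q^* \). Every claim can then be reduced to a statement about this model and transported back to \( Q \) via the slice theorem.

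For part (i), the zero section \( S \hookrightarrow \CotBundle S \) is \( G_q \)-equivariant and entirely contained in \( J_{G_q}^{-1}(0) \), so every \( G_q \)-orbit type appearing in \( S \) occurs in \( J_{G_q}^{-1}(0) \). For the opposite inclusion I would argue that the \( G_q \)-stabilizer of any \( (s, \alpha) \in J_{G_q}^{-1}(0) \) coincides with the \( G_q \)-stabilizer of some point of \( S \); this uses the equivariant identification \( \TBundle Q \isomorph \CotBundle Q \) from the first regularity assumption to transfer \( \alpha \) to a tangent vector at \( s \), together with the condition \( J_{G_q}(s, \alpha) = 0 \) to control the stabilizer under the linearized action. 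Combined with \parencite[Proposition~2.13]{DiezSlice}, which supplies the bijection between \( G_q \)-orbit types on the slice \( S \) and \( G \)-orbit types on \( Q \) near \( q \), this yields the equality of orbit type sets for \( P \) and \( Q \).

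For part (ii), the normal form identifies \( P_{(K)} \) locally with \( G \times_{G_q}\bigl(\{0\} \times J_{G_q}^{-1}(0)_{(K)}\bigr) \). The linear analysis underlying \cref{prop:singularReduction:linear}, applied to the quadratic momentum map \( J_{G_q} \) on \( \CotBundle S \), shows that each such subset is a smooth submanifold and that the quotient \( \check{P}_{(K)} \) inherits a unique smooth structure making the projection a submersion. The frontier condition on the orbit type decomposition of \( \check{P} \) then transfers directly from the assumed frontier condition on \( Q \) via the bijection of orbit types established in part (i).

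For part (iii), I would define \( \check{\omega}_{(K)} \) by the equation \( \pi_{(K)}^* \check{\omega}_{(K)} = \restr{\omega}{P_{(K)}} \). The right-hand side is \( G \)-invariant by construction, and horizontality follows from the momentum map relation \( \xi^* \contr \omega = -\dif \kappa(J, \xi) \), which vanishes on vectors tangent to \( P \) since \( J \) is constant there; closedness is inherited from \( \omega \). For non-degeneracy I would identify \( \TBundle_{\equivClass{p}} \check{P}_{(K)} \) with the fixed-point subspace \( E_{G_p} \) of \( E = \TBundle_p P \intersect \ker \tangent_p J \) under the linearized \( G_p \)-action, exactly as in \cref{prop:symplecticReduction:orbitTypeManifold}, and then combine \cref{prop:bifurcationLemma:kernelMomentumMapIsSymplectic} (using the symplectic closedness hypothesis \( (\LieA{g} \ldot p)^{\omega\omega} = \LieA{g} \ldot p \)) with \cref{prop:sympleticFunctionalAnalysis:invariantSubspaceSymplectic} to conclude that the restriction of \( \omega_p \) to \( E_{G_p} \) is symplectic. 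The main obstacle I foresee lies in part (i), namely the verification that \( J_{G_q}^{-1}(0) \) does not produce orbit types beyond those already present in \( S \); the remaining steps closely parallel the general singular reduction argument, but adapted to the simpler cotangent-bundle normal form rather than the MGS normal form of \cref{def:momentumMap:abstractMGSnormalForm}.
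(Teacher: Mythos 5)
Your overall strategy --- pushing everything through \( \Phi \) and the explicit normal form~\eqref{eq:cotangentBundle:normalFormMomentumMap} so that the problem localizes to the compact group \( G_q \) acting on \( \CotBundle S \) with the quadratic momentum map \( J_{G_q} \) --- is exactly the intended route; note that the present paper does not prove \cref{prop:cotangentBundle:singularSympRed} at all but defers to \parencite[Theorem~2.12]{DiezRudolphReduction}, for which \cref{sec:cotangentBundleReduction} only supplies the setup. The genuine gap is the one you yourself flag in part (i), and it is worth being precise about why it is nontrivial and how it closes. After the identifications, the stabilizer of a zero-momentum covector is \( (G_q)_x \intersect (G_q)_w \) for a point \( x \) and a vector \( w \) in the linear slice model \( X \), and you must realize this intersection as \( (G_q)_{x+tw} \) for some small \( t \neq 0 \). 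For an arbitrary pair \( (x,w) \) this is \emph{false}: for \( \SOGroup(3) \) acting on \( \R^3 \) with \( x = e_1 \), \( w = e_2 \), the intersection of stabilizers is trivial while every \( (G_q)_{x+tw} \) is a circle, so the trivial orbit type would appear in the fiber without appearing on the base. What saves the statement is precisely the condition \( J_{G_q}(\alpha_s) = 0 \), which (via a \( G_q \)-invariant inner product and the equivariant identification \( \CotBundle S \isomorph \TBundle S \)) forces \( w \) to be orthogonal to \( \LieA{g}_q \ldot x \), i.e.\ to lie in a slice at \( x \) for the \( G_q \)-action on \( X \); the slice property then gives \( (G_q)_{x+tw} = \bigl((G_q)_x\bigr)_{tw} = (G_q)_x \intersect (G_q)_w \) for small \( t \neq 0 \), and \( x + tw \) is a point of \( S \subseteq Q \). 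You name the two ingredients but do not assemble them, and since this is the heart of part (i) --- and of the seam structure that follows --- the proposal is incomplete there.

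Two further steps need repair. In part (ii), the frontier condition for \( \check{P} \) does not ``transfer directly'' from a bijection of orbit-type \emph{sets}: a bijection of index sets carries no information about closure relations, and you must either verify the frontier condition in the local model (using the conic structure of \( J_{G_q}^{-1}(0) \), as in \cref{prop:symplecticReduction:orbitTypeDecompositionStratification}) or deduce it from the assumed frontier condition on \( Q \) together with the fibered description of \( P_{(K)} \) over the orbit type strata of \( Q \). In part (iii), you cannot invoke \cref{prop:bifurcationLemma:kernelMomentumMapIsSymplectic} as stated, because that proposition presupposes a slice at \( p \) for the \( G \)-action on \( \CotBundle Q \), and the text explicitly warns that \( \Phi \) does \emph{not} provide one (the tube is modelled on \( G \times_{G_q}(\cdot) \), not \( G \times_{G_p}(\cdot) \)). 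You must first manufacture such a slice --- e.g.\ from a linear \( G_q \)-slice at \( \alpha_s \) in \( \CotBundle S \), which exists because \( G_q \) is compact and acts linearly --- or prove non-degeneracy of \( \check{\omega}_{(K)} \) directly from the normal form, which is how \parencite[Theorem~2.12]{DiezRudolphReduction} proceeds. The rest of your argument (horizontality and \( G \)-invariance of \( \restr{\omega}{P_{(K)}} \), reduction of the smooth structure to \cref{prop:singularReduction:linear}) is sound.
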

For the proof we refer to \parencite[Theorem~2.12]{DiezRudolphReduction}.
When the \( G \)-action on \( Q \) has only one orbit type\footnote{This assumption includes, of course, also the case of a free action.}, we obtain the infinite-dimensional counterpart to the well-known cotangent bundle reduction theorem for one orbit type.
\begin{coro}
	\label{prop:cotangentBundle:oneOrbitTypeReducedSpaceCotangentBundle}
	In the setting of \cref{prop:cotangentBundle:singularSympRed}, assume additionally that the \( G \)-action on \( Q \) has only one orbit type. 
	Then, \( \check{P} = J^{-1}(0) \slash G \) is symplectomorphic to \( \CotBundle (Q \slash G) \) with its canonical symplectic structure. 	
\end{coro}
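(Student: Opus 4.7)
The plan is to construct an explicit diffeomorphism $\bar{\Psi}: \check{P} \to \CotBundle(Q/G)$ and then verify that it pulls back the canonical symplectic form on $\CotBundle(Q/G)$ to the reduced form $\check{\omega}$. The observation that underlies every subsequent step is that under the one-orbit-type hypothesis the stabilizer $K = G_q$ acts \emph{trivially} on any slice $S$ at $q$: for $s \in S$ we have $G_s \subseteq G_q = K$, while the equality of orbit types $(G_s) = (K)$ forces $G_s = K$, so every point of $S$ is fixed by $K$. Consequently $Q/G$ is locally modeled by $S$, inheriting a Fréchet manifold structure for which $\pi: Q \to Q/G$ is a surjective submersion with $\ker \tangent_q\pi = \LieA{g}\cdot q$, and the pairing between $\TBundle Q$ and $\CotBundle Q$ descends, upon restriction to the annihilator of $\LieA{g}\cdot q$, to a fiberwise non-degenerate pairing with $\TBundle(Q/G)$; this yields $\CotBundle(Q/G)$ in the sense of \parencite[Appendix~A.1]{DiezRudolphReduction}.

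With $\CotBundle(Q/G)$ in hand, I would define $\Psi: J^{-1}(0) \to \CotBundle(Q/G)$ by sending $p \in \CotBundle_q Q \cap J^{-1}(0)$ to the unique covector $\tilde{p} \in \CotBundle_{[q]}(Q/G)$ satisfying $(\tangent_q\pi)^* \tilde{p} = p$. This is well defined because, by the momentum map formula~\eqref{eq:cotangentBundle:momentumMap}, $J(p) = 0$ is exactly the condition that $p$ annihilates $\LieA{g}\cdot q = \ker \tangent_q\pi$. The $G$-equivariance of the pairing, together with $\pi\circ L_g = \pi$, render $\Psi$ invariant under $G$, so it descends to a smooth map $\bar{\Psi}: \check{P} \to \CotBundle(Q/G)$.

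That $\bar{\Psi}$ is a diffeomorphism I would verify via the normal form~\eqref{eq:cotangentBundle:normalFormMomentumMap}. Since $K$ acts trivially on $S$, the fiberwise momentum map $J_{G_q}: \CotBundle S \to \LieA{g}_q^*$ vanishes identically, and the normal form collapses to $(J\circ\Phi)\bigl(\equivClass{a,(\nu,\alpha_s)}\bigr) = \CoAdAction_a\nu$. Hence $J^{-1}(0)$ is locally diffeomorphic to $G \times_K \CotBundle S$, whose $G$-quotient is $\CotBundle S$; on the other hand $Q/G$ is locally $S$, so $\CotBundle(Q/G)$ is locally $\CotBundle S$ as well, and tracing definitions one sees that $\bar{\Psi}$ corresponds to the identity under these identifications.

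Finally, to identify the symplectic forms, I would prove the canonical $1$-form identity $\Psi^*\theta_{Q/G} = \iota^*\theta_Q$, where $\iota: J^{-1}(0) \hookrightarrow \CotBundle Q$ is the inclusion. Writing $\rho_Q$ and $\rho_{Q/G}$ for the respective cotangent projections, the compatibility $\rho_{Q/G}\circ\Psi = \pi\circ\rho_Q\circ\iota$ together with the defining property $(\tangent_q\pi)^*\tilde{p} = p$ yield, for $X \in \TBundle_p J^{-1}(0)$,
\begin{equation*}
(\Psi^*\theta_{Q/G})_p(X) = \dualPair{\tilde{p}}{\tangent(\pi\circ\rho_Q)(X)} = \dualPair{p}{\tangent\rho_Q(X)} = (\iota^*\theta_Q)_p(X).
\end{equation*}
Taking the exterior derivative gives $\Psi^*\omega_{Q/G} = \iota^*\omega$, and the uniqueness clause in \cref{prop:cotangentBundle:singularSympRed} then forces $\bar{\Psi}^*\omega_{Q/G} = \check{\omega}$. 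The main obstacle is the first paragraph: rigorously equipping $Q/G$ with a Fréchet manifold structure and identifying a sensible cotangent bundle thereof, including the verification that the induced pairing is fiberwise non-degenerate. Once that foundation is laid, the remaining arguments are essentially formal, reducing to the normal form and the canonical $1$-form computation.
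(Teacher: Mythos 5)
Your argument is correct and follows essentially the same route as the paper's (the proof is deferred to \parencite{DiezRudolphReduction}, where the one-orbit-type case is likewise handled by observing that $J^{-1}(0)$ is the annihilator bundle of the orbit directions, that the normal form~\eqref{eq:cotangentBundle:normalFormMomentumMap} collapses because $G_q$ fixes the slice pointwise, and by comparing tautological $1$-forms). The one place requiring care is exactly the one you flag: in the Fréchet setting $\CotBundle(Q\slash G)$ must be \emph{chosen} together with a pairing, and the non-degeneracy of the pairing induced on the annihilator of $\LieA{g} \ldot q$ rests on the regularity assumption (2) about the existence and surjectivity of adjoints.
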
  

Note that in the cotangent bundle case we are dealing with two $G$-manifolds, namely, $Q$ endowed with the original $G$-action and 
$\CotBundle Q$ endowed with the lifted $G$-action.  Thus, let us consider the orbit types \( (K) \) and \( (H) \) of \( \CotBundle Q \) and \( Q \), respectively, and let us define  the subset
\begin{equation}
	\seam{(\CotBundle Q)}^{(K)}_{(H)} \defeq \set{p \in \CotBundle_q Q \given q \in Q_{(H)}, p \in (\CotBundle Q)_{(K)}}
\end{equation}
of the orbit type stratum \( (\CotBundle Q)_{(K)} \). 
Since the projection \( \CotBundle Q \to Q \) is \( G \)-equivariant, \( \seam{(\CotBundle Q)}^{(K)}_{(H)} \) is non-empty only if \( (K) \leq (H) \).
Moreover, the union of \( \seam{(\CotBundle Q)}^{(K)}_{(H)} \) over all orbit types \( (H) \) fulfilling this condition yields the orbit type stratum \( (\CotBundle Q)_{(K)} \). It is easy to show that  the sets \( \seam{(\CotBundle Q)}^{(K)}_{(H)} \) and \( \seam{(\CotBundle Q)}^{(K)}_{(H)} \slash G \) are submanifolds of \( (\CotBundle Q)_{(K)} \) and 
\( (\CotBundle Q)_{(K)} \slash G \), respectively, see \parencite[Lemma~2.22]{DiezRudolphReduction}.
We call the sets \( \seam{(\CotBundle Q)}^{(K)}_{(H)} \) the \emphDef{secondary strata} and the decomposition of \( \CotBundle Q \) into these secondary strata is referred to as the \emphDef{secondary orbit type stratification}.

Next, consider the sets 
\begin{equation}
	\seam{P}^{(K)}_{(H)} \defeq \seam{(\CotBundle Q)}^{(K)}_{(H)} \intersect J^{-1}(0) \, ,
\end{equation}
where \( (K) \leq (H) \). We call  $\seam{P}^{(K)}_{(H)}$ a \emphDef{preseam} and the quotient 
\( \seam{\check{P}}^{(K)}_{(H)} \defeq \seam{P}^{(K)}_{(H)} \slash G \) a \emphDef{seam}. One can prove that 
 the preseam \( \seam{P}^{(K)}_{(H)} \) is a smooth submanifold of \( \CotBundle Q \) and the seam \( \seam{\check{P}}^{(K)}_{(H)} \) is a smooth submanifold of \( \check{P}_{(K)} \) and of \( (\CotBundle Q)_{(K)} \slash G \).
Moreover, \( \seam{\check{P}}^{(K)}_{(H)} \) is a smooth fiber bundle over \( \check{Q}_{(H)} \), see 
\parencite[Lemma~2.24]{DiezRudolphReduction}. Here, \( \check{Q} = Q/G \).

To summarize, one can prove the following supplement to \cref{prop:cotangentBundle:singularSympRed}, see 
\parencite[Theorem~2.29]{DiezRudolphReduction}.
\begin{prop}[Secondary stratification]
	\label{prop:cotangentBundle:singularCotangentBundleRed}
Under the assumptions of \cref{prop:cotangentBundle:singularSympRed}, the following hold. 
	\begin{thmenumerate}
	     \item
			Every symplectic stratum \( \check{P}_{(K)} \) is further stratified as
			\begin{equation}
			 	\check{P}_{(K)} = \bigDisjUnion_{(H) \geq (K)} \seam{\check{P}}^{(K)}_{(H)} \,,
			\end{equation}
			where each seam \( \seam{\check{P}}^{(K)}_{(H)} \) is a smooth fiber bundle over \( \check{Q}_{(H)} \).
		\item
			For every orbit type \( (H) \), the principal seam \( \seam{\check{P}}^{(H)}_{(H)} \) endowed with the restriction of the symplectic form \( \check{\omega}_{(H)} \) is symplectomorphic to \( \CotBundle (\check{Q}_{(H)}) \) endowed with its canonical symplectic structure.
		\item
			The decomposition
			\begin{equation}
				\check{P} = \bigDisjUnion_{(H) \geq (K)} \seam{\check{P}}^{(K)}_{(H)}
			\end{equation}
			is a stratification of \( \check{P} \) called the \emph{secondary stratification}.
			Moreover, the projection \( \CotBundle Q \to Q \) induces a stratified surjective submersion \( \check{P} \to \check{Q} \) with respect to the secondary stratification of \( \check{P} \) and the orbit type stratification of \( \check{Q} \).
			\qedhere
	\end{thmenumerate}
\end{prop}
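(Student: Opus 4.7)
The disjoint union decomposition in \iref{prop:cotangentBundle:singularCotangentBundleRed}(i) is immediate: every \( p \in P_{(K)} \) projects via \( \CotBundleProj \) to some \( q \in Q \) of orbit type \( (H) \), and \( G \)-equivariance of \( \CotBundleProj \) forces \( G_p \subseteq G_q \), whence \( (K) \leq (H) \). The substantive content of (i)--(iii) lies in the smooth, symplectic, and stratification aspects, and for all of these the key tool is the semi-global normal form \( \Phi: G \times_{G_q} (\LieA{m}^* \times \CotBundle S) \to \CotBundle Q \) from~\eqref{locIso-Phi} together with the explicit formula~\eqref{eq:cotangentBundle:normalFormMomentumMap} for \( J \circ \Phi \). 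The plan is to use \( \Phi \) to reduce all local questions to the linear model \( \LieA{m}^* \times \CotBundle S \), piece the local descriptions together through the bundle structure \( G \times_{G_q} (\cdot) \), and finally invoke the previously established results.

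For the fiber bundle claim in (i) and the identification in (ii), work in coordinates provided by \( \Phi \) near a preseam point with \( H = G_q \). The zero level set of \( J \) corresponds via~\eqref{eq:cotangentBundle:normalFormMomentumMap} to the locus \( \nu = -J_{G_q}(\alpha_s) \), cutting out a smooth submanifold parametrised by \( [a, \alpha_s] \in G \times_H \CotBundle S \). Restricting the base coordinate \( s \in S \) to the slice fixed-point set \( S_H \) (whose projection models \( \check{Q}_{(H)} \) near \( [q] \), by properness of the action together with \parencite[Proposition~2.13]{DiezSlice}) and the cotangent direction \( \alpha_s \) to the orbit type-\( (K) \) locus of \( \CotBundle S \to S_H \) yields an explicit product model for \( \seam{P}^{(K)}_{(H)} \). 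Quotienting by \( G \) collapses the \( G \times_H \) factor and identifies \( \seam{\check{P}}^{(K)}_{(H)} \) with a fiber bundle over \( \check{Q}_{(H)} \) whose fiber is a \( G_q \)-quotient of an orbit-type subset of \( \CotBundle_q S \). When \( (K) = (H) \), only the \( G_q \)-fixed covectors survive, and these match the fiber of \( \CotBundle(\check{Q}_{(H)}) \to \check{Q}_{(H)} \); applying \cref{prop:cotangentBundle:oneOrbitTypeReducedSpaceCotangentBundle} to the single-orbit-type \( G \)-manifold \( G \cdot S_H \) then yields the symplectomorphism \( \seam{\check{P}}^{(H)}_{(H)} \isomorph \CotBundle(\check{Q}_{(H)}) \) of (ii).

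For the secondary stratification in (iii), local finiteness of \( \set{\seam{\check{P}}^{(K)}_{(H)}} \) follows from local finiteness of the orbit type stratification of \( Q \) (consequence of properness) together with the fact that, inside each fiber, the decomposition by cotangent orbit type is refined by a linear \( G_q \)-representation and hence locally finite. The frontier condition is the core step and decomposes into two layers: the base layer, where \( \check{Q}_{(H')} \subseteq \closure{\check{Q}_{(H)}} \) for \( (H) \leq (H') \) by the frontier assumption on \( Q \); and the fiber layer, where within a fixed fiber one shows that strata indexed by larger \( (K') \) lie in the closure of those indexed by \( (K) \) through the rescaling \( (\nu, \alpha_s) \mapsto (t\nu, t\alpha_s) \), whose limit \( t \to 0 \) lands on the most symmetric cotangent stratum. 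Combining the two layers along \( \Phi \) and passing to the quotient yields the frontier condition on \( \check{P} \). Finally, the stratified submersion \( \check{P} \to \check{Q} \) is built into the construction, since the local model is literally a bundle projection over \( \check{Q}_{(H)} \) on each seam.

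The main obstacle I expect is the simultaneous control of both indices in the frontier condition: a limit sequence in \( \seam{\check{P}}^{(K)}_{(H)} \) may approach a point whose base orbit type jumps from \( (H) \) to \( (H') \geq (H) \) while its cotangent orbit type jumps from \( (K) \) to \( (K') \geq (K) \), and one must verify that the resulting pair \( ((K'),(H')) \) is an admissible seam index with \( (K') \leq (H') \). This requires a uniform version of the linear rescaling argument as the footpoint varies along the slice, which is where the explicit normal form~\eqref{eq:cotangentBundle:normalFormMomentumMap} does the real work.
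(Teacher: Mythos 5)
The paper does not actually prove this proposition — it defers entirely to \parencite[Theorem~2.29]{DiezRudolphReduction} — so your plan can only be judged on its own terms. Your overall route (push everything through the semi-global normal form \( \Phi \), read the seams off the model \( G \times_{G_q} (\LieA{m}^* \times \CotBundle S) \), and split the frontier condition into a base layer and a fiber layer) is the natural one, but it contains one outright error and one unresolved gap. The error: since \( \nu \in \LieA{m}^* \) and \( J_{G_q}(\alpha_s) \in \LieA{g}_q^* \) lie in \emph{complementary} summands of \( \LieA{g}^* = \LieA{m}^* \oplus \LieA{g}_q^* \), the condition \( \nu + J_{G_q}(\alpha_s) = 0 \) is not a graph over \( \CotBundle S \); it forces \( \nu = 0 \) and \( J_{G_q}(\alpha_s) = 0 \) separately, so \( P \) is locally \( G \times_{G_q}\bigl(\set{0} \times J_{G_q}^{-1}(0)\bigr) \), which is singular in general — if it really were a smooth submanifold parametrized by \( G \times_{G_q} \CotBundle S \), there would be nothing to stratify. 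Your seam computations happen to survive over \( Q_{(H)} \) with \( H = G_q \) only because \( J_{G_q} \) vanishes identically on fibers over the fixed-point set \( S_{G_q} \) (as \( \xi \ldot s = 0 \) there), but the error propagates to the seams over lower base orbit types inside the same tube, where \( J_{G_q}^{-1}(0) \) is a genuine nonlinear constraint — and those are exactly the strata you must control for the frontier condition. (A smaller point of the same kind: applying \cref{prop:cotangentBundle:oneOrbitTypeReducedSpaceCotangentBundle} to \( G \cdot S_H \) for part (ii) requires verifying its cotangent-bundle hypotheses for that submanifold and identifying the \emph{intrinsic} reduction of \( \CotBundle(Q_{(H)}) \) with the seam sitting inside the reduction of \( \CotBundle Q \) via restriction of covectors; this is routine but not automatic.)

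The gap is the frontier condition itself. The rescaling \( (\nu, \alpha_s) \mapsto (t\nu, t\alpha_s) \) (on \( P \), just \( \alpha_s \mapsto t\alpha_s \), which does preserve \( J_{G_q}^{-1}(0) \) by linearity of \( J_{G_q} \) in the fiber) only shows that the zero covector over a base point lies in the closure of every seam meeting the fiber above it. The frontier condition requires, at an \emph{arbitrary} point \( p \) of an arbitrary seam, a conical local description of all nearby seams — i.e. a normal form centered at \( p \), not at a zero covector over a most-singular base point — followed by an induction over the partially ordered pairs of orbit types; moreover, in the Fréchet setting the fiberwise linear orbit-type decompositions need a frontier/approximation property of their own, which is not among the stated hypotheses (only the frontier condition for the decomposition of \( Q \) is assumed) and must either be derived from it or imported from \parencite{DiezRudolphReduction}. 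You correctly identify the simultaneous jump of both indices as the main obstacle, but the plan as written does not close it.
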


Finally, let us discuss dynamics. 
\begin{prop}
	Let \( h \) be a \( G \)-invariant Hamiltonian on \( \CotBundle Q \).
	Assume that the associated Hamiltonian vector field \( X_h \) exists and that it has a unique flow \( \flow^h_t \).
	Let \( (K) \) be an orbit type.
	Then,
	\begin{thmenumerate}
		\item
			the flow \( \flow^h_t \) is \( G \)-equivariant and leaves \( P_{(K)} \) invariant and, hence, it projects to a flow \( \check{\flow}^h_t \) on \( \check{P}_{(K)} \),
		\item
			the projected flow \( \check{\flow}^h_t \) is Hamiltonian with respect to the smooth function \( \check{h}_{(K)} \) on \( \check{P}_{(K)} \) defined by
			\begin{equation}
				\pi_{(K)}^* \check{h}_{(K)} = \restr{h}{P_{(K)}}.
				\qedhere
			\end{equation}
	\end{thmenumerate}
\end{prop}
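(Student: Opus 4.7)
The plan is to follow exactly the same line of argument as in \cref{prop:singularReduction:dynamics}, since in the present cotangent bundle setting all of the ingredients are available: the lifted \( G \)-action on \( \CotBundle Q \) is symplectic with equivariant momentum map \( J \) (by the construction preceding \cref{prop:cotangentBundle:singularSympRed}), and the orbit type subsets \( P_{(K)} \) as well as their quotients \( \check{P}_{(K)} \) are smooth manifolds with \( \pi_{(K)}: P_{(K)} \to \check{P}_{(K)} \) a surjective submersion and reduced symplectic form \( \check{\omega}_{(K)} \) satisfying \( \pi_{(K)}^* \check{\omega}_{(K)} = \restr{\omega}{P_{(K)}} \).

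First I would deduce \( G \)-equivariance of the flow from \( G \)-invariance of \( h \). Since \( h \circ \Upsilon_g = h \) for every \( g \in G \) and the symplectic form \( \omega \) on \( \CotBundle Q \) is preserved by the lifted action, the defining equation \( X_h \contr \omega = -\dif h \) forces \( \Upsilon_g^* X_h = X_h \), i.e.\ \( X_h \) is \( G \)-invariant. The two curves \( t \mapsto \flow^h_t(g \cdot p) \) and \( t \mapsto g \cdot \flow^h_t(p) \) then both satisfy the same ODE with the same initial condition, and uniqueness of the flow yields equivariance. In particular \( \flow^h_t \) preserves every orbit type stratum \( (\CotBundle Q)_{(K)} \).

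Next I would show that \( \flow^h_t \) preserves the level set \( P = J^{-1}(0) \). This is a direct application of Noether's \cref{prop:momentumMap:noetherTheorem} to the \( G \)-invariant Hamiltonian \( h \) on the symplectic \( G \)-manifold \( (\CotBundle Q, \omega) \): since \( X_h \) and its flow exist by assumption, \( J \) is constant along integral curves of \( X_h \). Combined with the previous step this gives \( \flow^h_t(P_{(K)}) \subseteq P_{(K)} \). Since \( \pi_{(K)} \) is a smooth \( G \)-invariant submersion, the equivariant flow then descends to a well-defined flow \( \check{\flow}^h_t \) on \( \check{P}_{(K)} \), characterized by \( \check{\flow}^h_t \circ \pi_{(K)} = \pi_{(K)} \circ \restr{\flow^h_t}{P_{(K)}} \).

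For the final step, I would verify that \( \check{\flow}^h_t \) is Hamiltonian with respect to \( \check{h}_{(K)} \). Let \( \check{X} \) denote the vector field on \( \check{P}_{(K)} \) generating \( \check{\flow}^h_t \); by construction \( \tangent \pi_{(K)} \circ \restr{X_h}{P_{(K)}} = \check{X} \circ \pi_{(K)} \). Using \( \pi_{(K)}^* \check{\omega}_{(K)} = \restr{\omega}{P_{(K)}} \) and \( \pi_{(K)}^* \check{h}_{(K)} = \restr{h}{P_{(K)}} \), one computes for any \( v \in \TBundle_p P_{(K)} \),
\begin{equation}
	\bigl(\check{X} \contr \check{\omega}_{(K)}\bigr)\bigl(\tangent_p \pi_{(K)}(v)\bigr)
		= \omega_p(X_h, v)
		= -(\dif h)_p(v)
		= -\bigl(\dif \check{h}_{(K)}\bigr)_{\pi_{(K)}(p)}\bigl(\tangent_p \pi_{(K)}(v)\bigr).
\end{equation}
Surjectivity of \( \tangent_p \pi_{(K)} \) then yields \( \check{X} \contr \check{\omega}_{(K)} = -\dif \check{h}_{(K)} \), proving that \( \check{X} \) is the Hamiltonian vector field of \( \check{h}_{(K)} \) on \( (\check{P}_{(K)}, \check{\omega}_{(K)}) \).

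I do not expect a genuine obstacle here: every nontrivial analytic input (existence of \( J \), stratification of \( \check{P} \), existence and non-degeneracy of \( \check{\omega}_{(K)} \), Noether's theorem) has already been established in \cref{prop:cotangentBundle:singularSympRed} and \cref{prop:momentumMap:noetherTheorem}. The only subtlety, as in \cref{prop:singularReduction:dynamics}, lies in the hypothesis that \( X_h \) exists and admits a unique flow, which is needed in order to even speak of \( \flow^h_t \) in the Fréchet setting; once this is assumed, the argument is formally identical to the general singular case, applied to the symplectic \( G \)-manifold \( (\CotBundle Q, \omega) \).
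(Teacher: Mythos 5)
Your proposal is correct and follows essentially the same route as the paper, which defers the proof to \parencite[Proposition~2.30]{DiezRudolphReduction} but proves the analogous general statement in \cref{prop:singularReduction:dynamics} by exactly this argument: invariance of \( X_h \), equivariance of the flow via uniqueness, Noether's theorem to preserve \( J^{-1}(0) \), and the standard submersion computation with \( \pi_{(K)}^* \check{\omega}_{(K)} = \restr{\omega}{P_{(K)}} \) for the Hamiltonian property.
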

For the proof see \parencite[Proposition~2.30]{DiezRudolphReduction}.

The interaction of dynamics with the secondary stratification is more complicated, cf. \parencite[Example~2.31]{DiezRudolphReduction}. In particular, the seams are in general not preserved by the Hamiltonian flow. 

%%%%%%%%%%%%%%%%%%%%%%%%%%%%%%%%%%%

In the remainder of this section we will show that the Yang--Mills--Higgs gauge field model fits into the above setting. 

Thus, let \( (M, g) \) be a \( 3 \)-dimensional oriented, compact manifold with time-dependent Riemannian metric, which plays the role of a Cauchy surface in the \(( 3+1 )\)-splitting. 
The geometry underlying our model is that of a principal \( G \)-bundle \( P \to M \), where \( G \) is a connected compact Lie group. In this picture, a gauge potential is described by a connection form $A$ on $P$.
The covariant derivative with respect to \( A \) is denoted by \( \dif_A \) and the curvature of \( A \) is written as \( F_A \).
A bosonic matter field is a section \( \varphi \) of the associated vector bundle \( F = P \times_G \FibreBundleModel{F} \), where the typical fiber \( \FibreBundleModel{F} \) carries a unitary \( G \)-representation.
Thus, the configuration  space  \( \SectionSpaceAbb{Q} \) of the system consists of pairs \( (A, \varphi) \).
It obviously is the product of the infinite-dimensional affine Fréchet space \( \ConnSpace \) of connections and the Fréchet space \( \SectionSpaceAbb{F} \) of sections of \( F \).

Since \( \SectionSpaceAbb{Q} \) is an affine space, its tangent bundle \( \TBundle \SectionSpaceAbb{Q} \) is trivial with fiber 
given by 
\( \DiffFormSpace^1(M, \AdBundle P) \times \sSectionSpace(F) \).
We denote points in \( \TBundle \SectionSpaceAbb{Q} \) by tuples \( (A, \alpha, \varphi, \zeta) \) with \( \alpha \in \DiffFormSpace^1(M, \AdBundle P) \) and \( \zeta \in \sSectionSpace(F) \).
A natural choice for the cotangent bundle $\CotBundle \SectionSpaceAbb{Q}$ is the trivial bundle over $\SectionSpaceAbb{Q}$ with fiber \( 	\DiffFormSpace^2(M, \CoAdBundle P) \times \DiffFormSpace^3(M, F^*)  \).
We denote elements of this fiber by pairs \( (D, \Pi) \).
Then, the natural pairing with \( \TBundle \SectionSpaceAbb{Q} \) is given by integration over \( M \),\footnote{Here, 
the Hodge dual of a vector-valued differential form \( \alpha \in \DiffFormSpace^k(M, E) \) is the 
\emph{dual-valued} differential form \( \hodgeStar \alpha \in \DiffFormSpace^{3-k}(M, E^*) \). Moreover, for \( \alpha \in \DiffFormSpace^k(M, E) \) and \( \beta \in \DiffFormSpace^{3-k}(M, E^*) \), we denote by 
\(\alpha \dot{\wedge} \beta \) the real-valued top-form which arises from combining the wedge 
product with the natural pairing \( \dualPairDot: E \times E^* \to \R \).}  
\begin{equation}
	\dualPair*{(D, \Pi)}{(\alpha, \zeta)} = \int_M D \dot{\wedge} \alpha + \int_M \Pi \dot{\wedge} \zeta .
\end{equation}
On \( \SectionSpaceAbb{Q} = \ConnSpace \times \SectionSpaceAbb{F} \) we have a left action of the group \( \GauGroup = \sSectionSpace(P \times_G G) \) of local gauge transformations, 
\begin{equation}
\label{LocGTr}
	A \mapsto \AdAction_{\lambda} A + \lambda \dif \lambda^{-1},
	\quad
	\varphi \mapsto \lambda \cdot \varphi,
\end{equation}
for \( \lambda \in \GauGroup \).
A straightforward calculation shows that the momentum map for the lifted action  with respect to the natural choice
 \( \GauAlgebra^* = \DiffFormSpace^3(M, \CoAdBundle P) \) is given by
\begin{equation}\label{eq:yangMillsHiggs:momentumMap}
	\SectionMapAbb{J}(A, D, \varphi, \Pi) = \dif_A D + \varphi \diamond \Pi \, ,
\end{equation}
where $\diamond $ is the diamond product, see \parencite[Section~III]{DiezRudolphReduction}. 
Thus,  the momentum map constraint \( \SectionMapAbb{J} = 0 \) coincides with the Gauß constraint. Clearly, this interpretation is well known, cf. \parencite{DiezRudolphClebsch,Sniatycki1999,ArmsMarsdenEtAl1981}.

We can show now that all assumptions made in the general discussion above are met for the Yang--Mills--Higgs system:
\begin{enumerate}
	\item 
		\( \SectionSpaceAbb{Q} \) is a Fréchet manifold, because it is an affine space modelled on the Fréchet vector space \( \DiffFormSpace^1(M, \AdBundle P) \times \sSectionSpace(F) \).
	\item
		\( \GauGroup \) is a Fréchet Lie group, because it is realized as the space of sections of the group bundle \( P \times_G G \), see \parencite{CirelliMania1985} for details.
	\item
		The cotangent bundle \( \CotBundle \SectionSpaceAbb{Q} = \SectionSpaceAbb{Q} \times \DiffFormSpace^2(M, \CoAdBundle P) \times \DiffFormSpace^3(M, F^*) \) is clearly a Fréchet manifold.
		The Hodge operator yields a fiber-preserving \( \GauGroup \)-equivariant diffeomorphism between \( \TBundle \SectionSpaceAbb{Q} \) and \( \CotBundle \SectionSpaceAbb{Q} \).
	\item 
		The \( \GauGroup \)-action on \( \SectionSpaceAbb{Q} \) is affine and thus smooth.
		Moreover, it is proper, see \parencite{DiezSlice,RudolphSchmidtEtAl2002}.
	\item 
		The \( \GauGroup \)-action on \( \SectionSpaceAbb{Q} \) admits a slice at every point.
		First, the slice \( \SectionSpaceAbb{S}_{A_0} \) at \( A_0 \in \ConnSpace \) is given by the Coulomb gauge condition.
		That is,\footnote{Here, as usual, \( \dif^*_{A} \alpha \defeq (-1)^k \hodgeStar \dif_{A} \hodgeStar \alpha \) for a \( k \)-form \( \alpha \).}
		\begin{equation}
			\SectionSpaceAbb{S}_{A_0} \defeq \set{A \in \SectionSpaceAbb{U} \given \dif_{A_0}^* (A - A_0) = 0},
		\end{equation}
		where \( \SectionSpaceAbb{U} \) is an open neighborhood of \( A_0 \) in \( \ConnSpace \).
		In order to see that \( \SectionSpaceAbb{S} \) is a slice, indeed, one uses the Nash--Moser inverse function theorem \parencite{Hamilton1982}, which amongst other things relies on the fact that \( \SectionSpaceAbb{Q} \) and \( \GauGroup \) are in fact tame Fréchet.
		The details can be found in \parencite{DiezSlice,AbbatiCirelliEtAl1989}.
		
		Note that this slice for the \( \GauGroup \)-action on \( \ConnSpace \) fixes the gauge transformations up to elements of the stabilizer \( \GauGroup_{A_0} \) of \( A_0 \).
		Thus, we are left with the \( \GauGroup_{A_0} \)-action on \( \SectionSpaceAbb{F} \).
		This is a linear action of a finite-dimensional compact group on a Fréchet space and hence has a slice \( \SectionSpaceAbb{S}_{\varphi_0} \) at every point \( \varphi_0 \in \SectionSpaceAbb{F} \), see \parencite[Theorem~A.7]{DiezRudolphReduction}.
		By \parencite[Proposition~A.8]{DiezRudolphReduction}, the product \( \SectionSpaceAbb{S}_{A_0} \times \SectionSpaceAbb{S}_{\varphi_0} \) is a slice at \( (A_0, \varphi_0) \) for the \( \GauGroup \)-action on \( \SectionSpaceAbb{Q} \).
		Moreover, one can show that this slice is compatible with the cotangent bundle structures, see \parencite[Section~III.A]{DiezRudolphReduction}.
	\item
		That every orbit of \( \GauGroup \) is symplectically closed has been shown in \parencite[Lemma~3.13]{DiezRudolphReduction}. 
		Thus, in particular, the strong version of the Bifurcation Lemma holds.
\end{enumerate}
As a consequence, the momentum map $J$ can be brought to the normal form as given by~\eqref{eq:cotangentBundle:normalFormMomentumMap}.
For \cref{prop:cotangentBundle:singularSympRed,prop:cotangentBundle:singularCotangentBundleRed} to hold we assume, additionally, that the frontier condition for the decomposition of \( \SectionSpaceAbb{Q} \) into gauge orbit types is satisfied.
The orbit type decomposition of \( \ConnSpace \) satisfies the frontier condition \parencite[Theorem~4.3.5]{KondrackiRogulski1986}.
However, including matter fields is a rather delicate issue, see \parencite{DiezRudolphReduction} for further details. 

In \parencite{DiezRudolphReduction}, the reader may find a detailed study of symplectic reduction applied to the Yang--Mills--Higgs model, including  
a discussion of the bosonic sector of the Glashow-Weinberg-Salam model.

%%%%%%%%%%%%%%%%%%%%%%%%%%%%%%%%%%%%%%%%%%%%%%%%%%%%%%%%%%%%%%%%%%%%%%%%%%%%%%%%%%%%%%%%%%%%%%%%%%%%%%%%%%%%%%%%%%%%%%
%%%%%%%%%%%%%%%%%%%%%%%%%%%%%%%%%%%%%%%%%%%%%%%%%%%%%%%%%

\section{Applications}
\subsection{Yang--Mills equation over a Riemannian surface}
\label{sec:yangMillsSurface}

\NewDocumentCommand { \pnt } { }{
	m_0
}

In this section, we are concerned with the moduli space of Yang--Mills connections on a principal bundle over a closed Riemannian surface.
This moduli space was extensively studied both from the geometric and algebraic point of view by \textcite{AtiyahBott1983}.
They described the structure of this moduli space using infinite-dimensional techniques inspired by ideas from symplectic reduction.
Here, we rework and extend the approach of \citeauthor{AtiyahBott1983} in our rigorous framework of symplectic reduction of Fréchet manifolds.
As an application of the Reduction \cref{prop:symplecticReduction:mainTheorem}, we show that (a variant of) the Yang--Mills moduli space is a symplectic stratified space.

Let \( G \) be a compact connected Lie group and let \( P \to M \) be a principal \( G \)-bundle over a closed Riemannian surface \( (M, g) \).
Fix an \( \AdAction_G \)-invariant pairing on the Lie algebra \( \LieA{g} \) of \( G \).
We are interested in connections \( A \in \ConnSpace(P) \) satisfying the Yang--Mills equation
\begin{equation}
	\dif_A \hodgeStar F_A = 0,
\end{equation}
where \( \hodgeStar \) refers to the Hodge star operator associated with the Riemannian metric \( g \) on \( M \).
A special class of Yang--Mills connections is provided by connections \( A \) whose curvature is of the form
\begin{equation}
	\label{eq:yangMillsSurface:centralYM}
	F_A = \xi \cdot \vol_g,
\end{equation}
where \( \xi \) is an element of the Lie algebra \( \aCenter \) of the center of \( G \).
We call such a connection a \emphDef{central Yang--Mills connection} and refer to \( \xi \) as the charge of \( A \).
%The space of all Yang--Mills connections is denoted by \( \cenYMConnSpace(P) \).
The importance of central Yang--Mills connections for the study of the solution space of the Yang--Mills equation on a Riemannian surface comes from the following observation.
\begin{prop}[\textnormal{\parencite[p.~560]{AtiyahBott1983}}]
	Every Yang--Mills connection \( A \) on \( P \) is reducible to a central Yang--Mills connection \( A_\xi \) on a principal subbundle \( P_\xi \subseteq P \). 
\end{prop}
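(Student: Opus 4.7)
The plan rests on the observation that on a closed oriented Riemannian surface the Hodge dual $\hodgeStar F_A$ of the curvature $F_A$ of a connection $A$ is a section of the adjoint bundle $\AdBundle P = P \times_G \LieA{g}$, equivalently a $G$-equivariant map $\hat{F}: P \to \LieA{g}$. Under this identification, the Yang--Mills equation $\dif_A \hodgeStar F_A = 0$ asserts exactly that $\hodgeStar F_A$ is a covariantly constant section of $\AdBundle P$ with respect to the connection on $\AdBundle P$ induced by $A$.

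First I would fix a base point $\pnt \in M$ and a frame $p_0 \in P_{\pnt}$, and set $\xi \defeq \hat{F}(p_0) \in \LieA{g}$. Since $\hat{F}$ is $A$-parallel, horizontal transport of $p_0$ along any path in $M$ produces points $p$ with $\hat{F}(p) = \xi$; combined with $G$-equivariance this shows that $\hat{F}$ takes all its values in the single adjoint orbit $\AdAction_G \xi \subseteq \LieA{g}$. I would then define
\begin{equation}
	P_\xi \defeq \hat{F}^{-1}(\xi) \subseteq P
\end{equation}
and argue, using compactness of $G$ together with equivariance and the surjectivity of horizontal transport onto $P_\xi \intersect P_m$ for each $m$, that $P_\xi$ is a smooth principal subbundle of $P$ with structure group $G_\xi$, the adjoint stabilizer of $\xi$.

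Next I would verify that $A$ reduces to a connection $A_\xi \in \ConnSpace(P_\xi)$. Because $\hat{F}$ is $A$-parallel and equal to the constant $\xi$ along $P_\xi$, the horizontal distribution of $A$ is tangent to $P_\xi$; equivalently, the connection form $A$ restricts along $\TBundle P_\xi$ to values in $\LieA{g}_\xi \subseteq \LieA{g}$, producing a connection $A_\xi$ on $P_\xi$. Its Hodge-dual curvature is the constant function $\xi$, hence $F_{A_\xi} = \hodgeStar \xi = \xi \cdot \vol_g$ on $P_\xi$ (using $\hodgeStar \hodgeStar = \id$ on two-forms in dimension two). Finally, since $\xi$ is fixed by every element of $G_\xi$, it lies in the center of $\LieA{g}_\xi$, so $A_\xi$ is a central Yang--Mills connection on $P_\xi$ in the sense of~\eqref{eq:yangMillsSurface:centralYM}.

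The main obstacle is establishing that $P_\xi = \hat{F}^{-1}(\xi)$ is genuinely a smooth principal $G_\xi$-subbundle rather than merely a closed equivariant subset. This comes down to showing that $\hat{F}$ is a submersion onto the adjoint orbit through $\xi$, for which I would use that the constancy of the orbit type of $\hat{F}$ (forced by parallelism) combined with $G$-equivariance provides local trivializations; compactness of $G$ ensures that $\AdAction_G \xi$ is a closed embedded submanifold of $\LieA{g}$ diffeomorphic to $G/G_\xi$, whence $P_\xi$ inherits the structure of a principal $G_\xi$-bundle over $M$.
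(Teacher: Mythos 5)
The paper does not prove this statement at all --- it is quoted directly from Atiyah--Bott (p.~560) --- so there is nothing to compare against; your argument is the standard one from that source (a Yang--Mills connection makes \( \hodgeStar F_A \) a parallel section of \( \AdBundle P \), whose level set \( \hat{F}^{-1}(\xi) \) is a holonomy-type reduction to \( G_\xi \) carrying the restricted connection), and it is correct. The one genuinely delicate step, smoothness of \( P_\xi = \hat{F}^{-1}(\xi) \) as a principal \( G_\xi \)-subbundle, you identify and resolve adequately: since \( \hat{F} \) is horizontally constant and \( G \)-equivariant, it is a submersion onto the (closed, embedded) adjoint orbit \( G/G_\xi \), and horizontal transport supplies the smooth local sections; alternatively one can invoke the holonomy reduction theorem of Kobayashi--Nomizu and saturate by \( G_\xi \).
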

Thus, the analysis of the moduli space of Yang--Mills connections is divided into two steps:
first, for all \( \xi \in \LieA{g} \), determine the possible reductions \( P_\xi \) of \( P \) to the stabilizer subgroup \( G_\xi \); second, investigate the moduli space of central Yang--Mills connections on \( P_\xi \).
As \( \xi \) is determined by the topological type of \( P_\xi \) (according to Chern--Weil theory), the first step has a topological flavor and has been extensively discussed in \parencite[Section~6]{AtiyahBott1983}.
In the following, we focus on the second step from the point of view of symplectic reduction.

Recall from \cref{ex::diffOneFormsOnSurface:symplecticReduction}, that in the case \( G = \UGroup(1) \) the moduli space of flat connections was realized as a symplectic quotient.
In a similar vein, we now describe the moduli space of central Yang--Mills connections on \( P \) as a symplectically reduced space.
The space \( \ConnSpace(P) \) of connections on \( P \) is an affine space modeled on the tame Fréchet space \( \DiffFormSpace^1(M, \AdBundle P ) \) of \( 1 \)-forms on \( M \) with values in the adjoint bundle \( \AdBundle P \).
As in \cref{ex::diffOneFormsOnSurface:generalizationToConnections}, the \( 2 \)-form \( \omega \) on \( \ConnSpace(P) \) defined by the integration pairing
\begin{equation}
	\label{eq:yangMillsSurface:symplecticForm}
	\omega_A (\alpha, \beta) = \int_M \wedgeDual{\alpha}{\beta}
\end{equation}
for \( \alpha, \beta \in \DiffFormSpace^1(M, \AdBundle P) \) is a symplectic form, where \( \wedgeDualDot \) denotes the wedge product relative to the \( \AdAction_G \)-invariant pairing on \( \LieA{g} \).
The natural action on \( \ConnSpace(P) \) of the group \( \GauGroup(P) \) of gauge transformations of \( P \) is smooth and preserves the symplectic structure \( \omega \).
A straightforward calculation similar to the one in \cref{ex::diffOneFormsOnSurface:gaugeAction} verifies that the map
\begin{equation}
	\SectionMapAbb{J}: \ConnSpace(P) \to \sSectionSpace(\AdBundle P), \qquad A \mapsto \hodgeStar F_A
\end{equation}
is a momentum map for the \( \GauGroup(P) \)-action on \( \ConnSpace(P) \), relative to the natural pairing
\begin{equation}
	\kappa: \sSectionSpace(\AdBundle P) \times 
	\sSectionSpace(\AdBundle P) \to \R, \qquad (\phi, \varrho) \mapsto 
	- \int_M \dualPair{\phi}{\varrho} \, \vol_g.
\end{equation}
Thus, the symplectic quotient
\begin{equation}
	\SectionMapAbb{J}^{-1}(\xi) \slash \GauGroup(P)
\end{equation}
at \( \xi \in \aCenter \) (viewed as a constant section of \( \AdBundle P \)) coincides with the moduli space \( \check{\ConnSpace}_\xi (P) \) of central Yang--Mills connections with charge \( \xi \).
Recall that the \( \GauGroup(P) \)-action on \( \ConnSpace(P) \) is in general not free and thus the symplectic quotient is not a smooth symplectic manifold.
The following is the next best thing one could hope for. 
\begin{thm}
	\label{prop:yangMillsSurface:moduliSpaceCentralYMSymplecticStrata}
	For every \( \xi \in \aCenter \), the orbit type subsets of the moduli space \( \check{\ConnSpace}_\xi (P) \) are finite-dimensional symplectic manifolds.
\end{thm}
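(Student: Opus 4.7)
The plan is to apply the Singular Reduction Theorem~\ref{prop:symplecticReduction:mainTheorem} to the symplectic \( \GauGroup(P) \)-manifold \( (\ConnSpace(P), \omega) \) with momentum map \( \SectionMapAbb{J}: A \mapsto \hodgeStar F_A \) at the level \( \xi \in \aCenter \), and then to extract finite-dimensionality of the strata from the ellipticity of the associated deformation complex. Since \( \xi \) lies in the center of \( \LieA{g} \), it is fixed by the coadjoint action of the gauge group on \( \sSectionSpace(\AdBundle P) \), so that the stabilizer \( \GauGroup(P)_\xi \) coincides with the full tame Fréchet Lie group \( \GauGroup(P) \).

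The geometric hypotheses of \cref{prop:symplecticReduction:mainTheorem} would be verified as in \cref{ex::diffOneFormsOnSurface:momentumMapNormalForm}: the \( \GauGroup(P) \)-action on \( \ConnSpace(P) \) is proper and admits a slice at every point via the Coulomb gauge condition. For every \( A \in \SectionMapAbb{J}^{-1}(\xi) \) the infinitesimal orbit \( \GauAlgebra(P) \ldot A = \dif_A \sSectionSpace(\AdBundle P) \) is \( L^2 \)-closed by the Hodge decomposition for the twisted de Rham complex on the compact surface \( M \); combined with the compatible complex structure induced by \( \hodgeStar \), this yields symplectic closedness via \cref{prop:symplecticFunctionalAnalysis:orthogonalToSymplecticOrthogonal}. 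The same Hodge-theoretic argument shows that \( \img \difLog_A \SectionMapAbb{J} \) is weakly closed.

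The MGS normal form itself comes from the elliptic version \cref{prop:momentumMap:MGSnormalForm:elliptic}. As linearizing chart on the abelian target \( \sSectionSpace(\AdBundle P) \) one simply takes translation by \( -\xi \), which trivially satisfies \( \tangentLeft_\nu \rho = \id \). The key hypothesis to check is the ellipticity of the deformation chain at \( A \),
\begin{equationcd}
	0 \to[r] & \sSectionSpace(\AdBundle P) \to[r, "\dif_A"] & \DiffFormSpace^1(M, \AdBundle P) \to[r, "\hodgeStar \dif_A"] & \sSectionSpace(\AdBundle P) \to[r] & 0,
\end{equationcd}
which, up to Hodge duality in the last slot, is the twisted de Rham complex with coefficients in \( \AdBundle P \), and hence an elliptic complex. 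Tameness of \( \GauGroup(P) \), of the stabilizer \( \GauGroup(P)_A \) and of the Coulomb slice are standard in gauge theory.

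Invoking \cref{prop:symplecticReduction:mainTheorem} then yields that each orbit type subset of \( \check{\ConnSpace}_\xi(P) \) is a smooth symplectic manifold, and \cref{prop:symplecticReduction:orbitTypeManifold} identifies its tangent space at \( \equivClass{A} \) with the \( \GauGroup(P)_A \)-fixed part of the middle cohomology of the elliptic complex above. Ellipticity over the compact surface \( M \) forces this middle cohomology -- the space of \( \dif_A \)-twisted harmonic \( 1 \)-forms with values in \( \AdBundle P \) -- to be finite-dimensional, whence every stratum is finite-dimensional. I expect the main technical obstacle to lie in verifying uniform tameness and ellipticity of the family of deformation operators over the Coulomb slice, needed to feed \cref{prop:momentumMap:MGSnormalForm:elliptic}; this ultimately reduces to Nash--Moser estimates for the twisted Laplacians \( \dif_A \dif_A^* + \dif_A^* \dif_A \) that are uniform over bounded perturbations of a fixed reference connection, and which are by now classical in the gauge-theoretic literature but require careful bookkeeping.
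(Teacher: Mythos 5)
Your proposal is correct and follows essentially the same route as the paper: verify the hypotheses of the elliptic MGS normal form theorem (centrality of \( \xi \) forces \( \GauGroup(P)_\xi = \GauGroup(P) \), properness and Coulomb slices, Hodge-theoretic closedness of the orbit and of \( \img \difLog_A \SectionMapAbb{J} \), ellipticity of the deformation chain) and then invoke the Singular Reduction Theorem, with finite-dimensionality of the strata coming from ellipticity of the complex over the compact surface. The one step you should make explicit is that the deformation chain is a \emph{complex} at all only because \( \dif_A \dif_A \eta = \LieBracket{F_A}{\eta} = \LieBracket{\xi}{\eta} \vol_g = 0 \), which uses precisely that \( A \) lies in \( \SectionMapAbb{J}^{-1}(\xi) \) with \( \xi \) central; for a general connection the "twisted de Rham complex" in your sense is not a complex.
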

\begin{proof}
	For \( A \in \SectionMapAbb{J}^{-1}(\xi) \), let us verify the assumptions of \cref{prop:momentumMap:MGSnormalForm:elliptic} for the momentum map \( \SectionMapAbb{J} \):
	\begin{enumerate}
		\item
			Since \( \xi \) is a central element, its stabilizer coincides with the whole group \( \GauGroup(P) \), which is a geometric tame Fréchet Lie group.
		\item 
			The \( \GauGroup(P) \)-action on \( \ConnSpace(P) \) is proper and admits a slice \( \SectionSpaceAbb{S} \) at \( A \) as discussed in \parencite{DiezSlice,AbbatiCirelliEtAl1989,RudolphSchmidtEtAl2002}.
		\item
			Since the pairing \( \kappa \) is \( \AdAction_{\GauGroup(P)} \)-invariant, the coadjoint action coincides with the adjoint action and thus is clearly linear.
		\item
			The chain~\eqref{eq:momentumMap:MGSnormalForm:elliptic:chain} takes the following form here:
			\begin{equationcd}[label=eq:yangMillsSurface:moduliSpaceCentralYMSymplecticStrata:chain]
				0 \to[r] 
					& \DiffFormSpace^0(M, \AdBundle P)
						\to[r, "- \dif_B"]
					& \DiffFormSpace^1(M, \AdBundle P)
						\to[r, "\hodgeStar \dif_B"]
					& \DiffFormSpace^0(M, \AdBundle P)
						\to[r]
					& 0,
			\end{equationcd}
			where the connection \( B \) on \( P \) is an element of the slice \( \SectionSpaceAbb{S} \).
			This is clearly a chain of differential operators tamely parametrized by \( B \in \SectionSpaceAbb{S} \).
			Moreover, for \( B = A \), this chain is an elliptic complex, because we have
			\begin{equation}
				\dif_A \dif_A \eta 
					= \LieBracket{F_A}{\eta}
					= \LieBracket{\xi}{\eta} \vol_g
					= 0
			\end{equation}
			for every \( \eta \in \sSectionSpace(\AdBundle P) \).
		\item
			Arguments similar to those used in \cref{ex::diffOneFormsOnSurface:gaugeActionMomentumMap} show that \( \GauAlgebra(P) \ldot A \) is symplectically closed in \( \TBundle_A \ConnSpace(P) \) and that the image of \( \difLog_A \SectionMapAbb{J} \) is \( \LTwoFunctionSpace \)-closed in \( \GauAlgebra(P) \).
	\end{enumerate}
	Thus, by \cref{prop:momentumMap:MGSnormalForm:elliptic}, the momentum map \( \SectionMapAbb{J} \) can be brought into a strong MGS normal form at \( A \).
	Now the claim follows from the Singular Reduction \cref{prop:symplecticReduction:mainTheorem}.
\end{proof}
The statement that the top stratum of \( \check{\ConnSpace}_\xi (P) \) is endowed with a natural symplectic reduction was already obtained by \textcite[p.~587]{AtiyahBott1983}.
The symplectic nature of the singular strata has been established in \parencite[Theorem~1.2]{Huebschmann1996} in the Sobolev framework by reducing the problem to a \emph{finite-dimensional} symplectic quotient.
\begin{remark}
	Let us spell out the local structure of \( \check{\ConnSpace}_\xi (P) \) at a point \( \equivClass{A} \) as given by \cref{prop:momentumMap:MGSnormalForm:elliptic,prop:symplecticReduction:mainTheorem}.
	According to \cref{prop:bifurcationLemma:kernelMomentumMapIsSymplectic}, the space \( \ker \) in the strong MGS normal form is identified with the middle homology of the chain~\eqref{eq:yangMillsSurface:moduliSpaceCentralYMSymplecticStrata:chain} at \( B = A \), that is,
	\begin{equation}
		\ker = \ker\bigl(\hodgeStar \dif_A: \DiffFormSpace^1(M, \AdBundle P) \to \DiffFormSpace^0(M, \AdBundle P)\bigr) \slash \img \dif_A \equiv \deRCohomology^1_A(M, \AdBundle P).
	\end{equation}
	Note that this (co)homology group is only well-defined because \( A \) is a central Yang--Mills connection.
	Similarly, the Lie algebra of the stabilizer subgroup \( \GauGroup_A(P) \) of \( A \) is given by
	\begin{equation}
		\GauAlgebra_A(P) = \ker\bigl(\dif_A: \DiffFormSpace^0(M, \AdBundle P) \to \DiffFormSpace^1(M, \AdBundle P)\bigr) \equiv \deRCohomology^0_A(M, \AdBundle P).
	\end{equation}
	The symplectic structure \( \omega_A \) on \( \TBundle_A \ConnSpace(P) \) restricts to a symplectic structure \( \bar{\omega}_A \) on \( \deRCohomology^1_A(M, \AdBundle P) \).
	By~\eqref{eq:yangMillsSurface:symplecticForm}, we have
	\begin{equation}
		\bar{\omega}_A\bigl(\equivClass{\alpha}, \equivClass{\beta}\bigr) = \int_M \wedgeDual{\alpha}{\beta}.
	\end{equation}
	In other words, \( \bar{\omega}_A \) is the non-abelian generalization of the intersection form.
	%Nondegeneracy of \( \bar{\omega}_A \) can be verified by a direct calculation or using the Bifurcation \cref{prop::bifurcationLemma:strong}.
	It is clear that the action of \( \GauGroup(P) \) on \( \ConnSpace(P) \) induces a symplectic action of \( \GauGroup_A(P) \) on \( \deRCohomology^1_A(M, \AdBundle P) \):
	\begin{equation}
		\lambda \cdot \alpha = \difFracAt{}{\varepsilon}{0} \lambda \cdot (A + \varepsilon \alpha) = \AdAction_\lambda \alpha
	\end{equation}
	for \( \lambda \in \GauGroup_A(P) \) and \( \equivClass{\alpha} \in \deRCohomology^1_A(M, \AdBundle P) \).
	The infinitesimal action of \( \GauAlgebra_A(P) \) is given by the Lie bracket so that the associated momentum map is defined by
	\begin{equation}
		\SectionMapAbb{J}_\singularPart: \deRCohomology^1_A(M, \AdBundle P) \to \deRCohomology^0_A(M, \AdBundle P),
		\qquad \equivClass{\alpha} \mapsto \frac{1}{2} \hodgeStar \wedgeLie{\alpha}{\alpha},
	\end{equation}
	where \( \wedgeLieDot \) denotes the wedge product of \( \AdBundle P \)-valued differential forms relative to the Lie bracket.
	Indeed, by \cref{ex:momentumMap:linearCase}, we have
	\begin{equation}\begin{split}
		\kappa\bigl(\SectionMapAbb{J}_\singularPart(\equivClass{\alpha}), \xi \bigr)
			&= \frac{1}{2} \bar{\omega}_A \bigl( \equivClass{\alpha}, \equivClass{\adAction_\xi \alpha} \bigr)
			\\
			&= \frac{1}{2} \int_M \wedgeDual{\alpha}{\commutator{\xi}{\alpha}}
			\\
			&= - \frac{1}{2} \int_M \dualPair{\wedgeLie{\alpha}{\alpha}}{\xi}
			\\
			&= \frac{1}{2} \kappa\bigl(\hodgeStar \wedgeLie{\alpha}{\alpha}, \xi \bigr).
	\end{split}\end{equation}
	Now,~\eqref{eq:symplecticReduction:orbitTypeManifold:locallyMmu} entails that, locally, the moduli space of central Yang--Mills connections is modeled on
	\begin{equation}
		\SectionMapAbb{J}_\singularPart^{-1}(0) \slash \GauGroup_A(P).
	\end{equation}
	Such a local description of \( \check{\ConnSpace}_\xi (P) \) has already been obtained by \textcite[Theorem~2.32]{Huebschmann1995} in the framework of Sobolev spaces.
\end{remark}

Recall from \cref{ex::diffOneFormsOnSurface:symplecticReduction} that, for \( G = \UGroup(1) \), the moduli space of flat connections is identified with the space of group homomorphisms from \( \pi_1(M) \) to \( \UGroup(1) \).
In the non-abelian case, \textcite{AtiyahBott1983} give a bijection using Wilson loops between the moduli space of central Yang--Mills connections and similar spaces of group homomorphisms.
This bijection is in fact an isomorphism of stratified spaces which, on each stratum, restricts to a diffeomorphism onto the corresponding stratum of the target space, see \parencite[Theorem~7.1]{DiezHuebschmann2017}.
Moreover, using \parencite[Theorem~5.2]{DiezHuebschmann2017}, one can show that these strata-wise diffeomorphisms are compatible with the symplectic structures of both spaces, where the symplectic structure on the target space is induced by the extended moduli space construction of \parencite{HuebschmannJeffrey1994,Huebschmann1993,Jeffrey1994}.
Thus, in summary, the Wilson loop map is an isomorphism of stratified symplectic spaces. 

\subsection{Teichmüller space}
\label{sec:applications:teichmuellerSpace}

Let \( M \) be a closed surface with genus \( \geq 2 \) endowed with a volume form \( \mu \).
A Riemannian metric $g$ on $M$ is said to be compatible with $\mu$ if the volume form \( \mu_g \) induced by \( g \) coincides with \( \mu \).
The space \( \MetricSpace_\mu \) of Riemannian metrics compatible with \( \mu \) 
is identified with the space of sections of the associated bundle 
\(\FrameBundle M \times_{\SLGroup}\bigl(\SLGroup(2,\R)\slash \SOGroup(2)\bigr)\), 
where \( \FrameBundle M \) denotes the \( \SLGroup(2, \R) \)-frame bundle 
induced by the volume form \( \mu \). 
As such, \( \MetricSpace_\mu \) naturally 
has the structure of an infinite-dimensional Fr\'echet manifold.
Using the Iwasawa decomposition of \( \SLGroup(2, \R) \), the space \( \SymTensorFieldSpace^2_g(M) \) of \( g \)-trace-free symmetric covariant \( 2 \)-tensors may be identified with 
the tangent space to $ \MetricSpace_\mu $ at \( g \).
On the space \( \MetricSpace_\mu \), we define the symplectic form (see \eg \parencite[Equation~4.79]{DiezRatiuAutomorphisms})
\begin{equation}\label{eq:applications:coadSL:symplecticFormOnMetrics}
\Omega_g (h, k) = - \frac{1}{2}\int_M
\tensor{h}{^i_j} \, 
\tensor{\mu}{^j_l} \, \tensor{k}{^l_i} \, \mu \,,
\end{equation}
where \( g \in \MetricSpace_\mu \) and 
\( h, k \in \TBundle_g \MetricSpace_\mu \).
Since the upper half plane \( \SLGroup(2, \R) \slash \SOGroup(2) \) is contractible, \( \MetricSpace_\mu \) is contractible as well.
Hence, the symplectic form \( \Omega \) is exact.
Moreover, \( \MetricSpace_\mu \) carries the natural (weak) Riemannian metric
\begin{equation}
	\scalarProd{h}{k}_g = \frac{1}{2} \int_M \tensor{h}{^i_j} \, \tensor{k}{^j_i} \, \mu \,,
\end{equation}
which is compatible with \( \Omega \).

The left action of the identity component \( \DiffGroup_{\mu,0} (M) \) of the group of volume-preserving diffeomorphisms on \( \MetricSpace_\mu \) by push-forward preserves \( \Omega \).
Based on \parencite{DiezRatiuAutomorphisms,DiezRatiuSymplecticConnections}, a group-valued momentum map for this action is given as follows.
Let \( j \) be the almost complex structure on \( M \) defined by the relation \( \mu(\cdot, j \cdot) = g \). The complex 
line bundle \( \ExtBundle^{1,0} M \) of holomorphic forms is the canonical line bundle. %The form \( \mu \) is a non-vanishing section of \( \ExtBundle^{1,1}M = \ExtBundle^{1,0}M \tensorProd\ExtBundle^{0,1}M \) and thus induces a Hermitian metric on \( \ExtBundle^{1,0} M \).
The associated 
Hermitian frame bundle \( \KBundle_g M \) is a principal circle bundle, called the canonical circle bundle.
The bundle \( \KBundle_g M \) is naturally equipped with the Chern connection, that is, the unique connection compatible with the Hermitian metric and the Dolbeault operator (see \eg \parencite[Remark~3.4]{ChenZhang2023}).
Since every almost complex structure on a surface is integrable, the Chern connection coincides with (the connection induced by) the Levi-Civita connection of \( g \).

Let \( \csCohomology^2(M, \UGroup(1)) \) be the Abelian group of gauge equivalence 
classes of circle bundles with connection over \( M \).
Equivalently, \( \csCohomology^2(M, \UGroup(1)) \) is the group of Cheeger-Simons differential character classes of degree \( 2 \), see \parencite{CheegerSimons1985,BaerBecker2013}.
On the level of line bundles, the group operation is given by the tensor product.
To every (equivalence class of a) circle bundle with connection, we associate the curvature form and the first Chern class.
These constructions yield the following two exact sequences
\begin{equationcd}
	0 \to[r] &
	\sCohomology^1(M, \UGroup(1)) \to[r, thick] &
	\csCohomology^2(M, \UGroup(1))
	\to[r, thick, "\curv"] &
	\clZDiffFormSpace^2(M, \R) \to[r] &
	0
\end{equationcd}
and
\begin{equationcd}
	0 \to[r]
		& \DiffFormSpace^1(M) \slash \clZDiffFormSpace^1(M) \to[r]
		& \csCohomology^2(M, \UGroup(1)) \to[r, "c"] 
		& \sCohomology^2(M, \Z) \to[r]
		& 0,
\end{equationcd}
where \( \clZDiffFormSpace^2(M, \R) \) denotes the space of closed \( 2 \)-forms with integral periods.
The second sequence shows that \( \csCohomology^2(M, \UGroup(1)) \) is an Abelian Fréchet Lie 
group with Lie algebra \( \DiffFormSpace^1(M) \slash \dif \DiffFormSpace^0(M) \), see \parencite[Appendix~A]{BeckerSchenkelEtAl2014}.
Via the integration pairing, \( \DiffFormSpace^1(M) \slash \dif \DiffFormSpace^0(M) \) is the dual of \( \clDiffFormSpace^1(M, \R) \), which can be identified with the space of volume-preserving vector fields on \( M \).
Accordingly, the group  \( \csCohomology^2(M, \UGroup(1)) \) is dual to \( \DiffGroup_{\mu,0}(M) \).
The coconjugation action is given by the pull-back action of diffeomorphisms\footnotemark{}, see \parencite[Example~2.30]{DiezRatiuAutomorphisms}.
\footnotetext{The adjoint action of a diffeomorphism \( \phi \) on a vector field \( X \) is given by the push-forward \( \phi_* X \).
Hence, the coadjoint action of \( \phi \) on \( \equivClass{\alpha} \in \DiffFormSpace^1(M, \R) \slash \dif \DiffFormSpace^0(M) \) is given by the pull-back \( \equivClass{\phi^* \alpha} \).
This action is integrated by the pull-back of a circle bundle with connection.}

Following \parencite{DiezRatiuAutomorphisms}, the map defined by
\begin{equation}
\label{eq:applications:coadSL:momentumMapForMetrics}
\SectionSpaceAbb{J}: \MetricSpace_\mu \to \csCohomology^2(M, \UGroup(1)), 
\quad g \mapsto \KBundle_g M,
\end{equation}
is a group-valued momentum map equivariant with respect to the action of \( \DiffGroup_{\mu,0}(M) \).

\begin{defn}
	The Teichmüller space \( \SectionSpaceAbb{T} \)  is defined as the space of conformal structures on \( M \) modulo the action of the identity component of the group of all diffeomorphisms.
\end{defn}
Let \( S_g \) denote the scalar curvature of \( g \) and \( \bar{S}_g \) be the average of \( S_g \) with respect to \( \mu_g \).
By the Gauss--Bonnet Theorem, the average of the scalar curvature is a topological invariant:
\begin{equation}
	\bar{S}_g = 4 \pi \, \frac{2 - 2 \, \textnormal{genus}(M)}{\textnormal{vol}_{\mu_g}(M)}.
\end{equation}
% recall: scalar curvature = 2 gauss curvature
In particular, it is independent of the metric \( g \in \MetricSpace_\mu \), and we will simply write \( \bar{S} \) in the following.
The curvature of the Chern connection on the canonical bundle \( \KBundle_g M \) is given by 
\( - S_g \mu \). 
Let \( \csCohomology^2(M, \UGroup(1))_{\bar{S}} = \curv^{-1}(-\bar{S} \mu) \) denote the subset of all classes of circle bundles with connection whose curvature is equal to \( - \bar{S} \mu \).
\begin{prop}
	The Teichmüller space \( \SectionSpaceAbb{T} \) is identified with the quotient
	\begin{equation}
		\label{eq:applications:sympQuotient}
		\SectionSpaceAbb{J}^{-1} \bigl(\csCohomology^2(M, \UGroup(1))_{\bar{S}}\bigr) \slash \DiffGroup_{\mu,0}(M).
		\qedhere
	\end{equation}
\end{prop}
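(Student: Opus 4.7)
The plan is to identify the preimage \( \SectionSpaceAbb{J}^{-1}\bigl(\csCohomology^2(M,\UGroup(1))_{\bar{S}}\bigr) \) with the set of constant scalar curvature metrics in \( \MetricSpace_\mu \), then use the uniformization theorem and Moser's trick to match this set modulo \( \DiffGroup_{\mu,0}(M) \) with conformal classes modulo \( \DiffGroup_0(M) \). The preimage computation is immediate: since \( \mu_g = \mu \) for \( g \in \MetricSpace_\mu \), the curvature of the Chern connection on \( \KBundle_g M \) equals \( -S_g\mu \), so \( \SectionSpaceAbb{J}(g) \in \csCohomology^2(M,\UGroup(1))_{\bar{S}} \) holds precisely when \( S_g \equiv \bar{S} \), the topologically determined Gauss--Bonnet average. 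Denote this subset by \( \MetricSpace_\mu^{\mathrm{CSC}} \).

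Next I would construct the bijection. Define \( \Psi: \MetricSpace_\mu^{\mathrm{CSC}} \to \SectionSpaceAbb{T} \) by sending \( g \) to the \( \DiffGroup_0(M) \)-orbit of its conformal class \( [g] \); this map is manifestly \( \DiffGroup_{\mu,0}(M) \)-invariant. For surjectivity, given any conformal class \( c \), the uniformization theorem (genus \( \geq 2 \)) supplies a constant-curvature representative \( g_c \in c \), unique up to positive scaling. Rescale so that \( \int_M \mu_{g_c} = \int_M \mu \); since \( \mu_{g_c} \) and \( \mu \) then have equal total mass, Moser's theorem provides \( \phi \in \DiffGroup_0(M) \) with \( \phi^* \mu = \mu_{g_c} \), and \( (\phi^{-1})^* g_c \) is a metric in \( \MetricSpace_\mu^{\mathrm{CSC}} \) whose conformal class lies in the \( \DiffGroup_0(M) \)-orbit of \( c \). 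For injectivity, if \( g_1, g_2 \in \MetricSpace_\mu^{\mathrm{CSC}} \) and \( [g_1] = \phi^* [g_2] \) for some \( \phi \in \DiffGroup_0(M) \), then uniqueness in uniformization forces \( \phi^* g_2 = \lambda g_1 \) for some \( \lambda > 0 \); comparing volume forms gives \( \phi^*\mu = \lambda \mu \), and integrating over \( M \) yields \( \lambda = 1 \), so \( \phi \in \DiffGroup_\mu(M) \cap \DiffGroup_0(M) \).

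The main obstacle is then to establish the identity \( \DiffGroup_\mu(M) \cap \DiffGroup_0(M) = \DiffGroup_{\mu,0}(M) \). Given an isotopy \( \phi_t \) in \( \DiffGroup_0(M) \) connecting \( \id \) to \( \phi \in \DiffGroup_\mu(M) \), the path \( \phi_t^*\mu - \mu \) consists of exact top-forms on \( M \) (each has total integral zero, using \( \sCohomology^2(M,\R) = \R \)), and a parametric Moser argument produces a smooth family \( \psi_t \in \DiffGroup_0(M) \) with \( \psi_0 = \psi_1 = \id \) such that \( \phi_t \circ \psi_t \) lies in \( \DiffGroup_\mu(M) \) throughout; this provides the desired isotopy in \( \DiffGroup_{\mu,0}(M) \). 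Executing this Moser step carefully in the tame Fréchet category, and verifying that \( \Psi \) descends to an isomorphism of the relevant smooth or topological structures on both sides, is where the technical work concentrates.
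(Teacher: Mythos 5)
Your proposal is correct and follows essentially the same route as the paper: identify the preimage with the constant scalar curvature metrics in \( \MetricSpace_\mu \) via the curvature formula for the Chern connection, then combine uniformization (with the Gauss--Bonnet normalization forcing the uniformized representative back into \( \MetricSpace_\mu \)) with the Moser trick to match the quotients. The only difference is one of bookkeeping: the paper invokes Ebin's identification \( \MetricSpace_\mu \slash \DiffGroup_\mu(M) \isomorph \set{g \given \int_M \mu_g = \int_M \mu} \slash \DiffGroup(M) \) and then restricts to identity components in one line, whereas you work with identity components throughout and explicitly supply the parametric Moser argument showing \( \DiffGroup_\mu(M) \intersect \DiffGroup_0(M) = \DiffGroup_{\mu,0}(M) \) — a detail the paper elides but which your version correctly identifies as the point needing care.
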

\begin{proof}
The definition of \( \SectionSpaceAbb{J} \) implies that 
\begin{equation}
	\SectionSpaceAbb{J}^{-1} \bigl(\csCohomology^2(M, \UGroup(1))_{\bar{S}}\bigr) = \set[\big]{g \in \MetricSpace_\mu \given S_g = \bar{S}}.
\end{equation}
%Now every Riemannian metric \( g \) on \( M \) can be rescaled by a positive constant to have the same total volume \( \int_M \mu \) as the reference volume form \( \mu \).
The proof of the uniformization theorem using the normalized Ricci flow shows that every metric \( g \) on \( M \) can be conformally deformed to a metric \( \tilde{g} \) with a given constant scalar curvature \( \rho < 0 \).
By the Gauss--Bonnet Theorem, we have
\begin{equation}
	\rho \int_M \mu_{\tilde{g}} = 8 \pi \, (1 -\textnormal{genus}(M)) = \bar{S} \int_M \mu,
\end{equation}
so that the \( \mu_{\tilde{g}} \)-area of \( M \) is directly related to the \( \mu \)-area.
In particular, if \( \rho = \bar{S} \), then \( \tilde{g} \) lies in \( \MetricSpace_\mu \).
Accordingly, the space of conformal structures on \( M \) is naturally identified with the subset of \( \MetricSpace_\mu \) of metrics with constant scalar curvature \( \bar{S} \), \cf \parencite[Theorem~7.7]{FischerTromba1984} for a similar result without the constraint of a fixed volume form.
The Moser trick yields the identification \parencite[Theorem~8.19]{Ebin1970}
\begin{equation}
	\MetricSpace_\mu \slash \DiffGroup_\mu(M) \isomorph \set*{g \in \MetricSpace \given \int_M \mu_g = \int_M \mu} \slash \DiffGroup(M).
\end{equation}
Restricting to the identity component of the diffeomorphism group, we obtain the identification of the quotient~\eqref{eq:applications:sympQuotient} with the Teichmüller space.
\end{proof}

Hence, the Teichmüller space may be regarded as a reduction at the subset \( \csCohomology^2(M, \UGroup(1))_{\bar{S}} \).
Below we will show that this reduction is an example of an infinite-dimensional (regular) \emph{symplectic orbit reduction}.
A general theory of symplectic orbit reductions in infinite dimensions is still missing, but our results below provide a first step in this direction for the example of the Teichmüller space.

We proceed in a series of lemmas, that roughly mirror the strategy in finite dimensions as presented in \parencite[Section~6.3]{OrtegaRatiu2003}.
\begin{lemma}
	The action of the group \( \DiffGroup_{\mu,0}(M) \) on \( \csCohomology^2(M, \UGroup(1))_{\bar{S}} \) is infinitesimally transitive.
	Moreover, the prescription
	\begin{equation}
		\Omega_{\bar{S}} \bigl(\equivClass{\alpha}, \equivClass{\beta}\bigr) = \frac{1}{2} \int_M \alpha \wedge \beta, \qquad \equivClass{\alpha}, \equivClass{\beta} \in \sCohomology^1(M, \R),
	\end{equation}
	defines a symplectic form \( \Omega_{\bar{S}} \) on \( \csCohomology^2(M, \UGroup(1))_{\bar{S}} \).
\end{lemma}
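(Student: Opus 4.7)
The plan is to first identify the tangent bundle of $\csCohomology^2(M, \UGroup(1))_{\bar S}$, then compute the infinitesimal action of $\DiffGroup_{\mu,0}(M)$, and finally verify the three defining properties of a symplectic form.

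Since $\csCohomology^2(M, \UGroup(1))$ is an abelian Fréchet Lie group with Lie algebra $\DiffFormSpace^1(M)/\dif \DiffFormSpace^0(M)$, and since $\curv$ is a Lie group morphism whose tangent at the identity is induced by the exterior differential $\dif$, the tangent space at every point of the level set $\csCohomology^2(M, \UGroup(1))_{\bar S} = \curv^{-1}(-\bar S \mu)$ is canonically identified via group translation with the kernel $\clDiffFormSpace^1(M)/\dif \DiffFormSpace^0(M) \isomorph \deRCohomology^1(M, \R)$. In particular, every tangent vector is represented by a closed $1$-form, well-defined up to exact forms.

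For infinitesimal transitivity, I compute the fundamental vector field associated with a divergence-free vector field $X$ on $M$. Since the coconjugation action is by pull-back of diffeomorphisms, differentiating $\exp(tX)^* \zeta$ at $t = 0$ with $\zeta$ locally represented by a connection $A$ of curvature $F$ gives the Lie derivative $\mathcal{L}_X A = \dif(X \contr A) + X \contr F$, which represents the class $[X \contr F]$ in $\DiffFormSpace^1(M)/\dif \DiffFormSpace^0(M)$. At a point with $F = -\bar S \mu$, the fundamental tangent vector equals $-\bar S [X \contr \mu] \in \deRCohomology^1(M, \R)$. Volume-preservation of $X$ is equivalent to $\dif(X \contr \mu) = 0$, so the musical-type map $X \mapsto X \contr \mu$ is a linear isomorphism from divergence-free vector fields onto $\clDiffFormSpace^1(M)$. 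Composing with the surjection $\clDiffFormSpace^1(M) \twoheadrightarrow \deRCohomology^1(M, \R)$ and multiplying by the nonzero constant $-\bar S$ (nonzero because the genus is at least two) yields a surjection onto the tangent space at every point, establishing infinitesimal transitivity.

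For the symplectic form $\Omega_{\bar S}$, well-definedness on cohomology classes follows from Stokes' theorem applied to $\dif(f \beta) = \dif f \wedge \beta$ when $\beta$ is closed and $M$ is closed. Antisymmetry is immediate from the wedge product of $1$-forms. Non-degeneracy is precisely Poincaré duality: the intersection pairing on $\deRCohomology^1(M, \R)$ of a closed oriented surface is non-degenerate. Finally, the torsor structure of $\csCohomology^2(M, \UGroup(1))_{\bar S}$ over the abelian kernel $\sCohomology^1(M, \UGroup(1))$ of $\curv$ trivializes the tangent bundle by translations within $\csCohomology^2(M, \UGroup(1))$, under which $\Omega_{\bar S}$ appears as a constant $2$-form; hence it is smooth and automatically closed.

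The main technical point will be the computation of the fundamental vector field as $-\bar S [X \contr \mu]$, which requires unpacking how diffeomorphisms act on Cheeger-Simons classes through Lie derivatives of local connection representatives; once this identification is in place, infinitesimal transitivity reduces to the surjection $\clDiffFormSpace^1(M) \twoheadrightarrow \deRCohomology^1(M, \R)$, and the symplectic properties follow from standard Hodge theory and Poincaré duality.
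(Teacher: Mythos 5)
Your proof is correct and follows essentially the same route as the paper: identify the tangent space of the level set with \( \sCohomology^1(M, \R) \) via the torsor structure under \( \sCohomology^1(M, \UGroup(1)) \), compute the fundamental vector field as \( \equivClass{-\bar{S}\, X \contr \mu} \) from the Lie derivative of a connection representative, and deduce surjectivity and non-degeneracy from the intersection form of the surface. You even make explicit a point the paper leaves implicit --- that surjectivity requires \( \bar{S} \neq 0 \), which holds since the genus is at least two --- and the only cosmetic difference is that the paper verifies non-degeneracy by pairing \( \equivClass{\alpha} \) with \( \equivClass{\hodgeStar \alpha} \) for an auxiliary metric rather than invoking Poincaré duality directly.
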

\begin{proof}
	Note that two bundles \( (L, \nabla) \) and \( (L', \nabla') \) with the same curvature form \( - \bar{S} \mu \) differ by a flat bundle, \ie, an element of \( \sCohomology^1(M, \UGroup(1)) \).
	Hence, \( \csCohomology^2(M, \UGroup(1))_{\bar{S}} \) is an abelian torsor under \( \sCohomology^1(M, \UGroup(1)) \), and we can identify its tangent space with \( \sCohomology^1(M, \R) \).
	Let \( (L, \nabla) \) be a circle bundle with connection whose curvature is equal to \( - \bar{S} \mu \).
	Then the infinitesimal action of \( \VectorFieldSpace_\mu(M) \) at \( (L, \nabla) \) is the Lie derivative of the connection, or equivalently, the contraction with the curvature form (up to an exact term):
	\begin{equation}
		\VectorFieldSpace_\mu(M) \to \sCohomology^1(M, \R), \quad X \mapsto \equivClass*{- \bar{S} \, X \contr \mu}.
	\end{equation}
	This map is clearly surjective.

	Nondegeneracy of \( \Omega_{\bar{S}} \) follows from pairing \( \equivClass{\alpha} \) with \( \equivClass{\hodgeStar \alpha} \), where \( \hodgeStar \) denotes the Hodge star operator of an auxiliary metric on \( M \).
\end{proof}
The next step, in the general procedure, is to show that the inverse image of the orbit under the momentum map is a submanifold.
This usually follows from the bifurcation lemma in combination with a transversality argument.
In the present case, it is more convenient to give a direct proof.
\begin{lemma}
	The subset \( \SectionSpaceAbb{J}^{-1} \bigl(\csCohomology^2(M, \UGroup(1))_{\bar{S}}\bigr) \) is a submanifold of \( \MetricSpace_\mu \).
\end{lemma}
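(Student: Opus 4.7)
The strategy is to realize the subset $\SectionSpaceAbb{J}^{-1}\bigl(\csCohomology^2(M, \UGroup(1))_{\bar{S}}\bigr) = \{g \in \MetricSpace_\mu \given S_g \equiv \bar{S}\}$ as the zero set of a tame submersion and then invoke the Nash--Moser Inverse Function Theorem. Concretely, I would consider the smooth map
\[
\Phi: \MetricSpace_\mu \to \sFunctionSpace_0(M, \R), \qquad g \mapsto S_g - \bar{S},
\]
where $\sFunctionSpace_0(M, \R) \defeq \{f \in \sFunctionSpace(M, \R) \given \int_M f \, \mu = 0\}$; the image genuinely lies in $\sFunctionSpace_0(M, \R)$ by the Gauss--Bonnet identity $\int_M S_g \, \mu = 4\pi \, (2 - 2 \, \text{genus}(M)) = \int_M \bar{S} \, \mu$. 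The task thus reduces to showing that $\tangent_g \Phi$ admits a tame right inverse at every $g \in \Phi^{-1}(0)$.

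For such a $g$ and a $g$-traceless symmetric $(0,2)$-tensor $h \in \TBundle_g \MetricSpace_\mu$, the classical first-variation formula for the scalar curvature reduces (because $\operatorname{tr}_g h = 0$ and $\operatorname{Ric}_g = \tfrac{1}{2} S_g \, g$ on a surface) to $(\tangent_g \Phi)(h) = \nabla^i \nabla^j h_{ij}$, which I denote by $\delta_g \delta_g h$. The formal $L^2$-adjoint of $\delta_g \delta_g$ is the traceless Hessian $\operatorname{Hess}_{0, g}: \sFunctionSpace_0(M, \R) \to \SymTensorFieldSpace^2_g(M)$, $f \mapsto \nabla^2 f - \tfrac{1}{2}(\Delta_g f) \, g$, whose kernel consists precisely of functions $f$ for which $\nabla f$ is a conformal Killing vector field. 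Since closed surfaces of genus $\geq 2$ admit no nonzero conformal vector fields, $\operatorname{Hess}_{0, g}$ is injective on $\sFunctionSpace_0(M, \R)$, and hence the composition $(\delta_g \delta_g) \circ \operatorname{Hess}_{0, g} = \operatorname{Hess}_{0, g}^{*} \, \operatorname{Hess}_{0, g}$ is a symmetric, positive-definite, fourth-order elliptic operator on $\sFunctionSpace_0(M, \R)$ with principal symbol proportional to $\abs{\xi}^4$. In particular it is invertible with tame inverse by standard elliptic theory, so the assignment $F \mapsto \operatorname{Hess}_{0, g} \bigl((\operatorname{Hess}_{0, g}^{*} \, \operatorname{Hess}_{0, g})^{-1} F\bigr)$ supplies the required tame right inverse of $\tangent_g \Phi$.

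Applying the tame-smooth Nash--Moser Inverse Function Theorem (for instance, the trivial-group specialization of \cref{prop:normalFormEquivariantMap:elliptic} applied directly to $\Phi$, or the framework of \parencite{Hamilton1982}) then yields that $\Phi^{-1}(0) = \SectionSpaceAbb{J}^{-1}\bigl(\csCohomology^2(M, \UGroup(1))_{\bar{S}}\bigr)$ is a closed tame Fréchet submanifold of $\MetricSpace_\mu$. The main analytical obstacle is to verify that the family of right inverses constructed above depends tamely on the base point $g$; this is standard for smoothly parameter-dependent elliptic operators of constant principal symbol on a fixed compact surface, and the hypothesis that the genus of $M$ is at least two is precisely what ensures the uniform invertibility of $\operatorname{Hess}_{0, g}^{*} \, \operatorname{Hess}_{0, g}$ on $\sFunctionSpace_0(M, \R)$ throughout $\MetricSpace_\mu$, from which the tame estimates follow.
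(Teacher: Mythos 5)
Your proposal is correct and follows essentially the same route as the paper: both realize the set as the zero level set of the map \( g \mapsto S_g - \bar{S} \) into the zero-average functions \( \sFunctionSpace_0(M) \), establish that this map is a (tame) submersion, and conclude via the Fréchet level-set theorem. The only difference is that the paper outsources the surjectivity of the linearization to the classical result of Fischer--Marsden, whereas you carry out that adjoint argument explicitly (traceless Hessian, absence of conformal Killing fields on a genus \( \geq 2 \) surface, ellipticity of \( \operatorname{Hess}_{0,g}^{*}\operatorname{Hess}_{0,g} \)), which is a welcome filling-in of the cited details.
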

\begin{proof}
	Clearly, \( \SectionSpaceAbb{J}^{-1} \bigl(\csCohomology^2(M, \UGroup(1))_{\bar{S}}\bigr) = \set[\big]{g \in \MetricSpace_\mu \given S_g = \bar{S}} \) is the zero set of the smooth function
	\begin{equation}
		\SectionSpaceAbb{S}: \MetricSpace_\mu \to \sFunctionSpace_0(M), \quad g \mapsto S_g - \bar{S},
	\end{equation}
	where \( \sFunctionSpace_0(M) \) is the space of smooth functions with zero average.
	It is a well-known fact, going back to the work of \textcite{FischerMarsden1975}, that the scalar curvature furnishes a submersion on the space of all Riemannian metrics.
	For the function \( \SectionSpaceAbb{S} \), we fix the total volume form \( \mu \) of the metric but at the same time restrict the target space to functions with zero average.
	Thus, the same argument shows that \( \SectionSpaceAbb{S} \) is a submersion, and the claim follows from the level set theorem for Fréchet manifolds \parencite{Hamilton1982,DiezRudolphKuranishi}, combined with the elliptic regularity theory for the scalar curvature operator.
\end{proof}
Alternatively, the issues coming from the infinite-dimensional setting can be resolved by working in the Sobolev chain (as in \parencite{FischerMarsden1975}).

The action of \( \DiffGroup(M) \) on the space \( \MetricSpace(M) \) of all Riemannian metrics is proper and admits a slice at every point as shown in the classical work of \textcite{Ebin1970}.
A similar reasoning using the explicit formula \parencite{FreedGroisser1989} for the geodesics in the space of Riemannian metrics shows that the action of \( \DiffGroup_{\mu,0}(M) \) on \( \MetricSpace_\mu(M) \) admits a slice at every point as well.
Alternatively, one could construct the slice using the general slice theorem for Fréchet Lie group actions \parencite{DiezSlice}.
Moreover, the action on metrics with constant scalar curvature \( \bar{S} \) is free, see \parencite[Theorem~7.10]{FischerTromba1984}.
As a consequence, the quotient~\eqref{eq:applications:sympQuotient} of \( \SectionSpaceAbb{J}^{-1} \bigl(\csCohomology^2(M, \UGroup(1))_{\bar{S}}\bigr) \) by the action of \( \DiffGroup_{\mu,0}(M) \) is a smooth Fréchet manifold.
To summarize, we have the following.
\begin{thm}
	The manifold \( \SectionSpaceAbb{T} \isomorph \SectionSpaceAbb{J}^{-1} \bigl(\csCohomology^2(M, \UGroup(1))_{\bar{S}}\bigr) \slash \DiffGroup_{\mu,0}(M) \) carries a natural symplectic structure \( \check{\Omega} \) uniquely determined by
	\begin{equation}
		\pi^* \check{\Omega} = \Omega - \SectionSpaceAbb{J}^* \Omega_{\bar{S}},
	\end{equation}
	where \( \pi: \SectionSpaceAbb{J}^{-1} \bigl(\csCohomology^2(M, \UGroup(1))_{\bar{S}}\bigr) \to \SectionSpaceAbb{J}^{-1} \bigl(\csCohomology^2(M, \UGroup(1))_{\bar{S}}\bigr) \slash \DiffGroup_{\mu,0}(M) \) is the canonical projection and restriction of the right-hand side to \( \SectionSpaceAbb{J}^{-1} \bigl(\csCohomology^2(M, \UGroup(1))_{\bar{S}}\bigr) \) is understood.
\end{thm}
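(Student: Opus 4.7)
The plan is to adapt the classical finite-dimensional symplectic orbit reduction theorem to our infinite-dimensional context. Set $N \defeq \SectionSpaceAbb{J}^{-1}\bigl(\csCohomology^2(M, \UGroup(1))_{\bar{S}}\bigr)$ and $\eta \defeq \restr{(\Omega - \SectionSpaceAbb{J}^* \Omega_{\bar{S}})}{N}$. I would show that $\eta$ is $\DiffGroup_{\mu,0}(M)$-invariant and horizontal with respect to $\pi$, so that it descends to a unique 2-form $\check{\Omega}$ on $\SectionSpaceAbb{T}$ with $\pi^* \check{\Omega} = \eta$, and then verify closedness and non-degeneracy.

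Invariance is straightforward: $\Omega$ is invariant by hypothesis, and $\SectionSpaceAbb{J}^* \Omega_{\bar{S}}$ is invariant because $\SectionSpaceAbb{J}$ is equivariant and $\Omega_{\bar{S}}$ is invariant under the pull-back action of diffeomorphisms on $\csCohomology^2(M, \UGroup(1))_{\bar{S}}$, the integration pairing of de Rham classes being natural under pull-back. For horizontality, let $g \in N$, $X \in \VectorFieldSpace_\mu(M)$ with fundamental vector field $X^*_g$, and $v \in T_g N$. On the one hand, the defining equation~\eqref{eq::momentumMap:defEq} of the group-valued momentum map gives $\Omega_g(X^*_g, v) = -\kappa\bigl(\difLog_g \SectionSpaceAbb{J}(v), X\bigr)$. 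On the other hand, equivariance of $\SectionSpaceAbb{J}$ together with the infinitesimal action formula $T_g \SectionSpaceAbb{J}(X^*_g) = \equivClass{-\bar{S}\, X \contr \mu}$ from the excerpt yields
\begin{equation}
(\SectionSpaceAbb{J}^* \Omega_{\bar{S}})_g(X^*_g, v) = -\frac{\bar{S}}{2} \int_M (X \contr \mu) \wedge \beta
\end{equation}
for any representative $\beta$ of $\difLog_g \SectionSpaceAbb{J}(v)$. Matching the two expressions is a direct surface-calculus computation using the explicit form of $\kappa$ as the integration pairing between $\DiffFormSpace^1(M) \slash \dif \DiffFormSpace^0(M)$ and $\VectorFieldSpace_\mu(M) \isomorph \clZDiffFormSpace^1(M)$; this gives $(X^* \contr \eta)_g(v) = 0$. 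Once descent is established, closedness of $\check{\Omega}$ is immediate from closedness of $\Omega$ and of $\Omega_{\bar{S}}$.

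The main obstacle is the verification of non-degeneracy of $\check{\Omega}$ in the Fréchet setting. The strategy is to identify, for $g \in N$, a symplectic complement $E \subseteq T_g N$ to the vertical subspace $\VectorFieldSpace_\mu(M) \cdot g$, so that $T_{\equivClass{g}}\pi$ restricts to an isomorphism $E \isomorph T_{\equivClass{g}} \SectionSpaceAbb{T}$, and to exhibit $\restr{\Omega_g}{E}$ as a symplectic form on $E$. This mirrors \cref{prop:bifurcationLemma:kernelMomentumMapIsSymplectic}, but in the orbit-reduction context the level set $T_g N = \difLog_g \SectionSpaceAbb{J}^{-1}(\sCohomology^1(M, \R))$ is strictly larger than $\ker \difLog_g \SectionSpaceAbb{J}$, the difference being precisely the image of the infinitesimal action. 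To apply the Strong Bifurcation \cref{prop::bifurcationLemma:strong} one needs that $\VectorFieldSpace_\mu(M) \cdot g$ is symplectically closed in $T_g \MetricSpace_\mu$ and that $\img \difLog_g \SectionSpaceAbb{J}$ is weakly closed in $\DiffFormSpace^1(M)\slash \dif \DiffFormSpace^0(M)$; both should follow from the elliptic Hodge-type decompositions associated with the linearized scalar curvature operator at metrics of constant scalar curvature $\bar{S}$, along the lines of \parencite{FischerMarsden1975,FischerTromba1984}. The principal technical hurdle will be to pin down these decompositions directly in the Fréchet category rather than in a Sobolev chain and to transfer the resulting closedness statements to the weakly symplectic setting, after which non-degeneracy of $\restr{\Omega_g}{E}$ follows by the orthogonality argument already used in the proof of \cref{prop:bifurcationLemma:kernelMomentumMapIsSymplectic}.
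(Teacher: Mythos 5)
Your proposal takes a genuinely different route from the paper. The paper does not prove this theorem by a direct orbit-reduction argument: the statement is presented as a summary of the preceding lemmas (the orbit \( \csCohomology^2(M, \UGroup(1))_{\bar{S}} \) is symplectic and the action on it is infinitesimally transitive, the level set is a submanifold, the action is proper, free and admits slices, so the quotient is a smooth Fréchet manifold), and the symplectic structure itself is obtained a posteriori by a two-stage reduction. Concretely, the paper first performs ordinary point reduction at \( 0 \) for the subgroup \( \DiffGroup_{\mu, \ex}(M) \), whose classical momentum map \( g \mapsto S_g - \bar{S} \) is fed through the MGS normal form machinery (\cref{prop:momentumMap:MGSnormalForm:elliptic}, the elliptic chain~\eqref{eq:teichmueller:chain}, and the Singular Reduction \cref{prop:symplecticReduction:mainTheorem}) to produce the symplectic manifold \( \SectionSpaceAbb{T}_\ex \); it then quotients by the finite-dimensional residual group \( \DiffGroup_{\mu, 0}(M) \slash \DiffGroup_{\mu, \ex}(M) \), whose orbits are symplectic, and observes that the resulting structure on \( \SectionSpaceAbb{T} \) coincides with the one given by the orbit-reduction formula. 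The paper explicitly notes that a general theory of symplectic orbit reduction in infinite dimensions is still missing --- which is precisely what your direct descent-plus-nondegeneracy argument would have to supply. Your route, if completed, would be more self-contained and closer to the finite-dimensional proof of Ortega--Ratiu; the paper's route buys rigour by channelling everything through machinery it has already established.

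Two steps of your proposal are, however, genuine gaps rather than routine verifications. First, horizontality of \( \Omega - \SectionSpaceAbb{J}^* \Omega_{\bar{S}} \) is exactly the assertion that \( \Omega_{\bar{S}} \) is the correct KKS-type form for this group-valued coadjoint orbit, i.e.\ that \( \Omega_{\bar{S}}\bigl(\equivClass{-\bar{S}\, X \contr \mu}, \equivClass{\beta}\bigr) \) equals \( - \kappa\bigl(\equivClass{\beta}, X\bigr) \) for all closed \( \beta \); with the normalizations as printed (the factor \( \tfrac{1}{2} \) in \( \Omega_{\bar{S}} \), the integration pairing \( \kappa \), and the infinitesimal action \( X \mapsto \equivClass{-\bar{S}\, X \contr \mu} \)) the two sides differ a priori by a constant involving \( \bar{S} \), so this is the crux of the whole argument and must actually be carried out, not waved through as "direct surface calculus". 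Since the cup-product pairing on \( \sCohomology^1(M, \R) \) is non-degenerate, any mismatch of constants destroys horizontality outright. Second, your non-degeneracy argument requires symplectic closedness of \( \VectorFieldSpace_\mu(M) \ldot g \), weak closedness of \( \img \difLog_g \SectionSpaceAbb{J} \) for the group-valued momentum map of the \emph{full} group \( \DiffGroup_{\mu,0}(M) \), and a slice for the stabilizer of a point of the orbit \( \csCohomology^2(M, \UGroup(1))_{\bar{S}} \) --- a subgroup the paper never identifies, let alone equips with slices; the paper only verifies the analogous elliptic-complex conditions for the \( \DiffGroup_{\mu, \ex}(M) \)-action and its classical momentum map. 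These pieces are all plausible, but none of them is in the paper, so what you have is an honest alternative proof programme rather than a completed proof.
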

%\begin{proof}
	%It is well known that the Teichmüller space is finite-dimensional.
	%
	%\end{proof}
The expression~\eqref{eq:applications:coadSL:symplecticFormOnMetrics} 
for the symplectic form on the space of Riemannian metrics implies that the 
reduced symplectic form is (proportional to) the Weil--Petersson symplectic form.
We conclude that the Teichmüller space \( \SectionSpaceAbb{T} \) is a symplectic orbit reduction starting from the space of Riemannian metrics compatible with a fixed volume form.

In the sequel, let us also consider the action of the subgroup \( \DiffGroup_{\mu, \ex}(M) \subseteq \DiffGroup_{\mu,0}(M) \) of exact volume-preserving diffeomorphisms, which has an ordinary equivariant momentum map.
By definition, the Lie algebra \( \VectorFieldSpace_{\mu, \ex}(M) \) of \( \DiffGroup_{\mu, \ex}(M) \) is the space of vector fields \( X \) for which \( X \contr \mu \) is exact.
If we identify \( \VectorFieldSpace_{\mu, \ex}(M) \) with smooth functions on \( M \) up to a constant, then the space \( \sFunctionSpace_0(M) \) of smooth functions with zero average is a natural dual of \( \VectorFieldSpace_{\mu, \ex}(M) \).
The scalar curvature furnishes a momentum map \parencite{Donaldson1999,Fujiki1992}
\begin{equation}
	\SectionMapAbb{J}_{\ex}: \MetricSpace_\mu \to \sFunctionSpace_0(M), \quad g \mapsto S_g - \bar{S}_g,
\end{equation}
for the action of the group \( \DiffGroup_{\mu, \ex}(M) \).
The ordinary symplectic reduction with respect to the action of the group of exact diffeomorphisms thus yields the space
\begin{equation}
\SectionSpaceAbb{T}_\ex \equiv \SectionSpaceAbb{J}_{\ex}^{-1}(0) \slash \DiffGroup_{\mu, \ex}(M) = \set[\big]{g \in \MetricSpace_\mu(M) \given S_g = \bar{S}} \slash \DiffGroup_{\mu, \ex}(M).
\end{equation}
Let us verify that the momentum map \( \SectionMapAbb{J}_{\ex} \) can be brought into a MGS normal form using \cref{prop:momentumMap:MGSnormalForm:elliptic} by checking the conditions of this theorem:
\begin{enumerate}
	\item
		The group \( \DiffGroup_{\mu, \ex}(M) \) is a geometric tame Fréchet Lie subgroup of \( \DiffGroup_\mu(M) \) by \parencite[Proposition~3.8]{DiezJanssensNeebVizmannHolPres}.
	\item
		The properness of the action and the existence of slices has been confirmed above.
		% The volume element \( \mu \) defines a diffeomorphism
		% \begin{equation}
		% 	\VolSpace(M) \times \MetricSpace_\mu(M) \to \MetricSpace(M), \quad (\lambda \mu, g) \mapsto \lambda \cdot g,	
		% \end{equation}
		% where \( \VolSpace(M) \) is the space of volume forms on \( M \) (compatible with the orientation).
		% Under this diffeomorphism, the slice on \( \MetricSpace(M) \) pulls back to a slice on \( \MetricSpace_\mu(M) \) for the action of \( \DiffGroup_\mu(M) \).
	\item
		As mentioned above, the action is free on the subset \( \SectionSpaceAbb{J}_{\ex}^{-1}(0) \).
		Hence, the third point is trivially satisfied.
	\item
		Let \( g \in \MetricSpace_\mu \) be a metric.
		Then the infinitesimal action of \( \VectorFieldSpace_{\mu, \ex}(M) \isomorph \sFunctionSpace(M) \slash \R \) on \( \MetricSpace_\mu \) at \( g \) is given by the Lie derivative of the metric
		\begin{equation}
			X_f \ldot g = - \difLie_{X_f} g = 2 \diF_g^* X_f, 
		\end{equation}
		where \( \diF_g^*: \VectorFieldSpace_\mu(M) \to \SymTensorFieldSpace^2_g(M) \) is the adjoint of the covariant divergence operator \( (\diF_g h)_j = \nabla_k \tensor{h}{_j^k} \), see \eg \parencite[Lemma~1.60]{Besse1987} for the second equality.
		Moreover, the linearization of the scalar curvature map at \( g \) is given by (see, \eg, \parencite[Theorem~1.174]{Besse1987})
		\begin{equation}
			\tangent_g S (h) = - \Delta_g (\tr_g h) + \diF_g \diF_g h - \textrm{Ric}_g \cdot h.
		\end{equation}
		If \( h \) is trace-free, then the first and third term vanish. %For third term: \parencite[p.157]{Tromba1992}
		Thus the chain relevant for the MGS normal form is (suppressing some constants for simplicity)
		\begin{equationcd}[label=eq:teichmueller:chain, tikz={column sep=large}]
			0 \to[r] 
				& \sFunctionSpace(M)
					\to[r, "\diF_g^* \circ \mu^\sharp \circ \dif"]
				& \SymTensorFieldSpace^2_g(M)
					\to[r, "\diF_g \circ \diF_g"]
				& \sFunctionSpace(M)
					\to[r]
				& 0.
		\end{equationcd}
		By direct inspection, one verifies that this chain is elliptic.%, \cf also \parencite{EbinBerger}.
	\item
		Finally, due to the ellipticity of the chain~\eqref{eq:teichmueller:chain}, the images of each map in the chain are closed orthogonally complemented subspaces of the respective target spaces.
		This shows directly that the image of \( \tangent_g S \) is weakly closed.
		In fact, since the action is free on the subset of metrics with constant scalar curvature the bifurcation lemma implies that \( \tangent_g S \) is surjective.
		This recovers the well-known fact about the linearization of the scalar curvature map \parencite[Theorem~A]{FischerMarsden1975}.
		Moreover, \cref{prop:symplecticFunctionalAnalysis:orthogonalToSymplecticOrthogonal} now implies that the infinitesimal orbit is symplectically closed.
\end{enumerate}
As a direct consequence of the Symplectic Reduction \cref{prop:symplecticReduction:mainTheorem}, the symplectic quotient \( \SectionSpaceAbb{T}_\ex \) is then a smooth symplectic manifold.

The group \( \DiffGroup_{\mu, \ex}(M) \) is a normal subgroup of \( \DiffGroup_{\mu}(M) \), and the finite-dimensional quotient group \( \DiffGroup_{\mu, 0}(M) \slash \DiffGroup_{\mu, \ex}(M) \) acts
on the space \( \SectionSpaceAbb{T}_\ex \).
It turns out that the orbits of this action are symplectic submanifolds of \( \SectionSpaceAbb{T}_\ex \) relative to the reduced symplectic form \parencite[p.~181]{Donaldson2003}.
Hence, the quotient 
\begin{equation}
	\SectionSpaceAbb{T}_\ex \slash \bigl(\DiffGroup_{\mu, 0}(M) \slash \DiffGroup_{\mu, \ex}(M)\bigr) \isomorph \SectionSpaceAbb{T}
\end{equation}
inherits a natural symplectic structure, which by construction coincides with the symplectic structure obtained by orbit reduction.

In summary, we have shown that the Teichmüller space \( \SectionSpaceAbb{T} \) is a symplectic orbit reduction of the space of Riemannian metrics compatible with a fixed volume form, and that the ordinary symplectic reduction with respect to the action of the group of exact diffeomorphisms yields a symplectic manifold \( \SectionSpaceAbb{T}_\ex \) that fibers over \( \SectionSpaceAbb{T} \) with symplectic fibers.

\begin{remark}
	Fix a reference metric \( g_0 \in \MetricSpace_\mu \).
For every \( g \in \MetricSpace_\mu \), the difference of the Chern connections on the canonical bundles \( \KBundle_{g} M \) and \( \KBundle_{g_0} M \) (under an arbitrary isomorphism \( \KBundle_{g} M \isomorph \KBundle_{g_0} M \)) is a \( 1 \)-form \( \widebar{J}(g_0, g) \in \DiffFormSpace^{1}(M) \), well-defined up to an exact form.
Then the map
\begin{equation}
	\widebar{\SectionMapAbb{J}}: \MetricSpace_\mu 
	\to \DiffFormSpace^{1}(M) \slash \dif \DiffFormSpace^{0}(M), \qquad g \mapsto \widebar{J}(g_0, g)
\end{equation}
is a momentum map for the action of \( \DiffGroup_\mu(M) \) on \( \MetricSpace_\mu \).
Here, \( \DiffFormSpace^{1}(M) \slash \dif \DiffFormSpace^{0}(M) \) is considered as the dual of the Lie algebra \( \VectorFieldSpace_\mu(M) \) of volume-preserving vector fields with respect to the integration pairing under the isomorphism \( \VectorFieldSpace_\mu(M) \isomorph \clDiffFormSpace^1(M) \) induced by the volume form \( \mu \).
Due to the need to fix a reference metric, the map \( \widebar{\SectionMapAbb{J}} \) is not equivariant.
It would be interesting to investigate the symplectic reduction relative to the non-equivariant momentum map \( \widebar{\SectionMapAbb{J}} \) and its relation to the Teichmüller space.
If \( \SectionMapAbb{J} \) were an ordinary momentum map, then the symplectic point reduction would be the same as the symplectic orbit reduction \parencite[Theorem~4.6.1]{OrtegaRatiu2003}, but a similar result for group-valued momentum maps is not yet known.
\end{remark}

\appendix
\section{Slices and Stratification}
\label{sec:slices}

Since slices play a fundamental role in the construction of the MGS normal form, for the convenience of the reader, we now recall the definition of a slice in infinite dimensions, \cf \parencite{DiezSlice}.   
\begin{defn}
	\label{defn:slice:slice}
	Let \( M \) be a \( G \)-manifold.
	A \emphDef{slice} at \( m \in M \) is a submanifold \( S \subseteq M \) containing \( m \) with the following properties:
	\begin{thmenumerate}[label=(SL\arabic*), ref=(SL\arabic*), leftmargin=*] 
		\item \label{i::slice:SliceDefSliceInvariantUnderStab}
			The submanifold \( S \) is invariant under the induced action of the stabilizer subgroup \( G_m \), that is, \( G_m \cdot S \subseteq S \).

		\item \label{i::slice:SliceDefOnlyStabNotMoveSlice}
			Any \( g \in G \) with \( (g \cdot S) \cap S \neq \emptyset \) is necessarily an element of \( G_m \). 

		\item \label{i::slice:SliceDefLocallyProduct}
			The stabilizer \( G_m \) is a principal Lie subgroup of \( G \) and the principal bundle \( G \to G \slash G_m \) admits a local section \( \chi: G \slash G_m \supseteq W \to G \) defined on an open neighborhood \( W \) of the identity coset \( \equivClass{e} \) in such a way that the map
			\begin{equation}
				\chi^S: W \times S \to M, \qquad (\equivClass{g}, s) \mapsto \chi(\equivClass{g}) \cdot s
			\end{equation}
			is a diffeomorphism onto an open neighborhood of \( m \), which is called a \emphDef{slice neighborhood} of \( m \).

		\item \label{i::slice:SliceDefPartialSliceSubmanifold}
			The partial slice \( S_{(G_m)} = \set{s \in S \given G_s \text{ is conjugate to } G_m} \) is a closed submanifold of \( S \).

		\item
			\label{i:slice:linearSlice}
			There exist a smooth representation of \( G_m \) on a locally convex space \( X \) and a \( G_m \)-equivariant diffeomorphism \( \iota_S \) from a \( G_m \)-invariant open neighborhood of \( 0 \) in \( X \) onto \( S \) such that \( \iota_S(0) = m \).
			\qedhere
	\end{thmenumerate}
\end{defn}

The notion of a slice is closely related to the concept of a tubular neighborhood.
In fact by \parencite[Proposition~2.6.2]{DiezSlice}, for every slice \( S \) at \( m \in M \), the tube map
\begin{equation}
	\chi^\tube: G \times_{G_m} S \to M, \qquad \equivClass{g, s} \mapsto g \cdot s
\end{equation}
is a \( G \)-equivariant diffeomorphism onto an open, \( G \)-invariant neighborhood of \( G \cdot m \) in \( M \).

As in the finite-dimensional case, the existence of slices implies many nice properties of the orbit space.
For example, the subset \( M_{(H)} \) of points having orbit type \( (H) \) is a submanifold of \( M \) and \( \check{M}_{(H)} = M_{(H)} \slash G \) carries a smooth manifold structure such that the natural projection \( \pi_{(H)}: M_{(H)} \to \check{M}_{(H)} \) is a smooth submersion.
If, in addition, a certain approximation property is satisfied, then the orbit type manifolds fit together nicely so that the orbit space is a stratified space, see \parencite[Theorem~4.2]{DiezSlice}.
For completeness, we include here our definition of stratification and refer the reader to \parencite{DiezSlice} for further details.
\begin{defn}
	\label{def:stratification:stratification}
	Let \( X \) be Hausdorff topological space. 
	A partition \( \stratification{Z} \) of \( X \) into subsets \( X_\sigma \) indexed by \( \sigma \in \Sigma \) is called a \emphDef{stratification} of \( X \) if the following conditions are satisfied:
	\begin{thmenumerate}[label=(DS\arabic*), ref=(DS\arabic*), leftmargin=*]
		\item \label{i::stratification:stratumIsManifold} 
			Every piece \( X_\sigma \) is a locally closed, smooth manifold (whose manifold topology coincides with the relative topology).
			We will call \( X_\sigma \) a \emphDef{stratum} of \( X \).
 
		\item \label{i::stratification:frontierCondition} (frontier condition)
			Every pair of disjoint strata \( X_\varsigma \) and \( X_\sigma \) with \( X_\varsigma \cap \closureSet{X_\sigma} \neq \emptyset \) satisfies:
			\begin{thmenumerate}[label=\alph*), ref=(DS2\alph*)]
				\item \label{i:stratification:frontierConditionBoundary}
					\( X_\varsigma \) is contained in the frontier \( \closureSet{X_\sigma} \setminus X_\sigma\) of \( X_\sigma \),
				\item \label{i:stratification:frontierConditionIntersection}
					\( X_\sigma \) does not intersect \( \closureSet{X_\varsigma} \).
			\end{thmenumerate}
			In this case, we write \( X_\varsigma < X_\sigma \) or \( \varsigma < \sigma \). 
			\qedhere
	\end{thmenumerate}
\end{defn}

\begin{refcontext}[sorting=nyt]{}
	\printbibliography
\end{refcontext}

\end{document}